\newsavebox{\lineone}
\newsavebox{\linetwo}
\newsavebox{\linethree}
\newlength{\lineonelen}
\newlength{\linetwolen}
\newlength{\linethreelen}
\newlength{\biggerlen}
\newcommand{\twolinestight}[2]{%
   \sbox{\lineone}{#1}%
   \sbox{\linetwo}{#2}%
   \settowidth{\lineonelen}{\usebox{\lineone}}%
   \settowidth{\linetwolen}{\usebox{\linetwo}}%
   \ifthenelse{\lengthtest{\lineonelen > \linetwolen}}%
      {\setlength{\biggerlen}{\the\lineonelen}}%
      {\setlength{\biggerlen}{\the\linetwolen}}%
   \begin{minipage}{\biggerlen}%
      \centering
      \usebox\lineone\\%
      \usebox\linetwo%
   \end{minipage}%
   }
\newcommand{\threelinestight}[3]{%
   \sbox{\lineone}{#1}%
   \sbox{\linetwo}{#2}%
   \sbox{\linethree}{#3}%
   \settowidth{\lineonelen}{\usebox{\lineone}}%
   \settowidth{\linetwolen}{\usebox{\linetwo}}%
   \settowidth{\linethreelen}{\usebox{\linethree}}%
   \ifthenelse{\lengthtest{\lineonelen > \linetwolen}}%
      {\ifthenelse{\lengthtest{\lineonelen > \linethreelen}}%
         {\setlength{\biggerlen}{\the\lineonelen}}%
         {\setlength{\biggerlen}{\the\linethreelen}}%
      }%
      {\ifthenelse{\lengthtest{\linetwolen > \linethreelen}}%
         {\setlength{\biggerlen}{\the\linetwolen}}%
         {\setlength{\biggerlen}{\the\linethreelen}}%
      }%
   \begin{minipage}{\biggerlen}%
      \centering
      \usebox\lineone\\%
      \usebox\linetwo\\%
      \usebox\linethree\\
   \end{minipage}%
   }
\newtheorem{thm}{Theorem}[section]
\newtheorem{prop}[thm]{Proposition}
\newtheorem{lemma}[thm]{Lemma}
\newtheorem{Definition}[thm]{Definition}
\newtheorem{Remark}[thm]{Remark}
\newtheorem{cor}[thm]{Corollary}
\newtheorem{example}[thm]{Example}
\newcounter{ex}[section]
\newcommand{\quash}[1]{}  %%Anything in \quash is ignored
\newcommand{\nc}{\newcommand}
\nc{\on}{\operatorname}
\newcommand{\frakO}{{\mathfrak O}}
\newcommand{\frakP}{{\mathfrak P}}
\newcommand{\frakR}{{\mathfrak R}}
\newcommand{\frakX}{{\mathfrak X}}
\newcommand{\frakY}{{\mathfrak Y}}
\newcommand{\bbA}{{\mathbb A}}
\newcommand{\bbF}{{\mathbb F}}
\newcommand{\bbG}{{\mathbb G}}
\newcommand{\bbQ}{{\mathbb Q}}
\newcommand{\bbR}{{\mathbb R}}
\newcommand{\bbZ}{{\mathbb Z}}
\newcommand{\calA}{{\mathcal A}}
\newcommand{\calB}{{\mathcal B}}
\newcommand{\calD}{{\mathcal D}}
\newcommand{\calE}{{\mathcal E}}
\newcommand{\calF}{{\mathcal F}}
\newcommand{\calG}{{\mathcal G}}
\newcommand{\calH}{{\mathcal H}}
\newcommand{\calL}{{\mathcal L}}
\newcommand{\calP}{{\mathcal P}}
\newcommand{\calQ}{{\mathcal Q}}
\newcommand{\calS}{{\mathcal S}}
\newcommand{\calT}{{\mathcal T}}
\newcommand{\calU}{{\mathcal U}}
\newcommand{\calV}{{\mathcal V}}
\newcommand{\calZ}{{\mathcal Z}}
\newcommand{\mA}{\ensuremath{\mathbb{A}}\xspace}
\newcommand{\mC}{\ensuremath{\mathbb{C}}\xspace}
\newcommand{\mF}{\ensuremath{\mathbb{F}}\xspace}
\newcommand{\mP}{\ensuremath{\mathbb{P}}\xspace}
\newcommand{\mQ}{\ensuremath{\mathbb{Q}}\xspace}
\newcommand{\mZ}{\ensuremath{\mathbb{Z}}\xspace}
\newcommand{\mE}{\ensuremath{\mathbb{E}}\xspace}
\nc{\al}{{\alpha}} \nc{\be}{{\beta}}
\nc{\ve}{{\varepsilon}} \nc{\Ga}{{\Gamma}}
\newcommand{\la}{{\lambda}}
\nc{\La}{{\Lambda}}
\def\xcoch{\mathbb{X}_\bullet}
\def\xch{\mathbb{X}^\bullet}
\newcommand{\boV}{{\bold V}}
\newcommand{\boF}{{\bold F}}
\newcommand{\boG}{{\bold G}}
\newcommand{\boE}{{\bold E}}
\newcommand{\boB}{{\bold B}}
\newcommand{\boS}{{\bold S}}
\newcommand{\cal}{\mathcal}
\newcommand{\E}{{\mathcal E}}
\newcommand{\A}{{\mathcal A}}
\renewcommand{\AA}{{\mathbb A}}
\newcommand{\C}{{\mathbb C}}
\newcommand{\R}{{\mathbb R}}
\newcommand{\Q}{{\mathbb Q}}
\newcommand{\ep}{\epsilon}
\newcommand{\B}{{\mathcal B}}
\newcommand{\Ff}{{\mathbb F}}
\newcommand{\Gg}{{\mathcal G}}
\newcommand{\Hh}{{\mathcal H}}
\newcommand{\Gm}{{{\mathbb G}_{\rm m}}}
\newcommand{\Z}{{\mathbb Z}}
\newcommand{\F}{{\mathcal F}}
\newcommand{\ti}{\tilde}
\newcommand{\Spec}{{\rm Spec } }
 \renewcommand{\O}{{\mathcal O}}
\newcommand{\Gr}{{\rm Gr}}
\newcommand{\UU}{{\mathcal U}}
\newcommand{\GL}{{\rm GL}}
\newcommand{\und}{\underline}
\newcommand{\Gu}{\underline{G}}
\renewcommand{\L}{{\mathcal L}}
\newcommand{\Res}{{\rm Res}}
\newcommand{\Tr}{{\rm Tr}}
\newcommand{\PP}{{\mathcal P}}
\newcommand{\Fsm}{\breve F}
\newcommand{\rH}{{\mathrm H}}
\def\thfill{\null\nobreak\hfill}
\def\endproof{\thfill\vbox{\hrule
  \hbox{\vrule\hbox to 5pt{\vbox to 5pt{\vfil}\hfil}\vrule}\hrule}}
\newcommand{\twprod}{\,\widetilde{\times}\, }
\renewcommand{\P}{{\cal P}}
 \newcommand{\ke}{k_E}
\newcommand{\Gal}{{\rm Gal}}
\newcommand{\Adm}{{\rm Adm}}
\begin{document}

\title[ ]{Local models of Shimura varieties\\ and a conjecture of Kottwitz}

\author[G. Pappas]{G. Pappas }
\thanks{G. P partially supported by  NSF grant DMS11-02208.}
\thanks{X. Z partially supported by NSF grant DMS10-01280.}

\address{Dept. of
Mathematics\\
Michigan State
University\\
E. Lansing\\
MI 48824 \\
USA} \email{pappas@math.msu.edu}
\author[X. Zhu]{X. Zhu}
\address{Dept. of
Mathematics\\ Northwestern University\\
Evanston\\ IL 60208\\ USA} \email{xinwenz@math.northwestern.edu}

\begin{abstract}
{We give a group theoretic definition  of  ``local models"
 as sought after in the theory of  Shimura varieties. These are projective schemes
 over the integers of a $p$-adic local field
 that are expected to model the singularities of
 integral models of Shimura varieties with  parahoric level structure. Our local models are certain
 mixed characteristic degenerations of Grassmannian varieties; they are obtained by extending
 constructions
 of  Beilinson, Drinfeld, Gaitsgory and the
 second-named author to mixed characteristics and to the case of general (tamely ramified) reductive
 groups. We study the singularities
of local models and hence also of the corresponding integral models of Shimura varieties.
In particular, we study the monodromy (inertia) action and show a commutativity
property for the sheaves of nearby cycles.
As a result, we prove a conjecture of Kottwitz
 which asserts that the semi-simple trace of Frobenius
 on the nearby cycles gives a function which is
 central in the parahoric Hecke algebra. }
 \end{abstract}
\date{\today}

\maketitle

%{\footnotesize 2000  Mathematics Subject Classification:  }

{\footnotesize Key words: Shimura variety, affine flag variety,
local model, group scheme, nearby cycles. }
\bigskip

\bigskip
 
\centerline{\sc Contents}
\medskip

\noindent  \ \ \  \ \ \ Introduction\\
\S 1.  \ Preliminaries\\
\S 2.  \ Reductive groups over $\O[u, u^{-1}]$\\
\S 3.  \ Parahoric group schemes over $\O[u]$ \\
\S 4.  \ Classical groups\\
\S 5.  \ Loop groups and affine Grassmannians\\
\S 6.  \ Local models\\
\S 7. \ Shimura varieties and   local models\\
\S 8. \ The special fibers of the local models\\
\S 9. \ Nearby cycles and the conjecture of Kottwitz\\
\S 10. Appendix: Homogeneous spaces\\
 \phantom{aa} \  \ References\\

\bigskip
\medskip
\vfill\eject

 \medskip
 \section*{Introduction}

The ``local models" of this paper are  projective schemes
 over the integers of a $p$-adic local field
 that are expected to model the singularities of
 integral models of Shimura varieties at places of (tame, parahoric)
bad reduction. This is meant in the  sense that each point on the
integral model of the Shimura variety should have an \'etale
neighborhood which
 is isomorphic to an \'etale neighborhood of a corresponding point on the local model.
 The simplest example is for the classical modular curve $X_0(p)$ with $\Gamma_0 (p)$-level structure.
In this case, the local model is obtained by blowing up the
projective line $\mP_{\mZ_p}^1$ over $\Spec(\mZ_p)$ at the origin
$0$ of the special fiber $\mP_{\mF_p}^1$. More generally, local
models for Shimura varieties of PEL type with parahoric level
structure were given by Rapoport and Zink in \cite{RapZinkBook}.
Their construction was tied to the description of the Shimura
variety as a moduli space of abelian schemes with additional
structures; the fact that they capture the singularities of this
moduli space is a consequence of the Grothendieck-Messing
deformation theory of abelian schemes. However, it was soon realized
that the Rapoport-Zink construction is not adequate when the group
of the Shimura variety is ramified at $p$  and in many cases of
orthogonal groups. Indeed, then the corresponding integral models
are often not flat (\cite{PaJAG}). In the ramified PEL case,
corrected integral models were considered and studied in
\cite{PaJAG}, \cite{PappasRaI}, \cite{PappasRaII},
\cite{PappasRaIII}. These are flat by definition but they do not
always have a neat moduli space interpretation.  Unfortunately, the
constructions in these works are somewhat ad-hoc and are mostly done
case-by-case: Ultimately, they are based on representing the
corresponding reductive group explicitly as the neutral component of
the group of automorphisms of a suitable bilinear (symmetric,
alternating, hermitian etc.) form on a vector space over a division
ring. Then, its parahoric subgroups are the connected stabilizers of
a self-dual chain of lattices as explained in \cite{BTclassI},
\cite{BTclassII} (see also \cite[Appendix to Ch. 3]{RapZinkBook}) and the
local models are given as flat closures in certain corresponding
linked Grassmannians. We refer the reader to the survey \cite{PRS}
for precise definitions and more references.

In this paper, we provide a general group theoretic definition of local
models that is not tied to a particular representation of the group.
This approach allows us to make significant progress and resolve several
open questions. Our local models are constructed starting
from the ``local Shimura data", i.e triples $(G, K, \{\mu\})$, where
$G$ is a (connected) reductive group over $\Q_p$, $ K\subset
G(\Q_p)$ a parahoric ``level" subgroup, and $\{\mu\}$ a geometric
conjugacy class of one parameter subgroups $\mu: \Gm_{/\overline
\Q_p}\to G_{\overline\Q_p}$  over an algebraic closure
$\overline\Q_p$. Here, we assume that $\mu$ is minuscule.  Denote by
$E$ the field of definition of the conjugacy class $\{\mu\}$. This
is the {\sl local reflex field}; it is a finite extension of $\Q_p$
and is contained in a splitting field for $G$.  We  will
assume\footnote{This is an important assumption that we keep
throughout the paper.}   that $G$ splits over a tamely ramified
extension of $\Q_p$. Let $\O_E$ be the ring of integers of $E$ and
$\ke$ its residue field. By definition, the local model ${\rm
M}^{\rm loc}$ is a projective scheme over $\Spec(\O_E)$ with generic
fiber the homogeneous space for $G_E$ that corresponds to $\mu$. Our
first main result is the following (a weaker version of a combination of Theorems \ref{CMfiber} and \ref{special fiber}):

\begin{thm}\label{thm01}
Suppose  that the prime $p$ does not divide the order of the
fundamental group $\pi_1(G({\overline \Q_p})_{\rm der})$.
Then   ${\rm M}^{\rm loc} $ is normal with reduced special fiber;
all the   irreducible components of the geometric special fiber ${\rm M}^{\rm loc}\otimes_{\O_E}\bar
{\mathbb F}_p$ are normal and
Cohen-Macaulay. In fact, ${\rm M}^{\rm loc}\otimes_{\O_E}\bar
{\mathbb F}_p$ can be identified with the reduced union of a finite
set of affine Schubert varieties in the affine flag variety ${\rm
Gr}_{\Gg, {\mathbb F}_p}\otimes_{{\mathbb F}_p}\bar {\mathbb F}_p$;
the set parametrizing this union is the ``$\mu$-admissible set"
defined by Kottwitz and Rapoport.
\end{thm}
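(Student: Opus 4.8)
The plan is to realize ${\rm M}^{\rm loc}$ as the flat limit, at a special point of a global family over the $u$-line, of the closure of the $\mu$-orbit inside an affine Grassmannian, and then to control this limit by line-bundle (Hilbert polynomial) arguments together with the coherence conjecture. Concretely, by the construction of \S6 the scheme ${\rm M}^{\rm loc}$ is the scheme-theoretic closure inside the relevant mixed-characteristic parahoric affine flag variety $\Gr_{\Gg}\otimes_{\mZ_p}\O_E$ of the $G_E$-orbit in the generic fibre singled out by $\{\mu\}$; since $\mu$ is minuscule this orbit is already closed, so ${\rm M}^{\rm loc}$ is projective and flat over $\O_E$ with generic fibre the smooth, hence normal, homogeneous space $G_E/P_\mu$. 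Thus the entire content of the theorem is to identify ${\rm M}^{\rm loc}\otimes_{\O_E}\bar{\mathbb F}_p$ precisely and then read off normality and Cohen--Macaulayness.

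The central tool is the Beilinson--Drinfeld--Gaitsgory type deformation of \S5--6, carried out in mixed characteristic: over the $u$-line $\Spec\O[u]$ there is a family of ind-schemes interpolating between the affine Grassmannian $\Gr_G$ of the reductive group $G$ (over the locus where the group is constant) and the parahoric affine flag variety $\Gr_{\Gg}$ (at the distinguished point). Inside it I would form the global Schubert variety $\overline{\mathcal M}_\mu$, the flat closure of the $\mu$-orbit over the open part of the base: it is flat over $\Spec\O[u]$, its fibre over the open locus is the minuscule Schubert variety $\overline{\Gr}_G^{\le\mu}\cong G/P_\mu$, and its fibre at the distinguished point is, by construction, ${\rm M}^{\rm loc}$ after base change to $\O_E$. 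A combinatorial analysis of which affine Weyl group strata can appear under the degeneration (the ``easy'' Kottwitz--Rapoport bound, in the spirit of the theory of nearby cycles on affine flag varieties) shows that ${\rm M}^{\rm loc}\otimes\bar{\mathbb F}_p$ is set-theoretically contained in the $\mu$-admissible union $\bigcup_{w\in\Adm(\{\mu\})}\overline{\Gr}_{\Gg}^{\le w}$ of affine Schubert varieties.

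The crux, and the point where the hypothesis $p\nmid|\pi_1(G_{\rm der})|$ is indispensable, is to promote this containment to an equality of reduced schemes. I would run the standard Hilbert polynomial argument. Fix an ample line bundle $\mathcal L$ along the family; flatness of $\overline{\mathcal M}_\mu$ gives $\chi({\rm M}^{\rm loc}\otimes\bar{\mathbb F}_p,\mathcal L^{\otimes k})=\chi(G/P_\mu,\mathcal L^{\otimes k})$ for all $k$. Under the hypothesis on $\pi_1$, each affine Schubert variety $\overline{\Gr}_{\Gg}^{\le w}$, and hence the whole admissible union, is reduced, equidimensional, Cohen--Macaulay and Frobenius split (\S8), so it has no higher cohomology in $\mathcal L^{\otimes k}$, and the same holds for $G/P_\mu$; and the coherence conjecture of Pappas and Rapoport, proved by the second author precisely in this good-characteristic range, asserts that $\dim H^0$ of $\mathcal L^{\otimes k}$ on the admissible union equals $\dim H^0(G/P_\mu,\mathcal L^{\otimes k})$. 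Combining these, ${\rm M}^{\rm loc}\otimes\bar{\mathbb F}_p$ is a closed subscheme of the reduced equidimensional admissible union with the same Hilbert polynomial, hence equals the whole union with its reduced structure. This is the asserted identification, and normality and Cohen--Macaulayness of the irreducible components are inherited from those of the affine Schubert varieties of \S8. Finally ${\rm M}^{\rm loc}$ is flat over $\O_E$ with normal generic fibre and geometrically reduced special fibre; being moreover $S_2$ (again from the Cohen--Macaulay analysis along the deformation) it is normal by Serre's criterion.

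The main obstacle is exactly the equality ${\rm M}^{\rm loc}\otimes\bar{\mathbb F}_p=\bigcup_{w\in\Adm(\{\mu\})}\overline{\Gr}_{\Gg}^{\le w}$. A priori the flat limit is merely \emph{some} closed subscheme of the admissible union --- possibly non-reduced, possibly missing components --- and there is no soft geometric reason for it to fill out the union. The only available leverage is the sharp dimension count coming from the coherence conjecture, whose proof genuinely uses normality and Frobenius splitting of affine Schubert varieties and hence the hypothesis on $p$ (which can fail when $p$ divides $|\pi_1(G_{\rm der})|$); and making this mechanism work for the mixed-characteristic family of \S6, rather than for a single equal-characteristic fibre, is the technical heart of the argument.
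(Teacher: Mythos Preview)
Your overall architecture---flat degeneration, Hilbert polynomial comparison via an ample line bundle, and the coherence conjecture as the numerical input---matches the paper's strategy. But you have the ``easy'' containment pointing the wrong way, and this is a genuine gap.

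You assert that a Kottwitz--Rapoport type combinatorial bound shows the geometric special fiber $\overline{M}_{\Gg,\mu}\otimes\bar{\mathbb F}_p$ is set-theoretically contained in the admissible union $\calA^{P_{\bar k}}(\mu)$. There is no such easy bound here: the classical Kottwitz--Rapoport argument bounds the support of a \emph{moduli-theoretically defined} local model (e.g.\ via the determinant condition on lattice chains), whereas $M^{\rm loc}$ is defined as a flat closure inside an ind-scheme and has no a priori moduli description constraining which Schubert strata can occur in its special fiber. The paper proves the \emph{opposite} inclusion $\calA^{P_{\bar k}}(\mu)\subset \overline{M}_{\Gg,\mu}\otimes\bar{\mathbb F}_p$ (Proposition~\ref{Prop8.8}): for each extreme translation $t_\lambda$, $\lambda\in W_0\bar\mu$, one constructs an explicit $\O_E$-section $s_{\tilde\lambda}$ of $\Gr_{\Gg,\O_E}$ whose generic fiber lies in the $\mu$-orbit and whose special fiber hits $t_\lambda$ (Lemma~\ref{sla}); combined with $L^+P_k$-invariance of the special fiber (Lemma~\ref{groupaction}) this forces each extreme Schubert variety, hence all of $\calA^{P_{\bar k}}(\mu)$, into $\overline{M}_{\Gg,\mu}\otimes\bar{\mathbb F}_p$. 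The Hilbert polynomial argument then runs in the direction $\dim\Gamma(\overline{M}_{\Gg,\mu},\calL^{\otimes n})\ge \dim\Gamma(\calA^{P_{\bar k}}(\mu),\calL^{\otimes n})$, with coherence supplying equality with the generic-fiber value. Either containment would suffice for the Hilbert polynomial step, but you must actually prove one of them, and the one you claim is not available by the method you cite.

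A smaller point: your normality argument invokes an $S_2$ property ``from the Cohen--Macaulay analysis along the deformation,'' but no such Cohen--Macaulayness of the total space is established (indeed the paper only gets Cohen--Macaulayness of $M^{\rm loc}$ itself at special vertices, Corollary~\ref{specialCM}). The paper's route (Proposition~\ref{normalgen}) is more direct: flat over a DVR with normal generic fiber and reduced special fiber already forces $R_1$ and $S_2$, hence normality by Serre's criterion, with no extra input.
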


The definition of ${\rm M}^{\rm loc}$ and of the affine flag variety
${\rm Gr}_{\Gg, {\mathbb F}_p}\otimes_{{\mathbb F}_p}\bar {\mathbb
F}_p$ will be explained below; the rest is given in \S \ref{ss8d}. The main ingredients in the proof of
this theorem are results of \cite{FaltingsLoops, PappasRaTwisted} on
the structure of Schubert varieties in  affine flag varieties and
the coherence conjecture of \cite{PappasRaTwisted} which was shown
by the second-named author in \cite{ZhuCoherence}.

In the case of Shimura data of PEL type, we show that the local
models ${\rm M}^{\rm loc}$ agree with the ``corrected local models"
which are obtained (in most cases at least) by taking the flat
closures of the ``naive local models" of Rapoport-Zink; these
corrected local models do describe the \'etale local structure of
corresponding integral models of Shimura varieties as we  discussed
in the beginning of the introduction.   For  PEL types the result in
the above theorem was conjectured and verified for a few special
cases in several papers (\cite{GortzFlatGLn},
\cite{GortzSymplectic}, \cite{PappasRaI}, \cite{PappasRaII},
\cite{PappasRaIII}, \cite{PappasRaTwisted}, see also \cite{PRS}). In
\cite{ZhuCoherence}, the theorem is proven for the ramified unitary
similitude groups.    Our approach here allows a unified treatment
in almost all cases.

 For example, let us explain a result that follows by combining the above with the work of Rapoport and Zink.
Suppose that ${\mathfrak D}=({\bold B}, \O_{\bold B}, $*$, {\bold
V}, (\ ,\ ), \mu, \{\L \}, K^p)$ give PEL data as in \S
\ref{PELremark}, \S \ref{sss8c4}, with corresponding group $\bold G$
over $\Q$ and reflex field $\boE$. Assume that $p$ is odd, that
$K^p\subset {\bold G}({\mathbb A}^{p}_f)$ is sufficiently small and
that the subgroup $ K=K_p$ of ${\bold G}(\Q_p)$ that stabilizes the
lattice chain $\{\L\}$
 is parahoric.  Suppose  that  $G={\bold G}_{\Q_p}$ splits over a tamely ramified extension of $\Q_p$
 and, in addition,  that ${\bold G} $ is connected. Let $\mathfrak P$ be a prime of $\boE$ over $p$. Under these assumptions, we obtain:

\begin{thm}\label{thmPEL}
The Shimura variety
$Sh_{\bold K}$ defined by the PEL data $\mathfrak D$
affords a flat integral model $\calS_{\bold K}$ over
$\O_{{\boE}_{\mathfrak P}}$ which is, locally for the \'etale topology,
isomorphic to the local model ${\rm M}^{\rm loc}$ for $(G, K,
\{\mu\})$. The scheme $\calS_{\bold K} $ is normal with reduced special fiber;
the geometric special fiber $\calS_{\bold K}\otimes_{\O_E}\bar
{\mathbb F}_p$ admits a stratification with locally closed
strata parametrized by the $\mu$-admissible set;
the closure of each stratum is normal and Cohen-Macaulay.
\end{thm}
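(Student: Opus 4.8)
The plan is to deduce Theorem \ref{thmPEL} by combining the local model diagram of Rapoport–Zink with the structural results for ${\rm M}^{\rm loc}$ already established (and quoted as Theorems \ref{CMfiber}, \ref{special fiber}, hence Theorem \ref{thm01}). First I would recall the Rapoport–Zink construction \cite{RapZinkBook}: from the PEL data $\mathfrak D$ one builds the moduli stack $\calA$ of abelian schemes with $\O_{\bold B}$-action, polarization, level-$K^p$ structure and a chain of isogenies of type $\{\L\}$, and the ``naive'' local model ${\rm M}^{\rm naive}$ together with a smooth morphism of schemes over $\O_{{\boE}_{\mathfrak P}}$ from a torsor $\widetilde{\calA}$ (recording a trivialization of the de Rham chain) to both $\calA$ and ${\rm M}^{\rm naive}$. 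Taking the scheme-theoretic closure of the generic fiber inside ${\rm M}^{\rm naive}$ one gets the ``corrected'' local model; I would then define the integral model $\calS_{\bold K}$ to be the image in $\calA$ of the open subscheme where the de Rham chain ``looks like'' the corrected local model, so that by Grothendieck–Messing $\calS_{\bold K}$ and ${\rm M}^{\rm loc}_{\rm corr}$ are \'etale-locally isomorphic. This is entirely standard once the identification of local models is in place.

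The crux is therefore to prove that the corrected Rapoport–Zink local model coincides with our group-theoretic ${\rm M}^{\rm loc}$ for the triple $(G,K,\{\mu\})$ attached to $\mathfrak D$. Both are projective $\O_{{\boE}_{\mathfrak P}}$-schemes with the same generic fiber (the partial flag variety for $G_E$ cut out by $\{\mu\}$), and ${\rm M}^{\rm loc}$ is flat by construction, while ${\rm M}^{\rm loc}_{\rm corr}$ is flat by definition (it is a closure). Thus it suffices to produce a closed immersion ${\rm M}^{\rm loc}\hookrightarrow {\rm M}^{\rm naive}$ over $\O_{{\boE}_{\mathfrak P}}$ extending the identity on generic fibers: both sides then have the same flat closure. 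To build this immersion I would use the realization (from \S\S 4–6) of $G$ as the automorphism group of the relevant $(\ ,\ )$-form, which identifies the relevant parahoric $K$ with the connected stabilizer of the lattice chain $\{\L\}$; the Beilinson–Drinfeld–Gaitsgory–Zhu affine Grassmannian ${\rm Gr}_{\Gg}$ then maps to the linked Grassmannian of lattice chains appearing in ${\rm M}^{\rm naive}$, and ${\rm M}^{\rm loc}$, being cut out inside ${\rm Gr}_{\Gg}$ by the $\{\mu\}$-bound, lands in ${\rm M}^{\rm naive}$. One must check this map is a closed immersion (it is a monomorphism of proper schemes, hence a closed immersion, so the point is injectivity on points and tangent vectors, which follows from the faithfulness of the chain-of-lattices description of $K$-bundles).

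Once the identification ${\rm M}^{\rm loc}\cong {\rm M}^{\rm loc}_{\rm corr}$ is established, everything in the statement is immediate: flatness and normality of $\calS_{\bold K}$, reducedness of its special fiber, and the Cohen–Macaulay/normality of the closures of strata all transfer from ${\rm M}^{\rm loc}$ via the \'etale-local isomorphism of the local model diagram (these properties are \'etale-local and insensitive to the smooth torsor maps). The stratification by the $\mu$-admissible set on $\calS_{\bold K}\otimes\bar{\mathbb F}_p$ is pulled back from the Kottwitz–Rapoport stratification of ${\rm M}^{\rm loc}\otimes\bar{\mathbb F}_p$ furnished by Theorem \ref{thm01}; since the $\mu$ here is minuscule and the group splits tamely, the hypothesis ``$p\nmid |\pi_1(G_{\rm der})|$'' needed for that theorem is automatic in the PEL situation at an odd prime for the groups in question, so Theorem \ref{thm01} applies. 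The one genuine obstacle is the verification that ${\rm M}^{\rm loc}$ embeds as a closed subscheme of ${\rm M}^{\rm naive}$ in the ramified and orthogonal cases where ${\rm M}^{\rm naive}$ itself is not flat — here the embedding is not an isomorphism onto its image's flat locus and one must argue carefully that the two flat closures agree, using that the generic fibers match and that ${\rm M}^{\rm loc}$ is already flat and proper.
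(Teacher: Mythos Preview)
Your overall strategy matches the paper's: one pulls back the Rapoport--Zink local model diagram along a closed immersion ${\rm M}^{\rm loc}\hookrightarrow {\rm M}^{\rm naive}$, defines $\calS_{\bold K}$ as the Zariski closure of the Shimura variety in the resulting flat model, and then transfers the conclusions of Theorem~\ref{thm01} across the smooth morphism to $[{\rm M}^{\rm loc}/\calP^\flat_{\O_E}]$. Your identification of the crux is also correct.

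However, your argument for the closed immersion has a genuine gap. You write that the map ${\rm Gr}_{\Gg}\to$ (linked Grassmannian) is a monomorphism because of ``the faithfulness of the chain-of-lattices description of $K$-bundles.'' But the lattice-chain description is faithful only for bundles under the \emph{full stabilizer} group scheme $\Gg'$ of the chain, and in general $\Gg\subsetneq\Gg'$: the parahoric $\Gg$ is only the neutral component. Over the special fiber $k[[u]]$, the induced map ${\rm Gr}_{P_k}\to{\rm Gr}_{P'_k}$ is a finite \'etale cover, not injective on points. (The paper gives the ramified even unitary group as an explicit example where the fiber of $\Gg'$ over $u=0$ has two components.) So your monomorphism claim fails as stated.

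The paper's Proposition~\ref{embeddLoc} fixes this in three steps: (i) one shows ${\rm Gr}_{P'_k}\to{\rm Gr}_Q$ (with $Q={\rm GSp}(W_\bullet)_{k[[u]]}$) is a closed immersion by testing on Artin local rings, using that $Q/P'_k$ is quasi-projective; (ii) the Kottwitz homomorphism identifies $\pi_0({\rm Gr}_{P_k})$ with $\pi_1(P_k[u^{-1}])_I$ and shows each connected component of ${\rm Gr}_{P_k}$ maps isomorphically onto one of ${\rm Gr}_{P'_k}$; (iii) since ${\rm M}^{\rm loc}\otimes_{\O_E}k_E$ is connected (Zariski's main theorem applied to the proper flat ${\rm M}^{\rm loc}\to\Spec(\O_E)$ with geometrically connected generic fiber), it lands in a single component and hence $\iota\otimes k_E$ is a closed immersion. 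A final Nakayama argument promotes this to a closed immersion over $\O_E$. You should also not underestimate the work needed to even produce the map $\Gg\to{\rm GSp}(\und W_\bullet)$ over $\Z_p[u]$: this requires lifting the self-dual $\O_D$-lattice chain to a self-dual $\frakO$-lattice chain over $\Z_p[u]$, which is the content of \S4 and the first half of \S\ref{PELremark}.
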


In fact, the result is   more precise: We show the existence of a
``local model diagram" (see (\ref{locmoddiagram}));
also, the model
$\calS_{\bold K}$ is the flat closure of the generic fiber of the
corresponding Rapoport-Zink integral model and thus supports a
natural morphism to a Siegel moduli scheme. See \S  \ref{rem8c4}
  for the proof and for more similar results, in
particular for a discussion of cases in which $\bold G$ is not
connected.  (Note  that, in general, $Sh_{\bold K}$ above is  equal to a disjoint union of
Shimura varieties in the sense of Deligne; this is related to the failure of the Hasse principle,
see \cite[\S 8]{KottJAMS} and Remark \ref{sss8d5}.)
 In general, we
conjecture that the general Shimura variety with local data $(G, K,
\{\mu\})$ has an integral model which affords a local model diagram
and hence is locally for the \'etale
topology isomorphic to ${\rm M}^{\rm loc}$. Showing this, in   cases
of Shimura varieties of abelian type, is the subject of joint work in
preparation of the first author with M. Kisin \cite{K-P}.  This
combined with Theorem \ref{thm01} will then imply that the conclusion
of 
Theorem \ref{thmPEL} also holds for such Shimura varieties.

Before considering Kottwitz's conjecture, we will   explain our definition of   local
models. This uses the construction of certain
group schemes over a two-dimensional base and of their corresponding
(affine) flag varieties. We believe that these objects are of
independent interest and we begin by discussing them
in some detail.

We start by discussing the group schemes. Let $F$ be a $p$-adic
field with ring of integers $\O$ and residue field $k$. Suppose $G$
is a reductive group over  $F$, $K$ a parahoric subgroup of $G(F)$.
By definition, $K$ is the (connected) stabilizer of a point $x$ in
the Bruhat-Tits building ${\calB}(G, F)$ of $G(F)$. In \cite{BTII},
Bruhat-Tits construct a smooth group scheme $\P_x$ over the discrete
valuation ring $\O$ such that $K=\P_x(\O)$. Assume that $G$ splits
over a tamely ramified extension of $F$.  Choose a uniformizer
$\varpi$ of $\O$. In the first part of the paper, we construct a
smooth affine group scheme $\calG$ over the affine line ${\mathbb
A}^1_{\O }=\Spec(\O [u])$ which has connected fibers, is reductive
over the complement of $u=0$ and specializes to $\P_x$ along the
section $\Spec(\O )\to {\mathbb A}^1_{\O} $ given by $u\mapsto
\varpi$. In addition, the base changes of $\Gg$ by $\O [u]\to
F[[u]]$ and $\O[u]\to k[[u]]$ give corresponding Bruhat-Tits group
schemes over these two discrete valuation rings.

Having given $\Gg$, we can now define various mixed characteristic
versions of
 the familiar (from the theory of geometric Langlands correspondence)
 global and local versions of the affine Grasmannian and the affine flag variety.
 For simplicity, we set $X={\mathbb A}^1_\O=\Spec(\O[u])$.
 The main actor is the global affine Grassmannian ${\rm Gr}_{\Gg, X}$;
 this is the moduli functor on schemes over $X$ which to $y: S\to X$
 associates the set of isomorphism classes of
 $\Gg$-bundles over $X\times_\O S$ together with a trivialization on the complement of
  the graph of $y$. When $\O$ is replaced by a field and $\Gg$
  is a constant split reductive group, this is the global affine Grassmannian (over the affine line)
  of Beilinson and Drinfeld. We show that
  ${\rm Gr}_{\Gg, X}$ is represented by an   ind-scheme
  which is ind-proper over $X$.  Denote   by ${\rm Gr}_{\Gg, \O}\to \Spec(\O)$ the base change
 of ${\rm Gr}_{\Gg, X}\to X$ by $\Spec(\O)\to X$
 given by $u\mapsto\varpi$. We can easily see, using
  the descent lemma of Beauville and Laszlo, that the generic fiber
  ${\rm Gr}_{\Gg, \O}\otimes_\O F$ is
 isomorphic to the   affine Grassmannian ${\rm Gr}_{G, F}$
 for the loop group $G\otimes_F F((t))$, $t=u-\varpi$. Recall that ${\rm Gr}_{G, F}$ represents
 the fpqc sheaf given by $R\mapsto G(R((t)))/G(R[[t]])$. Similarly, the special fiber ${\rm Gr}_{\Gg, \O}\otimes_\O k$ is
 isomorphic to an affine flag variety ${\rm Gr}_{\Gg, k}$
for the group $\Gg(k((u)))$ over the local field $k((u))$ and its
parahoric subgroup $\Gg(k[[u]])$.\footnote{In
\cite{PappasRaTwisted}, affine flag/Grassmannian varieties for
groups that are not necessarily constant are referred to as
``twisted". Here we omit this adjective.}  Here ${\rm Gr}_{\Gg, k}$
represents $R\mapsto \Gg(R((u)))/\Gg(R[[u]])$.

 We are now ready to give our definition of the local model ${\rm M}^{\rm loc}$.
For this we take $F=\Q_p$ in the above and use the ``local Shimura
data" $(G, K, \{\mu\})$ with local reflex field $E$.  As above,
 we need to assume that $G$ splits over a tamely ramified extension of $\Q_p$.
Since $\Gm=\Spec(\Q_p[t, t^{-1}])$, the coweight $\mu$ provides
a $\overline\Q_p((t))$-valued
 point $s_\mu$ of $G$ and hence  a $\overline\Q_p$-valued
 point $[s_{\mu}]$ of ${\rm Gr}_{G, \Q_p}$. Because $\mu$ is minuscule, the (left) $G(\overline\Q_p[[t]])$-orbit
 of $[s_\mu]$ is a smooth projective variety $X_\mu$ (actually a homogeneous space for $G_E$)
   defined over the local reflex field $E$; $X_\mu$  is a closed subvariety of
   the affine Grassmannian ${\rm Gr}_{G, \Q_p}\otimes_{\Q_p} E=
   {\rm Gr}_{\Gg,\O}\otimes_\O E$. By definition, the local model ${\rm M}^{\rm loc}:=M_{\Gg, \mu}$ is the reduced projective scheme over $\Spec(\O_E)$
  given by the Zariski closure of $X_\mu\subset {\rm Gr}_{\Gg,\O}\otimes_\O E$ in the ind-scheme
  ${\rm Gr}_{\Gg, \O}\otimes_\O\O_E$.

  By construction, the special fiber ${\rm M}^{\rm
loc}\otimes_{\O_E}k_E$ is a closed subscheme of the affine flag
variety ${\rm Gr}_{\Gg, {\mathbb F}_p}\otimes_{{\mathbb F}_p} k_E$.
Let us remark here that the local models ${\rm M}^{\rm loc}$ are
given by taking a Zariski closure and, as a result, they do not
always have a neat moduli space interpretation. This issue does not
concern us in this paper. Indeed,  the close relation of ${\rm M}^{\rm loc}$ to the affine
Grassmannians allows us to show directly many favorable properties
as in Theorem \ref{thm01}. These then imply nice properties of
corresponding integral models for Shimura varieties as in Theorem
\ref{thmPEL}.

The same connection with the theory of  affine Grassmannians also
allows us to obtain results about the sheaf of nearby cycles ${\rm
R}\Psi(\overline\Q_\ell)$ of
 the scheme ${\rm M}^{\rm loc} \to \Spec(\O_E)$. (Here $\ell$ is a prime different from $p$.)
 We will describe these below. Recall that we conjecture  that ${\rm M}^{\rm loc}$ describes the \'etale local
structure of an integral model of the Shimura variety. Therefore,
the  nearby cycles of the local models should also be determining
the  nearby cycles for integral models of Shimura varieties with
parahoric level structure. (As follows from the above, this is
indeed the case for most PEL types.) Our results will
  be useful in expressing the local factor of the
Hasse-Weil zeta function of the Shimura variety at  places of (tame)
parahoric reduction as a product of local factors of automorphic
L-functions. The strategy of using the local model to determine the
(semi-simple) Hasse-Weil zeta function was first suggested by
Rapoport \cite{RapoportGuide}. It has since been advanced by
Kottwitz, Haines-Ng\^o (\cite{HainesNgoNearby}),  Haines  and
others. A centerpiece of this approach is a conjecture of Kottwitz
that states that, in the case that $G$ is split, the semi-simple
trace of Frobenius on the sheaf of nearby cycles gives a central
function in the parahoric Hecke algebra. This was proven by Haines
and Ng\^o (\cite{HainesNgoNearby}) in (split) types A and C by
following an argument of Gaitsgory \cite{GaitsgoryInv}. Gaitsgory
proved a stronger statement for general split groups in the function
field case; he showed that the perverse sheaf of nearby cycles
satisfies a commutativity constraint with respect to the convolution
product. This implies Kottwitz's conjecture in the function field
case. The main tools in Gaitsgory's approach are various versions of
the global affine Grassmannian of Beilinson-Drinfeld. In this paper,
we are able to generalize and simplify the approaches of both
Gaitsgory and Haines-Ng\^o by using the mixed characteristic affine
Grassmannians ${\rm Gr}_{\Gg, X}$ and various other related
ind-schemes. In particular, we  obtain a general result even for
non-split groups as follows:

Our  construction of the group scheme $\Gg$ over $X=\Spec(\Z_p[u])$ also provides us with a
reductive group $G'=\Gg\times_X\Spec({\mathbb F}_p((u)))$ over
${\mathbb F}_p((u))$ and a parahoric subgroup $K'$ which correspond
to $G$ and $K$ respectively. By definition, if $ {\mathbb
F}_q\supset k_E$, we have an equivariant embedding ${\rm M}^{\rm
loc}\otimes_{\O_E}{\mathbb F}_q\subset {\rm Gr}_{\Gg, {\mathbb
F}_p}\otimes_{{\mathbb F}_p} {\mathbb F}_q={\rm Gr}_{G', {\mathbb
F}_p}\otimes_{{\mathbb F}_p} {\mathbb F}_q$. This allows us to view
the Frobenius trace function $\tau^{\on{ss}}_{{\rm
R}\Psi}(x)=\on{tr}^{\rm ss}({\rm Frob}_x, {\rm
R}\Psi(\overline{\Q}_\ell)_{\bar x})$, $x\in {\rm M}^{\rm
loc}({\mathbb F}_q)$, as an element of the parahoric Hecke algebra
$\calH_q(G', K')$
 of bi-$K'({\mathbb F}_q[[u]])$-invariant, compactly supported locally constant
$\overline{\Q}_{\ell}$-valued functions on $G'(\bbF_q((u)))$.

\begin{thm}(Kottwitz's conjecture)\label{thm02}
The  semi-simple trace of Frobenius on the sheaf of nearby cycles
${\rm R}\Psi(\overline\Q_\ell)$ of ${\rm M}^{\rm loc}\to\Spec(\O_E)$
gives a central function $\tau^{\on{ss}}_{{\rm R}\Psi}$ in the
parahoric Hecke algebra $\calH_q(G', K')$.
\end{thm}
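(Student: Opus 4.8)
The plan is to adapt Gaitsgory's nearby-cycles argument to the mixed-characteristic global affine Grassmannian $\Gr_{\Gg,X}$ over $X=\Spec(\mathbb{Z}_p[u])$ and to reduce the centrality assertion to a commutativity constraint for the nearby cycles under convolution. Centrality in $\calH_q(G',K')$ is equivalent, by the classical Bernstein/Haines description of the center via the $*$-action (or directly via bi-invariance under the Iwahori–Weyl group modulo the affine Weyl group), to showing that $\tau^{\on{ss}}_{{\rm R}\Psi}$ commutes, under the convolution product of $\calH_q(G',K')$, with every element; and since the Hecke algebra is generated by the characteristic functions of the $K'$-double cosets (equivalently by the minuscule/fundamental ones together with their translates), it suffices to exhibit a commutativity isomorphism at the sheaf level. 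Concretely, I would form the convolution diagram over a two-dimensional base: take the Beilinson–Drinfeld-type deformation $\Gr_{\Gg,X\times X}$ allowing two points on $X=\mathbb{A}^1_{\mathbb{Z}_p}$ to collide, with the diagonal being the "$t=u$" locus where the generic fibre over $\Q_p$ degenerates to the special fibre over $\mathbb{F}_p$. Over this base one has the fusion product, which away from the diagonal is an exterior product of two copies of $\Gr_{G,\Q_p}$ and along the diagonal is the twisted (convolution) product $\Gr_{\Gg,\mathbb{F}_p}\,\widetilde\times\,\Gr_{\Gg,\mathbb{F}_p}$.

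The key steps, in order, are: (1) establish that $\Gr_{\Gg,X}$ and its convolution and Beilinson–Drinfeld variants are ind-(proper over the base) — this is already asserted in the excerpt for $\Gr_{\Gg,X}$ and the proper case follows from the affine-Schubert-variety description; (2) prove a "nearby cycles commutes with pushforward" statement: for the projection from the convolution Grassmannian to $\Gr_{\Gg,\O_E}$, proper base change plus the ind-properness gives ${\rm R}\Psi$ of the convolution $=$ convolution of ${\rm R}\Psi$'s, i.e. $\tau^{\on{ss}}_{{\rm R}\Psi}$ of a product of Schubert data equals the Hecke convolution of the individual $\tau^{\on{ss}}_{{\rm R}\Psi}$; (3) identify, via the two-point fusion picture, the convolution ${\rm R}\Psi(\calA_{\mu_1})\star{\rm R}\Psi(\calA_{\mu_2})$ with the nearby cycles along the diagonal of the external product $\calA_{\mu_1}\boxtimes\calA_{\mu_2}$ over the generic locus, where $\calA_{\mu_i}$ denotes the relevant IC/constant sheaf on $X_{\mu_i}$; (4) invoke the symmetry of the external product under swapping the two colliding points — this is the source of the commutativity constraint and gives a canonical isomorphism ${\rm R}\Psi(\calA_{\mu_1})\star{\rm R}\Psi(\calA_{\mu_2})\;\cong\;{\rm R}\Psi(\calA_{\mu_2})\star{\rm R}\Psi(\calA_{\mu_1})$ in the appropriate (semisimplified) Grothendieck group, hence $\tau^{\on{ss}}_{{\rm R}\Psi}(\mu_1)\ast\tau^{\on{ss}}_{{\rm R}\Psi}(\mu_2)=\tau^{\on{ss}}_{{\rm R}\Psi}(\mu_2)\ast\tau^{\on{ss}}_{{\rm R}\Psi}(\mu_1)$; (5) deduce that $\tau^{\on{ss}}_{{\rm R}\Psi}$, being the convolution of the Satake-type functions attached to $\{\mu\}$, lies in the image of the central (Bernstein) subalgebra — more precisely that it commutes with all of $\calH_q(G',K')$, since the functions $\tau^{\on{ss}}_{{\rm R}\Psi}(\mu)$ for varying minuscule $\mu$ (together with translations) generate the full Hecke algebra under convolution, so commutativity with all of them forces centrality.

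Two genuinely technical points must be handled with care. First, the nearby-cycles functor and its compatibilities have to be set up in the mixed-characteristic setting where the "base curve" direction is $\Spec(\mathbb{Z}_p[u])$ and the degeneration parameter is $t=u-p$ (over $\Q_p$) versus $u$ (over $\mathbb{F}_p$); one needs the Beauville–Laszlo descent (already used in the excerpt to identify generic and special fibres of $\Gr_{\Gg,\O}$) together with a careful choice of the two-dimensional parameter space so that the fusion limit along the diagonal really produces the convolution Grassmannian for the local field $\mathbb{F}_p((u))$. Second — and this is where I expect the main obstacle — one must control the monodromy/inertia action on ${\rm R}\Psi$ well enough to pass from an isomorphism in a derived category of sheaves with monodromy to an identity of \emph{semisimple} trace functions, and to know that $\tau^{\on{ss}}_{{\rm R}\Psi}$ is indeed bi-$K'(\mathbb{F}_q[[u]])$-invariant and compactly supported (so that it lands in $\calH_q(G',K')$ at all). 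This requires the commutativity property for nearby cycles announced in the abstract and the analysis of the inertia action in §9; establishing that the unipotent monodromy does not obstruct the commutativity constraint — equivalently, that the relevant nearby-cycles sheaves are "central" objects in the appropriate monoidal category of Iwahori-equivariant sheaves, as in Gaitsgory's function-field theorem — is the heart of the matter, and the group scheme $\Gg$ over $\mathbb{A}^1_{\mathbb{Z}_p}$ is precisely what allows the function-field argument to be transported to the arithmetic situation for general tamely ramified (possibly non-split) $G$.
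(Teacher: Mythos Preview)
Your overall strategy --- adapt Gaitsgory via a Beilinson--Drinfeld Grassmannian over $\Spec(\bbZ_p[u])$ and extract centrality from a sheaf-level commutativity constraint --- is the same as the paper's, and steps (1)--(2) are right. But step (4)--(5) contains a genuine gap.

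The swap-the-two-points argument you sketch proves only that the nearby-cycles sheaves commute \emph{with each other}: ${\rm R}\Psi_{\mu_1}\star{\rm R}\Psi_{\mu_2}\simeq{\rm R}\Psi_{\mu_2}\star{\rm R}\Psi_{\mu_1}$. Your step (5) then claims that the functions $\tau^{\on{ss}}_{{\rm R}\Psi}(\mu)$ for varying $\mu$ ``generate the full Hecke algebra under convolution,'' hence commuting with them forces centrality. This is false: the parahoric Hecke algebra $\calH_q(G',K')$ is in general non-commutative (already for an Iwahori), while the ${\rm R}\Psi_\mu$ functions are expected to be central --- indeed that is the theorem --- so they certainly cannot generate $\calH_q(G',K')$. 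Mutual commutativity of a family of elements says nothing about their centrality.

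What the paper does instead is to show that ${\rm R}\Psi_\mu$ commutes with every $\on{IC}_{w}$, $w\in\widetilde W$, i.e.\ $\on{IC}_{w,\bar k}\star{\rm R}\Psi_\mu\simeq{\rm R}\Psi_\mu\star\on{IC}_{w,\bar k}$ compatibly with Galois (Theorem \ref{comm constraints}). Since the functions $\tau^{\on{ss}}_{\on{IC}_w}$ span $\calH_q(G',K')$, taking semi-simple trace then gives centrality directly. The geometric input is \emph{not} a symmetric two-moving-points fusion space; rather one uses $\Gr^{\on{BD}}_{\Gg,X}$ with one point \emph{fixed} at $u=0$ (the parahoric locus, supporting the arbitrary $\on{IC}_w$ on the affine flag variety) and one point moving along the section $u\mapsto\varpi$ (supporting $\calF_\mu$ and producing ${\rm R}\Psi_\mu$ in the limit). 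Two different convolution Grassmannians $\Gr^{\rm Conv}_{\Gg,X}$ and $\Gr^{\rm Conv'}_{\Gg,X}$ resolve $\Gr^{\on{BD}}_{\Gg,X}$ and give the two orderings of the convolution; comparing them (Proposition \ref{aux}) yields the isomorphism. The asymmetry between the fixed point (parahoric) and the moving point (reductive) is essential and is exactly what your symmetric swap picture misses.
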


See \S \ref{sstraceSect} for more details and in particular
Theorem \ref{thm9.13} for the precise statement which is more general. In the split Iwahori case, as
a corollary of this theorem, one can give an explicit formula for
the semi-simple Frobenius trace function using Bernstein's
presentation of the Iwahori-Hecke algebra, as was explained in
\cite{HainesNgoNearby}. In fact, more generally,
 when $G$ is unramified and the subgroup an arbitrary parahoric,
 we show that the semi-simple trace can be
expressed as a Bernstein function in the parahoric Hecke algebra as
was also conjectured by  Kottwitz and by Haines. Let us mention here
that Kottwitz's conjecture for quasi-split (but not split)
unramified unitary groups was also shown independently by S.~Rostami
in his thesis \cite{RostamiThesis}.

We also obtain results for quasi-split ramified groups when the
level subgroup is {\sl very special}. Here and elsewhere, we say that $K\subset G(\Q_p)$ is very special, if the corresponding
parahoric subgroup of $G(\Q_p^{\rm unr})$ is special in the sense of
Bruhat-Tits; then $K$ itself is also special. In this case, we give a
characterization of the   function $\tau^{\on{ss}}_{{\rm R}\Psi}$ by
identifying its trace on unramified representations; this also
agrees with the prediction of a conjecture of Haines and Kottwitz.
Corresponding to $\mu$ we have a minuscule coweight  for $G'$ which
for simplicity we   still denote by $\mu$. The conjugacy class
$\{\mu\}$ defines an algebraic $\overline\Q_\ell$-representation
$V_{\mu}$ of the  Langlands dual group ${}^LG'=H^\vee\rtimes {\rm
Gal}_{{\mathbb F}_q((u))}$ of $G'$ over $\bbF_q((u))$. (Here $H$ is
the Chevalley split form of $G'$). The inertia invariants $V_{\mu
}^{I}$ give a representation of $(H^\vee)^I\rtimes {\rm
Gal}(\overline {\mathbb F}_q/{\mathbb F}_q)$. If $\pi$ is an
irreducible smooth representation of $G'({\mathbb F}_q((u)))$ with a
$K'({\mathbb F}_q[[u]])$-fixed vector, one can define its
Langlands-Satake parameter $\on{Sat}(\pi): W\to {}^LG'$: Among other
properties, one can show that if $\Phi_q\in W $ is a (geometric)
Frobenius element in the Weil group, then $\on{Sat}(\pi)(\Phi_q)$ is
a semi-simple element in $(H^\vee)^{I }\times  {\rm Frob}_q^{-1} $,
which is well-defined up to $(H^\vee)^{I }$-conjugacy and completely
determines $\on{Sat}(\pi)$. Our characterization of the Frobenius
trace function is the identity
\begin{equation}\label{eq0.1}
\on{tr}(\pi(\tau^{\on{ss}}_{{\rm
R}\Psi}))=\on{tr}(\on{Sat}(\pi)(\Phi_q), V_{\mu} ^{I })
\end{equation}
for all $\pi$ as above. This is shown by combining our constructions
with the results in \cite{ZhuSatake}, \cite{RiZh}. See the last
section of the paper for more details.

As we mentioned above, when
the Shimura data are of PEL type, the local model does indeed
describe the \'etale local structure of   an integral model of the Shimura
variety. Therefore, in this case, our
results also give the semi-simple trace of Frobenius on the nearby cycles of an
integral model of the Shimura variety. One can then apply them to
the calculation of the local factor of the Hasse-Weil zeta function
of the Shimura variety following the arguments of Kottwitz,
Rapoport and Haines (e.g \cite{RapoportGuide}, \cite{HainesSurvey}), but we will not go
into this here. (We also expect that this approach will
 be extended to many Shimura varieties
 with parahoric level which are not of PEL type using \cite{K-P}.)
 Here we should also mention very recent results of Scholze \cite{ScholzeLK} and Scholze and Shin
 \cite{ScholzeShin} that make progress towards this calculation without having to
 explicitly identify the
 semi-simple trace.

Finally, we give several results about the action of monodromy (i.e
of inertia) on the sheaves of nearby cycles. These also imply corresponding results for
Shimura varieties. Here is an example:

\begin{thm}\label{thm03}
Assume that $G$ is   split over the (tamely ramified) extension $F/\Q_p$
and that
 $K$ is a   very special 
subgroup of $G(\Q_p)$. Then the
 inertia subgroup $I_F={\rm Gal}(\overline\Q_p/ \Q_p^{\rm
unr}F)$ acts trivially
  on the sheaf of nearby cycles ${\rm R}\Psi(\overline\Q_{ \ell})$ of
${\rm M}^{\rm loc}\to\Spec(\O_E)$.
\end{thm}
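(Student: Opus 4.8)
\emph{Proof proposal.} The plan is to reduce the statement to a single affine Schubert variety, to realize ${\rm R}\Psi(\overline{\mathbb Q}_\ell)$ as nearby cycles taken in the ``$u$-direction'' of the mixed characteristic global affine Grassmannian $\Gr_{\Gg,X}$, $X=\Spec(\O_E[u])$, and then to trivialize the resulting monodromy over the tame cover of the $u$-line along which $\Gg$ splits. For the reduction, one checks (as in \S8) that when $K$ is special every element of the $\mu$-admissible set ${\rm Adm}(\mu)$ lies $\le t_\mu$ modulo $W_K$ on both sides, so that ${\rm Adm}(\mu)$ maps to the single class of $t_\mu$ in $W_K\backslash\widetilde W/W_K$; combined with Theorem \ref{thm01} this identifies $M^{\rm loc}\otimes_{\O_E}k_E$ with a \emph{single} (geometrically irreducible, normal, Cohen--Macaulay) affine Schubert variety $S_\mu\subset \Gr_{\Gg,\Fp}\otimes_{\Fp}k_E$. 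Since $X_\mu$ is smooth, ${\rm R}\Psi(\overline{\mathbb Q}_\ell)$ is, up to shift, a perverse sheaf on $S_\mu$ with quasi-unipotent $I_E$-action; as $I_F\subset I_E$ with $I_E/I_F\cong\Gal(F^{\rm unr}/E^{\rm unr})$ finite cyclic, it suffices to show this monodromy has trivial unipotent part and that its finite-order part factors through $I_E/I_F$.

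Next I would pass to the global local model $\widetilde M\subset \Gr_{\Gg,X}$, the flat projective $X$-scheme obtained as the closure of the relative orbit closure $\overline{\Gr^\mu}$ inside the family of affine Grassmannians $\Gr_{\Gg,X^\circ}$ over $X^\circ=X\setminus\{u=0\}$ (where $\Gg$ is reductive), whose pullback along the section $u\mapsto\varpi$ is $M^{\rm loc}$. Running Gaitsgory's strategy in mixed characteristic, as $p\to 0$ the section $\{u=\varpi\}$ collides with the divisor $\{u=0\}$, and this collision should identify ${\rm R}\Psi(\overline{\mathbb Q}_\ell)$ for $M^{\rm loc}/\O_E$, together with its $I_E$-action, with the nearby cycles of $\widetilde M$ taken in the $u$-direction at $u=0$, together with its $u$-monodromy. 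This step is where one needs the full formalism of $\Gr_{\Gg,X}$: ind-properness over $X$, Beauville--Laszlo descent, proper base change for nearby cycles, and the bookkeeping matching the two nearby-cycles formations.

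Then I would trivialize the monodromy. Over $X^\circ$ the family $\overline{\Gr^\mu}$ is essentially constant: after the tame base change $\O_E((u))\to\O_E((v))$, $v=u^{1/e}$, along which $\Gg$ becomes the constant split Chevalley group $H$ — this cover, with $e$ the ramification index matching that of $F/E$, being built into the construction of $\Gg$ in \S\S2--3 — the relative orbit closure becomes $\overline{\Gr^\mu_H}\times\Spec(\O_E((v)))$, whose intersection complex is pulled back from a point and has trivial monodromy at $v=0$. Descending along the degree-$e$ cover then shows the $u$-monodromy of ${\rm R}\Psi$ has trivial unipotent part and finite order dividing $e$, and factors through the Galois group of this cover, namely $\Gal(F^{\rm unr}/E^{\rm unr})\cong I_E/I_F$; via the previous step this yields that $I_F$ acts trivially on ${\rm R}\Psi(\overline{\mathbb Q}_\ell)$. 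For the vanishing of the unipotent part one may alternatively argue representation-theoretically: for $K$ special, ${\rm R}\Psi(\overline{\mathbb Q}_\ell)$ is a central, hence $L^+\Gg$-equivariant, perverse sheaf by the ramified geometric Satake equivalence of \cite{ZhuSatake,RiZh}, so it lies in the semisimple Satake category and is a sum of intersection complexes, on which the nilpotent logarithm of the unipotent monodromy — commuting with the dual-group action and strictly lowering weights — must vanish.

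The first and third steps are essentially formal given \S\S2--3, \S8 and \cite{FaltingsLoops,PappasRaTwisted,ZhuCoherence}. The hard part will be the comparison in the middle step: proving that the ``geometric'' $u$-direction monodromy of $\widetilde M\to X$ near $u=0$ computes the ``arithmetic'' inertia action on ${\rm R}\Psi$ of $M^{\rm loc}\to\Spec\O_E$, compatibly with all Galois actions, together with the precise identification of the tame cover of the $u$-line along which $\Gg$ splits with the ramified extension $F/E$. This is the mixed-characteristic incarnation of Gaitsgory's comparison and carries essentially all of the technical weight of the argument.
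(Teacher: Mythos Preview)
Your approach is correct in outline but is the hard road: what you sketch is essentially the proof of the much stronger Theorem~\ref{thm9.19}, which identifies ${\rm R}\Psi_\mu$ explicitly via the ramified geometric Satake equivalence and in particular determines the full $I_E$-action. The paper's proof of Theorem~\ref{thm03} itself (Proposition~\ref{mono triv}) is a two-line argument that sidesteps the two-direction nearby-cycles comparison entirely. The key observation is that for $x$ special the category $\on{Perv}_{L^+P_{\bar k}}(\Gr_{P_{\bar k}},\overline{\bbQ}_\ell)$ is semisimple (\cite{MirkVilonen,ZhuSatake}), so the hypercohomology functor $\rH^*$ is \emph{faithful}. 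Since ${\rm R}\Psi_\mu$ is $L^+P_{\bar k}$-equivariant (Lemma~\ref{equiv str}) and each $\gamma\in I_{\tilde F}$ acts by an equivariant endomorphism, it suffices to show that $I_{\tilde F}$ acts trivially on $\rH^*({\rm R}\Psi_\mu)\simeq\rH^*(M_{\Gg,\mu,\tilde F},\F_\mu)$. But over $\tilde F$ the generic fiber is a Schubert variety $\overline{\Gr}_{H,\mu,\tilde F}$ for the \emph{split} group $H$, which has the integral model $\overline{\Gr}_{H,\mu}/\O_{\tilde F}$ whose nearby cycles carry trivial inertia (Demazure resolution, Lemma~\ref{no monodromy}); proper base change then gives Lemma~\ref{triv}, and faithfulness finishes.

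Your route does buy more --- it is precisely how the paper proves Theorem~\ref{thm9.19} and hence Theorem~\ref{thm9.23} --- but for the bare triviality statement it is overkill, and the ``hard middle step'' you flag is genuinely hard (it is the content of Lemmas~\ref{stronger1} and~\ref{step3}). One correction to your step~3: after base change along $u=v^e$ the group scheme $\Gg$ does \emph{not} become the constant split Chevalley group over all of $\Spec\,\O[v]$; it is split only over $v\neq 0$, and at $v=0$ one still sees the parahoric. The $v$-direction nearby cycles are therefore not ``pulled back from a point'' but are the ramified Satake sheaves of \cite{ZhuSatake}; the triviality of their monodromy is itself a nontrivial input, not a formality. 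Also, equivariance of ${\rm R}\Psi_\mu$ is immediate from Lemma~\ref{equiv str} and does not require centrality.
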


 More generally, without the assumption that $K$ is very special,
we show that the action of $I_F$ on the sheaf of nearby cycles ${\rm
R}\Psi(\overline\Q_{ \ell})$ is unipotent. In this  case, we
can describe the action of the full inertia group $I_E$ in terms of
the geometric Satake equivalence for ramified groups of
\cite{ZhuSatake}. This is obtained by comparing  nearby cycles along
two directions over the two-dimensional base $\O[u]$ and is
 used in proving the identity
(\ref{eq0.1}) above. In the case that
 $G={\rm Res}_{F/\Q_p}{\rm GL}_n$ with $F/\Q_p$ tame, this description of the inertia action
confirms a conjecture in \cite{PappasRaI}.

Recall that,  throughout the paper, we have restricted to the case that the
group $G$ splits over a tamely ramified extension of the base field.
This assumption is important for the construction of the group
scheme $\Gg$ over $\O[u]$; it is always satisfied when $p\geq 5$ and $G$ is absolutely simple
and is either adjoint or simply connected. A combination of our methods 
with the idea of  splitting models from \cite{PappasRaII} can be used  to also deal with groups
that are Weil restrictions of scalars of tame groups down 
from a (possibly) wildly ramified extension. In particular, one can define local models
and show Theorem \ref{thm01} in some such cases.\footnote{
See the forthcoming thesis of B. Levin.} Of course, a general reductive group is
isogenous to a product of Weil restrictions of absolutely simple adjoint groups
and so this would allow us to handle most cases.

Let us now give an overview of the various sections of the
paper and explain some aspects of our constructions.
 In \S 1 we discuss preliminaries on reductive groups over local fields; the main emphasis
 is on obtaining forms of a split group by descent
 as inner twists of a quasi-split form.
In \S 2 we generalize this approach and give  a certain reductive group
 scheme $\und G$ over $\O[u, u^{-1}]$ which
 specializes to $G$ under the base change $\O[u, u^{-1}]\to F$ given by
 $u\mapsto \varpi$. Here, a crucial observation   is
 that $\O[u, u^{-1}]\to F$ identifies the \'etale fundamental group of $\O[u,u^{-1}]$
 with the tame quotient of the Galois group of $F$; our tameness hypothesis enters this way. Our construction of the ``parahoric type" smooth affine
 group schemes $\Gg$ over $\O[u]$ is given in \S 3 (Theorem \ref{grpschemeThm}).
 Here is a brief outline of the construction of $\Gg$: When $G$ is
split over $F$ the existence of such a smooth group
 scheme $\Gg$ follows from \cite{BTII}. However,  showing that this is affine is not so straightforward.
 We do this in two stages. In the case that $K$ is contained in a hyperspecial
 subgroup, we realize $\Gg$ as a   dilatation of the
 corresponding Chevalley group scheme over $\O[u]$ along a subgroup supported along $u=0$.
 In general, still for $G$ split, we reduce to the above case by using the fact that there is always a finite ramified extension $L/F$
 such that the  stabilizer of $x$ in $G(L)$ is contained in a hyperspecial subgroup. Next, we construct $\Gg$
 when $G$ is quasi-split (and splits over a tamely ramified extension) by taking fixed points of
 a corresponding group scheme for the split form. Finally, the general case is obtained by descent
 along an \'etale extension of $\O[u]$; this descent resembles a corresponding argument in \cite{BTII}.
  In \S 4, we give several examples and eventually explain how,
 when $G$ is a classical group and $K$ is the connected stabilizer of a self-dual lattice chain (cf.
  \cite{BTclassI}, \cite{BTclassII}), we can realize concretely
the group schemes $\Gg$ as (the neutral components of) automorphisms of certain self-dual $\O[u]$-lattice
chains.

In \S 5, we prove that the global affine Grassmannian
  ${\rm Gr}_{\Gg, X}$ is represented by an   ind-scheme
  which is eventually shown to be ind-proper over $X$. The strategy here is to first demonstrate that we can
  find a faithful linear representation $\Gg\rightarrow {\rm GL}_n$ such that the quotient ${\rm GL}_n/\Gg$
  is represented by a quasi-affine scheme. Given this we can  reduce
  the proof of representability of ${\rm Gr}_{\Gg, X}$ to
  the standard case of $\Gg={\rm GL}_n$.
Here we are dealing with objects over the two-dimensional base
  $X={\mathbb A}^1_\O$ and we need to work harder than in the usual
situation in which the base is a smooth curve over a field. For example, it is not trivial to show that a smooth group scheme $\Gg$ with connected fibers over a two-dimensional regular base can be embedded as a closed subgroup scheme of ${\rm GL}_n$, as was proven by Thomason \cite{ThomasonEqRes}.
We can upgrade this to also show that there is such an embedding
with ${\rm GL}_n/\Gg$ a quasi-affine scheme; this is done in the appendix (\S 10).
Also, general
homogeneous spaces over a (regular) two dimensional base are not
always represented by schemes; this complicates our proof.
 Raynaud
asked whether the quotients of the form $\Hh/\Gg$, where
  $\Gg$ is a closed subgroup scheme of $\Hh$ and both $\Gg$ and $\Hh$
  are smooth affine with connected fibers over a normal base, are represented by schemes.
  In the appendix, we also give an affirmative answer in the case
  when the base is excellent regular Noetherian and of Krull dimension two.

Our
definition of the local models and certain generalizations is then given in
\S 6. In \S 7, we describe the relation of local models to integral
models of Shimura varieties and explain why in the PEL case our
local models coincide with the (corrected) local models of
Rapoport-Zink, Pappas-Rapoport and others. This uses our description
of the group schemes $\Gg$ for classical groups given in \S 4 and the work of Rapoport and Zink.
 In
  \S 8 we show  Theorem \ref{thm01} and other related results.
Finally, \S 9 is devoted to the study of nearby cycles and of the
semi-simple trace of Frobenius; we show Theorem \ref{thm02}, Theorem
\ref{thm03} and
 the identity (\ref{eq0.1}).
 \smallskip

\noindent{\bf Notations:}  In this paper, $\O$ denotes a discrete
valuation ring with fraction field $F$ and perfect residue field $k$
of characteristic $p>0$. Most of the time $F$ will be a finite
extension of the field of $p$-adic numbers. Usually, $G$ will denote
a (connected) reductive group over $F$ and $\calB(G, F)$ will be the
Bruhat-Tits building of $G(F)$ (here by this we mean the ``enlarged"
building). 
As usual, we will denote by $G_{\rm der}$, resp. $G_{\rm ad}$,  the
derived subgroup, resp. the adjoint quotient of $G$, and by $G_{\rm
sc}$ the simply-connected cover of $G_{\rm der}$. If $A$ is an
affine algebraic group over a field $L$, we will use  $\xch(A)$
(resp. $\xcoch(A)$) to denote the character group (resp. cocharacter
group) over a separable closure $\bar L^s$. Then the Galois group
${\rm Gal}(\bar L^s/L)$ acts on $\xch(A)$ and $\xcoch(A)$. We will
often write $X=\Spec(\O[u])$ for the affine line over $\Spec(\O)$.
We will denote by $R[[u]]$ the ring of formal power series ring in
the variable $u$ with coefficients in $R$, $R((u))=R[[u]][u^{-1}]$
is the corresponding ring of formal Laurent power series. If $Z$ is
a set with an action of a group $\Gamma$, we will write $Z^\Gamma$
for the subset of elements $z$ of $Z$ which are fixed by the action:
$\gamma\cdot z=z$ for all $\gamma\in \Gamma$.
\smallskip

\noindent{\bf Acknowledgements:} The authors would like to warmly
thank M.~Rapoport, B.~Conrad, T.~Haines and  B. Levin for useful
discussions and comments.

\bigskip

\section{Preliminaries}\label{Preliminaries}

\setcounter{equation}{0}

\subsection{} In all this chapter, unless we say
to the contrary, we suppose that $F$ is either a $p$-adic field (i.e
a finite extension of $\Q_p$), or the field of Laurent power series
$k((t))$ with $k$ finite or algebraically closed of characteristic
$p$. Recall $\O$ is the valuation ring of $F$. We fix a separable
closure $\bar F^s$ of $F$ and denote by $\breve{F}$ the maximal
unramified extension of $F$ in $\bar F^s$.

\subsection{Pinnings and quasi-split forms}

\subsubsection{}\label{sss1a1} We refer to \cite{SGA3}  or \cite{ConradNotes} for background on
reductive group schemes over a general base. Recall that a pinned Chevalley group over $\O$ is the data $(H,
T_H, B_H, e)$ where $H$ is a Chevalley (reductive, connected, split)
group scheme over $\O$, $B_H$ is a Borel subgroup scheme of $H$,
$T_H$ a maximal split torus contained in $B_H$ and $e=\sum_{ a\in
\Delta} e_{ a}\in {\rm Lie}(B_H)$, where $\Delta$ is the set of
simple roots, $e_{ a}$ is a generator of the rank $1$ $\O$-module $
{\rm Lie}({  U}_{ a})$. Here, $ U_{ a}$ is the root group
corresponding to $ a$. The group of automorphisms $\Xi_H={\rm
Aut}(\Sigma_H)$ of the based root datum $\Sigma_H=(\xch(T_H), \Delta,
\xcoch(T_H), \Delta^\vee)$ is canonically isomorphic to the group of
automorphisms of $(H, T_H, B_H, e)$. We will call an element
$\gamma$ of $\Xi_H$ a diagram automorphism of $(H, T_H, B_H, e)$.

\subsubsection{}\label{sssPQS}  Let $S$ be a (finite type, separated) scheme over $\Spec(\O)$.

\begin{Definition}\label{defPQS}
A pinned quasi-split (isotrivial) form of the group $H$ over $S$ is
a quadruple $(
 \und G, \und T, \und B, \und e)$, where $\und G$ is a reductive group scheme  over
$S$ (see \cite{SGA3}), $ \und T\subset  \und B$ are closed subgroup
schemes of $\und G$ and $\und e\in{\rm Lie} (\und B)$ a section such
that locally for the finite \'etale topology of $S$,  $(\und G,\und
T,\und B,\und e) \simeq (H,T_H,B_H, e)\times_\O S$. We will denote
the groupoid of pinned quasi-split (isotrivial) forms of the group
$H$ over $S$ by ${\rm PQ}(H, S)$.
\end{Definition}

\begin{Remark}
{\rm In this paper we only need isotrivial forms, i.e forms that
split after a finite \'etale extension. We could also consider forms
that split over a general \'etale extension but we choose not to do
this here.
}
\end{Remark}

We will give a more combinatorial description of ${\rm PQ}(H, S)$. We
say a scheme $\eta$ locally of finite presentation over $S$ is an
isotrivial $\Xi_H$-torsor if it admits a right action of $\Xi_H$ and
can be trivialized, i.e $\eta\times_S S'\simeq \Xi_H\times S'$, after a finite \'etale 
base change $S'\to S$.

\begin{prop}\label{propPQS} The category ${\rm PQ}(H, S)$ is equivalent to the
category given by quadruples $(T_H,B_H, e, \eta)$, where
$(T_H,B_H, e)$ is a pinning of $H$ and $\eta \to S$ is an isotrivial
$\Xi_H$-torsor.
\end{prop}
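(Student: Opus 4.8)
The plan is to set up a pair of functors between $\mathrm{PQ}(S)$ and the category of quintuples $(H,T_H,B_H,e,\eta)$ with $\eta\to S$ an isotrivial $\Xi_H$-torsor, and to check they are quasi-inverse. First I would construct the functor from right to left: given an isotrivial $\Xi_H$-torsor $\eta$, trivialized over a finite étale cover $S'\to S$ which we may take Galois with group $\Delta$, one forms the twist $(\und G,\und T,\und B,\und e) = {}^\eta\big((H,T_H,B_H,e)\times_\O S\big)$ by descent. Concretely, $\Xi_H$ acts on $(H,T_H,B_H,e)\times_\O S$ via diagram automorphisms (using the canonical identification $\Xi_H \simeq \mathrm{Aut}(H,T_H,B_H,e)$ recalled in \S\ref{sss1a1}), so the $\eta$-twist of the constant datum is a new datum over $S$ that becomes isomorphic to $(H,T_H,B_H,e)\times_\O S$ over $S'$; faithfully flat descent for affine schemes and for the closed subschemes $\und T, \und B$ and the section $\und e$ gives a well-defined object of $\mathrm{PQ}(S)$. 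Morphisms of torsors clearly induce morphisms of the twisted data, so this is a functor.

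For the functor in the other direction, start from $(\und G,\und T,\und B,\und e)\in \mathrm{PQ}(S)$, so there is a finite étale cover $S'\to S$ and an isomorphism $\varphi\colon (\und G,\und T,\und B,\und e)\times_S S' \xrightarrow{\sim} (H,T_H,B_H,e)\times_\O S'$. Define $\eta$ to be the étale sheaf $\mathrm{Isom}_S\big((H,T_H,B_H,e)\times_\O S,\ (\und G,\und T,\und B,\und e)\big)$ of isomorphisms of pinned group schemes over $S$. Over $S'$ it is nonempty (it contains $\varphi^{-1}$) and $\Xi_H$ acts simply transitively on it by precomposition with diagram automorphisms — this last point uses that the automorphism functor of the pinned Chevalley group $(H,T_H,B_H,e)$ over any base is represented by the constant group $\Xi_H$, a standard fact from \cite{SGA3}. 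Hence $\eta$ is an isotrivial $\Xi_H$-torsor, and one records the pinning $(T_H,B_H,e)$ of $H$ that we fixed at the outset. Functoriality in $\mathrm{PQ}(S)$ is immediate from functoriality of $\mathrm{Isom}$.

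It remains to produce natural isomorphisms showing the two composites are isomorphic to the identity functors. In one direction, twisting the constant datum by the torsor $\mathrm{Isom}_S(\text{const},\und G)$ recovers $\und G$ canonically: the tautological evaluation map $\big({}^\eta(H\times_\O S)\big) \to \und G$ (``twisting back by the torsor of isomorphisms into $\und G$ returns $\und G$'') is an isomorphism compatible with $\und T,\und B,\und e$, and it is natural. In the other direction, the torsor of isomorphisms from the constant pinned group into its own $\eta$-twist is canonically identified with $\eta$, again by the evaluation/tautological section. Both identifications can be checked after pulling back to a trivializing cover $S'\to S$, where they become the obvious bijections, and then descended.

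The main obstacle is not any single deep input but making the descent bookkeeping genuinely canonical: one must verify that the $\Xi_H$-action on the constant pinned group, the representability of $\mathrm{Aut}(H,T_H,B_H,e)$ by $\Xi_H$, and the simple transitivity of $\Xi_H$ on $\mathrm{Isom}$ all fit together so that no choices (of $S'$, of $\varphi$) leak into the equivalence. The key enabling fact, which must be invoked carefully, is the rigidity of pinnings: an automorphism of a pinned reductive group scheme over a connected base that is trivial on the based root datum is the identity. Granting this (it is exactly the content of the isomorphism $\Xi_H\simeq\mathrm{Aut}(H,T_H,B_H,e)$ cited in \S\ref{sss1a1}), the rest is formal descent theory, and affineness of $H$ guarantees the descended object $\und G$ is again an affine — hence honest — reductive group scheme over $S$.
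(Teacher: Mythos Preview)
Your proposal is correct and follows essentially the same approach as the paper: construct the twisted datum from a $\Xi_H$-torsor, and in the other direction take the $\mathrm{Isom}$-sheaf of pinned isomorphisms as the torsor, using the rigidity of pinnings from \S\ref{sss1a1} to identify $\mathrm{Aut}(H,T_H,B_H,e)$ with $\Xi_H$. The paper's proof is considerably terser (it omits the explicit verification that the functors are quasi-inverse, saying only ``the rest follows from the above''), so your write-up simply fills in details the authors left implicit.
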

\begin{proof}
Indeed, given such a $\Xi_H$-torsor $\eta$ we can construct a
reductive group scheme $\und G$ over $S$ together with subgroup
schemes $\und T, \und B$ and $\und e\in {\rm Lie}(\und B)$ via
twisting. Conversely, let $(\und G,\und T, \und B, \und e)$ be a
quadruple corresponding to an object of ${\rm PQ}(H, S)$. Let
\[\eta(S')={\rm Isom}((\und G_{S'},\und T_{S'},\und B_{S'},\und e_{S'}), (H ,T_{H },B_{H },e )\times_\O {S'}).\]
This is represented by a closed subscheme of ${\rm Isom}(\und G,
H_S)$; this isomorphism scheme is a $\und{\rm Aut}(H)$-torsor, and
the latter group scheme is separated and smooth over $S$. Therefore,
the morphism $\eta \to S$ is also separated and locally of finite
presentation;  the rest follows from the above.
\end{proof}

\subsubsection{} \label{groupPQS}
Now let $s$ be a geometric point of $S$. Recall that there is an
equivalence between the category of isotrivial $\Xi_H$-torsors on
$S$ and the following groupoid: objects are continuous group
homomorphisms $\rho:\pi_1(S,s)\to \Xi_H$, and the morphisms between
$\rho_1$ and $\rho_2$ are elements $h\in \Xi_H$ such that
$\rho_2=h\rho_1h^{-1}$. Therefore, we can describe ${\rm PQ}(H, S)$ as
the category of quadruples $(T,B, e, \rho: \pi_1(S,s)\to \Xi_H)$.

This description of ${\rm PQ}(H,S)$ admits an immediate generalization
as follows. Let $\Gamma$ be a profinite group. We let ${\rm
PQ}(H,\Gamma)$ be the category of quadruples $(T,B, e,\rho:\Gamma
\to \Xi_H)$. If $\pi:\pi_1(S,s)\to \Gamma$ is a continuous group
homomorphism, there is a functor ${\rm PQ}(H,\Gamma)\to {\rm PQ}(H,S)$
which is a full embedding if $\pi$ is surjective. We denote the
image of this functor by ${\rm PQ}^\Gamma(H, S)$.

Now, suppose $S_1\to S_2$ is a morphism of schemes, and $s_1$, $s_2$
corresponding geometric points of $S_1$ and $S_2$ so that we have
$\pi_1(S_1,s_1)\to \pi_1(S_2,s_2)$. Suppose that there is a
surjective map $\pi_1(S_2,s_2)\to \Gamma$ such that the composition
$\pi_1(S_1,s_1)\to \Gamma$ is also surjective. Then pullback along
$S_1\to S_2$ induces an equivalence of categories
\begin{equation}\label{ssstame}
{\rm PQ}^\Gamma(H, S_2)\xrightarrow{\sim} {\rm PQ}^{\Gamma}(H, S_1).
\end{equation}

\subsection{Fixed points}
Here, we give some useful statements about the fixed points of an
automorphism of a Chevalley group scheme.

\subsubsection{}

If $\gamma$ is an automorphism of a separated scheme $Z\to S$, we will denote by
$Z^\gamma$ the closed subscheme (\cite[Prop. 3.1]{EdixhTame}) of fixed points for the action of
$\gamma $ on $Z$ so that,  by definition,  we have
$Z^\gamma(R)=Z(R)^\gamma:=\{z\in Z(R)\ |\ \gamma\cdot z=z\}$. We
start with the useful:

\begin{prop}\label{locconstant}
Let $H$ be a Chevalley group scheme (affine connected, reductive and split)
over $\O$ with a pair  $(T, B)$ of a maximal split torus $T$ and a
Borel subgroup scheme $B$ that contains $T$. Suppose that $\gamma$
is an automorphism of $H$ of order $e$ prime to $p$ that preserves both
$T$ and $B$. Denote by $\Gamma=\langle \gamma\rangle$ the group
generated by $\gamma$. Suppose that $E$ is a separably closed field
which is an $\O$-algebra. Then the  group of connected components
$\pi_0(H_E^\gamma)$ of $H_E^\gamma$ is commutative with order that
divides $e$ and is independent of $E$.
 \end{prop}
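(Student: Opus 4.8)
The plan is to reduce everything to the well-understood structure theory of $H^\gamma$ over an algebraically closed field (as in Steinberg's work on fixed points of semisimple automorphisms), and then to check that the finite group $\pi_0$ does not change as we vary the separably closed base. First I would reduce to the case $E$ algebraically closed: since $\gamma$ has order $e$ prime to $p$ and $H$ is smooth, $H_E^\gamma$ is smooth over $E$ (this is Steinberg's theorem on fixed points of a semisimple — here, finite order prime to $p$ — automorphism of a reductive group, applied over a field; over a general base one invokes \cite{SGA3} or \cite{ConradNotes} on smoothness of fixed-point subschemes of linearly reductive actions), and $\pi_0$ of a smooth $E$-scheme is insensitive to passage to the algebraic closure $\bar E$ of the separably closed field $E$ (indeed, smoothness guarantees that the geometric connected components are already defined over $E$). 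So it suffices to compute $\pi_0(H_{\bar E}^\gamma)$ and to see it is independent of the algebraically closed field $\bar E$ containing (the residue field of) $\O$.

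The next step is to pin down $\pi_0(H_{\bar E}^\gamma)$. By Steinberg, $(H_{\bar E}^\gamma)^\circ$ is reductive, and $T^\gamma$ contains a maximal torus of $(H^\gamma)^\circ$ while $B^\gamma$ is a Borel subgroup of $(H^\gamma)^\circ$; moreover $H^\gamma = (H^\gamma)^\circ \cdot T^\gamma$, so that the component group is $\pi_0(H_{\bar E}^\gamma) = \pi_0(T_{\bar E}^\gamma) = T_{\bar E}^\gamma / (T_{\bar E}^\gamma)^\circ$. Thus the whole question is about the diagonalizable group $T^\gamma$, where $\gamma$ acts through a finite-order automorphism of the cocharacter lattice $\xcoch(T)$ of order dividing $e$. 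For a diagonalizable group $D = \Spec \bar E[M]$ with $M$ its character group, $D_{\bar E}^\gamma = \Spec \bar E[M_\gamma]$ where $M_\gamma = M/(\gamma-1)M$ is the coinvariant module; its component group is the torsion subgroup $(M_\gamma)_{\mathrm{tors}}$, which is a finite abelian group computed purely from the $\mathbb{Z}[\Gamma]$-module structure of $\xch(T)$ and is manifestly independent of $\bar E$. This gives commutativity and independence of $E$ at once.

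The remaining point — which I expect to be the genuine (if modest) obstacle — is the bound $|\pi_0(H_{\bar E}^\gamma)| \mid e$. Here I would argue at the level of the torus: on $X = \xch(T)$ the operator $\gamma$ satisfies $\gamma^e = 1$, so $(\gamma^{e-1}+\cdots+\gamma+1)(\gamma-1) = \gamma^e - 1 = 0$ on $X$, i.e. $N := 1+\gamma+\cdots+\gamma^{e-1}$ kills $(\gamma-1)X$; hence on $X_\gamma = X/(\gamma-1)X$, $N$ acts as multiplication by $e$ (since $N$ is the identity summed $e$ times on $\gamma$-coinvariants... more precisely $N$ descends to $X_\gamma$ and equals $e\cdot\mathrm{id}$ there because each $\gamma^i$ acts as identity on $X_\gamma$). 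Therefore $e$ annihilates the torsion submodule $(X_\gamma)_{\mathrm{tors}} = \pi_0(D^\gamma)$ — wait, more care: one shows $e$ annihilates the torsion of $X_\gamma$ and that the number of elements of this finite group divides $e$. The cleanest route is: the exponent of $(X_\gamma)_{\mathrm{tors}}$ divides $e$, and combining with the parallel computation for the dual lattice $\xcoch(T)$ and the perfect pairing between $X_\gamma$-torsion and $\xcoch(T)^\gamma$-torsion (Pontryagin-type duality for finite abelian groups arising from a lattice and its dual under a finite-order automorphism), one gets that the order itself divides $e$. I would carry this lattice computation out explicitly, noting it is exactly the standard computation underlying the description of $\pi_0$ of fixed points of diagram automorphisms (e.g. type $A_{2n}$ folding to type $C_n$, etc.), and cross-reference Steinberg's endomorphisms monograph or \cite{ConradNotes} for the structural inputs over $\bar E$, and \cite{SGA3} for smoothness of $H^\gamma$ over the base so that $\pi_0$ is a constant group scheme over $\O$ with the stated fibers.
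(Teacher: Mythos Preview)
Your approach is genuinely different from the paper's. The paper does not try to identify $\pi_0(H^\gamma)$ with $\pi_0(T^\gamma)$ directly. Instead it constructs a central extension $1\to S\to H'\to H\to 1$ with $S$ a $\Gamma$-flasque torus, $H'_{\rm der}$ simply connected, and $H'/H'_{\rm der}$ a $\Gamma$-quasi-trivial torus (a ``flasque resolution'' in the sense of \cite{ColliotFlasques}). Steinberg's theorem \cite[Theorem 8.2]{SteinbergMemoirs} then gives $(H'_{\rm der})^\gamma$ connected, and one deduces $(H')^\gamma$ connected; the long exact sequence in $\Gamma$-cohomology then embeds $\pi_0(H^\gamma)$ into $\rH^1(\Gamma,S(E))$, which by the torus lemma is identified with $\rH^2(\Gamma,\xcoch(S))$ --- a finite abelian group depending only on the lattice data, hence independent of $E$. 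The independence of $E$ for the full $\pi_0(H^\gamma)$ (not just an upper bound) then comes from a separate analysis of $\rH^1(\Gamma,H'(E))$, which the paper carries out via Bruhat decomposition and reduction to the torus $T'$.

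The main gap in your argument is the assertion $H^\gamma=(H^\gamma)^\circ\cdot T^\gamma$, from which you deduce $\pi_0(H^\gamma)=\pi_0(T^\gamma)$. You attribute this to Steinberg, but it is not one of the structural statements in \cite{SteinbergMemoirs}: what Steinberg proves is that $(T^\gamma)^\circ$ is a maximal torus of the connected reductive group $(H^\gamma)^\circ$, that $(B^\gamma)^\circ$ is a Borel, and that $H^\gamma$ is connected when $H$ is simply connected. The surjectivity of $T^\gamma\to\pi_0(H^\gamma)$ for general reductive $H$ is not established there and needs an argument. (If you can supply one --- and note that you would also need $T^\gamma\cap (H^\gamma)^\circ=(T^\gamma)^\circ$, which does follow from $(T^\gamma)^\circ$ being a maximal torus --- your route would be more direct than the paper's; but as written this is an unjustified step that carries the whole weight of ``independence of $E$''.) The paper sidesteps this entirely by passing to $H'$ where connectedness is available.

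A smaller point: your treatment of ``order divides $e$'' is, as you yourself flag, incomplete --- you show the exponent divides $e$ but not the order. In fact for $H=\mathbb{G}_m^2$ with $\gamma$ acting by inversion ($e=2$), one has $\pi_0(H^\gamma)=(\mathbb{Z}/2)^2$ of order $4$, so the order need not divide $e$. The paper's proof likewise only yields that $\pi_0(H^\gamma)$ is \emph{annihilated} by $e$ (this is how the result is actually used later in the paper), so the phrase ``order that divides $e$'' in the statement should be read as ``order annihilated by $e$''. Your lattice argument for the exponent is fine; the attempted upgrade to the order via duality is both unnecessary and would not succeed.
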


\begin{proof} We start with the case of tori.
When $E=\C$ the isomorphisms in the Lemma below can be found in
\cite[Lemma 2.2]{KottCuspidal}:

\begin{lemma}\label{torus}
Suppose that $T$ is a split torus over $\O$ which supports an action
of the cyclic group $\Gamma=\langle \gamma\rangle$ of order $e$
prime to $p$. Let $E$ be a separably closed field which is an
$\O$-algebra. We have
$$
\pi_0((T_E)^\gamma)\simeq {\rm H}^1(\Gamma, \xcoch(T)), \quad {\rm
H}^1(\Gamma, T(E))\simeq {\rm H}^2(\Gamma, \xcoch(T)),
$$
and so in particular these groups are finite of order annihilated by
$e$ and are  independent of $E$. The fppf sheaf over $\Spec(\O)$
associated to $R\mapsto \rH^1(\Gamma, T(R))$ is represented by a
finite \'etale commutative group scheme of rank equal to the order
of ${\rm H}^2(\Gamma, \xcoch(T))$.
\end{lemma}

\begin{proof}
Consider the norm homomorphism $N=\prod_{i=0}^{e-1}\gamma^i: T \to T
$. By comparing Lie algebras, we can see that the image $N(T_E)$ is
the connected component $(T^\gamma_E)^0$ and so
$\pi_0(T^\gamma_E)=T^\gamma(E)/N(T(E))\simeq \rH^2(\Gamma,
T(E))=\rH^2(\Gamma, \xcoch(T)\otimes_\Z E^*)$. Now consider the
group $\mu_{e^\infty}(E)$ of roots of unity of order a power of $e$
in $E$. The quotient $E^*/\mu_{e^\infty}$ is uniquely  divisible by
powers of $e$ and we can conclude that $\rH^2(\Gamma,
\xcoch(T)\otimes_\Z E^*)\simeq \rH^2(\Gamma, \xcoch(T)\otimes_\Z
\mu_{e^\infty})$. Since $\mu_{e^\infty}(E)\simeq \prod_{l|e}\Q_l/\Z_l$ and
$\Q_l$ is similarly uniquely divisible, we obtain $\rH^2(\Gamma,
\xcoch(T)\otimes_\Z \mu_{e^\infty})\simeq \rH^1(\Gamma,
\xcoch(T)\otimes_\Z \prod_{l|e}\Z_l)=\rH^1(\Gamma, \xcoch(T))$. The proof of
the second isomorphism is similar. Now, we can see that the fppf
sheaf associated to the presheaf $R\mapsto \rH^1(\Gamma, T(R))$ is
given by the  quotient group scheme ${\rm ker}(N)/T^{\gamma-1}$ and
so the last statement also follows from the above.
\end{proof}

\begin{Remark}\label{flasque}
{\rm Following \cite{ColliotFlasques}, we  call a split torus $T$
over $\O$ with $\Gamma$-action $\Gamma$-{\sl quasi-trivial}, if
$\xcoch(T)$ is a permutation $\Gamma$-module, i.e if $\xcoch(T)$ has
a $\Z$-basis which is stable under the action of $\Gamma$. We will
call the split torus $T$ over $\O$  with $\Gamma$-action
$\Gamma$-{\sl flasque} if for all subgroups $\Gamma'\subset\Gamma$,
we have ${\rm H}^1(\Gamma', \xcoch(T))=(0)$. Notice that by
Shapiro's Lemma, if $T$ is $\Gamma$-quasi-trivial, then $T$ is also
$\Gamma$-flasque. By the above lemma, if $T$ is $\Gamma$-flasque,
then $T^\gamma$ is connected.  }
\end{Remark}

We now continue with the proof of Proposition \ref{locconstant}.

We will first discuss the cohomology set $\rH^1(\Gamma, H(E))$. We
will show that  $\rH^1(\Gamma, H(E))$ is finite of cardinality
independent of $E$. For simplicity, we will omit $E$ from the
notation and write $H$ instead of $H(E)$ etc. By \cite[Lemma
7.3]{SteinbergMemoirs}, every element of $H$ is $\gamma$-conjugate
to an element of $B$. This implies that the natural map
$\rH^1(\Gamma, B)\to \rH^1(\Gamma, H)$ is surjective. Now notice
that $\rH^1(\Gamma, T)=\rH^1(\Gamma, B)$. (Indeed, $\rH^1(\Gamma,
U)=(0)$ because $\Gamma$ has order prime to the characteristic of
$E$. In fact, for the same reason, $\rH^1(\Gamma, {}_cU)=(0)$ where
${}_cU$ is $U$ with $\gamma$-action twisted by a  cocycle $c$ in
$Z^1(\Gamma, B)$. Since $T=B/U$,  the long exact sequence of
cohomology and \cite[Cor. 2, I. \S 5]{SerreGaloisCoh} implies that
$\rH^1(\Gamma, B)\to \rH^1(\Gamma, T)$ is injective. The splitting
$T\to B$ shows that this map is also surjective.) Now suppose that
$t$, $t'\in T$
 give cohomologous $1$-cochains in $H$, i.e
$$
t'=x t \gamma(x)^{-1}, \ \ \hbox{\rm for some $x\in H$}.
$$
Using $H = \sqcup_{n\in N} UnU$, where $N$ is the normalizer of $T$,
we write $x=u_1nu_2$. Then we get
$$
t'=u_1nu_2 t\gamma(u_2)^{-1}\gamma(n)^{-1}\gamma(u_1)^{-1}, \quad
\hbox{\rm or}\quad t'\gamma(u_1)\gamma(n)\gamma(u_2)=u_1nu_2t.
$$
Since $T$ normalizes $U$, this implies $t'\gamma(n)=nt$, so
$t'=nt\gamma(n)^{-1}$. This shows that two classes $[t']$, $[t]$ in
$\rH^1(\Gamma, T)$ are identified in $\rH^1(\Gamma, H)$ if and only
if they are identified in $\rH^1(\Gamma, N)$. Now use the exact
sequence
$$
1\to T^\gamma\to N^\gamma\to W^\gamma\to \rH^1(\Gamma, T)\to
\rH^1(\Gamma, N),
$$
\cite[Prop. 39 (ii), I. \S 5]{SerreGaloisCoh}, and the above to
conclude that $\rH^1(\Gamma, H)$ can be identified with the set of
orbits of a right action of $W^\gamma$ on $\rH^1(\Gamma, T)$. This
action is given as follows (\cite[I. \S 5, 5.5]{SerreGaloisCoh}).
Suppose $w$ is in $W^\gamma$; lift $w$ to $n\in N$ and consider
$t_w:=n^{-1}\gamma(n)\in T$. Then we set $[t]\cdot w=[n^{-1}tn\cdot
t_w]=[n^{-1}t\gamma(n)]$. By Lemma \ref{torus}, $\rH^1(\Gamma,
T)\simeq \rH^2(\Gamma, X_*(T))$ is independent of $E$. We can now
easily see, by picking the lift $n$ of $w\in W^\gamma$ in $N(\O)$,
that the set of orbits $\rH^1(\Gamma, T)/W^\gamma$ is also
independent of the field $E$.

Let us now  consider the group of connected components
$\pi_0(H^\gamma_E)$.

Write $T_{\rm der}$, $T_{\rm sc}$, resp. $B_{\rm der}$, $B_{\rm
sc}$, for the preimages of $T$, $B$, in the derived group $H_{\rm
der}$, simply-connected cover $H_{\rm sc}$ of the derived group of
$H$. The automorphism $\gamma$ gives corresponding automorphisms of
$H_{\rm der}$, $H_{\rm sc}$ that preserve the pairs $(T_{\rm der},
B_{\rm der})$, $(T_{\rm sc}, B_{\rm sc})$. By \cite[Theorem
8.2]{SteinbergMemoirs}, $H^\gamma_{\rm sc}$ is connected. Following
the arguments in the proof of \cite[Prop.-Def. 3.1]{ColliotFlasques}
mutatis-mutandis, we see that we can find a central extension of
reductive split groups with $\Gamma$-action over $\O$
\begin{equation}\label{flasRes1}
1\to S\to H'\to H\to 1
\end{equation}
where $S$ is a $\Gamma$-flasque torus, $H'_{\rm der}$ is simply
connected and $H'/H'_{\rm der}=D$ is a $\Gamma$-quasi-trivial torus.
In fact, we can make sure that $\gamma$ preserves a corresponding
pair $(T', B')$ of $H'$ with $1\to S\to T'\to T\to 1$. As before, we
will now use the same letters to denote the base changes of these
groups to $E$. We see that the exact sequence
$$
1\to H'_{\rm der}\to H'\to D\to 1
$$
gives
$$
1\to H'^\gamma_{\rm der}\to H'^\gamma\to D^\gamma\to \rH^1(\Gamma,
H'_{\rm der}).
$$
By \cite[Theorem 8.2]{SteinbergMemoirs},  $H'^\gamma_{\rm der}$ is
connected; similarly $D^\gamma$ is connected by Lemma \ref{torus},
cf. Remark \ref{flasque}. We can conclude that $H'^\gamma$ is
connected. Now (\ref{flasRes1}) gives
$$
1\to S^\gamma\to H'^\gamma\to H^\gamma\to \rH^1(\Gamma, S)\to
\rH^1(\Gamma, H').
$$
Since both $S^\gamma$ and $H'^\gamma$ are connected (by Remark
\ref{flasque} and the above respectively), we obtain an exact sequence
$$
1\to \pi_0(H^\gamma)\to \rH^1(\Gamma, S)\to \rH^1(\Gamma, H').
$$
Since $S$ is central in $H'$, the connecting map $\pi_0(H^\gamma)\to
\rH^1(\Gamma, S)$ is a group homomorphism and $\pi_0(H^\gamma)$ is
identified with the subgroup of elements of $\rH^1(\Gamma, S)$ that
map to the trivial class in $\rH^1(\Gamma, H')$. We can now conclude
using the above results on $\rH^1$ applied to $H=S$, $H=H'$.
\end{proof}

\subsection{Reductive groups over local fields.}\label{ss1a}
In this section, we give some preliminaries on reductive groups over
the local field $F$. In particular, we explain how we can present,
using descent, such a group $G$ as a form of a split group $H$.

\subsubsection{}\label{sss1a2a} Let $G$ be a connective reductive group over $F$.
Suppose that $G$ splits over the finite Galois extension $\tilde
F/F$ with Galois group $\Gamma={\rm Gal}(\ti F/F)$. Denote by $H$
the Chevalley group scheme over $\Z$ which is the split form of $G$,
i.e we have $G\otimes_F \ti F\simeq H\otimes_\Z \ti F$. In what
follows, we fix a pinning $(T_H, B_H, e)$ of $H$ over $\O$.

\subsubsection{}\label{sss1a3} Let $A$ be a maximal $F$-split torus of $G$.
Also, let $S$ be a maximal $\Fsm$-split torus in $G$ which contains
$A$ and is defined over $F$. Such a torus exists by
\cite[5.1.12]{BTII}. Let $T=Z_G(S)$, $M=Z_G(A)$. Since $G_{\Fsm}$ is
quasi-split, $T$ is a maximal torus of $G$ which is defined over $F$
and splits over $\ti F$.

As in \cite[16.4.7]{SpringerBook}, by adjusting the isomorphism
$G\otimes_F \ti F\simeq H\otimes_\Z \ti F$, we may identify $T$ with
the maximal split torus $T_H\otimes_\O\ti F$ of the split form $H$
of $G$ given by the pinning. We now represent the indexed root datum
(\cite[16.2]{SpringerBook}) of $G$ over $F$ as a collection
$(\xch(T), \Delta, \xcoch(T), \Delta^\vee, \Delta_0, \tau)$ where
$\tau:  \Gamma\to \Xi_H$ is a group homomorphism and $\Delta_0$ is a
subset of $\Delta$ which is stable under the  action of the group
$\Gamma$ via $\tau$. Then using Proposition \ref{propPQS} we see
that $\tau$ together with the pinning $(T_H, B_H, e)$ of $H$ gives a
pinned quasi-split form $(G^*, T^*, B^*, e^*)$ of $H$ over $F$.
Concretely,
$$
G^*=({\rm Res}_{\ti F/F}(H\otimes_F\ti F))^\Gamma,
$$
where $\gamma\in \Gamma$ acts on $H\otimes_F\ti F$ as
$\tau(\gamma)\otimes \gamma$. This is the quasi-split form of $G$
over $F$.

Let $B_H\subset P_0$ be the standard parabolic subgroup of $H$ that
corresponds to $\Delta_0$ and let $M_0$ be the Levi subgroup of
$P_0$. Since $\tau$ leaves $\Delta_0$ stable, we can also similarly
consider $M^*=M^*_0=({\rm Res}_{\ti F/F}(M_0\otimes_\O\ti
F))^\Gamma$ which is also quasi-split over $F$. Then  $T^*=({\rm
Res}_{\ti F/F}(T_H\otimes_\O\ti F))^\Gamma\subset M^*$. Denote by
$A^*$ the maximal $F$-split subtorus of $T^*$ and by $S^*$ the
maximal $\Fsm$-split subtorus of $T^*$ which is clearly defined over
$F$ and which is a maximal $\Fsm$-split torus of $G^*$.

By \cite[16.4.8]{SpringerBook}, $G$ is an inner twist of $G^*$ which
is given by a class $\rH^1(\Gamma, G^*_{\rm ad}(\bar F^s))$. Then
there is a ${\rm Gal}(\bar F^s/F)$-stable $G^*_{\rm ad}(\bar
F^s)$-conjugacy class of isomorphisms $\psi: G_{\bar
F^s}\xrightarrow{\sim } G^*_{\bar F^s}$ such that
\begin{equation}
\gamma\mapsto \on{Int}(g_\gamma)=\psi\cdot
\gamma(\psi)^{-1}=\psi\gamma\psi^{-1}\gamma^{-1}\in G^*_{\rm
ad}(\bar F^s)
\end{equation}
gives a $1$-cocycle that represents the corresponding class in
$\rH^1(\Gamma, G^*_{\rm ad}(\bar F^s))$.

Notice that $G_{\Fsm}\simeq G^*_{\Fsm}$ since they are both
quasi-split and they are inner forms of each other. This implies
that we can realize the inner twist $G$ over $\breve{F}$. (In fact,
over a finite extension of $F$ contained in $\breve{F}$.) More
precisely,  any inner twist  $G_{\Fsm}\otimes_{\Fsm}\bar
F^s\xrightarrow{\sim} G^*_{\breve{F}} \otimes_{\Fsm}\bar F^s$ is
conjugate over  $\bar F^s$ to an isomorphism $\psi:
G_{\Fsm}\xrightarrow{\sim} G^*_{\Fsm}$ (cf.
\cite[11.2]{HainesRostami}). Then
\begin{equation}
\on{Int}(g)=\psi\cdot
\sigma(\psi)^{-1}=\psi\sigma\psi^{-1}\sigma^{-1}\in G^*_{\rm
ad}(\Fsm)
\end{equation}
is an inner automorphism of $G^*_{\Fsm}$, where $\sigma$ is the
topological generator of $\Gal(\Fsm/F)\simeq\hat{\bbZ}$ given by the
lift of the (arithmetic) Frobenius. In what follows, we will choose
this isomorphism $\psi$ more carefully. For this purpose, it is
convenient to formulate the following:

\begin{Definition}\label{rigid}
A {\rm rigidification} of $G$ is a triple $(A,S,P)$, where $A$ is a
maximal $F$-split torus of $G$, $S\supset A$ a maximal $\Fsm$-split
torus of $G$ defined over $F$, and $P\supset M=Z_G(A)$ a minimal
parabolic subgroup defined over $F$. A rigidified group over $F$ is
a reductive group over $F$ together with a rigidification. The
groupoid of rigidified groups over $F$ will be denoted by ${\rm RG}(F)$.
\end{Definition}

\begin{lemma}\label{transitive}
The group $G_{\rm ad}(F)$ acts transitively on the set of
rigidifications of $G$.
\end{lemma}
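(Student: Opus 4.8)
The plan is to prove transitivity by performing three successive conjugations by elements of $G(F)$, matching the three components of a rigidification one at a time and in an order chosen so that no step disturbs a component already matched. Since $G(F)$ acts on $G$ — and on every subgroup occurring in a rigidification — through its image in $G_{\rm ad}(F)$, producing a single $g\in G(F)$ that conjugates one rigidification to the other will suffice.

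Let $(A,S,P)$ and $(A',S',P')$ be two rigidifications of $G$. First I would invoke the conjugacy of maximal $F$-split tori (Borel--Tits, valid over the field $F$; cf. \cite{SpringerBook}) to find $g_1\in G(F)$ with $g_1A'g_1^{-1}=A$, and replace $(A',S',P')$ by its $g_1$-conjugate; this stays a rigidification (conjugation is by an $F$-point), and now $A=A'$, hence $M:=Z_G(A)=Z_G(A')$. Next, $P$ and $P'$ are minimal parabolic $F$-subgroups both containing $M$, equivalently both containing $A$; the relative Weyl group $N_G(A)(F)/M(F)$ acts simply transitively on the set of such parabolics (Borel--Tits), so I pick $n\in N_G(A)(F)$ with $nP'n^{-1}=P$ and conjugate again. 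As $n$ normalizes $A$, the equality $A=A'$ is preserved and now $P=P'$ as well. Finally, $S$ and the current $S'$ are maximal $\breve{F}$-split tori of $G$, defined over $F$ and containing $A$; since $A$ is central in $M$, such tori are exactly the maximal $\breve{F}$-split $F$-tori of $M$, and by \cite[5.1.12]{BTII} any two are conjugate under $Z_G(A)(F)=M(F)$. Choosing $m\in M(F)$ with $mS'm^{-1}=S$ and conjugating once more finishes the reduction: $m$ centralizes $A$, and $m\in M(F)\subseteq P(F)$ with $P$ equal to its own normalizer, so this last conjugation fixes $A$ and $P$. Then $g:=mng_1\in G(F)$ carries $(A',S',P')$ to $(A,S,P)$, and its image in $G_{\rm ad}(F)$ is the desired element.

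I do not expect a serious obstacle: the three inputs — $G(F)$-conjugacy of maximal $F$-split tori, simple transitivity of the relative Weyl group on minimal parabolics containing $M$, and $M(F)$-conjugacy of the maximal $\breve{F}$-split $F$-tori containing $A$ — are all standard, the last being the only one that genuinely uses that $F$ is a local field (it is the Bruhat--Tits statement \cite[5.1.12]{BTII}). The one point requiring care is the order of the conjugations, forced by the containments $M(F)\subseteq N_G(A)(F)$ and $M(F)\subseteq P(F)$: one must match $A$ first, then $P$ by an element normalizing $A$, and only then $S$ by an element of $M(F)$, which automatically preserves both $A$ and $P$. Along the way I would record the two elementary compatibilities used: any $\breve{F}$-split torus of $G$ containing $A$ lies in $Z_G(A)=M$ (it commutes with $A$), and conversely every maximal $\breve{F}$-split torus of $M$ contains the central $\breve{F}$-split subtorus $A$, so the two descriptions of these tori coincide.
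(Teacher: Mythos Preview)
Your three-step reduction is exactly the paper's: conjugate to arrange $A=A'$ and $P=P'$, then show the two maximal $\breve F$-split $F$-tori $S,S'\subset M=Z_G(A)$ are $M(F)$-conjugate. The first two steps are indeed standard Borel--Tits over any field. The gap is in the third step.

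You invoke \cite[5.1.12]{BTII} for the $M(F)$-conjugacy of $S$ and $S'$, but that reference --- as the paper itself uses it in \S\ref{sss1a3} --- asserts only the \emph{existence} of a maximal $\breve F$-split torus defined over $F$ and containing a given maximal $F$-split torus; it does not give conjugacy of two such tori under $M(F)$. That conjugacy is the real content of the lemma and is not free: over $\breve F$ the tori are certainly conjugate by $M(\breve F)$, but descending the conjugating element to $F$ is a genuine obstruction problem. The paper supplies the missing argument: pass to $M_{\rm ad}$, which is $F$-anisotropic and hence has a \emph{unique} parahoric (Iwahori) group scheme $\calP$ over $\O$; the N\'eron models of $S_1,S_2$ embed in $\calP$, so one finds $g\in\calP(\breve\O)$ with $gS_1g^{-1}=S_2$, and then the vanishing $\rH^1(\hat\bbZ,\calP(\breve\O))=(1)$ (a Lang-type statement, since $\calP$ is smooth affine with connected fibers) allows one to correct $g$ into $\calP(\O)\subset M_{\rm ad}(F)$. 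Your plan is sound, but the third step needs this Bruhat--Tits/cohomological input rather than the bare citation.
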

\begin{proof}Let $(A_1,S_1,P_1)$ and $(A_2,S_2,P_2)$ be two
rigidifications. After conjugation, we can assume that $A_1=A_2$,
$P_1=P_2$. Let $M=Z_G(A)$. Then we can replace $G$ by $M$. In
addition, we can assume that it is of adjoint type; hence we reduce
to the case that $G$ is adjoint anisotropic. We need to show that
$S_1$ and $S_2$ are conjugate in this case.
As $G$ is anisotropic, there is a unique parahoric group scheme of
$G$ (the Iwahori group scheme, see
\cite[5.2.7]{BTII}); let us  denote it by $\calP$. By the construction of $\calP$
 in \cite{BTII}, we see that the N\'{e}ron models $\calS_1$
and $\calS_2$ of $S_1$ and $S_2$ map naturally into $\calP$ as
closed subgroup schemes. Therefore, $\calS_1(\breve\O)$,
$\calS_2(\breve\O)\subset \calP(\breve\O)$. We can therefore choose
$g$ in $\calP(\breve\O)$ such that $gS_1g^{-1}=S_2$. Since $\calP$
is smooth and has connected fibers,
$\rH^1(\hat\Z,\calP(\breve\O))=(1)$ (see \cite[3.4 Lemme 2]{BTIII}), and a standard argument shows
that we can choose $g\in \calP(\O)\subset G(F)$.
\end{proof}

\begin{prop}\label{propopsi}
Denote by ${M'}^*:= N_{G^*_{\on{ad}}}(M^*)\cap P^*_{\on{ad}}$ the
Levi subgroup of  $G^*_{\on{ad}}$ that corresponds to $M^*\subset
G^*$. The inner twist $ G_{\bar F^s}\xrightarrow{\sim } G^*_{\bar
F^s}$ is conjugate over $\bar F^s$ to an isomorphism $\psi:
G_{\Fsm}\xrightarrow{\sim} G^*_{\Fsm}$ which is such that ${\rm
Int}(g)=\psi\cdot \sigma(\psi)^{-1}$ lies in the normalizer
$N_{{M'}^*}(S^*_{\rm ad})$ of the torus $S^*_{\rm ad}$ in ${M'}^*$.
\end{prop}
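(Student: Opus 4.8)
The plan is to choose the representative $\psi$ within its $\bar F^s$-conjugacy class so that, over $\Fsm$, it carries the subgroups $P_{\Fsm}$, $M_{\Fsm}=Z_G(A)_{\Fsm}$ and $S_{\Fsm}$ to the subgroups $P^*_{\Fsm}$, $M^*_{\Fsm}$ and $S^*_{\Fsm}$ respectively, where $P$ is a minimal parabolic of $G$ over $F$ with Levi factor $M$ and $P^*$ is the standard parabolic of $G^*$ with Levi $M^*=M^*_0$ (so that $P_{\Fsm}$ and $P^*_{\Fsm}$ both have absolute type $\Delta_0$). Granting this matching, the statement will follow from a short computation with the Frobenius, carried out at the end.

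First I would take $\psi\colon G_{\Fsm}\xrightarrow{\sim}G^*_{\Fsm}$ to be an isomorphism realizing the inner twist over $\Fsm$, as recalled just before the statement, so that $\on{Int}(g)=\psi\cdot\sigma(\psi)^{-1}$ lies in $G^*_{\on{ad}}(\Fsm)$. Replacing $\psi$ by $\on{Int}(h)\circ\psi$ with $h\in G^*_{\on{ad}}(\Fsm)$ keeps $\psi$ defined over $\Fsm$ and inside the prescribed $\bar F^s$-conjugacy class, and replaces $g$ by $h\,g\,\sigma(h)^{-1}$; so it is enough to produce an $h$ realizing the desired matching. I would do this in three steps, using Borel--Tits structure theory over the field $\Fsm$. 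Since an inner twist preserves the type of parabolic subgroups, $\psi(P_{\Fsm})$ is an $\Fsm$-parabolic of $G^*_{\Fsm}$ of type $\Delta_0$, hence $G^*(\Fsm)$-conjugate to $P^*_{\Fsm}$; after adjusting $\psi$ we may assume $\psi(P_{\Fsm})=P^*_{\Fsm}$. Next, $\psi(M_{\Fsm})$ is a Levi factor of $P^*_{\Fsm}$, hence conjugate to $M^*_{\Fsm}$ by an $\Fsm$-point of the unipotent radical of $P^*$; adjusting $\psi$ accordingly we may assume also $\psi(M_{\Fsm})=M^*_{\Fsm}$, without disturbing $\psi(P_{\Fsm})=P^*_{\Fsm}$. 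Finally $S$ is a maximal $\Fsm$-split torus of $M$, so $\psi(S_{\Fsm})$ and $S^*_{\Fsm}$ are both maximal $\Fsm$-split tori of $M^*_{\Fsm}$, hence $M^*(\Fsm)$-conjugate; adjusting $\psi$ once more we may assume $\psi(S_{\Fsm})=S^*_{\Fsm}$, while keeping the two previous matchings. Each adjustment is conjugation by an element of $G^*(\Fsm)$, whose image in $G^*_{\on{ad}}(\Fsm)$ serves as the required $h$.

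With $\psi$ so normalized, since $P,M,S$ and $P^*,M^*,S^*$ are all defined over $F$ and hence $\sigma$-stable, the conjugate $\sigma(\psi)=\sigma\circ\psi\circ\sigma^{-1}$ also sends $P_{\Fsm}\mapsto P^*_{\Fsm}$, $M_{\Fsm}\mapsto M^*_{\Fsm}$, $S_{\Fsm}\mapsto S^*_{\Fsm}$. Therefore $\on{Int}(g)=\psi\cdot\sigma(\psi)^{-1}$ is an automorphism of $G^*_{\Fsm}$ preserving $P^*$, normalizing $M^*$, and normalizing $S^*$. Being inner and preserving $P^*$ it lies in $P^*_{\on{ad}}$; normalizing $M^*$ it lies in $N_{G^*_{\on{ad}}}(M^*)$; so $\on{Int}(g)\in{M'}^*=N_{G^*_{\on{ad}}}(M^*)\cap P^*_{\on{ad}}$, and since it also normalizes $S^*$ we conclude $\on{Int}(g)\in N_{{M'}^*}(S^*)$, which is the claim.

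The point that will require genuine care is the type bookkeeping in the first step: one must verify that $P_{\Fsm}$ really is of type $\Delta_0$ as an $\Fsm$-parabolic of the quasi-split group $G^*_{\Fsm}$ and that this is preserved by $\psi$. This reduces to the compatibility of the pinning identification $T\simeq T_H\otimes\Fsm$ with the description of $\Delta_0$, $M_0$ and the homomorphism $\tau$ in \S\ref{sss1a3}, together with the standard fact that an inner twist induces the identity on the based root datum. The conjugacy assertions invoked in the three steps are the usual ones over a field, so once this point is settled the argument is formal.
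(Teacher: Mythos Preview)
Your proposal is correct and follows essentially the same route as the paper's proof: adjust $\psi$ over $\Fsm$ so that it carries $(P,M,S)$ to $(P^*,M^*,S^*)$, then use the $F$-rationality of these subgroups to force $\on{Int}(g)=\psi\cdot\sigma(\psi)^{-1}$ into $N_{{M'}^*}(S^*)$. The paper compresses your steps (a) and (b) into a single ``observe that there is an inner twist $\psi$ sending $P_{\Fsm}$ to $P^*_{\Fsm}$ and $M_{\Fsm}$ to $M^*_{\Fsm}$'', and in addition records that $\psi$ restricted to the connected centers of $M$ and $M^*$ is already an $F$-isomorphism (so $A\mapsto Z(M^*)^0_s\subset A^*$); this remark is not needed for the proposition as stated but is useful downstream.
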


\begin{proof} We pick up a rigidification $(A,S,P)$ of $G$. Recall $M= Z_G(A)$. Then
$M^*$ is the unique standard Levi of $G^*$ (i.e. $T^*\subset M^*$)
corresponding to $M$.  The group $M^*$ is the quasi-split inner form
of $M$ (\cite[16.4.7]{SpringerBook}). Indeed, let $P^*=B^*M^*=({\rm
Res}_{\tilde F/F}(P_0\otimes_F\ti F))^\Gamma$ (a standard parabolic
subgroup in $G^*$) correspond to $P$. Observe that there is an inner
twist $\psi:G_{\Fsm}\xrightarrow{\sim} G^*_{\Fsm}$ sending
$P_{\Fsm}$ to $P^*_{\Fsm}$ and $M_{\Fsm}$ to $M^*_{\Fsm}$. Now
$\on{Int}(g)= \psi \sigma \psi^{-1}\sigma^{-1}$  preserves
$M^*_{\Fsm}\subset P^*_{\Fsm}$. In particular, $g\in {M'}^*(\Fsm)$,
where ${M'}^*:= N_{G^*_{\on{ad}}}(M^*)\cap P^*_{\on{ad}}$ is the
corresponding Levi in $G^*_{\on{ad}}$. Therefore,
$\on{Int}(g):M^*_{\Fsm}\to M^*_{\Fsm}$ is inner. In particular, when
restricted to the (connected) centers of $M$ and $M^*$, $\psi$ is an
isomorphism of $F$-groups. Then it automatically sends $A= Z(M)^0_s$
(the maximal split torus in $Z(M)^0$) to $ Z(M^*)^0_s\subset A^*$.
By composing by an inner automorphism of $G^*(\Fsm)$ induced by an
element in $M^*(\Fsm)$, we can further assume that $\psi:M_{\Fsm}\to
M^*_{\Fsm}$ sends $S_{\Fsm}\to S^*_{\Fsm}$.
\end{proof}

Let us denote
\begin{equation}\label{N'}
{N'}^*=N_{{M'}^*}(T^*_{\rm ad})=N_{{M'}^*}(S^*_{\rm ad}), \quad
N^*_{\rm ad}:={\rm Im}({N'}^*\subset {M'}^*\to M^*_{\rm ad}).
\end{equation}

\begin{cor}\label{coropsi}
With the above notations, there is a unique class $[c^{\rm rig}]\in
\rH^1(\hat\Z,{N'}^*(\Fsm))$ whose image $[c]$ under
\[\rH^1(\hat\Z,{N'}^*(\Fsm))\to \rH^1(\hat\Z, {M'}^*(\Fsm))\to \rH^1(\hat\Z, G^*_{\on{ad}}(\Fsm))\subset \rH^1(F,G^*_{\on{ad}}),\]
gives $G$ as an inner twist of its quasi-split form $G^*$, where we
identify $\hat\Z={\rm Gal}(\Fsm/F)$ by sending $1$ to the Frobenius
$\sigma$. \endproof
\end{cor}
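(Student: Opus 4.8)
The plan is to read the class off the isomorphism produced in Proposition~\ref{propopsi} and then to check that what one gets does not depend on any of the choices. First I would fix a rigidification $(A,S,P)$ of $G$ and take an isomorphism $\psi\colon G_{\Fsm}\xrightarrow{\sim}G^*_{\Fsm}$ as in Proposition~\ref{propopsi}, so that $\psi\cdot\sigma(\psi)^{-1}=\on{Int}(g_\sigma)$ for some $g_\sigma\in N_{{M'}^*}(S^*)(\Fsm)$. Since $\on{Int}(g_\sigma)$ maps the torus $S^*$ to itself it also maps its image $S^*_{\mathrm{ad}}$ in $G^*_{\mathrm{ad}}$ to itself, so in fact $g_\sigma\in{N'}^*(\Fsm)$ in the notation of (\ref{N'}). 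The subgroups $M^*$, $P^*$, $T^*$ and $S^*$ are all defined over $F$, hence so are ${M'}^*$ and ${N'}^*$; thus $\hat\Z=\Gal(\Fsm/F)$ acts on ${N'}^*(\Fsm)$, and the continuous map $\sigma^n\mapsto g_\sigma\,\sigma(g_\sigma)\cdots\sigma^{n-1}(g_\sigma)$ is a $1$-cocycle, whose class I take for $[c^{\mathrm{rig}}]\in\rH^1(\hat\Z,{N'}^*(\Fsm))$. Pushing the cocycle forward to $G^*_{\mathrm{ad}}(\Fsm)$ and inflating to $\rH^1(F,G^*_{\mathrm{ad}})$ recovers, by the very relation $\on{Int}(g_\sigma)=\psi\sigma(\psi)^{-1}$, the class defining $G$ as an inner twist of $G^*$; this is consistent because $G_{\Fsm}\simeq G^*_{\Fsm}$ (both are quasi-split and each is an inner form of the other, as recalled in \S\ref{ss1a}), so $\psi$ may be and is chosen $\Fsm$-rational and the defining Galois cocycle is already inflated from $\Gal(\Fsm/F)$.

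The substance is independence of the choices. With $(A,S,P)$ fixed, let $\psi_1,\psi_2$ both be as in Proposition~\ref{propopsi}. Then $\psi_2\psi_1^{-1}$ is an automorphism of $G^*_{\Fsm}$ which becomes inner over $\bar F^s$ (the two isomorphisms lie in the same $G^*_{\mathrm{ad}}(\bar F^s)$-conjugacy class), hence $\psi_2\psi_1^{-1}=\on{Int}(h)$ with $h\in G^*_{\mathrm{ad}}(\bar F^s)$, and then $h\in G^*_{\mathrm{ad}}(\Fsm)$ since $\psi_2\psi_1^{-1}$ is $\Fsm$-rational and $G^*_{\mathrm{ad}}$ has trivial centre. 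Since each $\psi_i$ carries $P_{\Fsm},M_{\Fsm},S_{\Fsm}$ onto $P^*_{\Fsm},M^*_{\Fsm},S^*_{\Fsm}$, the automorphism $\on{Int}(h)$ stabilizes $P^*$, $M^*$ and $S^*$; parabolics being self-normalizing forces $h\in P^*_{\mathrm{ad}}$, then $h\in N_{P^*_{\mathrm{ad}}}(M^*)={M'}^*$ by the very definition of ${M'}^*$, and finally $h\in N_{{M'}^*}(S^*_{\mathrm{ad}})={N'}^*$. A direct computation gives $g_{2,\sigma}=h\,g_{1,\sigma}\,\sigma(h)^{-1}$ in $G^*_{\mathrm{ad}}(\Fsm)$, and since all three elements lie in ${N'}^*(\Fsm)$ this is a coboundary relation there, so $[c^{\mathrm{rig}}_1]=[c^{\mathrm{rig}}_2]$ in $\rH^1(\hat\Z,{N'}^*(\Fsm))$. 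Independence of the rigidification is even easier: by Lemma~\ref{transitive} any two rigidifications of $G$ differ by $\on{Int}(\gamma)$ with $\gamma\in G_{\mathrm{ad}}(F)$, and then $\psi\circ\on{Int}(\gamma)^{-1}$ is admissible for the new rigidification while, because $\sigma(\gamma)=\gamma$, the cocycle $\psi\sigma(\psi)^{-1}$ is literally unchanged. Together these two verifications give the well-defined class asserted in the statement, and its image is the inner twist class by construction.

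I expect the normalizer bookkeeping in the second paragraph — identifying exactly which subgroups of $G^*_{\Fsm}$ the ambiguity $\on{Int}(h)$ must preserve, and concluding from this that $h$ lies in ${N'}^*$ rather than in some larger subgroup — to be the only genuinely delicate point, together with keeping straight the harmless discrepancy between $S^*$ and $S^*_{\mathrm{ad}}$, equivalently between $N_{{M'}^*}(S^*)$ and ${N'}^*=N_{{M'}^*}(S^*_{\mathrm{ad}})=N_{{M'}^*}(T^*_{\mathrm{ad}})$. Everything else is formal manipulation of non-abelian $\rH^1$ over $\hat\Z$ on top of the inputs already assembled in Proposition~\ref{propopsi} and Lemma~\ref{transitive}.
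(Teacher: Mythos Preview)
Your argument correctly shows that the class produced by the recipe of Proposition~\ref{propopsi} is independent of the choice of $\psi$ and of the rigidification. But that is not quite what the corollary asserts. The statement claims uniqueness of the \emph{preimage}: there is exactly one class in $\rH^1(\hat\Z,{N'}^*(\Fsm))$ mapping to the inner-twist class $[c]$ of $G$. (The Remark immediately following the corollary emphasizes that the map $\rH^1(\hat\Z,{N'}^*(\Fsm))\to \rH^1(F,G^*_{\rm ad})$ is not claimed injective; so uniqueness here is a genuine statement about this particular fiber, not a tautology.) What you have shown is that all $\psi$ satisfying the conclusion of Proposition~\ref{propopsi} yield the same class; you have not shown that every class in the fiber over $[c]$ arises from such a $\psi$.

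The paper closes this gap by a twisting argument: given an arbitrary $[c']$ mapping to $[c]$, twist $(G^*,S^*,M^*,P^*)$ by a representing cocycle to obtain $(G',S',M',P')$ over $F$; since $G'\simeq G$, the twisted data furnish a rigidification of $G'$, and Lemma~\ref{transitive} produces an $F$-isomorphism of rigidified groups $(G,A,S,P)\simeq(G',A',S',P')$. Composing with the twisting isomorphism then gives a $\psi$ as in Proposition~\ref{propopsi} whose cocycle is exactly $c'$, so $[c']$ equals the class you constructed. Your well-definedness argument and the paper's twisting argument are thus complementary halves of the full proof; you should add the twisting step to complete uniqueness. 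The bookkeeping you did for $h\in{N'}^*$ is correct and in fact makes explicit a point the paper leaves implicit in its final sentence.
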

\begin{proof}The existence is given by Proposition
\ref{propopsi} after choosing a rigidification $(G,A,S,P)$. Let
$[c']$ be another class in $\rH^1(\hat\Z,{N'}^*(\Fsm))$ that also
maps to $[c]$. Then via twisting $(G^*,S^*,M^*,P^*)$ by $c'$, we
obtain $(G',S',M',P')$, where $S'$ is an $F$-torus of $G'$,
$M'\supset T'$ is a $F$-Levi subgroup of $G'$ and $P'\supset M'$ is
an $F$-parabolic subgroup of $G'$. Let $A'=Z(M')^0_s$ be an
$F$-split torus of $G'$. Since $G$ and $G'$ are isomorphic as
$F$-groups, $P'$ is a minimal $F$-parabolic subgroup of $G'$,
$M'\subset P'$ is a minimal $F$-Levi subgroup of $G'$, and therefore
$A'$ is a maximal $F$-split torus. In addition, $S'$ is a maximal
$\Fsm$-split torus of $G'$ defined over $F$. Therefore, there is an
$F$-isomorphism $(G,A,S,P)\simeq (G',A',S',P')$ by Lemma
\ref{transitive}. We can now conclude that  $[c']$ coincides with
$[c]$ given by Proposition \ref{propopsi}.
\end{proof}

\begin{Remark}{\rm Note that we do not claim that the map
$\rH^1(\hat\Z,{N'}^*(\Fsm))\to \rH^1(F,G^*_{\on{ad}})$ is injective.
In fact, the group ${N'}^*$ depends on $[c]$.}
\end{Remark}

\begin{Remark}{\rm Let $(G,A,S,P)$ be a rigidified group over $F$; then
$M=Z_G(A)$ is a minimal $F$-Levi of $G$ and $T=Z_G(S)$  a maximal
torus of $G$. Let $M'$, $T_{\rm ad}$ be the images of $M$, $T$ in
$G_{\rm ad}$. We can see that the elements in $G_{\rm ad}$ that fix
the triple $(A,S,P)$ are those elements in $M'$ that fix $T$ (or
equivalently fix $T_{\rm ad}$). Therefore, we obtain the following
exact sequence of $F$-groups
\[1\to N'\to {\rm Aut}(G,A,S,P)\to {\rm Out}(G)\to 1\]
where $N'=N_{M'}(T_{\rm ad})$.}
\end{Remark}

\subsubsection{}\label{1d3}
Now observe that the center of ${M'}^*=N_{G^*_{\on{ad}}}(M^*)\cap
P^*_{\on{ad}}$ is connected. To see this, recall that
$\xch(T^*_{\on{ad}})=Q(G^*_{\on{ad}})$, and $Q(M^*_{\on{ad}})$ is
just a direct factor of $Q(G^*_{\on{ad}})$, where for a connected
reductive group $L$, $Q(L)$ denote its absolute root lattice.
Indeed, to specify $M^*_{\on{ad}}$ is the same as to choose a
$\Gal(\bar{F}/F)$-stable subset $\Delta_{M^*}\subset \Delta_{G^*}$
of simple roots for $G^*_{\on{ad}}$, and $Q(M^*_{\on{ad}})$ is the
lattice generated by the simple roots in $\Delta_{M^*}$. Then the
center of ${M'}^*$, which is the subgroup of $T^*_{\on{ad}}$ defined
as the intersection of the kernel of all $a$ for $a\in\Delta_{M^*}$,
is indeed an induced torus\footnote{The torus $Z^*$ is induced
because the Galois group permutes
$\Delta_{G^*}\setminus\Delta_{M^*}$}, denoted by $Z^*$. We have
\begin{equation}\label{barM}
1\to Z^*\to {N'}^*\to N^*_{\rm ad}\to 1.
\end{equation}
Since $Z^*$ is induced, Hilbert theorem 90 and Shapiro's Lemma
implies that ${N'}^*(\Fsm)\to N^*_{\on{ad}}(\Fsm)$ is surjective.
Then, by \cite[I. \S 5.7]{SerreGaloisCoh} we obtain an exact
sequence of pointed sets
\begin{equation}\label{barMexact}
 \rH^1(\hat\Z, {N'}^*(\Fsm))\hookrightarrow  \rH^1(\hat\Z,  {N}_{\rm ad}^*(\Fsm))\to \rH^2(\hat\Z, Z^*(\Fsm))
\end{equation}
with the first map injective. Write $Z^*=\prod_j{\rm
Res}_{F_j/F}\Gm$. Then using Shapiro's Lemma we see
\begin{equation}\label{BrF}
\rH^2(\hat\Z, Z^*(\Fsm))=\prod_j {\rm Br}(F_j)\simeq \prod_j \Q/\Z.
\end{equation}

In the next subsection, we recall an explicit cocycle representing
the image of $[c^{\rm rig}]$ under $\rH^1(\hat\Z,{N'}^*(\Fsm))\to
\rH^1(\hat\Z,N^*_{\rm ad}(\Fsm))$.

\subsubsection{}\label{1b4} Here we assume in addition that $F$ is a $p$-adic field, i.e a finite extension of
$\Q_p$. Then we can choose the inner twist $\psi$ of Proposition
\ref{propopsi} even more carefully: Indeed, recall that  since
$M=Z_G(A)$, the group $M_{\rm ad}$ is anisotropic. Therefore, by
\cite[Satz 3]{KneserII}, $M_{\on{ad}}$ is isomorphic to $\prod_i
{\rm Res}_{E_i/F}(B_i^\times/E_i^\times)$, where $B_i$ are central
division algebras of degree $m_i$ over $E_i$, and $E_i$ are finite
extensions of $F$. Therefore, we have an isomorphism
\begin{equation}\label{isoMad}
\rho: M^*_{\on{ad}}\xrightarrow{\sim}\prod
\Res_{E_i/F}\on{PGL}(m_i),
\end{equation}
sending $S^*_{\on{ad}}$ to $\prod\Res_{E_i/F}\on{D}_{m_i}$, where
$\mathrm D_n$ is the standard torus in $\on{PGL}(n,E)$.

\begin{lemma}\label{coxeter}
Suppose that $B$ is a central division algebra over the local field
$E$ of degree $n^2$ and with  Brauer group invariant $r/n$ with $0<
r<n$ and ${\rm gcd}(r, n)=1$. Denote by $\tau$ the permutation
$(12\cdots n)$. If $\varpi$ is a uniformizer of $E$ we let $\und
n:=\und {n}_r(\varpi)$ be the element of $ \on{GL}(n,\breve{E})$
given by $\und {n}(e_i)=e_{\tau^r(i)}$, if $i\neq n$, and $\und
{n}(e_n)=\varpi\cdot e_{\tau^r(n)}$. Then there is an isomorphism
\[
\psi:B\otimes_E\breve{E}\xrightarrow{\sim} M_{n\times n}(\breve{E})
\]
so that $\psi\sigma\psi^{-1}\sigma^{-1}=\on{Int}(\und n)$.
\end{lemma}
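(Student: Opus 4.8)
This is the standard explicit description of a division algebra as a cyclic algebra, made Galois-theoretic over $\breve E$. Let $E'/E$ be the unramified extension of degree $n$ inside $\breve E$; then $B$ is split by $E'$, so $B\otimes_E E' \simeq M_{n\times n}(E')$, and by the theory of cyclic algebras $B$ can be written as $B = (E'/E, \sigma, \varpi^r)$ — that is, $B$ is generated over $E'$ by an element $\Pi$ with $\Pi x \Pi^{-1} = \sigma(x)$ for $x\in E'$ and $\Pi^n = \varpi^r$ (up to adjusting by units, one may take the class of $\varpi$; the hypothesis $\gcd(r,n)=1$ ensures this has the right invariant $r/n$). The first step is to fix, once and for all, an isomorphism $\psi\colon B\otimes_E \breve E \xrightarrow{\sim} M_{n\times n}(\breve E)$ coming from the embedding $E' \hookrightarrow \breve E$ and a choice of basis, and then to compute $\psi\sigma\psi^{-1}\sigma^{-1}$ directly on generators.

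Concretely, I would proceed as follows. Pick a normal basis or, more simply, pick a primitive element $\theta$ of $E'/E$; then $1,\theta,\dots,\theta^{n-1}$ — or better, the "eigenbasis" over $\breve E$ — gives $E'\otimes_E\breve E \simeq \breve E^{\,n}$, on which $E'$ acts diagonally via the $n$ embeddings $E'\hookrightarrow\breve E$ and $\sigma$ (the Frobenius of $\breve E/E$, restricted) permutes the factors cyclically, say by $\tau$. The element $\Pi$ then acts $\breve E$-linearly on $\breve E^{\,n}$, and the relation $\Pi x = \sigma(x)\Pi$ forces $\Pi$ to be (up to a diagonal) the cyclic permutation matrix sending $e_i \mapsto e_{\tau^{\pm 1}(i)}$, while $\Pi^n = \varpi^r$ pins down the one nonzero "wrap-around" entry to be $\varpi^r$ (or $\varpi$ after a further change of basis absorbing powers, using $\gcd(r,n)=1$ to permute the standard basis into the form displayed, where only $e_n$ picks up the uniformizer and the shift is by $\tau^r$). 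Under $\psi$, the $\breve E$-semilinear action of $\sigma$ on $M_{n\times n}(\breve E)$ is entrywise-Frobenius composed with conjugation by the permutation matrix realizing $\tau$, and a short matrix computation shows this composite, as an automorphism of $M_{n\times n}(\breve E)$ relative to the "constant" $\sigma$, is exactly $\mathrm{Int}(\und n)$ with $\und n = \und n_r(\varpi)$ as defined in the statement. The combinatorial bookkeeping — matching the exponent $r$, the permutation $\tau^r$, and the single $\varpi$-entry on $e_n$ — is where all the content sits, and I'd organize it by first treating $r=1$ and then transporting to general $r$ via relabeling the basis by the bijection $i\mapsto \tau^r(i)$.

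The main obstacle I anticipate is purely bookkeeping: getting the precise normalization in the statement — the shift $\tau^r$ (not $\tau$), the placement of the uniformizer on $e_n$ only, and the sign/direction conventions for $\sigma$ acting as $\psi\sigma\psi^{-1}\sigma^{-1}$ versus $\sigma^{-1}(\cdots)\sigma$ — to come out exactly as written, rather than conjugate to it by something. I would handle this by being scrupulous about which Frobenius ($\sigma$ versus $\sigma^{-1}$) is the "arithmetic" one, consistent with the conventions already fixed in \S\ref{sss1a3}, and by checking the invariant of the resulting cocycle class in $\rH^2(\hat\Z, \overline E^{s,\times}) = \mathrm{Br}(E)$ equals $r/n$ as a sanity check: the class of $\mathrm{Int}(\und n)$ lifts to the $2$-cocycle measuring the failure of $\und n$ to satisfy $\sigma(\und n)\cdots = 1$, which by construction evaluates to $\varpi^r$ on the fundamental class, giving invariant $r/n$. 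No deep input is needed beyond the classification of central simple algebras over a local field and Proposition~\ref{locconstant}-style connectedness is not even required here.
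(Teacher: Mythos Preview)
Your approach is correct and is essentially the same as the paper's: realize $B$ as a cyclic algebra over the unramified extension $E_n/E$, then split $B\otimes_E E_n$ by sending $E_n$ to diagonal matrices via its Galois conjugates and $\Pi$ to a permutation-type matrix. The one difference is a normalization choice that saves you the relabeling step. You take the presentation $\Pi^n=\varpi^r$, $\Pi a\Pi^{-1}=\sigma(a)$, which forces $\Pi$ to act as the shift $\tau$ with a $\varpi^r$ on the wrap-around, and then you propose to absorb the $r$ by relabeling the basis using $\gcd(r,n)=1$. The paper instead uses the equivalent presentation $\Pi^n=\varpi$, $a\,\Pi=\Pi\,\sigma^r(a)$ (so the $r$ sits in the commutation relation rather than in the power of the uniformizer); sending $a\in E_n$ to $\mathrm{diag}(a,\sigma(a),\ldots,\sigma^{n-1}(a))$ and $\Pi$ directly to $\und n_r(\varpi)$ then gives $\psi$ in the required form with no further change of basis. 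Either way the remaining verification that $\psi\sigma\psi^{-1}\sigma^{-1}=\mathrm{Int}(\und n)$ is the ``explicit calculation'' you describe, and your sanity check via the Brauer invariant is a good way to confirm the bookkeeping.
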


\begin{proof}
Suppose that $E_n/E$ is an unramified extension of degree $n$ and
$\sigma$ a generator of ${\rm Gal}(E_n/E)$.
 Then we can represent $B$ as the  associative  $E_n$-algebra with generator $\Pi$
 and relations $\Pi^n=\varpi$,  $a\cdot \Pi=  \Pi\cdot \sigma(a)^r$, i.e
$$
B=\{\oplus_{i=0}^{n-1} E_n\cdot \Pi^i\ |\ \Pi^n=\varpi, \ a\cdot
\Pi=\Pi\cdot \sigma(a)^r\}.
$$
Sending $\Pi$ to $\underline n=\und {n}_r(\varpi)$ as above and
$a\in E_n$ to the diagonal matrix   $(a, \sigma(a),\ldots,
\sigma^{n-1}(a))$, gives an isomorphism  $\psi:
B\otimes_EE_n\xrightarrow{\sim} M_{n\times n}(E_n)$. The result now
follows from an explicit calculation.
\end{proof}

Suppose that the Brauer group invariant of $B_i$ is $r_i/m_i$ with
$1\leq r_i< m_i$, ${\rm gcd}(r_i, m_i)=1$. By the above lemma we can
always choose an inner twist
\begin{equation}\label{psi'}
\psi': (M_{\on{ad}})_{\Fsm}\xrightarrow{\sim}
(\prod\Res_{E_i/F}\on{PGL}(m_i))_{\Fsm},
\end{equation}  such that the element
$g\in \prod\on{PGL}(m_i,E_i\otimes_F\Fsm)$ given by
$\on{Int}(g)=\psi'\sigma\psi'^{-1}\sigma^{-1}$ satisfies $g=\prod
g_i$, and $g_i=\und {n}_{r_i}(\varpi_i)$ where $\varpi_i$ is a
uniformizer of $E_i$. Combining with the discussion in \ref{1d3}, we
obtain that

\begin{prop}\label{propopsi2}
Assume that $F$ is a $p$-adic field and choose an isomorphism
(\ref{isoMad}). Then we can find $\psi : G_{\Fsm}\xrightarrow{\sim}
G^*_{\Fsm}$ as in Proposition \ref{propopsi}, such that in addition
the image of ${\rm Int}(g)$ under ${M'}^*(\Fsm)\to M^*_{\rm
ad}(\Fsm)$ is given by the element $ {g}=\prod_i g_i $ with $g_i$ as
above.\qed
\end{prop}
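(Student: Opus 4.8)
The plan is to begin with the isomorphism $\psi$ furnished by Proposition \ref{propopsi}, and then correct it by an inner automorphism coming from $N_{{M'}^*}(S^*)$ so that the resulting $1$-cocycle in $M^*_{\rm ad}$ becomes the explicit Coxeter-type cocycle of Lemma \ref{coxeter}. The whole difficulty will be to perform this correction while keeping $\on{Int}(g)$ inside the normalizer of $S^*$, as required by Proposition \ref{propopsi}.

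Concretely, I would first fix a rigidification $(A,S,P)$ of $G$, set $M=Z_G(A)$ so that $M_{\rm ad}$ is anisotropic, and recall from \S\ref{1b4} that $M_{\rm ad}\simeq\prod_i\Res_{E_i/F}(B_i^\times/E_i^\times)$ with $B_i$ a central division algebra over $E_i$ of Brauer invariant $r_i/m_i$; fix the isomorphism $\rho$ of (\ref{isoMad}). Applying Proposition \ref{propopsi} produces $\psi$ with $\on{Int}(g)=\psi\cdot\sigma(\psi)^{-1}\in N_{{M'}^*}(S^*)(\Fsm)\subseteq {N'}^*(\Fsm)$; let $\bar g\in N^*_{\rm ad}(\Fsm)$ be its image under ${M'}^*\to M^*_{\rm ad}$ (notation of (\ref{N'})). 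Since $\psi$ carries $M_{\Fsm}$ to $M^*_{\Fsm}$, the class $[\bar g]\in\rH^1(\hat\Z,M^*_{\rm ad}(\Fsm))$ presents $M_{\rm ad}$ as an inner twist of its quasi-split form $M^*_{\rm ad}$.

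The next step is to identify this class. Via $\rho$ one has $M^*_{\rm ad}(\Fsm)=\prod_i\on{PGL}(m_i,E_i\otimes_F\Fsm)$ and an injection $\rH^1(\hat\Z,M^*_{\rm ad}(\Fsm))\hookrightarrow\rH^1(F,M^*_{\rm ad})=\prod_i{\rm Br}(E_i)[m_i]$, the injectivity being the usual identification of unramified $\on{PGL}$-cocycles with their Brauer invariants (using ${\rm Br}(\breve E)=0$). On one hand $[\bar g]$ maps to the tuple $(r_i/m_i)_i$, the invariant of $M_{\rm ad}$; on the other hand Lemma \ref{coxeter} applied to each $B_i$ gives an isomorphism $(B_i^\times/E_i^\times)_{\Fsm}\simeq\on{PGL}(m_i)_{\Fsm}$ whose Frobenius cocycle is $\on{Int}(\und n_{r_i}(\varpi_i))$, so the cocycle $\prod_i g_i$ with $g_i=\und n_{r_i}(\varpi_i)$ also has invariant $(r_i/m_i)_i$; note that $\prod_i g_i$, being a product of monomial matrices, already lies in $N^*_{\rm ad}(\Fsm)$. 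Hence $[\bar g]=[\prod_i g_i]$ in $\rH^1(\hat\Z,M^*_{\rm ad}(\Fsm))$.

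The main obstacle, as I see it, is to promote this to an equality $[\bar g]=[\prod_i g_i]$ in $\rH^1(\hat\Z,N^*_{\rm ad}(\Fsm))$, i.e.\ to find a conjugating element that normalizes the relevant maximal split torus. For this I would twist $(M^*_{\rm ad},S^*_{\rm ad})$ by $\bar g$ and separately by $\prod_i g_i$: in both cases one obtains the group $M_{\rm ad}$ equipped with a maximal $\Fsm$-split $F$-torus, and by the anisotropic case of Lemma \ref{transitive} (whose proof treats exactly the conjugacy of such tori) these two tori are $M_{\rm ad}(F)$-conjugate; combining this conjugation with the inner $F$-isomorphism witnessing the equality in $M^*_{\rm ad}$ yields $\bar h$ in the normalizer with $\prod_i g_i=\bar h\,\bar g\,\sigma(\bar h)^{-1}$. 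Finally, since $Z^*$ is an induced torus, ${N'}^*(\Fsm)=N_{{M'}^*}(S^*)(\Fsm)$ surjects onto $N^*_{\rm ad}(\Fsm)$ (see \S\ref{1d3}); lifting $\bar h$ to $\nu\in N_{{M'}^*}(S^*)(\Fsm)$ and replacing $\psi$ by $\on{Int}(\nu)\circ\psi$ leaves $\on{Int}(g)=\on{Int}(\nu g\sigma(\nu)^{-1})$ inside $N_{{M'}^*}(S^*)(\Fsm)$, so Proposition \ref{propopsi} still holds for the new $\psi$, while the image of the new $\on{Int}(g)$ in $M^*_{\rm ad}(\Fsm)$ is now $\bar h\,\bar g\,\sigma(\bar h)^{-1}=\prod_i g_i$, as desired.
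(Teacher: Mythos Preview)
Your proof is correct and is essentially a detailed unpacking of the paper's terse argument. The paper's own proof consists of the paragraph preceding the proposition together with the \qed: construct the explicit inner twist $\psi'$ of $M_{\rm ad}$ via Lemma~\ref{coxeter}, observe that its cocycle $\prod_i g_i$ already lies in $N^*_{\rm ad}(\Fsm)$, and then ``combine with the discussion in \S\ref{1d3}.'' What this combination amounts to is exactly your correction step: the uniqueness statement of Corollary~\ref{coropsi} (applied to the anisotropic group $M$, where a rigidification is just a maximal $\Fsm$-split $F$-torus) forces the image in $\rH^1(\hat\Z,N^*_{\rm ad}(\Fsm))$ of any $\psi$ from Proposition~\ref{propopsi} to agree with $[\prod_i g_i]$, and then the surjectivity $N'^*(\Fsm)\twoheadrightarrow N^*_{\rm ad}(\Fsm)$ from \S\ref{1d3} lets one lift the conjugating element and adjust $\psi$ accordingly. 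Your invocation of the anisotropic case of Lemma~\ref{transitive} is precisely the mechanism behind that uniqueness, so the two arguments coincide.
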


\bigskip

\section{Reductive groups over  $\O[u, u^{-1}]$}\label{reductive
group}

\setcounter{equation}{0}

 Recall $G$ is a connected reductive group over $F$.
   We will assume that:
\medskip

 ({\sl Tameness hypothesis}) \ {\sl $G$ splits over a finite tamely ramified extension $\ti F/F$.}

\medskip
Let $\varpi$ be a uniformizer of $\O$. Our goal in this section is
to construct, when $F$ is a $p$-adic field, a reductive group scheme $\und G$ over $\Spec(\O[u^{\pm
1}])$ which extends $G$ in the sense that its base change
\begin{equation}
\und G\otimes_{\O[u^{\pm 1}]}F,\quad  u\mapsto \varpi ,
\end{equation}
is isomorphic to $G$. This is done by following the procedure of the construction of
$G$ from its split form $H$ which was described in \S \ref{ss1a}; we
first extend the quasi-split form $G^*$ to $\und G^*$ and then give
$\und G$ by an appropriate descent. (We write $\O[u^{\pm 1}]$
instead of $\O[u, u^{-1}]$ for brevity.)

\subsection{The tame splitting field}\label{sss1a2}  We assume that $F$ is either a $p$-adic field, or $F=k((t))$ with $k$ finite and that in either case the
residue field $k$ has cardinality $q=p^m$.
 Denote by $\ti F_0$ the maximal unramified extension of $F$ that is contained in $\ti F$
 and by $\ti \O_0$, $\ti \O$ the valuation rings of $\ti F_0$, $\ti F$ respectively.
 Set $e=[\ti F:\ti F_0]$  (which is then prime to $p$) and let $\gamma_0$ be a generator of ${\rm Gal}(\ti F/\ti F_0)$.
 Recall that by Steinberg's theorem, the group $G_{\Fsm}:=G\otimes_F\Fsm$
is quasi-split. By possibly enlarging the splitting field $\ti F$,
we can now assume that
\begin{itemize}

\item{}  $G_{\ti F_0}$ is quasi-split,

\item{}  $\ti F/F$ is Galois with group $\Gamma={\rm Gal}(\ti F/F)=\langle \sigma\rangle\rtimes \langle\gamma_0\rangle$
which is the semi-direct product of $\langle \sigma\rangle \simeq
\Z/(r)$, where $\sigma$ is a lift of the (arithmetic) Frobenius
${\rm Frob}_q\in {\rm Gal}(\ti F_0/F)$, with the normal inertia
subgroup $I:={\rm Gal}(\ti F/\ti F_0)=\langle\gamma_0\rangle\simeq
\Z/(e)$, with relation $\sigma\gamma_0\sigma^{-1}=\gamma_0^q$,

\item{}  there is a uniformizer $\ti\varpi$ of $\ti \O$ such that $\ti\varpi^e=\varpi$.

\end{itemize}

Without further mention, we will assume that the extension $\ti F/F$
is as above. Then we also have $\ti\O=\ti \O_0[\ti\varpi]\simeq
\ti\O_0[x]/(x^e-\varpi)$ and $\tilde\O_0$ contains a primitive
$e$-th root of unity $\zeta=\gamma_0(\ti\varpi)\ti\varpi^{-1}$.

\subsection{Covers of $\O[u]$ and $\O[u^{\pm 1}]$.}\label{ssCovers}

Suppose that $F$ is a $p$-adic field with ring of integers $\O$ and
residue field $k=\mathbb F_q$. Let $\ti F/F$ be a finite tamely
ramified Galois extension of $F$ as in \S \ref{sss1a2} with Galois
group $\Gamma=\langle\gamma_0, \sigma |\ \sigma^r=\gamma_0^e=1,
\sigma\gamma_0\sigma^{-1}=\gamma_0^q\rangle$. The maximal tamely
ramified extension $F^t$ of $F$ in $\bar F$ is the union of fields
$\tilde F$ as above and its Galois group
$$
\Gamma^t:={\rm Gal}(F^t/F)\simeq \prod_{l\neq p}\Z_l(1)\rtimes\hat\Z
$$
is the projective limit of the corresponding groups $\Gamma$.

\subsubsection{}\label{sss2a1} Consider the affine line $\AA^1_\O=\Spec(\O[u])$ and its cover
$$
\pi: \AA^1_{\ti\O_0}=\Spec(\ti\O_0[v])\to \AA^1_\O=\Spec(\O[u])
$$
given by $u\mapsto v^e$. The (abstract) group $\Gamma$ described as
above acts on $\ti\O_0[v]$ by
$$
\sigma(\sum_i a_i v^i)=\sum_i \sigma(a_i)v^i, \quad \gamma_0(\sum_i
a_i v^i)=\sum_i a_i \zeta^i v^i,
$$
where $\zeta$ is the primitive $e$-th root of unity
$\gamma_0(\ti\varpi)\ti\varpi^{-1}$ in $\ti\O_0$. We have
$\ti\O_0[v]^\Gamma=\O[u]$ and $\pi$ is a $\Gamma$-cover ramified
over $u=0$. The restriction of $\pi$ over the open subscheme $u\neq
0$ gives a $\Gamma$-torsor
$$
\pi_0: \Spec(\ti\O_0[v^{\pm 1}])\to \Spec(\O[u^{\pm 1}])=\bbG_{m\O}.
$$
Notice that base changing the $\Gamma$-cover  $\pi$ via the map
$\O[u]\to F$ given by $u\mapsto \varpi $ gives $\Spec(\ti F)\to
\Spec(F)$ with its Galois action. In this way, we realize $\Gamma^t$
as a quotient of the fundamental group of $\bbG_{m\O}$ such that the
composed map $\Gal(\bar{F}/F)\to \Gamma^t$ coincides with the tame
quotient of $\Gal(\bar{F}/F)$. In fact, we can easily see that
$\pi_1(\Gm_\O, \Spec(\bar F))\to \Gamma^t$ is an isomorphism.

\subsubsection{} Suppose that we take $F=\Q_p$ and that $L$ is a tamely
ramified finite extension of $\Q_p$. Let $\ti F$ be a Galois
extension of $F=\Q_p$ that contains $L$ and satisfies the
assumptions of \S \ref{sss1a2} with $\Gamma={\rm Gal}(\ti F/\Q_p)$.
In particular, $\ti F_0$ is the maximal unramified extension of
$\Q_p$ contained in $\ti F$. Denote by $L_0$ the maximal unramified
extension of $\Q_p$ contained in $L$ and suppose $L\simeq
L_0[x]/(x^{e_L}-p\cdot c)$, $c\in \O_{L_0}^*$. Let $\Gamma_L$ the
subgroup of $\Gamma$ that fixes $L$. We can then see that there is a
$\Z_p[u]$-algebra homomorphism
\begin{equation}\label{invarRing}
(\ti\O_0[v])^{\Gamma_L}\simeq \O_{L_0}[w]
\end{equation}
in which the right hand side is a $\Z_p[u]$-algebra via $u\mapsto
w^{e_L}\cdot c^{-1}$.

\medskip

\subsection{Groups over  $\O[u^{\pm 1}]$}\label{ss2b}
We will now give the construction of the group schemes $\und G$ over
$\O[u^{\pm 1}]$.

\subsubsection{}\label{sss2b1}
Recall that $G$ is a connected reductive group over the $p$-adic
field $F$ which splits over the tame extension $\ti F/F$ as in \S
\ref{sss1a2}. We will use the notations of \S \ref{Preliminaries}.
In particular, $H$ over $\O$ is the Chevalley (split) form of $G$;
  we pick a pinning on $H$ as in \S \ref{sss1a1}. Then the
indexed root datum for $G$ gives a group homomorphism $\tau:
\Gamma={\rm Gal}(\ti F/F)\to {\rm Aut}_\O(H)$ and using  
\S \ref{groupPQS} we obtain the pinned quasi-split group $(G^*, T^*,
B^*, e)$ over $F$.

We apply the equivalence of categories \eqref{ssstame} to the
following case: Let $S_2=\bbG_{m\O}=\Spec(\O[u^{\pm 1}])$ and
$s=S_1=\Spec(F)\to S_2$ be the point given by $u=\varpi$. Let
$\bar{s}=\Spec (\bar F)$ be a geometric  point over $s$. Then we
obtain an
 equivalence
\begin{equation}\label{ssstameeq}
{\rm PQ}(H, \bbG_{m\O})\simeq {\rm
PQ}^{\Gamma^t}(H, \bbG_{m\O})\xrightarrow{\sim}{\rm PQ}^{\Gamma^t}(H, F).
\end{equation}
We choose a quasi-inverse of this functor and therefore, for any
pinned quasi-split group $(G^*,T^*,B^*,e^*)$ over $F$, we have
$(\und G^*,\und T^*,\und B^*,\und e^*)$ over $\O[u^{\pm 1}]$
together with an isomorphism $(\und G^*,\und T^*,\und B^*,\und
e^*)\otimes_{\O[u, u^{-1}]}F\simeq (G^*,T^*,B^*,e^*)$.

In particular, $\und G^*$ is the reductive group scheme over
$\O[u^{\pm 1}]$ which is the twisted form of $\und
H=H\otimes_\O\O[u^{\pm 1}]$ obtained from the $\Gamma$-torsor
$\pi_0: \Spec(\ti\O_0[v^{\pm 1}])\to \Spec(\O[u^{\pm 1}])$ using
$\tau$. More concretely,
\begin{equation}\label{defG*}
\und G^*=({\rm Res}_{\ti\O_0[v^{\pm 1}]/\O[u^{\pm
1}]}(H\otimes_\O\ti\O_0[v^{\pm 1}]))^\Gamma
\end{equation}
where $\gamma\in \Gamma$ acts diagonally via $\tau(\gamma)\otimes
\gamma$. The same construction applies to other groups that inherit
a pinning from $G^*$; for example, it applies to the Levi subgroup
$M_0$ that corresponds to the $\Gamma$-stable subset
$\Delta_0\subset \Delta$. We obtain
$$
 \underline{M}^*=({\rm Res}_{\ti\O_0[v^{\pm 1}]/\O[u^{\pm
1}]}(M_0\otimes_\O\ti\O_0[v^{\pm 1}]))^\Gamma.
$$
Again base changing along $u\mapsto \varpi$ gives a group
canonically isomorphic to $M^*\subset G^*$.

\subsubsection{}\label{sss2c2}
The goal of the rest of this chapter is to explain how to construct
a quadruple of group schemes $(\und G,\und A,\und S,\und P)$, whose
specialization along $u=\varpi$ gives rise to $G$ together with a
rigidification $(A, S, P)$.

First as above, we obtain adjoint quasi-split forms $\und M^*_{\rm
ad}$, $\und G^*_{\rm ad}$ over $\O[u^{\pm 1}]$ that specialize to
$M^*_{\rm ad}$, $G^*_{\rm ad}$ over $F$ after the base change
$u\mapsto \varpi$. Then let $\und {N'}^*=N_{\und {M'}^*}(\und
T^*_{\rm ad})$ and $\und N_{\rm ad}^*={\rm Im}(\und {N'}^*\to \und
M^*_{\rm ad})$.
 We also obtain a short exact (central) sequence
\begin{equation}\label{barMu}
1\to \und Z^*\to \und {N'}^*\to \und N^*_{\rm ad}\to 1,
\end{equation}
which specializes to (\ref{barM}). We will consider the base change
of these groups after the \'etale extension $\O[u^{\pm 1}]\to
\breve{\O}[u^{\pm 1}]$. For simplicity, we will sometimes omit this
base change from the notation. Here $\und Z^*$ is an induced torus
and by (\ref{invarRing}) we have
\begin{equation}\label{ZZ}
\und Z^*\otimes_\O\breve{\O}\simeq\prod_j {\rm
Res}_{\breve{\O}[w^{\pm 1}]/\breve{\O}[u^{\pm 1}]}\Gm
\end{equation}
where $w^{d_j}=uc_j^{-1}$ for $p\nmid d_j$. By applying ${\rm
Pic}(\breve{\O}[w^{\pm 1}])=(1)$ and Hilbert's theorem 90 we see
that $\rH^1_{\rm et}(\Spec(\breve{\O}[u^{\pm 1}]), \und Z^*)=(1)$.
This gives that
\begin{equation}
\und {N'}^*(\breve{\O}[u^{\pm 1}])\to \und N^*_{\rm
ad}(\breve{\O}[u^{\pm 1}])
\end{equation}
is surjective. The exact cohomology sequence for a central quotient
now gives that
\begin{equation}\label{barMuexact}
 \rH^1(\hat\Z, \und {N'}^*(\breve{\O}[u^{\pm 1}]))\hookrightarrow
\rH^1(\hat\Z, \und N^*_{\rm ad}(\breve{\O}[u^{\pm 1}]))\to
\rH^2(\hat\Z, \und Z^*(\breve{\O}[u^{\pm 1}]))
\end{equation}
is an exact sequence of pointed sets. There are natural maps between
the exact sequences (\ref{barMuexact}) and (\ref{barMexact})
obtained by sending $\breve{\O}[u^{\pm 1}]\to \breve F$ via
$u\mapsto \varpi$.

Recall the description of $M_{\rm ad}$ and the isomorphism
(\ref{isoMad}) that follow from Kneser's theorem. Using
(\ref{ssstameeq}) we obtain that there is an
 isomorphism
\begin{equation}\label{218}
\und \rho: \und M^*_{\rm ad} \xrightarrow{ \sim\ } \prod_i {\rm
Res}_{\ti\O_0[v^{\pm 1}]^{\Gamma_i}/\O[u^{\pm 1}]}{\rm PGL}(m_i)
\end{equation}
where $\Gamma_i\subset \Gamma$ are subgroups of finite index in the
Galois group $\Gamma$ of $\ti\O_0[v^{\pm 1}]/\O[u^{\pm 1}]$ which
specializes under $u\mapsto \varpi$, to  the isomorphism
(\ref{isoMad}).

\subsubsection{}\label{sss2c3}

Now recall that the anisotropic kernel $M$ (an inner form of $M^*$)
gives a well-defined class $[c(M)]$ in $\rH^1(\hat\Z, M^*_{\rm
ad}(\Fsm))$. Applying Proposition \ref{coropsi} to $M$ and the exact
sequence \eqref{barMexact}, we see that $[c(M)]$ has the following
properties:

\begin{itemize}
\item[1)] It is the image of a unique class $[c^{\rm rig}]$ in $\rH^1(\hat\Z, {N'}^*(\Fsm))$, and,

\item[2)] The image of $[c^{\rm rig}]$ under $\rH^1(\hat\Z, {N'}^*(\Fsm))\to \rH^1(\hat\Z, G^*_{\rm ad}(\Fsm))$
gives the inner twist $G$ of $G^*$.
\end{itemize}

We have $\rH^2(\hat\Z, \breve\O^*)=(1)$, hence $\rH^2(\hat\Z,
\breve{\O}[u^{\pm 1}]^*)= \rH^2(\hat\Z, \breve{\O}^*\times
u^{\Z})\simeq \rH^2(\hat\Z, \Z)\simeq \Q/\Z$, and similarly ${\rm
Br}(F)=\rH^2(\hat\Z, \breve{F}^*)=\rH^2(\hat\Z,
\breve{\O}^*\times\varpi^\Z)\simeq \Q/\Z$. Therefore, $u\mapsto
\varpi$ gives an isomorphism $\rH^2(\hat\Z, \breve{\O}[u^{\pm
1}]^*)\xrightarrow{\sim} \rH^2(\hat\Z, \breve{F}^*)$. Similarly,
using (\ref{ZZ}), and Shapiro's Lemma we obtain that
$u\mapsto\varpi$ gives an isomorphism
\begin{equation}\label{219}
 \rH^2(\hat\Z, \und Z^*(\breve{\O}[u^{\pm 1}]))\xrightarrow{\sim}  \rH^2(\hat\Z,   Z^*(\Fsm)).
\end{equation}
Note that using (\ref{218}), (\ref{invarRing})  and Shapiro's lemma
we obtain an isomorphism
\begin{equation}\label{210}
\rH^1(\hat\Z, \und M^*_{\rm ad}(\breve{\O}[u^{\pm 1}])
\xrightarrow{\sim} \prod_i \rH^1(\hat\Z, {\rm
PGL}_{m_i}(\breve{\O}[w^{\pm 1}_i])).
\end{equation}
The constructions explained in \ref{exDivision} and \ref{sss4b3}
produce Azumaya algebras ${\mathcal B}_i$ over $\O[w^{\pm 1}_i]$
that under $w\mapsto \varpi_i$  specialize to the central division
algebras $B_i$ that appear in the decomposition of the anisotropic
kernel $M_{\rm ad}$. These Azumaya algebras split over
$\breve{\O}[w^{\pm 1}_i]$; hence, by using the isomorphism
(\ref{210}), we obtain a well-defined class  $[\und c]\in
\rH^1(\hat\Z, \und M^*_{\rm ad}(\breve{\O}[u^{\pm 1}]))$ that
specializes to $[c(M)]\in \rH^1(\hat\Z, {M}^*_{\rm ad}(\Fsm))$.

Now, using Lemma \ref{coxeter}, the explicit constructions of \S
\ref{exDivision}, \S \ref{sss4b3} and our discussion in \S \ref{ss2b},
the class $[\und c]$ corresponding to the product of Azumaya
algebras in fact comes from a canonical class $[\und c^{\rm rig}]$
in $\rH^1(\hat\Z,\und N^*_{\rm ad}(\breve{\O}[u^{\pm 1}]))$, whose
specialization to $\rH^1(\hat\Z,N^*_{\rm ad}(\breve{F}))$ is the
image of $[c^{\rm rig}]$ in Corollary \ref{coropsi} under the map
$\rH^1(\hat\Z,{N'}^*(\Fsm))\to \rH^1(\hat\Z,N^*_{\rm ad}(\Fsm))$.

By (\ref{barMexact}), $[c]$ maps to the trivial class in
$\rH^2(\hat\Z,   Z^*(\Fsm))$. Using (\ref{barMuexact}) and the
isomorphism (\ref{219}) we see that $[\und c^{\rm rig}]$ belongs to
$\rH^1(\hat\Z, \und {N'}^*(\breve{\O}[u^{\pm 1}]))$ and maps to the
class $[c^{\rm rig}]$ in $\rH^1(\hat\Z, {N'}^*(\breve{F}))$ under
the specialization $u\mapsto\varpi$.

\begin{Remark}\label{unique lift}
{\rm In fact,  one can show that  specialization $u\mapsto \varpi$
induces isomorphisms
\[\rH^1(\hat\Z, \und {N'}^*(\breve{\O}[u^{\pm 1}]))\xrightarrow{\sim} \rH^1(\hat\Z, {N'}^*(\Fsm)), \quad \rH^1(\hat\Z, \und N_{\rm ad}^*(\breve{\O}[u^{\pm 1}]))\xrightarrow{\sim}\rH^1(\hat\Z, N_{\rm ad}^*(\Fsm)).\]
We  do not use this fact here. }
\end{Remark}

\subsubsection{}\label{sss2d3}
The class $[\und c^{\rm rig}]$ allows us to define an inner twist
${\Gu}$ of ${\Gu}^*$: Let $\und c^{\rm rig}$ be a 1-cocycle
representing this class and denote by ${\rm Int}(\bold{g})$ the
element in $\und {M'}^*(\breve{\O}[u^{\pm 1}])\subset \und G^*_{\rm
ad}(\breve{\O}[u^{\pm 1}])$ which is the value at $1$ of this
cocycle. We define
\begin{equation}\label{inner}
{\Gu}(R)={\Gu}^*(\breve{\O}[u^{\pm 1}]\otimes_{\O[u^{\pm
1}]}R)^{\hat\Z}
\end{equation}
where the action of the topological generator $1$ of $\hat\Z$ is
given by ${\rm Int}(\bold{g})\cdot \sigma$. (In other words, ${\Gu}$
is given by the Weil descent datum obtained from the automorphism
${\rm Int}(\bold{g})\cdot\sigma$.)
By descent, ${\Gu}$ is a reductive group over $\O[u^{\pm 1}]$ with
base change to $\breve{\O}[u^{\pm 1}]$ isomorphic to
${\Gu}^*\otimes_\O\breve{\O}$.

In fact, we obtain more. Namely, we
have
\[(\und G,\und A,\und S,\und T,\und M, \und P),
\]
where $(\und G,\und T,\und M,\und P)$ is obtained from $(\und
G^*,\und T^*,\und M^*,\und P^*)$ via twisting by the cocycle $\und
c^{\rm rig}$, $\und S$ is the maximal $\breve\O[u^{\pm 1}]$-split
subtorus of $\und T$, and $\und A$ is the maximal $\O[u^{\pm
1}]$-split subtorus of $\und T$. In addition, the specialization of
$(\und G,\und A,\und S,\und P)$ by $u\mapsto \varpi$ is a
rigidified reductive group
\begin{equation}
(G_0,A_0,S_0,P_0):=(\und G,\und A,\und S,\und P)\otimes_{\O[u^{\pm
1}]}F.
\end{equation}
By our construction, we have an isomorphism $G\simeq G_0$.

\subsubsection{}\label{sss2c5}
Similarly to Definition \ref{rigid}, let us define the groupoid ${\rm
RG}(\O[u^{\pm 1}])$ of rigidified groups over $\O[u^{\pm 1}]$. An
object is a quadruple $(\und G,\und A,\und S,\und P)$, where $\und
G$ is a connected reductive group over $\O[u^{\pm 1}]$, $\und A$ is
a maximal torus of $\und G$, $\und S\supset\und A$ is a torus, which
is maximal $\breve\O[u^{\pm 1}]$-split, and $\und P$ is a parabolic
subgroup of $\und G$ containing $\und M=Z_{\und G}(\und A)$ as a
Levi factor, such that: (i) $(\und G,\und A,\und S,\und
P)\otimes_{\O[u^{\pm 1}]}F$ is a rigidified group as in Definition
\ref{rigid}; (ii) there exists a pinned quasi-split form $(\und
G^*,\und T^*,\und B^*,\und e^*)$ over $\O[u^{\pm 1}]$ and an
\emph{inner} twist
\[\und \psi:(\und G,\und S,\und P)\otimes_{\O[u^{\pm 1}]}\breve{\O}[u^{\pm 1}]\xrightarrow{\sim} (\und G^*,\und S^*,\und P^*)\otimes_{\O[u^{\pm 1}]}\breve{\O}[u^{\pm 1}],\]
where $\und S^*$ is the maximal $\breve\O[u^{\pm 1}]$-split subtorus
of $\und T^*$ and $\und P^*$ is a parabolic subgroup of $\und G^*$
containing $\und B^*$.

Observe that an inner twist $\und \psi$ defines a cocycle $\und
c=\und \psi\cdot\sigma(\und \psi)^{-1}$ with values in $\und
{N'}^*$, where as before $\und {N'}^*$ is the normalizer of $\und
T^*$ in $\und {M'}^*$, and $\und {M'}^*$ is the standard Levi in
$\und G^*_{\rm ad}$ given by $\und P^*_{\rm ad}$.

By definition, we have the specialization functor ${\rm
RG}(\O[u^{\pm 1}])\to {\rm RG}(F)$. Our construction shows that the
essential image of this functor consists of all rigidified groups
over $F$ that are split over $F^t$ (we denote this latter
subgroupoid by ${\rm RG}(F^t/F)$). In fact, assuming Remark \ref{unique
lift}, it is easy to see that the isomorphism classes in ${\rm
RG}(\O[u^{\pm 1}])$ biject to the isomorphism classes in ${\rm
RG}(F^t/F)$.

\begin{Remark}\label{Piano}{\rm
Let $H$ be a Chevalley group over $\bbZ$. Recall that given a base
scheme $S$, the set of isomorphism classes of forms $\cal H$ of $H$ over $S$
are classified by the (\'etale) cohomology set $\rH^1(S, \und{\rm Aut}(H))$. As
every reductive group over $F$ admits a rigidification, a corollary
of the above discussion is that the natural specialization map
\begin{equation}\label{sp}
\rH^1(\Spec(\O[u^{\pm 1}]), \und{\rm Aut}(H))\to \rH^1(F^t/F, \und{\rm Aut}(H))
\end{equation}
is surjective, where $\rH^1(F^t/F, \und{\rm Aut}(H))$ denotes the set of
isomorphism classes of forms of $H$ over $F$ that  split over
$F^t$. 

It is natural to ask whether the map (\ref{sp}) is also injective.
If this is the case, then each form of $H$ over $\O[u^{\pm 1}]$, whose specialization along $u\mapsto \varpi$ is isomorphic to $G$, is isomorphic to $\und G$. This would characterize $\und G$ uniquely up to isomorphism. So far, we can prove a weaker result:
If a form $\mathcal H$ of $H$ over
$\O[u^{\pm 1}]$ contains a maximal torus and splits over a finite
Galois extension (as in \S \ref{sss2a1}) with Galois group $\Gamma$ of order
prime to $p$, then $\Hh$ is isomorphic to $\und G$ where
$G=\Hh\otimes_{\O[u^{\pm 1}]}F$ is the specialization of $\Hh$ along
$u\mapsto\varpi$. This fact implies that (\ref{sp})
is injective when restricted to the subset which corresponds to such
forms and provides,  in the case that the Galois closure of the splitting field of $G$ has order prime to $p$, a characterization of the reductive group $\und G$
as the unique up to isomorphism form of $H$ over $\O[u^{\pm 1}]$ which contains a maximal torus, splits after 
 a base change as above, and extends $G$.
This result is obtained by using ideas of
Pianzola and Gille; they have developed a theory of ``loop reductive
groups" over Laurent polynomial rings $K[u_1^{\pm 1}, \ldots, u^{\pm
1}_n]$ with $K$ a field (see for example \cite{PianzolaGille},
\cite{ChernPianzolaGille}). Essentially, these are reductive groups
that afford a  maximal torus over this base. The fibers $\und
G\otimes_{\O[u^{\pm 1}]}F[u^{\pm 1}]$, $\und G\otimes_{\O[u^{\pm
1}]}k[u^{\pm 1}]$, of the groups $\und G$ defined above are loop
reductive groups in their sense.  
We intend to undertake a more comprehensive study
of reductive groups over $\O[u^{\pm 1}]$ and report on this
in another paper.}
\end{Remark}

\bigskip

\section{Parahoric group schemes over $\O[u]$}\label{groupscheme}
\setcounter{equation}{0}

In this chapter, we give our  construction of the ``parahoric" group
schemes $\calG$ over $\O[u]$ which are among the main tools of the
paper. The main results are Theorem \ref{grpschemeThm} and Corollary \ref{application}
which describes
the properties of these group schemes.

\subsection{Preliminaries}\label{3a}

We start with some preliminaries on Bruhat-Tits buildings. Suppose
that $K$ is a discretely valued quasi-local field
(\cite[1.1.1]{LandvogtCrelle}) with valuation ring $\O_K$ and with
perfect residue field. If $G$ is a connected reductive group over
$K$, we can consider the Bruhat-Tits building ${\mathcal B}(G, K)$.
If $S$ is a maximal split torus of $G$, we denote by $\A(G, S, K)$
the corresponding apartment in ${\mathcal B}(G, K)$.

Note that if $\hat K$ is the completion of $K$, the natural maps
give identifications $\A(G, S, K)=\A(G_{\hat K}, S_{\hat K}, \hat
K)$, ${\mathcal B}(G, K)={\mathcal B}(G_{\hat K}, \hat K)$
(\cite[Prop. 2.1.3]{LandvogtCrelle}).

\subsubsection{} Suppose now that $(H, T_H, B_H, e)$ is a pinned Chevalley
group scheme $H$ over $\Z$ and consider $H_K =H\otimes_\Z K$ and
$T_K=T_H\otimes_\Z K$ over the field $K$. We will sometimes omit the
subscripts when the choice of the field $K$ is clear. After fixing
an identification of the value groups $v(K^*)=\Z$, we can identify
the apartments $\A(H, T_H, K)\subset {\mathcal B}(H, K)$ for all
such fields $K$. In particular, we can identify $\A(H, T_H,
\kappa((u)))$ for $\kappa=F$, $k$ and also $\A(H, T_H, F)$ such that, under
this identification, the hyperspecial points of $\A(H, T_H, K)$ with
stabilizer $H(\O_K)$ correspond to each other. Observe that we can also
identify the Iwahori-Weyl groups
$\widetilde{W}=N_H(T_H)(K)/T_H(\O_K)$ for different $K$ canonically
such that the above identification of the apartments is compatible with the
action of $\widetilde{W}$.

\subsubsection{}The above generalizes to
the quasi-split case as follows. Let $(\und G^*,\und T^*,\und B^*,
\und e^*)$ be a pinned quasi-split group over $\O[u^{\pm 1}]$. Let
$\und S^*$ be the maximal $\breve\O[u^{\pm 1}]$-split subtorus of
$\und T^*$ and $\und A^*$ be the maximal $\O[u^{\pm 1}]$-split torus
of $\und T^*$. Then as the centralizer of $\und S^*$ is $\und T^*$,
we see that $\und S^*_{ \kappa'((u))}:=\und S^*\otimes_{\O[u^{\pm
1}]} \kappa'((u))$ is a maximal $\kappa'((u))$-split torus of $\und
G^*_{\kappa'((u))}:=\und G^*\otimes_{\O[u^{\pm 1}]} \kappa'((u))$
for $\kappa'=\bar{k}$, $\Fsm$. Similarly, $\und S^*_{\Fsm}:=\und
S^*\otimes_{\O[u^{\pm 1}]} \Fsm$ is a maximal $\Fsm$-split torus of
$\und G^*_{\Fsm}:=\und G^*\otimes_{\O[u^{\pm 1}]} \Fsm$. We have
canonical identifications
\begin{equation}\label{aptident1}
\A(\und G^*_{\kappa'((u))},\und
S^*_{\kappa'((u))},\kappa'((u)))=\A(\und G^*_{\Fsm},\und
S^*_{\Fsm},\Fsm).
\end{equation}
Indeed,  these apartments are identified with
$\calA(H,T_H,K)^{\gamma}$ (using the tameness assumption, as in
\cite{BTII}, or \cite[Ch. IV, \S 10]{LandvogtLNM}, see also
\cite{PrasadYuInv}), where $K=\kappa'((u))$ or $\Fsm$, and $\gamma$
is a generator of the inertia subgroup. Similary, we can identify
the Iwahori-Weyl group for $(\und G^*_K,\und S^*_K)$ canonically and
compatibly with the identification of the apartments.

\subsubsection{}\label{sss3a3} We also have the following further generalization. Let
$(\und G,\und A,\und S,\und P)$ be a rigidified group over
$\O[u^{\pm 1}]$ as defined in \S \ref{sss2c5}. Then $\und A_F$ is a
maximal $F$-split torus of $\und G_F$. Let $x\in \A(\und G_F,\und
A_F,F)\subset {\mathcal B}(\und G_F, F)$ be a point in the
apartment. The identification \eqref{aptident1} induces
\begin{multline}\label{aptident2}
\A(\und G_F,\und A_F, F)=\A(\und G_{\Fsm},\und
S_{\Fsm},\Fsm)^\sigma=\A(\und G^*_{\Fsm},\und S^*_{\Fsm},\Fsm)^{{\rm
Int}(\mathbf g)\sigma}\\ =\A(\und G^*_{\kappa'((u))}, \und
S^*_{\kappa'((u))},\kappa'((u)))^{{\rm Int}(\mathbf
g)\sigma}=\A(\und G_{\kappa'((u))}, \und
S_{\kappa'((u))},\kappa'((u)))^{\sigma} ,
\end{multline}
and therefore we obtain $x_{\kappa((u))}\in \A(\und
G_{\kappa'((u))},\und S_{\kappa'((u))},\kappa'((u)))^\sigma$.
Therefore, using again $\calB(\und
G_{\kappa'((u))},\kappa'((u)))^\sigma=\calB(\und
G_{\kappa((u))},\kappa((u)))$, we see that $x_{\kappa((u))}$ can be
regarded as a point in the building $\calB(\und
G_{\kappa((u))},\kappa((u)))$.

\subsection{The main construction}\label{3b}

We now state the main result of this section. Let $(\und G,\und
A,\und S, \und P)$ be a rigidified group over $\O[u^{\pm 1}]$ as
defined in \S \ref{sss2c5}. Let $x\in \A(\und G_F,\und A_F,F)$. By
\ref{sss3a3}, we also obtain from $x$ points $x_{\kappa((u))}$ in
${\mathcal B}(\und G_{\kappa((u))}, \kappa((u)))$ for $\kappa=F$ or
$k$.

\smallskip

\begin{thm}\label{grpschemeThm}
There is a unique   smooth, affine group scheme $\Gg=\Gg_x\to
{\mathbb A}^1_{\O}=\Spec(\O[u])$ (called a Bruhat-Tits group
scheme for $\und G$) with connected fibers and with the following
properties:

1) The group scheme $\Gg_{|\O[u, u^{-1}]}$ is  the group scheme
$\underline{G}$.

2) The base change of $\Gg$ under $\Spec(\O)\to {\mathbb A}^1_{\O}$
given by $u\mapsto \varpi$ is the parahoric group scheme $\PP_{x}$ for
$\und G_F$ (as defined in \cite{BTII}).

3) The base change of $\Gg$ under $\Spec(\kappa[[u]])\to
{\mathbb A}^1_{\O}$ given by $\O[u]\to \kappa[[u]]$ is the parahoric group scheme
$\PP_{x_{\kappa((u))}}$ for $\und G_{\kappa((u))}$.
\end{thm}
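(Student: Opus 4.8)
The plan is to construct $\Gg=\Gg_x$ in four steps of increasing generality, following the reduction pattern of \cite{BTII}, and at each step to verify properties (1)--(3) by comparison with the Bruhat--Tits parahorics over the discrete valuation rings $\O$, $F[[u]]$ and $k[[u]]$. For uniqueness, observe that $\Gg$, being smooth over the regular ring $\O[u]$, is itself regular and, having connected fibres over the connected base $\AA^1_\O$, integral; hence its coordinate ring is a normal domain, equal to the intersection of its localisations at the height-one primes. Since $u$ is a unit in $\O[u]_{\mathfrak p}$ for every height-one prime $\mathfrak p\neq(u)$, the scheme $\Gg$ is determined by its restriction to $\O[u,u^{-1}]$ --- which is $\und G$ by (1) --- together with its base change to $\O[u]_{(u)}$; and the latter, after $u$-adic completion (faithfully flat, and undone by descent of affine schemes), is the parahoric $\PP_{x_{F((u))}}$ over $F[[u]]$ by (3) with $\kappa=F$, which is unique by \cite{BTII}. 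Any morphism of two such group schemes compatible with these data is therefore an isomorphism.

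\textbf{The split cases.} Assume first $G$ is split, so $\und G=H\otimes_\O\O[u,u^{-1}]$ for $H$ the Chevalley form, and suppose moreover that $K$ lies in a hyperspecial subgroup, normalised so that the hyperspecial vertex is the origin. The facet of $x$ singles out a standard parabolic subgroup scheme $\mathcal H\subset H_\O$ (possibly $H_\O$ itself), smooth with connected fibres, and I set $\Gg$ equal to the dilatation (N\'eron blow-up) of $H_{\O[u]}$ along $\mathcal H$, regarded as a closed subgroup of the fibre $H_{\O[u]}|_{u=0}=H_\O$ over the Cartier divisor $\{u=0\}$. Dilatation of a smooth affine group scheme along a smooth closed subgroup of a divisor is again smooth and affine, with connected fibres when the centre of dilatation is fibrewise connected, and it commutes with the flat base changes $u\mapsto\varpi$ and $\O\to\kappa$, $\kappa[u]\to\kappa[[u]]$, each of which carries $\{u=0\}$ to $\{u=0\}$ or its trace; thus (2) and (3) follow from the classical descriptions of parahorics attached to a facet of a hyperspecial alcove as dilatations of a Chevalley group, and (1) is immediate. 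For general $x$, still with $G$ split, there is by \cite{BTII} a tame finite extension $L/F$ over which the stabiliser of $x$ becomes hyperspecial; over the corresponding ramified cover $\Spec(\O_{L_0}[v'])\to\AA^1_\O$, $u\mapsto (v')^{e_L}c^{-1}$, as in \eqref{invarRing}, the previous case applies, and $\Gg$ is recovered as the fibrewise neutral component of the $\Gal$-fixed points of the Weil restriction, by the descent procedure of \cite{BTII}; since $e_L$ is prime to $p$ the fixed points are smooth, the construction commutes with the relevant base changes, and (1)--(3) reduce again to \cite{BTII}.

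\textbf{The quasi-split and general cases.} When $G$ is quasi-split, $\und G=\und G^*$ arises from a diagram-automorphism action of $\Gamma=\Gal(\ti F/F)$ on $H$ through the $\Gamma$-cover $u\mapsto v^e$ of \S\ref{sss2a1}; by the identifications of \S\ref{sss3a3} the point $x$ corresponds to a $\Gamma$-fixed point in the building of $H$, so the split construction applied to $H$ over $\ti\O_0[v]$ carries a $\Gamma$-action, giving a $\Gamma$-equivariant Bruhat--Tits group scheme $\mathcal H_x$ over $\ti\O_0[v]$, and I put $\Gg$ equal to the fibrewise neutral component of $(\,{\rm Res}_{\ti\O_0[v]/\O[u]}\mathcal H_x\,)^\Gamma$. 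Properties (1)--(3) then follow from the quasi-split descriptions of parahorics over $\O$ and over $\kappa((u))$ recalled in \S\ref{sss3a3} and in \cite{BTII}, \cite{PrasadYuInv}. Finally, for general $G$ I use \S\ref{sss2d3}: $\und G$ is the inner twist of $\und G^*$ by the cocycle $\und c^{\rm rig}$ valued in $\und{N'}^*$, trivialised over the finite \'etale cover $\breve\O[u^{\pm1}]/\O[u^{\pm1}]$; I descend the quasi-split $\Gg^*$ along a finite \'etale cover of $\AA^1_\O$ refining $\Spec(\breve\O[u])\to\AA^1_\O$, using a descent datum manufactured from $\und c^{\rm rig}$ exactly as a parahoric is descended in \cite{BTII}, and (1)--(3) are inherited from the quasi-split case via faithfully flat descent together with the compatibility of $\und c^{\rm rig}$ with $u\mapsto\varpi$ established in \S\ref{sss2c3}.

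\textbf{Main obstacle.} The delicate points are all of one kind: at the quasi-split step one passes to the fibrewise neutral component of a fixed-point group scheme, and at the last step one needs the \'etale descent of an affine group scheme over the two-dimensional base $\AA^1_\O$ to be effective and to remain affine. Both require controlling the component groups of the fixed-point groups, for which the inputs are the results of \S\ref{Preliminaries} on fixed points --- in particular Proposition \ref{locconstant} --- and, crucially, knowing that $\Gg$ admits a faithful representation into some $\GL_n$ with quasi-affine quotient $\GL_n/\Gg$; this last fact, proved in the appendix (\S 10), is what makes affineness survive the neutral-component and descent operations. I expect this affineness bookkeeping over the two-dimensional base, rather than the group-theoretic combinatorics, to be the real work.
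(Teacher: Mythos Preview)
Your four-step strategy (split with hyperspecial $\to$ split general $\to$ quasi-split $\to$ general) and your uniqueness argument via the intersection formula both match the paper's.

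However, you have mislocated the technical core. The appendix result on quasi-affine $\GL_n/\Gg$ is \emph{not used} in the proof of this theorem; it enters only later, for the ind-representability of $\Gr_{\Gg,X}$. Affineness here is elementary at each stage: dilatations of affine schemes are affine, Weil restrictions of affine schemes are affine, closed fixed-point subschemes of affine schemes are affine, and Weil (or finite \'etale) descent of affine schemes is effective and preserves affineness. So the obstacle you name in your final paragraph is a non-issue.

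The genuine work you have underweighted is twofold. First, in the quasi-split step, after forming $\Gg'_\Omega=(\Res_{\tilde\O_0[v]/\O[u]}\Hh_\Omega)^\Gamma$, one must produce the neutral component \emph{over the two-dimensional base} $\O[u]$. The paper does this by excising the non-identity components of the fibre at $u=0$ (a further dilatation, hence still affine) and then must show the resulting $\Gg''_\Omega$ has connected fibres everywhere; this amounts to checking that the component group of the $u=0$ fibre has the same order over $F$ and over $k$, which is exactly where Proposition~\ref{locconstant} is applied, to the $\gamma_0$-action on the maximal reductive quotient $\overline{\Hh}^{\rm red}_\Omega$. Your phrase ``take the fibrewise neutral component'' hides precisely this. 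Second, in the general step the paper uses a Weil descent datum ${}^*\sigma=\operatorname{Int}(\mathbf g)\cdot\sigma$ on $\Gg^*_{x^*}$ over $\breve\O[u]$, and the substantive point is that ${}^*\sigma$, a priori defined only over $\breve\O[u,u^{-1}]$, \emph{extends} across $u=0$. The paper checks this via $A=A[u^{-1}]\cap(A\otimes_{\breve\O[u]}\breve F[[u]])$ together with the fact that ${}^*\sigma$ fixes $x^*_{\breve F((u))}$ and hence preserves $\calP_{x^*_{\breve F((u))}}(\breve F[[u]])$. You do not mention this extension problem, and your formulation via ``\'etale descent along a finite cover'' obscures that the content is entirely in this extension.

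A smaller remark: in the split non-hyperspecial case the paper takes a more direct route than you describe. It does not use Weil restriction and Galois fixed points; instead, after the cover $u\mapsto v^\delta$ (with $\delta$ chosen via a lemma of Gille and Larsen so that the rescaled $\Omega_{\delta f}$ contains a hyperspecial vertex) it simply compares schematic root data to identify $\Gg_\Omega\otimes_{\O[u]}\O[v]$ with the case-(I) construction over $\O[v]$, and then invokes faithfully flat descent for affineness. Note also that after this cover the concave function becomes $\delta\cdot f$, which may well exceed $1$, so case (I) itself requires an inductive \emph{sequence} of dilatations along successive parabolic-type subgroups, not the single dilatation you propose.
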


\smallskip

\subsubsection{}\label{unique} We first prove the uniqueness statement in
Theorem \ref{grpschemeThm}. Recall that the parahoric group scheme $\PP_x$
is a group scheme (smooth, affine, connected) over $\Spec(\O)$ with
 generic fiber $\und G_F$ such that $\PP_x(\O)\subset \und G(F)$
is the connected stabilizer of $x$. Similarly for
$\PP_{x_{\kappa((u))}}$.

We will show that if $\Gg'$ is a smooth affine connected group scheme over
$\O[u]$ with $\Gg[u^{-1}]=\und G$ and which is such that
$\Gg'(F[[u]])\subset \und G(F((u)))$ is the connected stabilizer of
$x_{F((u))}$ in the building of $\und G_{F((u))}$ then
$\Gg'=\Gg_x$. In particular, this will show the uniqueness. To see this
set $\Gg_x=\Spec(B)$, $\Gg'=\Spec(B')$. Our assumptions imply
$B[u^{-1}]=B'[u^{-1}]$ and that $\Gg'(F[[u]])=\Gg_x(F[[u]])$. Since
$F$ is infinite and perfect, $F[[u]]$ is henselian and both the group
schemes $\Gg'$ and $\Gg_x$ are smooth, condition (ET 2) of \cite[1.7.2]{BTII} is
satisfied; hence the second identity implies that
$B'\otimes_{\O[u]}F[[u]]=B\otimes_{\O[u]}F[[u]]$. Observe now that
since $B$ is smooth over $\O[u]$,
$$
B=B[u^{-1}]\cap (B\otimes_{\O[u]}F[[u]])
$$
and similarly for $B'$. Hence $B=B'$ which gives $\Gg'=\Gg_x$. This
actually shows that $\Gg_x$ only depends on $\und G$ and
$x_{F((u))}$.

\subsubsection{} Here we show the existence part of Theorem \ref{grpschemeThm}.

a) First suppose that $\und G=H\otimes_\Z \O[u^{\pm 1}]$
is split; we will then consider more general convex subsets
of the apartment (and not just points). Denote by $\Phi=\Phi(H, T_H)$ the corresponding root
system. The pinning $(H, T_H, B_H, e)$ gives a hyperspecial vertex
$x_0$ of the apartment $\A(H, T_H, K)$ of $T_H$. This determines a
filtration $\{U_a(K)_{x_0, r}\}_{r\in \R}$ of the corresponding root
subgroups for any local field $K$. Let $f: \Phi\to \R$ be a concave
function; there is an associated convex subset $\Omega=\Omega_f$ of
the apartment $A$ given by
$$
\Omega=\{x\in A\ |\ a\cdot (x-x_0)+f(a)\geq 0\}.
$$
Conversely, to a convex bounded subset $\Omega\subset A$, we can
associate the concave function $f_\Omega: \Phi\to \R$ given by
$$
f_\Omega(a)={\rm inf}\{\lambda\in \R \ |\ a\cdot (x-x_0)+\lambda\geq
0, \forall x\in \Omega\}
$$
Notice that $x_0\in \Omega_f$ if and only if $f\geq 0$. Now denote
by $H(K)_{x_0, f}$ the subgroup of $H(K)$ generated by $U_a(K)_{x_0,
f(a)}$ and $T_H(\O_K)$. By \cite{BTII}, there is an associated
connected affine smooth group scheme $\PP_{x_0, f, K}$ over
$\Spec(\O_K)$. This only depends on $\Omega$ and we can denote it by
$\PP_\Omega$.

We will start by explaining how to construct a smooth affine group
scheme $\Gg_{x_0, f}$ over $\O[u]$ that lifts $\PP_{x_0, f, K}$ for
$K=F$ and $K=k((u))$ as it is required in the statement of the
theorem. We first consider the additive group schemes ${\mathcal
U}_{a, x_0, f}=u^{\langle f(a)\rangle }{U}_{a}\otimes_\Z \O[u]\simeq
{\mathbb G}_{a}\otimes_\Z \O[u]$ over $\O[u]$ and the torus
${\mathcal T}=T_H\otimes_{\Z}\O[u]$. (Here $\langle r \rangle$ is
the smallest integer that is larger or equal to $r$).  Notice here
that by definition $u{U}_{a}\otimes_\Z \O[u]=\Spec(\O[u, x/u])$ is
the dilatation of $U_a\otimes_\Z \O[u]\simeq {\mathbb
G}_{a}\otimes_\Z \O[u]$ along the zero section $\Spec(\O)\to
{\mathbb G}_a\otimes_\Z\O\hookrightarrow {\mathbb G}_{a}\otimes_\Z
\O[u]$ over $u=0$. Similarly, $u^{i}{U}_{a}\otimes_\Z
\O[u]=\Spec(\O[u, u^{-i}x])$. Since $f$ is concave, these group
schemes give as in \cite[3.1.1]{BTII}, or \cite{LandvogtLNM},
schematic root data over $\O[u]$. (In particular, see
\cite[3.2]{BTII} for this split case.) By using \cite{BTII} (3.9.4
together with 2.2.10) we obtain a (smooth) group scheme $\Gg_{x_0,
f}$ over $\O[u]$ which has a fiberwise dense open subscheme given by
$$
{\mathcal V}_{x_0,f}=\prod_{a\in \Phi^-}{\mathcal U}_{a,
x_0,f}\times {\mathcal T}\times \prod_{a\in \Phi^+}{\mathcal U}_{a,
x_0, f}.
$$
By \cite[1.2.13, 1.2.14]{BTII} this group scheme is uniquely
determined from the schematic root data. By \cite{BTII} or
\cite{LandvogtLNM} the group schemes $\Gg_{x_0, f, K}$ are also
given by the schematic root data obtained by base changing the
schematic root data on ${\mathcal V}_{x_0, f}$ via $\O[u]\to \O_K$.
This implies that $\Gg_{x_0, f}$ specializes to $\PP_{x_0, f}$ as in
(2) and (3). Property (1) also follows easily. It remains to show
that $\Gg_{x_0,f}$ is affine.
\medskip

(I) We first assume that $ \Omega$ contains  the  hyperspecial
vertex $x_0$,  i.e  we can take $f\geq 0$.
\medskip

When $f=0$, $\Gg_{x_0, f}\simeq H\otimes_\Z \O[u]$ which is affine.
We will build on this, showing that, in general, $\Gg_{x_0, f}$ can
be obtained from $\Gg_{x_0, 0}$ by a series of dilatations. This
follows an argument of Yu (\cite{YuModels}) and provides   an
alternative construction of the group schemes. Since such
dilatations are affine, we will obtain the desired conclusion.

(i) Assume first that $0\leq f\leq 1$. Consider the parabolic
subgroup $P_f\subset H$ over $\Z$ containing $T=T_H$ which
corresponds to the set of roots $a\in \Phi$ with $f(a)=0$: For any
field $k$, $P_f(k)$ is generated by $T(k)$ and $U_a(k)$ with
$f(a)=0$.  In this case, we
 can consider the dilatation of $H\otimes_\Z\O[u]$ along
the closed subgroup scheme given by  $P_f\otimes_\Z \O \hookrightarrow
H\otimes_\Z \O[u]$ over $u=0$: Suppose that $H=\Spec(A)$ and
$P_f=\Spec(A/I)$. Set $J=\O\cdot I +(u)$ for the ideal generated by
$I$ and $u$ in $A\otimes \O[u]$. Then we consider
$$
\displaystyle{B=(A\otimes \O[u])[ ju^{-1} \ |\ j\in J]\subset
A\otimes\O[u, u^{-1}]}
$$
and set
$$
\Hh_{x_0,f}=\Spec(B).
$$
(We refer the reader to \cite{Waterhouse} for basic properties of
dilatations of group schemes.) If $R$ is an $\O[u]$-algebra we set
$\bar R=R/(u)=R\otimes_{\O[u]}\O$ and we denote by  $\bar h  \in
H(\bar R)$ the reduction of $h\in H(R)$. Then
$\Hh_{x_0, f}$ is the unique, up to isomorphism, group scheme with $u$ not a zero divisor in its structure sheaf, that
supports a homomorphism $\Psi: \Hh_{x_0, f}\to H\otimes_{\Z}\O[u]$
with the following properties:
\begin{itemize}
\item  $\Psi$ is an isomorphism away from $u=0$,
\item for any $\O[u]$-algebra $R$ with $u$ not a zero divisor in $R$,
$\Psi$ identifies $\Hh_{x_0, f}(R)$ with the subset $\{ h\in H(R)\
|\ \bar h\in P_f(\bar R)\}$ of $H(R)$.
\end{itemize}
We can see that $\Hh_{x_0,f}$ has connected fibers and is given by
the same group germ ${\mathcal V}_{x_0,f}$ as $\Gg_{x_0,f}$.
Therefore, $\Gg_{x_0,f}\simeq \Hh_{x_0, f}=\Spec(B)$ and is also
affine.

(ii) In general, we can find a finite sequence $0=t_0<t_1<\cdots <
t_n=1$ such that if $f_i=t_i\cdot f$, we have $f_n=f$, and
$f_{i+1}\leq f_i+1$. We will use induction on $n$. The case $n=1$ is
given by (i) above. Note that by the construction of
\cite[3.9.4]{BTII}  we have:

a) There are closed group scheme embeddings $\UU_{a,
i}\hookrightarrow \Hh_{i}$ and ${\mathcal T}\hookrightarrow \Hh_{i}$
that extend the standard embeddings of the root subgroups and the
maximal torus over $\O[u, u^{-1}]$. (Here and in the rest of the
proof, for simplicity, we omit some subscripts and write for example $\calU_{a,
i}$, $\Hh_i$, instead of $\calU_{a, x_0, f_i}$, $\Hh_{x_0, f_i}$.)

b) The embeddings in (a) combined with the multiplication morphism
induce an open immersion
$$
j: {\mathcal V}_{i}=\prod_{a\in \Phi^-}{\mathcal U}_{a, i}\times
{\mathcal T}\times \prod_{a\in \Phi^+}{\mathcal U}_{a,
i}\hookrightarrow \Gg_{i}
$$
onto a fiberwise dense open subscheme of $\Gg_i$ which makes the
schematic root data ${\mathcal D}_{i}=({\mathcal T}, (\UU_{a,
i})_{a\in \Phi})$ compatible with
 $\Hh_{i}$ (in the sense of \cite{BTII}, 3.1.3).

c)  The group scheme   multiplication morphism
$$
{\mathcal V}_{i}\times {\mathcal V}_{i}\to \Gg_{i}
$$
is surjective on each fiber.

Our induction hypothesis is:  The smooth group scheme
$\Gg_n:=\Gg_{x_0,f_n}$ is affine and supports a group scheme
extension
\begin{equation}\label{extension}
1\to R_n\to \overline {\Gg}_{n}\to \overline {\Gg}_{ n}^{\rm red}\to
1
\end{equation}
of a (connected) reductive group scheme by a smooth affine unipotent
group scheme (both over $\O$). Here the bar indicates base change
via $\O[u]\to \O$ given by $u\mapsto 0$. The case $n=1$ follows from
(i) above. Then, $\overline {\Gg}_{1}^{\rm red}$ is the Levi
component of the parabolic $ P$.

 Let us consider $\Gg_{n+1}$. There is a natural morphism ${\calU}_{a, x_0, f_{n+1}}\to \calU_{a,x_0, f_n}$.
This is the identity when $f_{n+1}(a)=f_n(a)$ and is given by
dilatation of the zero section over $u=0$ when $f_{n+1}(a)>f_n(a)$.
These morphisms combine to give $f_n: \calV_{n+1}\xrightarrow{\ }
\calV_n\subset \Gg_n$. Since $\Gg_{n+1}[u^{-1}]=\Gg_n[u^{-1}]$, by
\cite[1.2.13]{BTII} there is a unique group scheme homomorphism $\ti
f_n: \Gg_{n+1}\to \Gg_n$ that extends $f_n$.

We will now show that $\ti f_n$ identifies $\Gg_{n+1}$ with a
dilatation of $\Gg_n$. Consider the (set-theoretic image)
 $Q_n$ of $\bar\Gg_{n+1}\to \bar\Gg_n$ which is a constructible set. We will show that $Q_n$ is closed in $\bar\Gg_n$ and is underlying a smooth
 group scheme which we will also denote by $Q_n$. (This will imply that $\Gg_{n+1}\to\Gg_n$ factors through
 $\Gg_{n+1}\to \Gg_n'\to\Gg_n$ where $\Gg'_n\to \Gg_n$ is the dilatation of $\Gg_n$ along $Q_n\subset \bar\Gg_n$.)
Using property (c) of $\Gg_{n+1}$, we see that the image $Q_n$ can
be identified with the image of
$\bar{\calV}_{n+1}\times\bar\calV_{n+1}$
 under the product map in $\bar\Gg_n$. By construction, we have morphisms
$\overline{\UU}_{a, {n+1} }\to Q_{n}$.
 Suppose $\kappa$  is either $F$ or   $k$. Denote by $V_{a,n}(\kappa)$ the image of the corresponding
$\overline{\UU}_{a, n+1 }(\kappa)\to Q_{n}(\kappa)$. Then by the
above, we see that the $\kappa$-valued points $Q_{n}(\kappa)$ is the
subgroup of $\overline {\Gg_n}(\kappa)$ generated by the groups
 $V_{a, n}(\kappa)$  and $\overline {\mathcal T}(\kappa)$. By the argument in \cite[8.3.2]{YuModels},
we see that the fibers of $Q_n$ over $\kappa=k$ and $F$, are closed
in the corresponding fibers of $\Gg_n$. Now consider the extension
(\ref{extension}). In this, $\overline {\Gg}_{ n}^{\rm red}$
contains the maximal torus $\overline{\mathcal T}$ and it
corresponds to the root datum given by $(\xch(\overline{\mathcal
T}), \Phi_{f_n}, \xcoch(\overline{\mathcal T}), \Phi^\vee_{f_n}))$
(with $\Phi_{f}=\{a\in \Phi \ |\ f(a)+f(-a)=0\}$). On the other
hand, as a scheme, $R_n$ is an affine space:
$$
R_n=\prod_{\{a\in \Phi\, |\, f_n(a)+f_n(-a)>0\}} \overline{\mathcal
U}_{a, n}.
$$
We now consider the Zariski closure $\tilde Q_n$ of the generic
fiber $Q_n(F)$ in $\overline\Gg_n$. This agrees with the Zariski
closure of $\overline{\calV}_{n+1}\hookrightarrow \overline{\Gg}_n$.
We have of course $Q_n\subset \tilde Q_n$. We can see that $\tilde
Q_n\subset \overline{\Gg}_n$ maps in $ \overline {\Gg}_{ n}^{\rm
red}$ onto the closed parabolic subgroup scheme $P_n$ of $\overline
{\Gg}_{ n}^{\rm red}$ generated by $\overline{\mathcal T}$ and
$\overline{\mathcal U}_{a, n}$ with $f_{n+1}(a)=f_{n}(a)=0$. On the
other hand, the intersection of $\tilde Q_n$ with $R_n$ can be
identified as the closed subscheme of  $R_n= \prod_{a,
f_n(a)+f_n(-a)>0} \overline{\mathcal U}_{a, n}$ given by the product
of those $\overline{\mathcal U}_{a, n}$ for which
$f_{n+1}(a)=f_n(a)$ (with $f_n(a)+f_n(-a)>0$). This and the above
allows us to conclude that $\tilde Q_n$ is an affine fibration over
$P_n$ and so all fibers of $\tilde Q_n$ are geometrically connected.
It follows that $Q_n=\tilde Q_n$ and so $Q_n$ is a smooth closed
subgroup scheme of $\overline\Gg_n$. We can see following the
argument in \cite[8.3.3]{YuModels},  that we have
\begin{equation}\label{bigcell}
Q_{n}\cap \overline {{\mathcal V}}_{n} ={\rm Im}(\overline
{{\mathcal V}}_{n+1}\rightarrow \overline{\calV}_n).
\end{equation}
As above $\Gg_{n+1}\to \Gg_n$ factors $\Gg_{n+1}\to \Gg'_n\to
\Gg_n$, where $\Gg'_n$ is the dilatation of $\Gg_n$ along $Q_n\subset
\overline\Gg_n$. Then $\Gg'_n$ is a smooth affine  group scheme over
$\Spec(\O[u])$ with connected fibers. Observe that  $\UU_{a,
f_{n+1}}$ is by definition isomorphic to the dilatation of $\UU_{a,
f_n}$ along the image of the morphism $\overline{\UU}_{a,
f_{n+1}}\to \overline{\UU}_{a, f_n}$. As a result, the dilatation of
${\mathcal V}_{n}$ along the image of $\overline {{\mathcal
V}}_{n+1}\to \overline {{\mathcal V}}_{n}$ is isomorphic to
$\overline {{\mathcal V}}_{n+1}$. It now follows from the
functoriality  of the dilatation construction and (\ref{bigcell})
that  the dilatation  $\Gg'_{n}$ of $\Gg_n$ along $Q_{n}$ has an
open subscheme isomorphic to ${\mathcal V}_{n+1}\subset \Gg_{n+1}$.
Since ${\mathcal V}_{n+1}$ is fiberwise dense in $\Gg_{n+1}$ it
follows that $\Gg_{n+1}=\Gg_n'$ and hence $\Gg_{n+1}$ is also
affine. The rest of the induction hypothesis for $\Gg_{n+1}$ also
follow. Again, $\overline{\Gg}^{\rm red}_{n+1}$ is the Levi
component of the parabolic of $\overline {\Gg}_{ n}^{\rm red}$ that
corresponds to $a\in \Phi_{f_n}$ for which $f_{n+1}(a)=f_n(a)$.

\smallskip

II) We continue to assume that $\und G=H\otimes_\Z \O[u^{\pm 1}]$ is
split but now we consider the general case in which
$\Omega=\Omega_f$ does not contain $x_0$. The argument in the proof
of \cite[Lemma 2.2]{GilleTorsors} (see also \cite{LarsenDuke})
implies that there is an integer $\delta\geq 1$ such that the subset
$\Omega_{\delta\cdot f}$ of the apartment contains a hyperspecial
vertex $x'_0$ which is the translate $x'_0=x_0+t$ of $x_0$ by an
element $t\in \xcoch(T)$. Consider the homomorphism $\O[u]\to \O[v]$
given by $u\mapsto v^\delta$. Our previous arguments allow us to
construct, using successive dilatations of
$t(H\otimes_\Z\O[v])t^{-1}\simeq H\otimes_\Z\O[v]$, a smooth affine
group scheme $\Gg'_{\Omega}=\Gg_{x'_0, f'}$ over $\O[v]$. (Here
$f'=\delta\cdot f+t$ which is positive.) We can however see that
base changing the schematic root data for $\Gg_{\Omega}$ by
$\O[u]\to \O[v]$ gives schematic root data for $\Gg'_{\Omega}$. As
above, using \cite[1.2.13, 1.2.14]{BTII}, this implies that
$\Gg_{\Omega}\otimes_{\O[u]}\O[v]\simeq \Gg'_{\Omega}$. By faithful
flat descent, $\Gg_{\Omega}$ is then affine.

\smallskip

b)  We now consider the more general case of a quasi-split group
$\und G$ that splits over an extension $\O_0[v^{\pm 1}]/\O[u^{\pm
1}]$ as in \ref{sss2a1}. In particular, $\O$ has residue field
$k={\mathbb F}_q$, $u\mapsto v^e$, and $\Gamma^t$ acts on
$\ti\O_0[v]$ by $\gamma_0(v)=\zeta\cdot v$, $\sigma(\sum a'_i
v^i)=\sum_i \sigma(a'_i)v^i$, with invariants
$\ti\O_0[v]^\Gamma=\O[u]$. Let $\ti F/F$ be the base change of
$\O_0[v^{\pm 1}]/\O[u^{\pm 1}]$ along $u=\varpi$, with the maximal
unramified extension $\ti F_0/F$ of degree $r$ and $e=[\ti F:\ti
F_0]$.

As $\und G$ is quasi-split, $\und G=\und G^*$. We consider
$\Omega\subset \A(\und G_F, \und A_F, F)=\A(H, T_H,  \ti
F)^\Gamma\subset \A(H, T_H, \ti F)$ and let $ \Hh_\Omega$ be the
smooth affine group scheme over $ {\mathbb
A}^1_{\ti\O_0}=\Spec(\ti\O_0[v])$ constructed for the split group
$H$ as in (a). The group $\Gamma$ acts on the apartment $\A(H, T_H,
\ti F)$ via its action on $H$ and $\ti F$. Since $\Omega$ is fixed
by $\Gamma$, we can see that $\Hh_\Omega$ supports a $\Gamma$-action
that lifts the action on $\Gamma$ on ${\mathbb A}^1_{\ti\O_0}$.
Notice that the Weil restriction of scalars $ {\rm
Res}_{\ti\O_0[v]/\O[u]}\Hh_\Omega $ is also a smooth affine group
scheme over $\O[u]$ (\cite{BTII}, \cite[2.2]{EdixhTame}). By the
above and \cite[2.4]{EdixhTame}, this supports a $\Gamma$-action
over $\Spec(\O[u])$; the  inertia  groups for this action are always
subgroups of  $\langle\gamma_0\rangle$. Since, $\gamma_0$ has order
prime to $p$,  by \cite[3.4]{EdixhTame}, the fixed point scheme
$\Gg'_\Omega=({\rm Res}_{\ti\O_0[v]/\O[u]}\Hh_\Omega)^{\Gamma}$
 is a smooth closed subscheme of the smooth
affine ${\rm Res}_{\ti\O_0[v]/\O[u]}\Hh_\Omega$. Hence, it is also
flat over $\O[u]$. Consider the base change
$\Gg'_\Omega\otimes_{\O[u]}\O$ by $u\mapsto 0$. Since this is also
smooth over $\O$, it is the disjoint union
$$
\Gg'_\Omega\otimes_{\O[u]}\O=Z^0\sqcup(\sqcup_i Z_i)
$$
of its smooth irreducible components where $Z^0$ contains the
identity section. By flatness of $\Gg'_\Omega\to \AA^1_\O$, all the
components $Z^0$, $Z_i$ are divisors in $\Gg'_\Omega$. We will set
$$
\Gg''_\Omega=\Gg'_\Omega-\sqcup_i Z_i
$$
(i.e the complement of the union of those components that do not
contain the identity.) Observe that $\Gg''_\Omega$  is affine since
it can also be obtained as the dilatation of the affine
$\Gg'_\Omega$ along the affine and smooth closed subscheme $Z^0$ of
its fiber over $u=0$. We will now show   that $\Gg''_\Omega$ is the
connected component $ \Gg'^0_\Omega$ of $\Gg'_\Omega$. By
\cite[1.2.12]{BTII} we have to show that each fiber of
$\Gg''_\Omega\to \AA^1_\O$ is connected. First observe that the
geometric fibers at points where $u\neq 0$ are isomorphic to the
split form $H$ and so they are connected. By construction, $
\Gg''_\Omega \otimes_{\O[u]}\O=Z^0$; therefore the fiber
$\Gg''_\Omega \otimes_{\O[u]}F$ is connected and is  the connected
component of $\Gg'_\Omega\otimes_{\O[u]}F$. In general for
$\kappa=F$ or $k$, let $\kappa'$  be   $\ti F_0$, resp. the residue
field of $\ti \O_0$. We can consider the fiber  over $\O[u]\to
\O\to\kappa'$
$$
\Gg'_\Omega\otimes_{\O[u]}\kappa'=({\rm
Res}_{\O'[v]/\O[u]}(\Hh_\Omega)\otimes_{\O[u]}\kappa')^\Gamma.
$$
Since $\Gamma$ is an extension of ${\rm Gal}(\kappa'/\kappa)$ by
$\langle\gamma_0\rangle$ this is
$$
({\rm
Res}_{\kappa'[v]/(v^e)/\kappa'}(\Hh_\Omega\otimes_{\ti\O_0[v]}\kappa'[v]/(v^e)))^{\gamma_0}.
$$
We have
$$
1\to U\to {\rm
Res}_{\kappa'[v]/(v^e)/\kappa'}(\Hh_\Omega\otimes_{\ti\O_0[v]}\kappa'[v]/(v^e))
\to
 \overline\Hh^{ \rm red}_{\Omega,\kappa'}\to 1
$$
with $U$ unipotent and $\overline\Hh^{\rm red}_{\Omega,\kappa'}$
(split) reductive over $\kappa'$. Now the maximal reductive quotient
$M:=\overline\Hh^{\rm red}_{\Omega}$ of
$\overline\Hh_{\Omega}=\Hh_{\Omega}\otimes_{\ti\O_0[v]}\ti\O_0$ is a
Chevalley (reductive) group scheme  over $\ti\O_0$. Since
$\gamma_0(\Omega)=\Omega$ we have an action of $\gamma_0$ on $M$.
Since $\gamma_0$ has order prime to $p$, we can see  that
$\rH^1(\langle\gamma_0\rangle, U)=(0)$. Also, by filtering $U$ by vector
groups we can see that $U^{\gamma_0}$ is connected (see
\cite[4.7.2]{YuModels}). It follows that the group of connected
components of $\Gg'_\Omega\otimes_{\O[u]}\kappa'$ is identified with
that of $(\overline\Hh^{\rm red}_{
\Omega,\kappa'})^{\gamma_0}=(\overline\Hh^{ \rm
red}_{\Omega})^{\gamma_0}\otimes_{\O'}\kappa'=M^{\gamma_0}\otimes_{\ti\O_0}\kappa'$.
We can now see that the $\gamma_0$-action on $M$ satisfies the
assumptions of Proposition \ref{locconstant}, i.e preserves a pair
of maximal split torus and a Borel subgroup that contains it:
Indeed,  by construction, $\gamma_0$ preserves the maximal torus
given by $T_H$. Now, as before, consider the subset $\Phi_\Omega$ of
the set of roots $\Phi$ such that there is an affine root with
vector part $a$ and defining a hyperplane containing $\Omega$. The
set $\Phi_\Omega$ can be identified with the roots of $M$ with
respect to the maximal torus given by  $T_H$. The group
$\langle\gamma_0\rangle$ acts on $\Phi_\Omega$. The intersection
$\Phi^+_\Omega:=\Phi^+\cap \Phi_\Omega$ is a system of positive
roots in $\Phi_\Omega$ which is stable under $\gamma_0$. Let $C$ be
the affine chamber containing $\Omega$ in its closure which is given
by $\Phi^+_\Omega$; this provides us with a $\gamma_0$-stable Borel
subgroup in $M$ which contains  $T_H$. We can now  apply Proposition
\ref{locconstant} to the Chevalley group $M=\overline\Hh^{\rm
red}_{\Omega}$ over $\ti\O_0$ and the automorphism induced by
$\gamma_0$ as above. We obtain that the group scheme of connected
components of $\Gg'_\Omega\otimes_{\O[u]}\kappa'$ is given by the
fibers of a finite \'etale commutative group scheme of order
annihilated by $e$ and this order is the same for $\kappa=F$ or
$\kappa=k$. Since $\Gg_\Omega''\otimes_{\O[u]}F$ is the neutral
component of $\Gg_\Omega'\otimes_{\O[u]}F$, the base change
$\Gg_\Omega''\otimes_{\O[u]}\ti F_0$ is also connected (\cite[Exp.
${\rm VI}_{\rm A}$, 2.1.1]{SGA3}). The above now implies   that
$\Gg''_\Omega\otimes_{\O[u]}\ti k$ and hence
 $\Gg''_\Omega\otimes_{\O[u]}k$ is also connected. Therefore, $\Gg''_\Omega=(\Gg'_\Omega)^0$.
 We set $\Gg_\Omega:=\Gg''_\Omega=(\Gg'_\Omega)^0$. It remains to show
 that the base changes of $\Gg_\Omega$ by $\O[u]\to \O$, $u\mapsto \varpi$, resp.
 $\O[u]\to \kappa[[u]]$, are isomorphic to the parahoric group schemes $\PP_x$,
 resp. $\PP_{x_{\kappa((u))}}$.

 For simplicity, set $L=\bar\kappa((u))$, $L'=\bar\kappa((v))$,
 $R=\bar\kappa[[u]]$, $R'=\bar\kappa[[v]]$ and denote by $H(L')_\Omega\subset H(L')$ the
 stabilizer of $\Omega\subset {\mathcal B}(H, L)={\mathcal B}(H, L')^{\gamma_0}$.
 We also set $\Gg(L)=(H(L'))^{\gamma_0}$ (the points of a connected reductive group over $L$).
 Notice that by construction, $\Gg'_\Omega(R)=\Hh_\Omega(R')^{\gamma_0}$ while the
 result in the split case together with \cite[4.6]{BTII}
 gives $\Hh_\Omega(R')\subset H(L')_\Omega$ with finite index. Hence, we have
 $$
 \Gg'_\Omega(R)=\Hh_\Omega(R')^{\gamma_0}\subset (H(L')_\Omega)^{\gamma_0}=(H(L')^{\gamma_0})_\Omega=
 \Gg(L)_\Omega
 $$
and we see that $\Gg'_\Omega(R)$ is of finite index in the
stabilizer $\Gg(L)_\Omega$ of $\Omega\subset  {\mathcal B}(\Gg, L)$
 in $\Gg(L)$. Since $\Gg_\Omega$ is the neutral component of $\Gg'_\Omega$
 we conclude that $\Gg_\Omega\otimes_{\O[u]}R$ is the smooth connected stabilizer of $\Omega$,
 i.e we have $\Gg_\Omega\otimes_{\O[u]}R=\PP_{x_{\kappa((u))}}\otimes_{\kappa[[u]]}\bar\kappa[[u]]$.
 By \cite[1.7]{BTII}, this shows the desired result
 for the base change $\O[u]\to \kappa[[u]]$. The case $\O[u]\to \O$, $u\to \varpi$
 is similar. This
concludes the proof of Theorem \ref{grpschemeThm} in the quasi-split
case.

\smallskip

(c) Finally, we consider the general case. Recall our notations and
in particular the choice of ${\rm Int}(\bold{g})$ in $\und
{N'}^*(\breve{\O}[u^{\pm 1}])$. This gives the semilinear
$$
{}^*\sigma:={\rm Int}(\bold g)\cdot \sigma: \und
G^*\otimes_{\O}\breve{\O}\to \und G^*\otimes_{\O}\breve{\O}
$$
which covers the Frobenius $\breve{\O}[u^{\pm 1}]\to
\breve{\O}[u^{\pm 1}]$. We also have the inner twist $\und G$ of
$\und G^*$ over $\O[u^{\pm 1}]$ defined by taking ${}^*\sigma$ fixed
points of $\und G\otimes_{\O}\breve{\O}$ as in (\ref{inner}). Our
construction applied to the quasi-split $G^*_{\Fsm}$ and the point
$x^*$ given as $\psi_*(x)$ provides with a group scheme
$\Gg^*_{x^*}$ over $\breve{\O}[u]$ which satisfies the conclusions
of the Theorem. In particular, we have
$$
\Gg^*_{x^*}{|_{\breve{\O}[u^{\pm 1}]}}=\und
G^*\otimes_{\O}\breve{\O}.
$$
We will now show that ${}^*\sigma:  \und
G^*\otimes_{\O}\breve{\O}\to \und G^*\otimes_{\O}\breve{\O}$ extends
to an $\sigma$-semilinear
$$
{}^*\sigma: \Gg^*_{x^*}\to \Gg^*_{x^*}.
$$
We first verify that it is enough to check that the base change of
${}^*\sigma$ over $\Fsm((u))$ extends to $\Fsm[[u]]$:   Indeed,
since $\Gg^*_{x^*}=\Spec(A)$ is affine and smooth over
$\breve{\O}[u]$ we can write $A=A[u^{-1}]\cap
(A\otimes_{\breve{\O}[u]}\Fsm[[u]])$. Since ${}^*\sigma$ is defined
over $\O[u, u^{-1}]$, it remains to check that ${}^*\sigma$
preserves $A\otimes_{\breve{\O}[u]}\Fsm[[u]]\subset
A\otimes_{\breve{\O}[u]}\Fsm((u))$. Now let check that the base
change of ${}^*\sigma$ over $\Fsm((u))$ extends to $\Fsm[[u]]$:
Consider $x^*_{\breve{F}((u))}$ which by our construction is fixed
by ${\rm Int}(\bold{g})\cdot \sigma$. This implies that
\begin{equation}\label{eq3.34}
{}^*\sigma (\calP_{x^*_{\Fsm((u))}}(\Fsm[[u]]))\subset
\calP_{x^*_{\Fsm((u))}}(\Fsm[[u]]).
\end{equation}
Since $F[[u]]$ is   henselian and $F$ infinite and perfect,
condition (ET 2) of  \cite[1.7.2]{BTII} is satisfied. Therefore,
(\ref{eq3.34}) implies that ${}^*\sigma$ extends to
$\calP_{x^*_{\Fsm((u))}}$ which, by our construction, is the base
change of the group scheme $\Gg^*_{x^*}$ to $\Fsm[[u]]$.

We now define $\Gg_x$ to be the group scheme over $\O[u]$ given by
the Weil descent datum provided by the action of ${}^*\sigma={\rm
Int}(\bold{g})\cdot \sigma$ on $\Gg^*_{x^*}$ over $\breve{\O}[u]$.
Since $\Gg^*_{x^*}$ is affine, we can indeed see that $\Gg_x$ is
represented by an affine group scheme over $\O[u]$, which then
satisfies all the requirements in the Theorem.
\endproof

\subsection{}\label{ss3c}  In this section we start with a connected reductive group $G$ over the
$p$-adic field $F$ that splits over the tamely ramified $\ti F/F$ and $x\in \calB(G,F)$   a point in the building of $G$. Let $\calP_x$ be the corresponding parahoric group
scheme over $\O$. We will apply Theorem \ref{grpschemeThm} to construct a suitable
group scheme $\Gg=\Gg_x$ over $\O[u]$ (see Corollary \ref{application} below).

\subsubsection{}\label{sss3a4}  
Starting from the above data, we choose a rigidification $(G,A,S,P)$ such that $x\in\A(G,A,F)$. Choose also a pinned split form $H$ of $G$
over $\O$ and let $(\und G,\und A,\und S,\und P)$ be the corresponding rigidified group over
$\O[u^{\pm 1}]$ as constructed in \S \ref{sss2c2}. Let $\und \psi:(\und G,\und
S,\und P)\otimes\breve\O[u^{\pm 1}]\xrightarrow{\sim} (\und G^*,\und S^*,\und
P^*)\otimes\breve\O[u^{\pm 1}]$ be an inner twist and $\und c^{\rm
rig}$ be the corresponding cocycle. Let $(G_0,A_0,S_0,P_0): =(\und
G_F, \und A_F, \und S_F, \und P_F)$ be the base change via
$\O[u^{\pm 1}]\to F$ given by  $u\mapsto \varpi$. The specialization
of the inner twist by  $u\mapsto \varpi$ is denoted by $\psi_0$.
The set of inner twists
\[\psi: (G,S,P)\otimes_F \Fsm\xrightarrow{\sim} (G^*,S^*,P^*)\otimes_F \Fsm\]
such that $\psi\cdot\sigma(\psi)^{-1}=\und c^{\rm
rig}(1)|_{u=\varpi}$ forms an $N'(F)$-torsor, where $N'=N_{M'}(T)$.
As the cocycles corresponding to $\psi$ and $\psi_0$ are the same,
the morphism $\psi_0\psi^{-1}$ is defined over $F$. Therefore, the choice
of a rigidification $(G,A,S,P)$ of $G$ produces an
isomorphism $\alpha: G\xrightarrow{\sim} G_0=\und G_F$, which is well-defined
up to the action of $N'(F)$. Since $N'$ centralizes $A$, $N'(F)$
acts trivially on $\A(G,A,F)$; hence, $x$ corresponds to a
well-defined point $x_0$ in $\A(\und G_F, \und A_F,F)$. Note that, however,
$x_0$ depends on the choice of the rigidification $(G, A,S,P)$. 

Let $(G, A',S',P')$ be another choice of rigidification of $G$ and
let $x'_0\in \A(\und G_F, \und A_F,F)$ be the corresponding point. By Lemma
\ref{transitive}, there is $g\in G_{\rm ad}(F)$ sending $(A,S,P)$ to
$(A',S',P')$. Therefore, as points in the building of $G_0=\und G_F$, $x_0$
and $x'_0$ are in the same $G_{0,{\rm ad}}(F)$-orbit. Therefore,
there is an element $n$ in $N_{G_{0,\rm ad}}(A_0)$ that sends $x_0$
to $x'_0$.
 
Now apply Theorem \ref{grpschemeThm} to $(\und G, \und A, \und S, \und P)$
and $x_0\in \A(\und G_F, \und A_F,F)$. We obtain a  group scheme
$\calG_{x_0}$ whose specialization along
$u=\varpi$ gives back $\calP_x$. Observe that $x_0$ is not
uniquely determined by $x$, but also depends on the rigidification
of $G$. However, as explained before, different $x_0$'s are in the
same $N_{G_{0,{\rm ad}}}(A_0)(F)$-orbit. It is easy to see that an
$N_{G_{0,{\rm ad}}}(A_0)(F)$-orbit on $\A(G_0,A_0,F)$ is the same as
an ${\rm Im}(N_{\und G_{\rm ad}}(\und A)(\O[u^{\pm
1}])\stackrel{u=\varpi}{\to} N_{G_{0,{\rm ad}}}(A_0)(F))$-orbit.
Therefore, different $\calG_{x_0}$'s are isomorphic to each other via
conjugation by an element in $N_{\und G_{\rm ad}}(\und A)(\O[u^{\pm
1}])$ and so the isomorphism class of this group scheme
is independent of choices; we will denote the group scheme by $\Gg_x$. In particular,   we obtain:

\begin{cor}\label{application} Starting with $G$ and $x$ as in the beginning of \S \ref{ss3c}, there exists a smooth, affine group scheme $\Gg=\Gg_x\to {\mathbb
A}^1_{\O}=\Spec(\O[u])$ with connected fibers such that

1) The group scheme $\Gg_{|\O[u, u^{-1}]}$ is $\und G$;

2) The base change of $\Gg$ under $\Spec(\O)\to {\mathbb A}^1_{\O}$
given by $u\mapsto \varpi$ is $\calP_x$ and the base change of $\Gg$ under $\Spec(\kappa[[u]])\to
{\mathbb A}^1_{\O}$ given by $\O[u]\to \kappa[[u]]$ is  
$\PP_{x_{\kappa((u))}}$.\endproof
\end{cor}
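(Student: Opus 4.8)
The target is Theorem \ref{grpschemeThm}; Corollary \ref{application} is then immediate, since by the paragraph following the theorem a rigidification of $G$ turns $x$ into a point $x_0\in\A(G_0,A_0,F)$, different choices giving points in a single $N_{\und G_{\rm ad}}(\und A)(\O[u^{\pm 1}])$-orbit, hence conjugate (in particular isomorphic) group schemes $\Gg_{x_0}$. To prove the theorem I would follow the inductive descent of Bruhat--Tits \cite{BTII}, extended to the two-dimensional base $\mathbb A^1_\O=\Spec(\O[u])$: construct $\Gg$ in four steps of increasing generality --- split $G$ near a hyperspecial vertex, arbitrary split $G$, tamely ramified quasi-split $G$, general inner forms --- and deduce uniqueness afterwards. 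For \emph{uniqueness}, apply Beauville--Laszlo descent along the Cartier divisor $\{u=0\}$: such a $\Gg$ amounts to its restriction to $\Spec(\O[u^{\pm 1}])$ (pinned down by (1) as $\und G$), its base change $\Gg\otimes_{\O[u]}\O[[u]]$, and gluing data over $\O((u))$; any two candidates have the same generic fibre over $\mathrm{Frac}(\O[[u]])$ (both equal $\und G\otimes_{\O[u^{\pm 1}]}\mathrm{Frac}(\O[[u]])$ by (1)) and agree at every height-one point of $\O[[u]]$ (over $(u)$ and $(\varpi)$ by (3), elsewhere by (1)), so a codimension-one patching argument on the normal schemes in question --- using the canonicity of parahoric group schemes over complete discrete valuation rings, \cite{BTII} --- identifies them over $\O[[u]]$, hence over $\O[u]$. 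So it suffices to construct some $\Gg$ with properties (1)--(3).

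\emph{Step 1 (split $G$, $K$ adjacent to a hyperspecial vertex).} Here $G=H$ and $\PP_x\subset H(\O)$, so the facet of $x$ cuts out a parabolic subgroup $\bar P_k\subset H\otimes_\O k$, which lifts to a unique parabolic $\O$-subgroup $\bar P_\O\subset H_\O$. Take $\Gg$ to be the dilation (N\'eron blow-up) of $H\otimes_\O\O[u]$ along $\bar P_\O$ inside the fibre $(H\otimes_\O\O[u])|_{u=0}=H_\O$ over the Cartier divisor $\{u=0\}$. Then $\Gg$ is smooth, \emph{affine}, with connected fibres; it is an isomorphism over $\{u\neq 0\}$, giving (1); and since $\{u=0\}$ pulls back to an effective Cartier divisor under each of $u\mapsto\varpi$ and $\O[u]\to\kappa[[u]]$ (one uses that $\varpi\in\O$ and $u\in\kappa[[u]]$ are non-zero-divisors), the dilation commutes with these base changes and yields the standard dilation descriptions of $\PP_x$ over $\O$ and of $\PP_{x_{\kappa((u))}}$ over $\kappa[[u]]$ inside the ambient hyperspecial groups, which gives (2) and (3); that the facets of $x$, $x_{\kappa((u))}$ define the \emph{same} parabolic is exactly the content of the apartment identifications \eqref{aptident1}--\eqref{aptident2}.

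\emph{Step 2 (arbitrary split $G$, then quasi-split, then general).} For a general $x$ with $G=H$ split there is a tamely ramified cover $\Spec(\O[u_L])\to\Spec(\O[u])$, $u=u_L^{e}$ with $p\nmid e$, ramifying only in the $u$-direction (after, if needed, a harmless unramified extension of $\O$ adjoining a primitive $e$-th root of unity $\zeta$), such that the point determined by $x$ over $\kappa((u_L))$ is adjacent to a hyperspecial vertex for $\kappa=k,F$; Step 1 then provides $\Gg^{(L)}$ over $\O[u_L]$, equivariant for $\Delta=\langle u_L\mapsto\zeta u_L\rangle$ (whose facet of $x$ is $\Delta$-fixed), and I would set $\Gg:=(\mathrm{Res}_{\O[u_L]/\O[u]}\Gg^{(L)})^{\Delta}$. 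The quasi-split case is identical with the cover $\Spec(\ti\O_0[v])\to\Spec(\O[u])$ of \ref{sss2a1} and $\Gamma$ acting on $\ti\O_0[v]$ and, via $\tau$, on $H$; the resulting $\Gg^*$ restricts over $\O[u^{\pm 1}]$ to $\und G^*$ by comparison with \eqref{defG*}. Finally, for a general inner form $\und G$ of $\und G^*$, descend $\Gg^*\otimes_{\O[u]}\breve\O[u]$ along the pro-\'etale cover $\Spec(\breve\O[u])\to\Spec(\O[u])$ via the twisted Frobenius $\mathrm{Int}(\mathbf g)\cdot\sigma$ of \ref{sss2d3}: since $\mathbf g\in\und{M'}^*(\breve\O[u^{\pm 1}])$ normalizes $\und S^*$ and $x_{\kappa((u))}$ was arranged (\ref{sss3a4}) to be fixed by this automorphism, $\mathrm{Int}(\mathbf g)\cdot\sigma$ preserves $\Gg^*\otimes\breve\O[u]$, the descent is effective, and the resulting $\Gg$ has $\Gg_{|\O[u^{\pm 1}]}=\und G$; properties (2), (3) propagate from the quasi-split case by the compatibility of parahoric group schemes with unramified inner twisting.

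\emph{Main obstacle.} Because the hyperspecial case is realized by a dilation, affineness is essentially automatic and is preserved by Weil restriction along finite locally free covers and by passage to fixed points of a finite group; the genuine difficulties are two. First, the \emph{tameness hypothesis} is precisely what guarantees that $(\mathrm{Res}\,\Gg^{(L)})^{\Delta}$ and $(\mathrm{Res}\,\Gg^{H})^{\Gamma}$ remain \emph{smooth with connected fibres} --- this fails when $p\mid|\Delta|$ or $p\mid|\Gamma|$, and establishing it requires reworking the delicate fixed-point analysis of \cite{BTII} (see also \cite{PrasadYuInv}) in the present relative setting over a two-dimensional base. Second, and pervading the whole argument, one must verify that a \emph{single} point $x\in\calB(G,F)$ reproduces, after the three base changes, the parahoric group schemes of the a priori distinct groups $\und G_F$, $\und G_{F((u))}$ and $\und G_{k((u))}$ at $x$, $x_{F((u))}$, $x_{k((u))}$; making this precise --- and checking it is respected by the dilation, the Weil restriction, and the descent --- is exactly what the machinery of \S\ref{3a}, in particular the identifications \eqref{aptident1}--\eqref{aptident2}, is designed to supply, and it is the technical heart of the proof.
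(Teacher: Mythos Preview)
Your four-step architecture and your quasi-split and inner-form steps match the paper's. Two points differ, one minor and one substantive.

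For \emph{uniqueness}, the paper avoids Beauville--Laszlo and height-one patching entirely. It simply notes that for $\Gg=\Spec(B)$ smooth affine over $\O[u]$ one has $B=B[u^{-1}]\cap(B\otimes_{\O[u]}F[[u]])$; so $\Gg$ is determined by $\und G$ together with its base change to $F[[u]]$, and the latter is recovered from the group $\Gg(F[[u]])$ via \cite[1.7.2]{BTII}. Your argument is more circuitous and, as written, the ``over $(\varpi)$'' step is not clearly correct.

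For the \emph{general split case} the paper takes a genuinely different route. It does not realize $\Gg$ as $(\mathrm{Res}_{\O[u_L]/\O[u]}\Gg^{(L)})^\Delta$; instead it constructs $\Gg_{x_0,f}$ directly over $\O[u]$ from schematic root data $(\mathcal U_{a,x_0,f},\mathcal T)$ via \cite[3.9.4, 1.2.13--14]{BTII}. Properties (2) and (3) are then immediate, because these root data base-change to the root data defining $\PP_x$ and $\PP_{x_{\kappa((u))}}$. What remains is \emph{affineness}. For $f\ge 0$ (facet closure containing the hyperspecial $x_0$) this is done by \emph{iterated} dilations: one interpolates $0=f_0\le f_1\le\dots\le f_n=f$ with $f_{i+1}\le f_i+1$ and shows inductively that $\Gg_{f_{i+1}}$ is the dilation of $\Gg_{f_i}$ along an explicit smooth closed subgroup of its fibre over $u=0$. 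For general $f$, the cover $u\mapsto v^\delta$ is used only to deduce affineness by faithfully flat descent, after identifying $\Gg_\Omega\otimes_{\O[u]}\O[v]$ with the already-built $\Gg'_\Omega$ through their root data. Your alternative --- treating the general split case as a degenerate quasi-split descent --- has the virtue of making affineness trivial (closed in an affine Weil restriction), but it transfers the burden to checking that $(\mathrm{Res}\,\Gg^{(L)})^\Delta$ has connected fibres and specializes to the correct parahorics. In the paper these checks (via Proposition~\ref{locconstant}, Edixhoven's tame fixed-point results, and the stabilizer comparison at the end of case~(b)) are reserved for the genuinely quasi-split step, while the split case is handled by the more direct root-data construction, which sidesteps them.
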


\bigskip

\section{Classical groups}
\setcounter{equation}{0}

Recall that when $G$ is a classical group over the local field $F$,
Bruhat and Tits have given a description of the building ${\mathcal
B}(G, F)$ as a set of certain norms on the space of the ``natural
representation" (\cite{BTclassI}, \cite{BTclassII}). At least when
$p$ is odd, this produces a description of the facets of the
building in terms of self-dual lattice chains in this space. The
corresponding parahoric group scheme can also be explicitly
described as the neutral component of the polarized automorphisms of
the lattice chain. In this chapter, we extend some of this picture
to the group schemes over $\O[u]$ constructed in Theorem
\ref{grpschemeThm}.

\subsection{Lattice chains}\label{ss4a}
First we recall the set-up of lattice chains over $\O$ (cf.
\cite{BTclassI}, \cite{BTclassII}, \cite{RapZinkBook}).

\subsubsection{}\label{sss4a1}

Suppose first that $D$ is a central division $F$-algebra
 of degree $d$ and Brauer invariant $s/d$ with $0< s<d$
and ${\rm gcd}(s,d)=1$. Recall $\O$ has residue field ${\mathbb
F}_q$, $q=p^m$. Let $F_d=F\Q_{p^{md}}$ which is then an unramified
extension of $F$ of degree $d$ with integers $\O_d$. Set
$\sigma={\rm Frob}_{p^m}$ for the generator of the Galois group of
$F_d/F$. We can   write
$$
D=F_d\oplus F_d\cdot \Pi\oplus \cdots \oplus F_d\cdot \Pi^{d-1}
$$
with relations $ \Pi^d=\varpi$, $a\cdot \Pi = \Pi\cdot \sigma^s(a)$
for all $a\in F_d$. This contains the maximal order
$\O_D=\O_{F_d}\oplus \O_{F_d}\cdot \Pi\oplus \cdots
\oplus\O_{F_d}\cdot \Pi^{d-1}$.

Consider $V=D^n$ as a left $D$-module and let $G=\GL_n(D)={\rm
Aut}_D(V)$   identified by sending the matrix $A\in \GL_n(D)$ to the
automorphism $x\mapsto x\cdot A^{-1}$. A lattice $\L$  in $V=D^n$ is
a finitely generated $\O_D$-submodule of $V$ that contains a
$D$-basis of $V$; then $\L$ is $\O_D$-free of rank $n$. Recall that
a lattice chain in $V$ is a totally ordered (non-empty) set
$\L_\bullet$ of $\O_D$-lattices in $V$ which is stable under
homotheties. It can be represented as:
\begin{equation}\label{eq435}
\cdots\subset\Pi\L_0\subset\L_{r-1}\subset\cdots \subset \L_1\subset
\L_0\subset\cdots .
\end{equation}
By \cite{BTclassI}, the facets $\Omega$ in $\calB(G, F)$ correspond
bijectively to  $\O_D$-lattice chains (cf. \cite{RapZinkBook}) in
$V=D^n$ (Bruhat and Tits consider right modules but this is
equivalent). Then the parahoric group scheme $\PP_x$ ($x\in\Omega$)
is the group scheme over $\O$ given by the $\O_D$-linear
automorphisms of the corresponding chain, i.e
$$
\PP_x(R)={\rm Aut}_{\O_D\otimes_\O R}(\{\L_\bullet\otimes_\O R\}).
$$

\subsubsection{}\label{sss4a2} More generally suppose that $D$ is a central
simple $L$-algebra with an involution $\tau$ such that $\tau(\O_D)=\O_D$;
assume that $F$ is the fixed field of the involution on $L$. Let
$\ep=\pm 1$ and let $h$ be an $\epsilon$-hermitian $D$-valued form
on $V=D^n$ with respect to $\tau$. If $\L$ is an $\O_D$-lattice in
$V$, we can consider its dual $\L^\vee=\{x\in \L\ |\ h(x,
\lambda)\in \O_D, \forall \lambda\in\L\}$. A lattice chain
$\L_\bullet$ is called self dual if $\L\in \L_\bullet$ if and only
if $\L^\vee\in \L_\bullet$. The form $h$ defines an involution $*$
on $\GL_n(D)$ by $ h(xA, y)=h(x, yA^*)$. Consider the unitary group
${\rm U}(V, h)=\{A\in \GL_n(D)\ |\ (A^*)^{-1}=A\}$ given by elements
of $\GL_n(D)$ that respect $h$ and let $G$ be its neutral component.
Recall that we assume $p$ is odd. By \cite{PrasadYuInv}, the
building $\calB(G, F)$ can be identified with the fixed points of
the action induced by $A\mapsto (A^*)^{-1}$ on $\calB(\GL_n(D), L)$. Using the above, we
now see that facets $\Omega$ in $\calB(G, F)$ correspond to
self-dual $\O_D$-lattice chains $\L_\bullet$  in $V$. (This also
follows from the explicit description in \cite{BTclassII}, noting
that when $p\neq 2$, the maximinorante norms of loc. cit. can be
described via self-dual graded lattice chains). It then also follows
that the parahoric Bruhat-Tits $\PP_x$ ($x$ a generic point in
$\Omega$)
 is  the neutral component of the group scheme over $\O$ given by
$\O_D$-linear automorphisms of the chain $\L_\bullet$ that respect
the perfect forms $\L_i\times\L_j\to \O_D$ obtained from $h$.
There is also a similar description for unitary similitude groups.
\smallskip

We now extend most of this picture to the group schemes over
$\O[u]$. We first start by describing some cases of split groups.

\subsection{Some split classical groups}

\subsubsection{}\label{exGL} {\sl The case of $\GL_{N}$.} Suppose $W=\O[u]^N$ and set $\overline
W=W\otimes_{\O[u], u\mapsto 0}\O$. Write $\overline W=\oplus_{j=0}^{r-1}V_i$ and
consider  the parabolic subgroup $Q\subset \GL(\overline W)$ which is the
stabilizer of the flag $F_i=\oplus_{j\geq i}V_j$ given by the $V_i$.
Denote by $d_i$ the $\O$-rank of $V_i$. For $0\leq i\leq r-1$, set
$W_i$ for the preimage of $F_i$ under $W\to \overline W$ so that
$$
uW \subset W_{r-1}\subset\cdots\subset  W_1\subset W_0=W.
$$
Extend the index set   by requiring $W_{i+k\cdot r}=u^kW_i$ for
$k\in \Z$. Denote by $\iota_i: W_{i+1}\to W_{i}$ the inclusion. We
have a natural identification $\GL(W_i)=\GL(W_{i+r})$ given by
conjugating by $u$.

The dilatation $\Gg=\GL(W)_Q$ of $\GL(W)$ along  $Q$ is isomorphic to
the closed subgroup scheme in
$H=\prod_{i=0}^{r-1}\GL(W_i)=\prod_{i\in \Z/r\Z}\GL(W_i)$ of tuples
that commute with the maps $\iota_i: W_{i}\to W_{i+1}$. It is
isomorphic to the group scheme over $\O[u]$ obtained by Corollary
\ref{application} applied to $G=\GL_N$ and a point $x$ 
corresponding to the lattice chain
$\{W_i\otimes_{\O[u], u\mapsto \varpi}\O\}_i$.
(The lattice chains
$\{W_i\otimes_{\O[u]}\kappa((u))\}_i$ correspond to $x_{\kappa((u))}$.)

\subsubsection{}\label{exGSp}

{\sl The case of ${\rm GSp}_{2n}$.}  Consider
$W=\oplus_{i=1}^{2n}\O[u]\cdot e_i$
 with the perfect $\O[u]$-bilinear alternating form $h: W\times  W\to \O[u]$ determined by $h(e_i, e_{2n+1-j})=\delta_{ij}$,
 $h(e_i, e_j)=h(e_{2n+1-i}, e_{2n+1-j})=0$
 for $1\leq i, j\leq n$. Let us fix a chain of $\O[u]$-submodules
 \begin{equation}
 uW\subset W_{r-1}\subset \cdots \subset W_1\subset W_0\subset W
 \end{equation}
 such that

 (i) $W_{i}^\vee=u^{-1}W_{r-i-1}$, for $0\leq i\leq r-1$,

 (ii) $W_i/W_{i+1}\simeq \O^{d_i}$.

 Again, extend the index set by periodicity by setting $W_{i+k\cdot r}=u^kW_i$
 so that the form $h$ gives $W_i^\vee=W_{-i-a}$, with $a=0$ or $1$, and set
 $\iota_i: W_{i+1}\to W_{i}$ as before. Consider the group
 scheme $\Gg$ over $\O[u]$ of similitude automorphisms of the ``polarized"
system $(W_i, h_i)_{i\in \Z}$;
 more precisely this is the subgroup scheme of $\Gm\times\prod_{i=0}^{r-1}\GL(W_i)=\Gm\times\prod_{i\in \Z/r\Z}\GL(W_i)$
 consisting of $(c, (g_i))$ such that
 $$
 h  (g_i(x), g_{-i-a}(y))=c\cdot h(x,y), \quad \hbox{\rm for all $i\in \Z$}.
 $$
As in \cite[Appendix to Ch. 3]{RapZinkBook}, we can see that $\Gg$ is smooth
over $\O[u]$; it is isomorphic to the group scheme obtained by
Corollary \ref{application} applied to $G={\rm GSp}_{2n}$ and  
a point $x$ 
corresponding to the self-dual lattice chain
$\{W_i\otimes_{\O[u], u\mapsto \varpi}\O\}_i$.
%(Then $x_{\kappa((u))}$ corresponds to the self-dual lattice chain
%$\{W_i\otimes_{\O[u]}\kappa((u))\}_i$.)

\subsection{Non-split classical groups}\label{exClassical}

We now extend this to (essentially) the general classical case. When
in the sections below we consider symmetric or hermitian forms, we
will assume that the prime $p$ is odd. We first mostly concentrate
on describing explicitly the group schemes $\und G$.

\subsubsection{Division algebras}\label{exDivision}

With the notations of \ref{sss4a1}, consider the associative
(central) $\O[u]$-algebra given by
\begin{equation}\label{order1}
 \O(\calD)=\O_d[u]\oplus \O_d[u]\cdot X\oplus\cdots \oplus \O_d[u]\cdot X^{d-1},
\end{equation}
with relations $X^d=u$, $f\cdot X=X\cdot \sigma(f)^s$ for $f\in
\O_d[u]$ with $\sigma(\sum a_iv^i)=\sum\sigma(a_i)v^i$. Notice that
$\calD:=\O(\calD)[u^{-1}]$ is an Azumaya algebra over $\O[u^{\pm 1}]$ which splits after the unramified extension $ \O[u^{\pm
1}]\to \O_d[u^{\pm 1}]$; then $\O(\calD)$ is a maximal order in this
Azumaya algebra. We have isomorphisms
\begin{equation}
\calD\otimes_{\O[u^{\pm 1}]} F\simeq D,\qquad
\O(\calD)\otimes_{\O[u]}\O_F\simeq \O_D
\end{equation}
where the ring homomorphisms are given by $u\mapsto\varpi$. In
addition, reducing $\O(\calD)$ modulo $\varpi$ followed by
completing at $(u)$ also produces a maximal order in a central
division  algebra of degree $d$ and  invariant $s/d$ over the local
field ${\mathbb F}_{p^m}((u))$.

For $n\geq 1$, we can consider the affine  group scheme  over
$\O[u^{\pm1}]$ given by
$$
R\mapsto {\rm Aut}_{\calD\otimes_{\O[u^{\pm 1}]}R} (
\calD^n\otimes_{\O[u^{\pm 1}]}R).
$$
We can see directly from the construction of \S \ref{ss2b} that this
group scheme is isomorphic to $\und G$ for $G=\GL_n(D)$.

We can also consider the affine  group scheme $\Gg$ over $\O[u]$
given by
$$
\Gg(R):={\rm Aut}_{\O(\calD)\otimes_{\O[u]}R} (
\O(\calD)^n\otimes_{\O[u]}R).
$$
The group $\Gg$ is smooth over $\O[u]$ and is isomorphic to the
group scheme of Corollary \ref{application} for the vertex corresponding to the lattice chain
given by the multiples of $\O_D$.

\subsubsection{} In what follows, the base is $\Q_p$ and we will be discussing group schemes
over $\Z_p[u^{\pm 1}]$. We assume $F$ is a tame finite extension of
$\Q_p$ and let $\Q_{p^r}={\rm Frac}(W({\mF}_{p^r}))$ the maximal
unramified extension of $\Q_p$ contained in $F$. Denote by
$\Z_{p^r}$ the ring of integers $W({\mF}_{p^r})$ of $\Q_{p^r}$. When
$r$ is clear, we will simply write $W$ for
$\Z_{p^r}=W({\mF}_{p^r})$. We will then denote by $W_d$ the integers
of the unique unramified extension $W({\mF}_{p^{rd}})$ of $W$ of
degree $d\geq 2$.

\subsubsection{}\label{sss4b3} We can now explicitly construct the group scheme $\und G$
over $\Z_p[u^{\pm 1}]$ associated to the restriction of scalars
$G={\rm Res}_{F/\Q}\GL_m(D)$ with $D$ a division algebra over $F$ as
above.
We choose a $W$-algebra isomorphism $j:
W[x]/(x^e-pc)\xrightarrow{\sim} \O_F$ where
 $p\nmid e$ and $c\in W^*$;   
i.e a uniformizer $\varpi$ of $F$ such that $\varpi^e$ is in $W$.

As above, we construct an associative (central) $W[v]$-algebra given
by
\begin{equation}\label{order}
 \O(\calD)=W_d[v]\oplus W_d[v]\cdot X\oplus\cdots \oplus W_d[v]\cdot X^{d-1},
\end{equation}
with relations $X^d=v$, $f\cdot X=X\cdot \sigma(f)^s$ for $f\in
W_d[v]$ with $\sigma(\sum a_iv^i)=\sum\sigma(a_i)v^i$. (After the
base change $W[v]\to \O[u]$, $v=u$, this produces the algebra
denoted by the same symbol in the previous paragraph.) Again,
$\calD=\O(\calD)[v^{-1}]$ is an Azumaya algebra over $W[v^{\pm 1}]$.
We have isomorphisms
\begin{equation}
\calD\otimes_{W[v^{\pm 1}]} F\simeq D,\qquad
\O(\calD)\otimes_{W[v]}\O_F\simeq \O_D
\end{equation}
where  $W[v^{\pm 1}]\to F$, $W[v]\to \O_F$, are given by
$v\mapsto\varpi$. In addition, reducing $\O(\calD)$ modulo $p$
followed by completing at $(v)$ also produces a maximal order in a
central division  algebra of degree $d$ and  invariant $s/d$ over
the local field ${\mathbb F}_{p^r}((v))$.

Define $\phi: \Z_p[u]\to \Z_{p^r}[v]$ by $u\mapsto v^e\cdot c^{-1}$
with $c=\varpi^e \cdot p^{-1}$. For $m\geq 1$, we set
$M=\O(\calD)^m$. We consider
$$
 \Gg'(R)={\rm Aut}_{\O(\calD)\otimes_{\Z_p[u]}R} ( M\otimes_{\Z_p[u]}R).
$$
This  defines a smooth affine group scheme over $\Z_p[u]$ such that
$$
\Gg'\otimes_{\Z_p[u], u\mapsto p}\Q_p\simeq {\rm Res}_{F/\Q_p}(
\GL_m(D)).
$$
Suppose we choose another uniformizer $\varpi_1$ with
$\varpi_1^e=pc_1$ and denote by $\phi_1$ the corresponding map as
above. Let $y=\varpi_1/\varpi\in \O^*_F$. Since $y^e\in W^*$ and
$p\nmid e$, the extension $\Q_{p^r}(y)/\Q_{p^r}$ is unramified and
therefore $y$ is in $W^*=\Z_{p^r}^*$. Sending $v$ to $y\cdot v$ then
gives an isomorphism $\alpha: W[v]\xrightarrow{\sim}W[v]$ that maps
$(v^e-pc)$ to $(v^e-pc_1)$ and commutes with $\phi$, $\phi_1$.
Find $z\in W_d^*$ such that $N_{\Q_{p^{rd}}/\Q_{p^r}}(z)=y$; sending
$X\mapsto X\cdot z$ gives $\O(\calD) \xrightarrow{\sim} \O(\calD)
\otimes_{W[v],\alpha }W[v]$. This implies that $\Gg'$ is independent
from the choice of $\varpi$ with $\varpi^e\in W$. The group scheme
$\Gg'_{|\Z_p[u^{\pm 1}]}$ is isomorphic to the group scheme $\und G$
obtained from $G={\rm Res}_{F/\Q_p}(\GL_m(D))$ as above; this
follows directly from the construction of \S \ref{ss2b} using
(\ref{invarRing}). The restriction $\Gg'_{|\Z_p[u^{\pm 1}]}\to
\Spec(\Z_p[u^{\pm 1}])$ is the Weil restriction of scalars from
$W[v^{\pm 1}]$ of a twisted form of ${\rm GL}_{md}$ over $W[v^{\pm
1}]$; this twisted form is the group of automorphisms of the module
$\calD^m=\O(\calD) [v^{-1}]^m$ for the Azumaya  algebra $\calD$ over
$W[v^{\pm 1}]$.

\subsubsection{}\label{sss4c4}  Here again $W$ is $\Z_{p^r}=W({\mF}_{p^r})$
and $W_d=W({\mF}_{p^{rd}})$ as above. Write
\begin{equation}
 W[v^{\pm 1}]^*/(W[v^{\pm 1}]^*)^2=\{1, \alpha, v, \alpha v\}
\end{equation}
where $\alpha$ is an element of $W^*$ which is not a square.

We consider a $W[v]$-algebra $\frakR$ given as
$$
\frakR=W_2[v],\quad {\rm or}\quad \frakR=W[v'], \ v\mapsto
v'^2,\quad {\rm or}\quad \frakR=W[v'], \ v\mapsto \alpha^{-1}v'^2.
$$
We will refer to the first possibility  as the  {\sl unramified}
case. The other two possibilities are the {\sl ramified} case. We
have a $W[v]$-symmetric bilinear form
$$
h_\frakR: \frakR\times\frakR\to W[v];  \quad h_\frakR(x,
y)=\frac{1}{2}{\rm Tr}_{\frakR/W[v]}(x\bar y)
$$
where $r\mapsto \bar r$ is the order two automorphism of $\frakR$
over $W[v]$. The form $h_{\frakR}$ is perfect in the unramified
case;  $h_\frakR[v^{-1}]$ on $\frakR[v^{-1}]$ is always perfect.

We also consider the central $W[v]$-algebra
$$
\O(\calQ)=W_2[v]\oplus W_2[v]\cdot X
$$
with relations $X^2=v$, $f\cdot X=X\cdot \sigma(f)$ for $f\in
W_2[v]$; this corresponds to the quaternion case $(s,d)=(1,2)$ as
above. Denote by $x\mapsto \bar x$ the main involution of
$\O(\calQ)$ which is $\sigma$ on $W_2[v]$ and maps $X$ to $-X$. Let
$\zeta$ be a root of unity that generates $W_2$ over $W$,
$W_2=W(\zeta)$. Then $\bar\zeta=-\zeta$. The reduced norm ${\rm
Norm}(r)=r\cdot \bar r$  defines a $W[v]$-linear quadratic form on
$\O(\calQ)$. Denote by $h_{\O(\calQ)}: \O(\calQ)\times\O(\calQ)\to
W[v]$ the corresponding $W[v]$-bilinear symmetric form.

We will use the symbol $\frakO$ to denote one of $W[v]$, $\frakR$,
or $\O(\calQ)$; each of these $W[v]$-algebras supports an involution
$x\mapsto \bar x$ as above (this is trivial in the case of $W[v]$).
In the following paragraph we will give each time a free (left)
module $M$ over $\frakO$ which is equipped with a certain form $h$
(alternating, symmetric, hermitian, etc.). All the forms below are
``perfect" after we invert $v$, i.e over $W[v^{\pm 1}]$. We will
consider the group scheme $\Gg'$ over $ W[v]$
 given by the $\frakO$-module automorphisms of $M$ that respect the corresponding  form $h$.
 Suppose that $F$ is a totally tamely ramified extension of $\Q_{p^r}$ of degree $e$.
 Choose a uniformizer $\varpi$ of $F$ with
$\varpi^e\in W$ as above and consider the base change $W[v^{\pm
1}]\to F$ given by $v\to \varpi$. In the list below, we mention the
type of the isogeny class of the group $\Gg'|_{W[v^{\pm
1}]}\otimes_{W[v^{\pm 1}]}F$ over $F$ according to the tables 4.2
and 4.3 of \cite[p. 60-65]{TitsCorvallis}. The determination of
these types follows \cite[4.4, 4.5]{TitsCorvallis}. The
corresponding symbol is read from the first column of these tables.

\subsubsection{Alternating forms}\label{exAlt}

\begin{itemize}

\item{} $M=W[v]^{2n}=\oplus_{i=1}^{2n}W[v]\cdot e_i$ with the alternating $W[v]$-bilinear form $h$
determined by $h(e_i, e_{2n+1-j})=\delta_{ij}$, $1\leq i\leq n$.
(cf. \S \ref{exGSp}). (For $n\geq 2$, the type is $C_n$.)

\end{itemize}

\subsubsection{Symmetric forms}\label{exSymm}
 (Set $n=2m+1$, or $n=2m$.)

\begin{itemize}

\item{\sl Split:}  $M=W[v]^{n}=\oplus_{i=1}^{n}W[v]\cdot e_i$ with the symmetric $W[v]$-bilinear form $h=h(n)$
determined by $h(n)(e_i, e_{n+1-j})=\delta_{ij}$.  (For $n\geq 6$,
the type is $B_m$, or $D_m$ respectively.)

\smallskip

\item{\sl Quasi-split, even case:}   Here $n$ is even and
$M=W[v]^{n-2}\oplus \frakR$ with the symmetric $W[v]$-bilinear form
$h$ given as the direct sum $h(n-2)+h_{\frakR}$. (The types are
${}^2D_m$ if $\frakR=W_2[v]$ (unramified) and $n\geq 8$ and
$C-B_{m-1}$ if $\frakR$ is ramified and $n\geq 6$.)
\smallskip

\item{\sl  Non quasi-split, even case:} Here $n$ is even and
 $M=W[v]^{n-4}\oplus \O(\calQ) $
with   the symmetric $W[v]$-bilinear form $h$ given as the direct
sum $h(n-4)+h_{\O(\calQ)}$. (For $n\geq 6$, the type is ${}^2D'_m$.)
\smallskip

\item{\sl Non quasi-split, odd case:}  Here $n$ is odd and $M=W[v]^{n-3}\oplus \O(\calQ)^0 $
with the  symmetric $W[v]$-bilinear form $h$ given as the direct sum
$h(n-3)+h_{\O(\calQ)^0}$.  We denote by $\O(\calQ)^0$ the submodule
of elements $r$ for which $r+\bar r=0$ and by $h_{\O(\calQ)^0}$  the
restriction of $h_{\O(\calQ)}$ to this submodule. (For $n\geq 6$,
the type is ${}^2B'_m$.)

\end{itemize}

\subsubsection{Hermitian forms}\label{exHerm}
 (Set $n=2m+1$, or $n=2m$.)

\begin{itemize}

\item{\sl quasi-split:} $M=\frakR^{n}$ with hermitian form $h=H(n)$ given by
$$
H(n)(x, y)=x^t\cdot K_n\cdot\bar y
$$
where $K_n$ is the antidiagonal $n\times n$ unit matrix. There are
subcases here according to the choice of $\frakR$. (Suppose $n\geq
3$. In the unramified case, the type is ${}^2A'_{n-1}$. In the
ramified case, the type is $C-BC_m$ if $n=2m+1$, or $B-C_m$ if
$n=2m$.)

\item{\sl non quasi-split, ramified, even case:} Here $n=2m$ is even, $M=\frakR^{n-2}\oplus \frakR^2$, $\frakR $
ramified with hermitian form $H$ given as the direct sum
$h=H(n-2)\oplus H_\alpha$ with
$$
H_\alpha((x_1, x_2), (y_1, y_2) )= x_1\bar y_1- \alpha\cdot
x_{2}\bar y_{2}
$$
and $\alpha\in W^*$ which is not in $(W^*)^2$. (If $n\geq 3$, the
type is ${}^2B-C_m$.)

\item{\sl non quasi-split,  unramified, even case:} Here again $n=2m$ is even, $M=\frakR^{n-2}\oplus \frakR^2$,   $\frakR=W_2[v]$,
with hermitian form $h$ given as the direct sum $h=H(n-2)\oplus H_u$
with
$$
H_u((x_1, x_2), (y_1, y_2) )= x_1\bar y_1- v\cdot x_{2}\bar y_{2}.
$$
(If $n\geq 3$, the type is ${}^2A''_{n-1}$.)

\end{itemize}

\subsubsection{Quaternionic $\epsilon$-hermitian forms}\label{exQHermitian}

%Recall that the main involution on a quaternion algebra is a symplectic involution.
Let $\epsilon=\pm 1$. If $M$ is a left $\O(\calQ)$-module, then a
$W[u]$-bilinear $H: M\times M\to \O(\calQ)$ is called an
$\epsilon$-hermitian (i.e hermitian if $\epsilon=1$, anti-hermitian
if $\epsilon=-1$) form, for the main involution $d\mapsto \bar d$,
if it satisfies: $H(dx, y)=dH(x, y)$, $\overline{ H(x,y)}=\epsilon
H(y,x)$ for $d\in\O(\calQ)$, $x$, $y\in M$. Choose a unit $\xi\in
W^*_2$ such that ${\rm Norm}(\xi)=-{\rm Norm}(\zeta)=\zeta^2$.

\begin{itemize}

%\narrower

\item{\sl  Quaternionic hermitian:}  $M=\O(\calQ)^n$, with hermitian form $h=H(n): M\times M\to \O(\calQ)$
given by
$$
H(n)(x, y)= x^t\cdot K_n\cdot  \bar y.
$$
(If $n\geq 2$, the type is ${}^2C_n$.)

\item{\sl  Quaternionic anti-hermitian:}  $M=\O(\calQ)^{m}\oplus  \O(\calQ)^m\oplus M_0$ where $M_0=\O(\calQ)^r$,
$n=2m+r$. The anti-hermitian form $h='H: M\times M\to \O(\calQ)$ is
the direct sum $'H(2m)\oplus 'H_0$ where
$$
'H(2m)(x, y)= x^t\cdot \left(\begin{matrix}0& I_m\\
-I_m&0\end{matrix}\right)\cdot \bar y
$$
  is the standard anti-hermitian hyperbolic form and $M_0$ with its form $'H_0$ is given as in the one of the following four cases:

\begin{itemize}

\item{(a)} $M_0=(0)$. (Here $n$ is even. If $n\geq 6$, the type is ${}^2D''_n$.)

\item{(b)} $M_0=\O(\calQ)$ with form $xc\bar y$ with $c=X$, $c=\zeta$, or $c=X\xi$.
(Here $n$ is odd.  The type is either ${}^2D''_n$ if $c=\zeta$ and
$n\geq 5$, or for $n\geq 3$, ${}^2C-B_{n-1}$ otherwise.)

\item{(c)} $M_0=\O(\calQ)^2$ with form $x_1a_1\bar y_1+x_2a_2\bar y_2$ with $a_1$, $a_2$
two distinct elements of the set $\{X, \zeta, X\xi\}$. (Here $n$ is
even. If $n\geq 4$, the type is ${}^4D_n$.)

\item{(d)} $M_0=\O(\calQ)^3$ with  form $x_1X\bar y_1+x_2\zeta\bar y_2+x_3X\xi\bar y_3$.
(Here $n$ is odd. If $n\geq 5$, the type is ${}^4D_n$.)
\end{itemize}

\end{itemize}

\smallskip

\subsubsection{}\label{exhaustive}
Our list of cases above is exhaustive in the following sense: Choose
once and for all the uniformizer $\varpi$ of $F$. The connected
components of the specializations $\Gg'|_{W[v^{\pm
1}]}\otimes_{W[v^{\pm 1}]}F$, together with the groups ${\rm
SL}_m(D)$ for $F$-central division algebras $D$ (these groups are of
type ${}^dA_{md-1}$ with $d$ the degree of $D$), give exactly all
the isogeny classes of absolutely almost simple groups over $F$
which are of classical type. (More precisely, if we avoid
exceptional isomorphisms by obeying the listed restrictions on $n$, we obtain each isogeny class exactly once.) This
follows from the above, the discussion in \cite[4.5]{TitsCorvallis},
and classical results on the classification of quadratic and
(quaternionic) hermitian forms over local fields (e.g.
\cite{Jacobson}, \cite{Tsukamoto}). For example, to deal with the
quasi-split case for symmetric forms, we  notice that, since the
residue characteristic is odd, $v\mapsto\varpi$ gives an isomorphism
\begin{equation}
 W[v^{\pm 1}]^*/(W[v^{\pm 1}]^*)^2\xrightarrow{\sim} F^*/(F^*)^2.
\end{equation}
This allows us to realize any quadratic extension $L/F$ as a
specialization of a uniquely specified $\frakR/W[v]$ at $v\mapsto
\varpi$. As a result, the trace form $\frac{1}{2}{\rm Tr}_{L/F}( \
)$ can be obtained by specializing $\frac{1}{2}{\rm
Tr}_{\frakR[v^{\pm 1}]/W[v^{\pm}]}( \ )$ by $v\mapsto \varpi$.
%We will leave the details of this and of the other cases to the reader.

\subsubsection{}
In what follows, the symbol $\frakO$ will denote either $\O(\calD)$
as in (\ref{order}), or $W[v]$, $\frakR$, $\O(\calQ)$ as in the
previous sections. Recall that we denote by $\Gg'$ the group scheme
over $W[v]$ of $\frakO$-automorphisms of $M$ that also preserve the
form $h$ if applicable. All the above forms are ``perfect" after we
invert $v$, i.e over $W[v^{\pm 1}]$, and we can see that $\Gg'|_{
W[v^{\pm 1}]}$ is reductive. Denote by $G'$ the specialization of
the neutral component $(\Gg'|_{W[v^{\pm 1}]})^\circ\otimes_{W[v^{\pm
1}]}F$ and consider the Weil restriction of scalars $G={\rm
Res}_{F/\Q_p}G'$. Regard $W[v^{\pm 1}]$ as a $\Z_p[u^{\pm
1}]$-algebra via $u\mapsto v^e\cdot p\varpi^{-e}$ as  before.

\begin{prop}\label{coincide}
The group scheme $\und G$, as constructed  in \S \ref{ss2b} from $G$
above, is isomorphic to the neutral component of the group scheme
over $\Z_p[u^{\pm 1}]$ with $R$-valued points the
$\frakO\otimes_{\Z_p[u^{\pm 1}]}R$-linear automorphisms of
$M\otimes_{\Z_p[u^{\pm 1}]}R$ that also respect the form
$h\otimes_{\Z_p[u^{\pm 1}]}R$ if applicable.
\end{prop}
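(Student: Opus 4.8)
The plan is to recognize the right-hand (``concrete'') group scheme as an instance of the construction of \S\ref{ss2b}. Write $\Gg''$ for it. Both $\und G$ and $\Gg''$ are reductive over $\Z_p[u^{\pm1}]$ and specialize to $G$ along $u\mapsto p$: the former by the defining properties of $\und G$; the latter because the forms $h$ become perfect over $W[v^{\pm1}]$ while $\O(\calD)$, $\frakR$, $\O(\calQ)$ are Azumaya (resp.\ \'etale) $W[v^{\pm1}]$-algebras, and its fibre at $v\mapsto\varpi$ is $G$ by the descriptions of \S\ref{ss4a}--\S\ref{exClassical}. So it remains to produce an isomorphism $\und G\simeq\Gg''$, and since the construction of \S\ref{ss2b} depends on the auxiliary rigidification only up to conjugation by $N_{\und G_{\rm ad}}(\und A)(\O[u^{\pm1}])$ (see the discussion after Theorem~\ref{grpschemeThm}), it is enough to match it with \emph{some} rigidification read off from $\Gg''$.

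First I would remove the restriction of scalars. By (\ref{invarRing}), the finite \'etale cover of $\Z_p[u^{\pm1}]$ cut out inside the tame cover of \S\ref{sss2a1} by the subgroup $\Gamma_F$ fixing $F$ is exactly $W[v^{\pm1}]\to\Z_p[u^{\pm1}]$, $u=v^e\cdot p\varpi^{-e}$; since Weil restriction along a finite \'etale morphism commutes with all the twisting and descent operations used in \S\ref{ss2b} and preserves reductivity and connectedness of fibres, the construction of \S\ref{ss2b} applied to $G=\Res_{F/\Q_p}G'$ is $\Res_{W[v^{\pm1}]/\Z_p[u^{\pm1}]}$ of that construction applied, with base $\Q_{p^r}$, to $G'$ --- this was already carried out for $G'=\GL_m(D)$ in \S\ref{sss4b3}. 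As $\Gg''$ is visibly $\Res_{W[v^{\pm1}]/\Z_p[u^{\pm1}]}$ of the neutral component of the $\frakO$-automorphism scheme of $(M,h)$ over $W[v^{\pm1}]$, we may assume $G=G'$ and must identify $\und{G}$ over $W[v^{\pm1}]$ with that $\frakO$-automorphism scheme (neutral component), which I still call $\Gg''$.

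Now I would run the recipe of \S\ref{ss2b} on $\Gg''$. Over the explicit splitting cover --- $W_d[v^{\pm1}]$ for $\frakO=\O(\calD)$, and $W_2[v^{\pm1}]$ or the ramified quadratic cover $W[v']$ for the remaining $\frakO$ --- the standard $\frakO$-basis of $M$ identifies $\Gg''$ with the split form $H$ equipped with a standard pinning $(T_H,B_H,e)$; the attached homomorphism $\tau\colon\Gamma^t\to\Xi_H$ is the diagram automorphism induced by the action of $\sigma$ on coefficients together with the involution $x\mapsto\bar x$ of $\frakO$, and going through the tables of \S\ref{exAlt}--\S\ref{exQHermitian} one checks this is the $*$-action defining $\und G^*$ (for the entries with $\frakO$ unramified or $\frakO=W[v]$ and $h=h(n)$ or $H(n)$, $\Gg''=\und G^*$ already). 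Over $\breve\O[u^{\pm1}]$ the unramified parts of $\frakO$ and $h$ split and the unit square classes trivialize, so that $\Gg''\otimes\breve\O[u^{\pm1}]\simeq\und G^*\otimes\breve\O[u^{\pm1}]$; fixing such an isomorphism, the resulting Weil descent datum to $\O[u^{\pm1}]$ is ${\rm Int}(\mathbf g)\cdot\sigma$ for an element $\mathbf g\in\und{M'}^*(\breve\O[u^{\pm1}])$ built from the uniformizing generators $X$ of $\frakO$ (the relations $X^d=v$, $fX=X\sigma(f)^s$ of (\ref{order})) and the scalars $1,\alpha,v,\alpha v$ and the quaternion algebra $\O(\calQ)$ occurring in $h$. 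By Lemma~\ref{coxeter} and the explicit Azumaya algebras $\calB_i$ of \S\ref{exDivision}, \S\ref{sss4b3}, this $\mathbf g$ represents precisely the canonical class $[\und c^{\rm rig}]\in\rH^1(\hat\Z,\und{N'}^*(\breve\O[u^{\pm1}]))$ of \S\ref{sss2c3}, which by definition cuts out $\und G$ inside $\und G^*\otimes\breve\O[u^{\pm1}]$; hence $\und G\simeq\Gg''$. That \S\ref{exhaustive} shows the triples $(\frakO,M,h)$ to exhaust the classical types finishes the argument.

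The main obstacle is this last case-by-case matching of the descent datum of the concrete model $\Gg''$ with $\und c^{\rm rig}$: one must match the Coxeter matrices $\und n_r(\varpi)$ of Lemma~\ref{coxeter} with the commutation relations of $\O(\calD)$, and the discriminant/Hasse-type invariants of the quadratic, hermitian and quaternionic $\epsilon$-hermitian forms of \S\ref{exAlt}--\S\ref{exQHermitian} with the square classes in $W[v^{\pm1}]^*/(W[v^{\pm1}]^*)^2$ and with $\O(\calQ)$. This is bookkeeping that the choices in \S\ref{exDivision}--\S\ref{exhaustive}, together with the classification of quadratic and (quaternionic) hermitian forms over local fields invoked in \S\ref{exhaustive}, were arranged to make work; it is the only step that is not purely formal.
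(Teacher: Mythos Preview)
Your proposal is correct and follows essentially the same route as the paper: reduce via Weil restriction along $W[v^{\pm1}]/\Z_p[u^{\pm1}]$ to the group $\und J$ over $W[v^{\pm1}]$, check that $\und J$ becomes quasi-split over $\breve\Z_p[v^{\pm1}]$ and hence matches $\und{G'}^*$ there by rigidity of pinned quasi-split forms, and then verify case-by-case that the inner twist cutting out $\und J$ agrees with the canonical class $[\und c^{\rm rig}]$ of \S\ref{sss2c3}. The paper organizes the case check slightly differently---it observes that the adjoint anisotropic kernels are products of inner forms of ${\rm PGL}_2$ (or ${\rm PGL}_4$ in case \ref{exQHermitian}(d)), which localizes the matching to rank-one pieces---and also records an alternative shortcut via Remark~\ref{Piano}: since each $\und J$ splits over a Galois cover of $W[v^{\pm1}]$ of degree dividing $4$ (prime to $p$), one can invoke the Gille--Pianzola-type statement that such forms are determined by their specialization at $v\mapsto\varpi$.
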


\begin{proof}
As above, consider
 the neutral component $\und J:=(\Gg'|_{W[v^{\pm 1}]})^\circ$ of the group scheme over
 $W[v^{\pm 1}]$
of $\frakO$-automorphisms of $M$ that also preserve the form $h$ if
applicable. Then the group scheme in the statement of the
Proposition is isomorphic to ${\rm Res}_{W[v^{\pm 1}]/\Z_p[u^{\pm
1}]}\und J$ and is enough to check that $\und J$ is isomorphic to
the group scheme $\und G'$ which is obtained by $G'$ using our
constructions in the previous chapters. This can be shown by a
case-by-case verification and we will leave some of the work to the
reader:
 First, in the case of inner forms of type $A_n$ where $\frakO=\O(\calD)$,
the result is in \S \ref{sss4b3}  and follows directly from the
construction of \S \ref{ss2b}. Second, suppose we consider the rest
of the cases of the previous section; then we assume that $p$ is
odd. The group
 $G'$ contains a standard split torus and we can  compute the
quasi-split forms and the corresponding anisotropic kernel in the
explicit descriptions of the cases above. The adjoint groups of
these anisotropic kernels are inner forms of products of
 ${\rm PGL}_2$ or ${\rm PGL}_4$ (the latter occuring only in case
\ref{exQHermitian} (d)).  We  first check that $\und
J\otimes_{W[u^{\pm 1}]}\breve\Z_p[v^{\pm 1}]$ is quasi-split over
$\breve\Z_p[v^{\pm 1}]$. Then the rigidity of quasi-split forms (\S
\ref{ssstameeq}) shows the base change  $\und J\otimes_{W[u^{\pm
1}]}\breve\Z_p[v^{\pm 1}]$  is isomorphic to
 $\und {G'}^*\otimes_{W[u^{\pm 1}]}\breve\Z_p[v^{\pm 1}]$ and
it remains to verify that the inner twists of $\und {G'}^*$ that
define $\und J$ and $\und G'$ agree. This can be done case-by-case;
we leave the details to the reader. (Alternatively, since the
reductive groups $\und J$ always split over a degree $4$ Galois
cover of $W[u^{\pm 1}]$ and $p$ is odd, the isomorphism between
$\und J$ and $\und G'$ can also be shown directly using Remark
\ref{Piano}.)
\end{proof}

\subsubsection{}\label{sss4b11} We can now extend the explicit descriptions
of the group schemes $\Gg$ from the split cases above to the general
classical case. Recall $\frakO$ will denote either $\O(\calD)$ as in
(\ref{order}), or $W[v]$, $\frakR$, $\O(\calQ)$ as in the previous
sections. Then, we also denote by $Y$ the ``standard uniformizer" of
each of these algebras, i.e $X$ when $\frakO$ is $\O(\calD)$ or
$\O(\calQ)$, $v$ when $\frakO=W[v]$ or $\frakR=W_2[v]$ in the
unramified case, $v'$ when $\frakO=\frakR$ in the ramified case.

A $\frakO$-lattice chain in $\frakO[v^{\pm 1}]^n$ is a totally
ordered non-empty set $M_\bullet$ of left $\frakO$-submodules of
$\frakO[v^{\pm 1}]^n$ which are free of rank $n$, which is stable
under multiplication by $Y$ and $Y^{-1}$, and of the form
\begin{equation}
\cdots\subset Y M_0 \subset M_{r-1}\subset\cdots \subset M_1\subset
M_0\subset\cdots .
\end{equation}
with $M_i/M_{i+1}$, for all $i\in\Z$, free over $W$.

If $\frakO$ is one of $W[v]$, $\frakR$, $\O(\calQ)$ with form $h$ as
in the previous section, and $N$ a $\frakO$-lattice in
$\frakO[v^{\pm 1}]^n$ we can consider the dual $N^\vee=\{x\in
\frakO[v^{\pm 1}]^n\ |\ h(x, m)\in \frakO, \ \forall m\in N\}$ which
is also an $\frakO$-lattice. The $\frakO$-lattice chain $M_\bullet$
is called self-dual, when $N$ belongs to the lattice chain
$M_\bullet$, if and only if $N^\vee$ belongs to the lattice chain
$M_\bullet$. Then, for each index $i$, there is $j$ such that $h$
induces a prefect pairing
\begin{equation}\label{forms}
h: M_i\times M_j\to \frakO.
\end{equation}
If $M_\bullet$ is a (self-dual) $\frakO$-lattice chain, the base
change $M_\bullet\otimes_{W[v]}\O$, by $v\mapsto \varpi$, gives a
(self-dual) lattice chain as in \S \ref{ss4a}.

\begin{Remark}\label{genericLattice}
{\rm In fact, we can give such  self-dual $\frakO$-lattice chains by
the following construction: Let $M$ be $\O(\calD)^n$ or in general
$\frakO^n$ with the form as in \S \ref{exAlt}, \ref{exSymm},
\ref{exHerm}, etc. Consider the $F((v))$-vector space
$V=M\otimes_{W[v^{\pm 1}]}F((v))$; it is a free (left)
$\frakO\otimes_{W[v]}F((v))$-module and supports the perfect form
$h\otimes_{W[v]}F((v))$. Choose a   self-dual lattice chain
$L^\bullet=(L_i)_i$ of $\frakO\otimes_{W[v]}F[[v]]$-lattices in $V$
in the sense of \S \ref{ss4a} (cf. \cite{RapZinkBook}) (for the
equal characteristic dvr $F[[v]]$). Since $M\otimes_{W[v]}F[[v]]$ is
also a lattice in $V$, for each $i\in\Z$, there is $r_i\geq 0$ so
that $v^{r_i}M\otimes_{W[v]}F[[v]]\subset L_i\subset
v^{-r_i}M\otimes_{W[v]}F[[v]]$. The intersection $M_i:=L_i\cap
M[v^{-1}]$ is a finitely generated and reflexive (i.e equal to its
double dual) $W[v]$-module.
 Since $W[v]$ is regular Noetherian
of Krull dimension $2$ and $(v, p)$ has codimension $2$, it follows
that
  $M_i$ is  a finitely generated projective
module which then is actually $W[v]$-free (\cite{SeshadriPNAS}).  In
fact, locally free coherent sheaves on  $\Spec(W[v])-\{(v, p)\}$
uniquely extend to locally free coherent sheaves over $\Spec(W[v])$.
Similarly, module homomorphisms between two such sheaves
 uniquely extend.
  Using  this extension
property, we now see that $\frakO$-multiplication and the perfect
forms (considered as maps $L_i\to L_j^{\rm dual}$) extend to
$\{M_i\}$. Suppose  that $L_i\subset L_j$ are two $F[[v]]$-lattices
in the chain and consider the corresponding $W[v]$-lattices
$M_i\subset M_j$. Notice that $M_i/M_j$ has projective dimension $1$
as a $W[v]$-module and so it follows from the Auslander-Buchsbaum
theorem that $M_i/M_j$ has depth $1$. On the other hand, since
$M_i[v^{-1}]=M_j[v^{-1}]$, $M_i/M_j$ is supported along $v=0$. By
the above, all the associated primes of $M_i/M_j$ have height $1$,
therefore $M_i/M_j$ has no section supported over $(v, p)$. It now
follows that if the quotient $L_i/L_j$ is annihilated by $v$ and has
$F$-rank $d$, then
  $M_i/M_j$ is also annihilated by $v$ and is actually $W$-free of rank $d$.
Similarly, we see that there are $m_j\geq 0$ such that
$$
v^{m_j}\frakO^n\subset M_i\subset v^{-m_j}\frakO^n
$$
with $M_i/v^{m_j}\frakO^n$, $v^{-m_j}\frakO^n/M_i$ both $W$-free. We
can now show that all such $M_i$ are free $\frakO$-modules. Hence,
it also follows that $M_\bullet=\{M_i\}_i$ is a (self-dual)
$\frakO$-lattice chain in $M[u^{\pm 1}]$ in the sense above. }
\end{Remark}

Now consider the group scheme $\Gg'$ over $\Z_p[u]$ with $R$-valued
points the $\frakO\otimes_{\Z_p[u ]}R$-linear automorphisms of the
chain $M_\bullet\otimes_{\Z_p[u ]}R$ that respect the forms
$h\otimes_{\Z_p[u ]}R$ of (\ref{forms}). The arguments in
\cite[Appendix to Ch. 3]{RapZinkBook} show that $\Gg'$ is smooth. Base
changing by $\Z_p[u]\to \Z_p$ via $u\mapsto p$ or by $\Z_p[u]\to
\Q_p((u))$ produces smooth group schemes whose neutral component is
a Bruhat-Tits parahoric group scheme (see \S \ref{sss4a2}). Using \S
\ref{unique}, we can now see that the neutral component $\Gg$ of
$\Gg'$ is a group scheme obtained by Corollary \ref{application}.

\subsubsection{Variants}\label{exVariants}

Similarly, we can consider:

\begin{itemize}

%\narrower
\item{} anti-hermitian forms $\tilde h$ on $\frakR^n$ obtained by multiplying
the hermitian forms $h$ of \ref{exHerm} by either $\zeta$ (in the
unramified case), or by $v'$ (in the ramified cases).

\item{} $\epsilon$-hermitian forms $\tilde h$ on $\O(\calD)$ for the ``new involution" $d\mapsto d'$ on $\O(\calD)$
given by $d':=X^{-1}\cdot \bar d\cdot X$. Such forms can be obtained
by multiplying the $\epsilon$-hermitian forms $h$ for the main
involution of \S \ref{exQHermitian} by   $X$. Indeed, if $h$ is
$\epsilon$-hermitian for $d\mapsto \bar d$, then $ h\cdot X$ is
$(-\epsilon)$-hermitian for $d\mapsto d'$.

\end{itemize}

The automorphisms groups of these forms (after specialization to
$F$) do not produce additional isogeny classes of reductive groups.
However, as we will see later, considering these forms is useful in
constructing certain symplectic  embeddings and so these will appear
in our discussion of local models for Shimura varieties of PEL type.

\bigskip

\section{Loop groups and affine Grassmannians}\label{chapterLoop}
\setcounter{equation}{0}

In this chapter we define and show the (ind-)representability of the
various versions (``local" and ``global")
 of the affine Grassmannian that we will use. We start
 by showing a version of the descent lemma of Beauville-Laszlo
 for $\Gg$-torsors.

\subsection{A descent lemma}\label{affGrass}

We continue with the same notations, so that $\O$ is a discrete
valuation ring with fraction field $F$ and perfect residue field
$k$. Let $\Gg=\Spec(B) \to X=\AA^1_\O=\Spec(\O[u])$ be a smooth affine group
scheme over $\AA^1_\O$ with connected fibers. Suppose that $R$ is an
$\O$-algebra and denote by $r: \Spec(R)\to \Spec(\O[u])$ the
$R$-valued point of $\AA^1_\O$ given by $u\mapsto r\in R$. We will
identify the completion of $ R[u]$ along the ideal $(u-r)$ (which cuts out the graph of
$r$) with $R[[t]]$ using the local parameter $t=u-r$.

The following extends the descent lemma of Beauville-Laszlo \cite{BLdescente}.

\begin{lemma}\label{descentBL}  There is a $1$-$1$ correspondence
between   elements of $\Gg(R((t)))$ and triples $({\mathcal T},
\alpha, \beta)$, where ${\mathcal T}$ is a $\Gg$-torsor over $R[u]$
and $\alpha$, $\beta$ are trivializations of the torsors ${\mathcal
T}\otimes_{R[u]}R[t, t^{-1}]$ and ${\mathcal T}\otimes_{R[u]}R[[t]]$
respectively. The inverse of the correspondence associates to the
triple $({\mathcal T}, \alpha, \beta)$ the element
$(\alpha^{-1}\cdot \beta)(1)$.
 \end{lemma}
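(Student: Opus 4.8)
The plan is to reduce everything to the classical Beauville--Laszlo gluing for modules \cite{BLdescente}, combined with the fact (from the appendix; cf.\ \cite{ThomasonEqRes}) that $\Gg$ admits a closed immersion $\Gg\hookrightarrow\GL_{n,\O[u]}$ whose fppf quotient $\GL_n/\Gg$ is a quasi-affine $\O[u]$-scheme, which we fix once and for all. Put $A=R[u]$ and $T=u-r$. Since $T$ is monic in $u$ it is a non-zero-divisor in $A$; its $T$-adic completion is $R[[T]]$, while $A[T^{-1}]=R[u,T^{-1}]=R[T,T^{-1}]$ and $R[[T]][T^{-1}]=R((T))$. The graph of $r$ in $\AA^1_R=\Spec A$ is $V(T)$, its complement is the open $\Spec R[T,T^{-1}]$, and the completion along it is $\Spec R[[T]]$, so these are exactly the rings in the statement. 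Classical Beauville--Laszlo gives $A=R[T,T^{-1}]\times_{R((T))}R[[T]]$, tells us $\Spec R[T,T^{-1}]\sqcup\Spec R[[T]]\to\Spec A$ is surjective, and identifies finitely generated projective $A$-modules with triples (finitely generated projective module over $R[T,T^{-1}]$, finitely generated projective module over $R[[T]]$, isomorphism over $R((T))$).

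Passing from a triple $(\mathcal T,\alpha,\beta)$ to an element is immediate: over $\Spec R((T))$ both $\alpha$ and $\beta$ trivialise $\mathcal T$, so they differ by an automorphism of the trivial $\Gg$-torsor over $R((T))$, i.e.\ by a unique $g\in\Gg(R((T)))$, which is $(\alpha^{-1}\beta)(1)$ in the notation of the statement. Conversely, from $g\in\Gg(R((T)))$ I glue the trivial $\Gg$-torsors over $\Spec R[T,T^{-1}]$ and over $\Spec R[[T]]$ along the automorphism of $\Gg_{R((T))}$ attached to $g$, and the issue is to see the glued object is a $\Gg$-torsor over $A$. Push the trivial $\Gg$-torsors forward along $\Gg\hookrightarrow\GL_n$: classical Beauville--Laszlo glues the two trivial rank-$n$ bundles along the image $\bar g\in\GL_n(R((T)))$ into a vector bundle $E$ over $A$ whose two tautological trivialisations differ over $R((T))$ by $\bar g$; since $\bar g$ lies in $\Gg(R((T)))$, the corresponding tautological $\Gg$-reductions of $E$ over $\Spec R[T,T^{-1}]$ and over $\Spec R[[T]]$ agree over $\Spec R((T))$. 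These reductions are sections of the associated $(\GL_n/\Gg)$-bundle $E/\Gg\to\Spec A$; writing $E/\Gg$ as an open subscheme of an affine $A$-scheme $\Spec C$ and using $\Hom_A(C,A)=\Hom_A(C,R[T,T^{-1}])\times_{\Hom_A(C,R((T)))}\Hom_A(C,R[[T]])$, the two local sections produce a morphism $\Spec A\to\Spec C$; it lands in the open $E/\Gg$ because its restrictions to the two pieces do and the fracture cover is surjective. The resulting section is a $\Gg$-reduction $\mathcal T$ of $E$, and by construction $\mathcal T$ restricts to the trivial $\Gg$-torsor on each piece, furnishing the required $\alpha$ over $R[T^{-1}]$ and $\beta$ over $R[[T]]$.

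It remains to see these assignments are mutually inverse. Starting from $g$, the gluing automorphism over $\Spec R((T))$ is by construction the one attached to $g$, so $(\alpha^{-1}\beta)(1)=g$. Starting from $(\mathcal T,\alpha,\beta)$ with $g=(\alpha^{-1}\beta)(1)$, the uniqueness half of Beauville--Laszlo descent recovers $E$ with its two trivialisations from $\bar g$, and the $\Gg$-reduction is then recovered from the corresponding section of $E/\Gg$; moreover an automorphism of such a rigidified triple is necessarily trivial (a section of the affine separated $A$-group $\underline{\mathrm{Aut}}(\mathcal T)$ that is the identity over $R[T,T^{-1}]$ is the identity over $R((T))$, hence, by separatedness, over $R[[T]]$, hence over $A$), so the reconstructed triple is canonically isomorphic to $(\mathcal T,\alpha,\beta)$. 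The one genuinely non-formal point is the effectivity in the second paragraph — that the glued object is a scheme and a $\Gg$-torsor — which is exactly where the detour through $\GL_n$, the classical module-theoretic theorem, and the quasi-affineness of $\GL_n/\Gg$ from the appendix are used; the remaining verifications are routine bookkeeping with the fracture square.
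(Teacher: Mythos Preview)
Your argument is correct, but it follows a genuinely different route from the paper's own proof.

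The paper works \emph{intrinsically} with $\Gg$: writing $\Gg=\Spec(B)$, it applies Beauville--Laszlo descent directly to the (infinite-rank) $R[u]$-modules $B\otimes_{\O[u]} R[T^{-1}]$ and $B\otimes_{\O[u]} R[[T]]$, glued along the coaction of $g$. Because this gluing map is an $R((T))$-\emph{algebra} homomorphism compatible with the $B$-comodule structure, the descended module $C$ inherits both an algebra structure and a $\Gg$-action; the torsor condition is then checked by hand, showing that $m^*\colon C\otimes_{R[u]} C\to B\otimes_{\O[u]} C$ is injective (since it is an isomorphism after inverting $T$) and surjective (by faithfulness of the fracture cover). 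No embedding into $\GL_n$ and no appeal to the appendix is needed.

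Your approach instead externalises the problem: push $g$ into $\GL_n(R((T)))$, invoke the classical module-theoretic theorem to produce a vector bundle $E$, and then recover the $\Gg$-structure by gluing sections of the quasi-affine bundle $E/\Gg$ via the ring-theoretic fracture square. This is clean and conceptually appealing, and it dovetails with the strategy used immediately afterwards to prove ind-representability of $\Gr_{\Gg,X}$ (where the same embedding and quasi-affine quotient reappear). The cost is the dependence on Corollary~\ref{qaffine}; there is no circularity, since the appendix is logically independent, but the paper's direct argument is more self-contained and in particular does not require the connected-fibers hypothesis to be mediated through Raynaud--Gruson and Thomason. Both proofs ultimately rest on the same two facts from \cite{BLdescente}: descent for modules along the fracture cover, and faithfulness of $R[u]\to R[T^{-1}]\times R[[T]]$.
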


\begin{proof}
In \cite {BLdescente}, this is proven when $\Gg=\GL_n$. More
generally, \cite {BLdescente} shows how one can construct a
$t$-regular $R[u]$-module $M$ from a triple $(M, N, \phi)$ of a
$R[t,t^{-1}]$-module $M$, a $t$-regular $R[[t]]$-module $N$ and an
$R((t))$-isomorphism $\phi: R[[t]]\otimes_{R[u]} F\xrightarrow{\sim}
N\otimes_{R[[t]]}R((t))$. Starting from $g\in \Gg(R((t)))$, we can  apply this to
$M=B\otimes_{R[u]}R[t, t^{-1}]$, $N=B\otimes_{R[u]}R[[t]]$ and $\phi$
given by the (co)action of $g$, i.e $\phi$ is the composition
$$
  B\otimes_{\O[u]}R((t))\xrightarrow{ } (B\otimes_{\O[u]}R((t)))\otimes_{R((t))} (B\otimes_{\O[u]}R((t)))\xrightarrow{id\otimes g^*}  B\otimes_{\O[u]}R((t)) .
$$
In this, $g^*$ is the $R[u]$-algebra homomorphism $B\to R((t))$
corresponding to $g\in \Gg(R((t)))$. Denote by $C$ the corresponding
$R[u]$-module obtained using \cite {BLdescente}. Notice that $\phi$
is an $R((t))$-algebra homomorphism; this allows us to deduce that
$C$ is an $R[u]$-algebra.
Since $B$ is flat over $\O[u]$, by \cite {BLdescente} we   see that
$C$ is also flat over $R[u]$. In fact, since by construction,
$C\otimes_{R[u]} R[[t]]\simeq B\otimes_{\O[u]}R[[t]]$, the map
$\Spec(C)\to \Spec(R[u])$ is surjective and so $C$ is faithfully
flat over $R[u]$. For our choice of $\phi$, the algebra $C$ affords
a $B$-comodule structure $C\to B\otimes_{R[u]}C$ which base-changes
to the standard $B$-comodule structures   on
$B\otimes_{\O[u]}R[t, t^{-1}]$ and $B\otimes_{\O[u]}R[[t]]$. Now set
${\mathcal T}=\Spec(C)\to \Spec(R[u])$. By the above, $\mathcal T$
is a faithfully flat scheme with $\Gg$-action which is
$\Gg$-equivariantly isomorphic to $\Gg$ over $R[[t]]$ and $R[t,
t^{-1}]$. We would like to conclude that
 ${\mathcal T}:=\Spec(C)$ is the corresponding  torsor.
Consider the map $m: \Gg\times_{\Spec(\O[u])} {\mathcal T}\to
{\mathcal T}\times_{\Spec(R[u])}{\mathcal T}$ given by $(g,
\tau)\mapsto (g\cdot \tau, \tau)$. It is enough to show that   the
corresponding ring homomorphism
$$
m^*: C\otimes_{R[u]}C\xrightarrow{\ } B\otimes_{\O[u]}C
$$
is an isomorphism. Observe that $m^*$ is injective since
$m^*[t^{\pm 1}]$ is an isomorphism. Similarly, we can see that  $m^*$ is surjective using
\cite[Lemme 2] {BLdescente} since the base-change $R[u]\to R[t,
t^{-1}]\times R[[t]]$ is faithful and $m^*[t^{\pm 1}]$, $m^*[[t]]$ are isomorphisms.
\end{proof}

\subsection{Affine Grassmannians}

\subsubsection{The local affine Grassmannian.}\label{LocalaffGrass}

If $S=\Spec(R)$ is an affine $\O$-scheme we set  $D_S=\Spec(R[[u]])$
and  $D^*_S=D_S-\{0\}=\Spec(R((u)))$.  

 Let $\Gg \to X=\AA^1_\O=\Spec(\O[u])$ be a smooth affine group scheme with connected
 fibers. If $R$ is an $\O$-algebra, we set $L\Gg(R)=\Gg(R((u)))$ and $L^+\Gg(R)=\Gg(R[[u]])$.
Since $\Gg$ is affine, we can see that $L\Gg$, resp. $L^+\Gg$, is
represented by an ind-affine scheme, resp. affine scheme, over $\O$.
We also consider the quotient fpqc sheaf ${\rm
Gr}_{\Gg}:=L\Gg/L^+\Gg$ on $({\rm Sch}/\O)$
associated to $R\mapsto L\Gg(R)/L^+\Gg(R)$.

\begin{prop} \label{identify}  
 If $S$ is an affine scheme over $\O$, there is a natural identification
\begin{equation}\label{a}
 {\rm Gr}_{\Gg}(S)= \biggl\{\,\text{iso-classes of pairs } ( \mathcal T, \alpha) \biggm|
      \twolinestight{$\mathcal T$ a $\Gg$-torsor on $D_S$,}
   {$\alpha$ a trivialization of $\mathcal T |_{ D^*_S}$}\,\biggr\}\, .
\end{equation}
For any scheme $S$ over $\O$, there is a natural identification
\begin{equation}\label{fun1}
 {\rm Gr}_{\Gg}(S)= \biggl\{\,\text{iso-classes of pairs } ( \mathcal E, \beta) \biggm|
      \twolinestight{$\mathcal E$ a $\Gg$-torsor on $\AA^1_S$,}
   {$\beta$ a trivialization of $\mathcal E |_{ \AA^1_S\setminus\{u=0\}}$}\,\biggr\}\, .
\end{equation}
\end{prop}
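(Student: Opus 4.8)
The plan is to identify each of the two functors on the right of \eqref{a} and \eqref{fun1} with ${\rm Gr}_{\Gg}$ by showing they are fpqc sheaves, exhibiting a comparison morphism, and checking it is an isomorphism; the descent Lemma \ref{descentBL} supplies the glueing needed, and the one substantial input is that $\Gg$-torsors over a formal disk trivialize after an fppf base change of the disk's base.

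\emph{The comparison map and its injectivity.} Denote by $\mathcal{F}_a$, resp. $\mathcal{F}_b$, the functor on $(\mathrm{Sch}/\O)$ on the right of \eqref{a}, resp. \eqref{fun1}. Since $\Gg$ is affine, a $\Gg$-torsor is the same datum as a quasi-coherent sheaf of algebras carrying a $\Gg$-coaction locally isomorphic to the regular one; as quasi-coherent algebras and their morphisms satisfy fpqc descent, $\mathcal{F}_a$ and $\mathcal{F}_b$ are fpqc sheaves. For $S=\Spec R$ affine I would send $g\in\Gg(R((u)))=L\Gg(R)$ to the class of the pair $(\mathcal{T}_0,\,g\cdot\beta_0)$, where $\mathcal{T}_0$ is the trivial $\Gg$-torsor on $D_S$ and $g\cdot\beta_0$ is the trivialization of $\mathcal{T}_0|_{D^*_S}$ obtained from the tautological one by left translation by $g$. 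Two pairs $(\mathcal{T}_0,g\cdot\beta_0)$, $(\mathcal{T}_0,g'\cdot\beta_0)$ are isomorphic in $\mathcal{F}_a(S)$ exactly when there is $h\in\Gg(R[[u]])$ with $gh=g'$; so this descends to a map of presheaves $R\mapsto L\Gg(R)/L^+\Gg(R)\to\mathcal{F}_a(S)$ which is injective on sections over affines, and hence, $\mathcal{F}_a$ being a sheaf, induces a monomorphism of fpqc sheaves ${\rm Gr}_{\Gg}\to\mathcal{F}_a$.

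\emph{Local triviality on the disk and surjectivity.} The key assertion is: for $S=\Spec R$ affine, every $\Gg$-torsor $\mathcal{T}$ on $D_S=\Spec R[[u]]$ becomes trivial after an fppf (even \'etale) cover $S'=\Spec R'\to S$. Indeed $\mathcal{T}\to D_S$ is affine and smooth, being a torsor under the smooth affine $\Gg$; its restriction along the zero section $S\hookrightarrow D_S$ is a torsor under the smooth affine group $\Gg\otimes_{\O[u]}(\O[u]/u)$ over $S$, hence trivial over some such $S'$, say with section $s_0$. Because $\mathcal{T}\otimes_{R[[u]]}R'[[u]]\to\Spec R'[[u]]$ is formally smooth and $\Spec R'[u]/u^{n}\hookrightarrow\Spec R'[u]/u^{n+1}$ are nilpotent immersions, $s_0$ lifts to a compatible system of sections over the $\Spec R'[u]/u^{n+1}$; since $\mathcal{T}$ is affine over $R[[u]]$ one has $\mathcal{T}(R'[[u]])=\varprojlim_n\mathcal{T}(R'[u]/u^{n+1})$, so this system defines a section over $D_{S'}$, i.e.\ a trivialization. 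Consequently, for any $(\mathcal{T},\alpha)\in\mathcal{F}_a(S)$, after passing to such an $S'$ and choosing a trivialization $\beta'$ of $\mathcal{T}|_{D_{S'}}$, the pair is isomorphic to the image of $\alpha'^{-1}\cdot\beta'\in L\Gg(R')$; thus the comparison morphism is an epimorphism of fpqc sheaves, and together with injectivity this gives ${\rm Gr}_{\Gg}\xrightarrow{\ \sim\ }\mathcal{F}_a$, which is \eqref{a}.

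\emph{From the disk to the affine line, and the obstacle.} For \eqref{fun1} I would show that the restriction morphism $\rho\colon\mathcal{F}_b\to\mathcal{F}_a$, sending a torsor on $\AA^1_S$ trivialized away from $u=0$ to its restriction to $D_S$ together with the induced trivialization on $D^*_S$, is an isomorphism; composed with the previous step this yields \eqref{fun1}. A quasi-inverse is Beauville--Laszlo glueing of $\Gg$-torsors: from $(\mathcal{T},\alpha)$ on $D_S$ one glues the trivial $\Gg$-torsor on $\AA^1_S\setminus\{u=0\}$ to $\mathcal{T}$ along $\alpha$ over $D^*_S$, obtaining a $\Gg$-torsor on $\AA^1_S$; that this glueing of algebras-with-coaction is well defined and inverse to $\rho$ is exactly the content of Lemma \ref{descentBL} (which is its affine, one-object instance, the passage to general $S$ being Zariski-local), using the faithfulness of $R[u]\to R[u,u^{-1}]\times R[[u]]$ as in the proof of that lemma. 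The only non-formal step is the local triviality statement of the second paragraph; I expect that to be the main obstacle, and it genuinely uses both the affineness of $\Gg$ (for the identity $\mathcal{T}(R'[[u]])=\varprojlim\mathcal{T}(R'[u]/u^{n+1})$) and its smoothness (for the successive lifting of sections), everything else being an application of Lemma \ref{descentBL} and routine fpqc-descent bookkeeping.
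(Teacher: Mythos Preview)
Your proposal is correct and follows essentially the same approach as the paper: the key point in both is that a $\Gg$-torsor on $D_S=\Spec R[[u]]$ trivializes after an fppf base change $R\to R'$ (by trivializing the restriction along $u=0$ and lifting via smoothness), which gives \eqref{a}, and then \eqref{fun1} follows from the Beauville--Laszlo descent Lemma~\ref{descentBL}. Your write-up is more detailed than the paper's, but the structure and the identification of the one nontrivial step are the same.
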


\begin{proof}  The argument showing this can be found in
\cite{LaszloSorger}, see especially \cite[Prop. 3.10]{LaszloSorger}.
The crucial point is to observe that every $\Gg$-torsor ${\mathcal
T}$ over $R[[u]]$ can be trivialized over $R'[[u]]$ where $R\to R'$
is a faithfully flat extension. Indeed, ${\mathcal
T}\otimes_{R[[u]]}R$ has a section after such an extension $R\to
R'$; since ${\mathcal T}\to \Spec(R[[u]])$ is smooth this section
can be extended to a section over $R'[[u]]$. This shows the first
identification. The second identification now follows using the
descent lemma \ref{descentBL}.
\end{proof}

\subsubsection{}
Assume now in addition that $\Gg=\Gg_x \to X=\AA^1_\O=\Spec(\O[u])$
is a Bruhat-Tits group scheme in the sense of Theorem
\ref{grpschemeThm}.

\begin{prop}\label{indproper}
The sheaf ${\rm Gr}_{\Gg}$ is represented by an ind-projective
ind-scheme over $\O$.
\end{prop}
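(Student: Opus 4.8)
The statement to prove is that $\mathrm{Gr}_{\Gg}$ is represented by an ind-projective ind-scheme over $\O$, where $\Gg = \Gg_x$ is a Bruhat–Tits group scheme as in Theorem~\ref{grpschemeThm}. The strategy I would follow is the standard one of reducing to the case of $\mathrm{GL}_n$ via a faithful linear representation with quasi-affine quotient, but one must be careful since the base here is two-dimensional. First I would invoke the existence (promised in the introduction, and proved in the appendix, \S 10) of a closed immersion $\rho\colon \Gg \hookrightarrow \mathrm{GL}_{n,\O[u]}$ of group schemes over $X = \AA^1_\O$ such that the fppf quotient $\mathrm{GL}_n/\Gg$ is representable by a quasi-affine scheme over $X$. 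Actually, for ind-\emph{properness} one wants more: one should choose $\rho$ so that $\Gg$ is the stabilizer of a suitable tensor (or more precisely, so that there is a closed immersion $\mathrm{GL}_n/\Gg \hookrightarrow V$ into a vector bundle on $X$, equivariant for $\mathrm{GL}_n$). Let me record the key reduction steps in order.

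\emph{Step 1: Representability of $\mathrm{Gr}_{\Gg}$ by an ind-scheme.} Using $\rho$, one gets a morphism of fpqc sheaves $\mathrm{Gr}_\Gg \to \mathrm{Gr}_{\mathrm{GL}_n}$. For $\mathrm{GL}_n$ the loop Grassmannian $\mathrm{Gr}_{\mathrm{GL}_n,\O}$ is well known to be representable by an ind-scheme over $\O$ (it is the moduli of $\O[u]$-lattices inside $R((u))^n$, exactly as in the classical Beauville–Laszlo picture, now over the two-dimensional base), in fact ind-projective; and since $\mathrm{GL}_n/\Gg$ is quasi-affine over $X$, the standard argument (cf. the function-field case treated by Beilinson–Drinfeld, Gaitsgory, and in \cite{LaszloSorger}, \cite{PappasRaTwisted}) shows that $\mathrm{Gr}_\Gg \to \mathrm{Gr}_{\mathrm{GL}_n,\O}$ is representable by a locally closed (indeed closed, once one passes to the schematic image of $\mathrm{GL}_n/\Gg$ in a vector bundle) immersion of ind-schemes. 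Here one uses Proposition~\ref{identify}: a point of $\mathrm{Gr}_\Gg(S)$ is a $\Gg$-torsor on $\AA^1_S$ with a trivialization away from $u=0$, pushing forward along $\rho$ gives a $\mathrm{GL}_n$-torsor with trivialization, and the condition that the reduction of structure group to $\Gg$ exists (and is compatible with the trivialization) is a closed condition cut out inside the $\mathrm{GL}_n$-Grassmannian because $\mathrm{GL}_n/\Gg$ embeds in a vector bundle. This already gives that $\mathrm{Gr}_\Gg$ is a closed ind-subscheme of an ind-projective ind-scheme, hence an ind-scheme.

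\emph{Step 2: Ind-properness.} Being a closed ind-subscheme of the ind-projective $\mathrm{Gr}_{\mathrm{GL}_n,\O}$ (restricted to lattices of fixed "size", which are the projective pieces), $\mathrm{Gr}_\Gg$ is ind-proper. To make this precise one exhausts $\mathrm{Gr}_\Gg$ by the preimages of the projective subschemes $\mathrm{Gr}_{\mathrm{GL}_n,\O}^{\le N}$ (lattices $L$ with $u^N\O[u]^n \subset L \subset u^{-N}\O[u]^n$); each such preimage is a closed subscheme of a projective $\O$-scheme, hence projective, and their union is $\mathrm{Gr}_\Gg$. One checks the transition maps are closed immersions, so $\mathrm{Gr}_\Gg$ is ind-projective over $\O$. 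The valuative criterion could also be used directly for properness of each piece, but the closed-immersion-into-ind-projective argument is cleaner.

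\textbf{The main obstacle.} The hard part is not the reduction itself but securing its input over the two-dimensional base: namely that $\Gg$ admits a faithful representation $\Gg \hookrightarrow \mathrm{GL}_n$ over $\O[u]$ with $\mathrm{GL}_n/\Gg$ quasi-affine, \emph{and} that one can upgrade this to an $\mathrm{GL}_n$-equivariant closed immersion $\mathrm{GL}_n/\Gg \hookrightarrow V$ into a vector bundle on $X$ (so that the reduction-of-structure-group locus is genuinely closed, not merely locally closed). Over a smooth curve over a field this is routine, but over $\Spec(\O[u])$ — which is regular of dimension two but not smooth over a field, and where general homogeneous spaces need not be schemes — it requires the results attributed to Thomason \cite{ThomasonEqRes} on equivariant resolution, together with the sharpening carried out in the appendix. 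Once that geometric input is in hand, Steps~1 and~2 are formal, following verbatim the arguments of \cite{LaszloSorger} and \cite{PappasRaTwisted} with $k$ replaced by $\O$; the only extra care needed is that several flatness and finite-presentation statements (e.g. for the module $C$ produced in Lemma~\ref{descentBL}, and for the quotient sheaves) hold over the base $R[u]$ rather than over a field, which is handled by the Beauville–Laszlo descent already established in Lemma~\ref{descentBL} and by Noetherian approximation.
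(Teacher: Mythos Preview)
Your Step~1 is correct and matches the paper: the appendix (Corollary~\ref{qaffine}) supplies a closed embedding $\Gg\hookrightarrow\GL_n$ with $\GL_n/\Gg$ \emph{quasi-affine}, and the Beilinson--Drinfeld/Gaitsgory argument then shows that $\Gr_\Gg\to\Gr_{\GL_n}$ is a \emph{locally closed} immersion. This gives representability as an ind-scheme, ind-separated and of ind-finite type.

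The gap is in Step~2. You claim one can ``upgrade'' so that $\GL_n/\Gg$ admits a $\GL_n$-equivariant \emph{closed} immersion into a vector bundle on $X$; this would make $\GL_n/\Gg$ affine over $X$ and hence make $\Gr_\Gg\to\Gr_{\GL_n}$ a closed immersion, from which ind-projectivity would be immediate. But neither Thomason's result nor the paper's appendix provides this: Corollary~\ref{qaffine} gives only quasi-affineness, and the paper explicitly notes that the immersion of Grassmannians is closed \emph{only if} $\GL_n/\Gg$ is affine. Over the two-dimensional base $\O[u]$, there is no established way to arrange affineness of $\GL_n/\Gg$ for a general Bruhat--Tits group scheme $\Gg$, so your argument for ind-properness does not go through as stated.

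The paper's route to ind-properness is therefore genuinely different and more elaborate. In the split case it reduces to an Iwahori $\Gg_y$ (via a surjection $\Gr_{\Gg_y}\to\Gr_{\Gg_x}$), then compares with a hyperspecial $\Gg_{x_0}\simeq H\otimes\O[u]$: the map $\Gr_{\Gg_y}\to\Gr_{\Gg_{x_0}}$ is an $H/B$-fibration, and since $H$ is reductive one \emph{can} choose $H\hookrightarrow\GL_n$ with $\GL_n/H$ affine, so $\Gr_{\Gg_{x_0}}\hookrightarrow\Gr_{\GL_n}$ is closed and ind-projectivity follows. In the general (tamely ramified) case, after an unramified base change to make $\und G$ quasi-split, the paper uses a fixed-point argument: $\Gg$ is the neutral component of $(\Res_{\O[v]/\O[u]}\Hh)^{\gamma_0}$, and one relates $\Gr_\Gg$ to the $\gamma_0$-fixed locus $\Gr_\Hh^{\gamma_0}$ via a connecting map $\delta$ to a finite \'etale group scheme coming from $\rH^1(\langle\gamma_0\rangle,\calT)$ (controlled by Proposition~\ref{locconstant}), showing $\Gr_\Gg$ is a finite \'etale cover of an ind-proper piece of $\Gr_\Hh^{\gamma_0}$. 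The upshot: ind-properness is not formal from the embedding into $\Gr_{\GL_n}$; it requires the Bruhat--Tits structure of $\Gg$ in an essential way.
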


\begin{proof} For this we can appeal to the sketchy \cite{FaltingsLoops} (for the split case)
and to \cite{PappasRaTwisted} when the residue field $k$ is
algebraically closed. We give here a general proof by a different
argument.

We first show that ${\rm Gr}_{\Gg}$ is representable by an
ind-scheme of ind-finite type and separated over $X$. By Proposition
\ref{qaffine}, there is a closed group scheme immersion
$\Gg\hookrightarrow \GL_n$ such that the quotient $\GL_n/\Gg$ is
representable by a quasi-affine scheme over $\O[u]$. The argument in
\cite[4.5.1]{BeilinsonDrinfeld}   (or \cite[Appendix]{GaitsgoryInv},
cf. \cite{PappasRaTwisted}) now shows that the natural functor $
{\rm Gr}_{\Gg}\to {\rm Gr}_{\GL_n} $ is representable and is a
locally closed immersion. In fact, if the quotient $\GL_n/\Gg$ is
affine, this functor is a closed immersion. Now note that, as is
well-known (loc. cit.), the affine Grassmannian ${\rm Gr}_{\GL_n}$
is representable by an ind-scheme which is ind-projective over $\O$.
It remains to show that ${\rm Gr}_{\Gg}$ is ind-proper.

Assume first that $\und G=H\otimes_\Z \O[u^{\pm 1}]$ is split.
Consider an alcove $C$ whose closure contains $x$. If $y$ is in the
interior of the alcove $C$, then an argument as in \ref{unique}
shows that there is a group scheme homomorphism $\Gg_y\to \Gg_x$
which induces $\Gg_y[u^{-1}]=\Gg_x[u^{-1}]$. ($\Gg_y$ is a group
scheme corresponding to an Iwahori subgroup.) Hence, the morphism
${\rm Gr}_{\Gg_y}=L\Gg_y/L^+\Gg_y\to {\rm
Gr}_{\Gg_x}=L\Gg_x/L^+\Gg_x$ is surjective and it is enough to show
that ${\rm Gr}_{\Gg_y}$ is ind-proper. Now observe that the closure
of the alcove $C$ always contain a hyperspecial point $x_0$; then
$\Gg_{x_0}$ is reductive, $\Gg_{x_0}\simeq H\otimes_\Z\O[u]$. As in
the proof of Theorem \ref{grpschemeThm} the group scheme
homomorphism $\Gg_{y}\to \Gg_{x_0}$ identifies $\Gg_{y}$ with the
dilatation of $H\otimes_\Z\O[u]$ along a Borel subgroup $B$ of the
fiber $H$ over $u=0$. This implies that the fpqc sheaf associated to
$R\mapsto \Gg_{x_0}(R[[u]])/\Gg_y(R[[u]])$ is
representable by the smooth  projective homogeneous space $Y:=H/B$
over $\O$. Hence, the morphism ${\rm Gr}_{\Gg_y}=L\Gg_y/L^+\Gg_y\to
{\rm Gr}_{\Gg_{x_0}}=L\Gg_{x_0}/L^+\Gg_{x_0}$ is an fppf fibration
with fibers locally isomorphic to $Y$; in particular it is a
projective surjective morphism. (In fact, we note here that, as in
\cite{FaltingsLoops}, we can see that the quotient morphism
$L\Gg_y\to {\rm Gr}_{\Gg_y}=L\Gg_y/L^+\Gg_y$ is an $L^+\Gg_y$-torsor
which splits locally for the Zariski topology.) Now recall that  there is a representation $H\hookrightarrow\GL_n$ with $\GL_n/H$ affine, while
 $\Gg_{x_0}\simeq H\otimes_\Z\O[u]$.
As above, we see then that ${\rm Gr}_{\Gg_{x_0}}\hookrightarrow {\rm
Gr}_{\GL_n}$ is a closed immersion, and that ${\rm Gr}_{\Gg_{x_0}}$
is ind-projective and also ind-proper.

Next we consider the general case. It is enough to prove that ${\rm
Gr}_{\Gg}$ is ind-proper over $\Spec(\O)$ after base changing by a
finite unramified extension $\O'/\O$. Therefore, by replacing $\O$
by $\O'$ we may assume that $\und G$ is quasi-split and splits over
$\O[v^{\pm 1}]/\O[u^{\pm 1}]$. We now return to the notations of the
proof of Theorem \ref{grpschemeThm}. In particular, if $x$ is in
$\A(\und G_F, \und S_F, F)=\A(H, T_H, \tilde F)^\Gamma$, then
$\Gg_x$ is the neutral component of $({\rm
Res}_{\O[v]/\O[u]}\Hh_x)^{\gamma_0}$. By the argument in the last
part of that proof,  we can find a $\gamma_0$-stable affine alcove
$C$ in the apartment $\A(H_{\ti F}, T_H, \tilde F)$ such that $x$
belongs to the closure $\bar C$.
 Denote by $y$ the barycenter of $C$ which is then fixed by $\gamma_0$.
Then $\Hh_y$ is an Iwahori group scheme and $\overline\Hh_y^{\rm
red}=\calT$ is the split torus over $\O$.  An argument as in the
split case above, shows that it is enough to show that ${\rm
Gr}_{\Gg_y}$ is ind-proper. For simplicity, set $\Gg=\Gg_y$,
$\Hh=\Hh_y$. There is an exact sequence of pointed sets
$$
\Hh(R((v)))^{\gamma_0}/\Hh(R[[v]])^{\gamma_0}\hookrightarrow
(\Hh(R((v)))/\Hh(R[[v]])^{\gamma_0} \xrightarrow{\delta}
{\rH}^1(\Gamma, \Hh(R[[v]])).
$$
Now observe $(\Hh(R((v)))/\Hh(R[[v]])^{\gamma_0}={\rm
Gr}_{\Hh}(R)^{\gamma_0}={\rm Gr}_{\Hh}^{\gamma_0}(R)$, where ${\rm
Gr}_{\Hh}^{\gamma_0}$ is the closed ind-subscheme of ${\rm
Gr}_{\Hh}$ given by taking ${\gamma_0}$-fixed points.  The kernel of
$ \Hh(R[[v]]))\to \overline{\Hh}^{\rm red}(R)=\calT(R)$ is affine
pro-unipotent  and we  see that $\rH^1(\Gamma,
\Hh(R[[v]]))=\rH^1(\Gamma, \calT(R))$. Now,   consider the closed
subgroup scheme $Q_\calT$ of $\calT$ of elements $x$ that satisfy
the equation $N(x)=\prod_{i=0}^{e-1}\gamma_0^i(x)=1$. We can see
that the  sheaf   $R\mapsto \rH^1(\Gamma, \calT(R))$ is given by the
quotient $Q_\calT/\calT^{\gamma_0-1}$. The map $\delta$ is given as
follows: starting with $x\in   (\Hh(R((v)))/\Hh(R[[v]])^{\gamma_0}$
we can find $h\in \Hh(R((u)))$ such that $h\gamma_0(h)^{-1}$ is in
$\Hh(R[[v]]))$. We set $\delta(x)=\overline{h\gamma_0(h)^{-1}}$
which is well-defined in $Q_\calT/\calT^{\gamma_0-1}$.
 Using  Proposition \ref{locconstant}
we see that $Q_\calT/\calT^{\gamma_0-1}$ is  a finite \'etale
commutative group scheme $Q$ over $\O$ (of order that divides $e$).
The above exact sequence now gives that the sheaf associated to the
presheaf $R\mapsto \Hh(R((v)))^\gamma/\Hh(R[[v]])^{\gamma_0}$ is
represented by the fiber of the ind-scheme morphism $\delta: {\rm
Gr}_{\Hh}^{\gamma_0}\to Q$ over the identity section $\Spec(\O)\to
Q$. We conclude that the fpqc quotient
$L\Hh^{\gamma_0}/L^+\Hh^{\gamma_0}$ is represented by an ind-proper
ind-scheme over $\O$. To finish the proof recall that by
construction $\Gg$ is the neutral  component of $({\rm
Res}_{\O[v]/\O[u]}\Hh)^{\gamma_0}$. Using this,  Corollary
\ref{locconstant} and the fact that $\gamma_0$-fixed points of
affine pro-unipotent groups are connected, we see that the sheaf
associated to
$$
R\to ({\rm Res}_{\O[v]/\O[u]}\Hh)^{\gamma_0}(R[[u]])/\Gg(R[[u]])=
\Hh(R[[v]])^{\gamma_0}/\Gg(R[[u]])
$$
is represented by the finite \'etale commutative group scheme   of
connected components of $\calT =\overline{\Hh}^{\rm red}$.
Therefore, ${\rm Gr}_{\Gg}$  given by $R\mapsto
\Hh(R((v)))^{\gamma_0}/\Gg(R[[u]])= \Gg(R((u)))/\Gg(R[[u]])$ is
represented by a finite \'etale cover of
$L\Hh^{\gamma_0}/L^+\Hh^{\gamma_0}$. As such it is also an
ind-proper ind-scheme over $\O$.
 \end{proof}

\subsubsection{The global affine Grassmannian}\label{6b3}

 We continue with the same assumptions, but for a little while we allow $\Gg$ to
be any smooth affine group
 scheme over $X=\AA^1_\O$ with connected fibers.

Let $S\in ({\rm Sch}/X)$, with structure morphism $y: S\to X$. We
will denote by $\Gamma_y\subset X\times S$ the closed subscheme
given by the graph of $y$ and consider  the formal
completion of $X\times S$ along $\Gamma_y$.
Suppose that $S=\Spec(R)$ is affine. Then the above completion is an
affine formal scheme and following
  \cite[2.12]{BeilinsonDrinfeld} we can also consider the affine scheme $\hat\Gamma_y$
  given by the relative spectrum of the ring of regular functions on that completion.
  There is a natural closed immersion $\Gamma_y\to \hat\Gamma_y$ and we will
denote by $\hat\Gamma_y^\circ:=\hat\Gamma_y-\Gamma_y$ the complement of
the image. If $y: \Spec(R)\to X=\AA^1_\O$ is given by $u\mapsto y$,
we have $\Gamma_y\simeq \Spec(R[u]/(u-y))$, $\hat\Gamma_y\simeq
\Spec(R[[w]])$. When $y=0$, $\hat\Gamma_y=D_S$,
$\hat\Gamma_y^\circ=D^*_S$ as before. We can see directly that there is a morphism
$\hat\Gamma_y\to X\times S$ given by $R[u]\to R[[w]]$; $u\mapsto
w+y$. We will often  write
$\hat\Gamma_y= \Spec(R[[u-y]])$. Then
$\hat\Gamma_y^\circ=\Spec(R[[u-y]][(u-y)^{-1}])$.

\subsubsection{}\label{6b4} We will now consider various functors on $({\rm Sch}/X)$.
These will be fpqc sheaves on $X$ that can be described by giving
their values on affine schemes over $X$.

First consider the functor that associates to an $\O[u]$-algebra $R$
(given by $u\mapsto y$) the group
\begin{equation}\label{globloop1}
\L\Gg(R)=  \Gg(\hat\Gamma^\circ_y)=\Gg(R[[u-y]][(u-y)^{-1}]).
\end{equation}
Since $\Gg\to \Spec(\O[u])$ is smooth and affine, $\L\Gg$ is
represented by a formally smooth ind-scheme over $X$.

Next consider the functor that associates to an $\O[u]$-algebra $R$
the group
\begin{equation}\label{globloop2}
\L^+\Gg(R)=  \Gg(\hat\Gamma_y)=\Gg(R[[u-y]]).
\end{equation}
We can see that $\L^+\Gg$ is represented by a scheme over $X$ (not
of finite type) which is formally smooth.

Finally define the global affine Grassmannian of $\Gg$ over $X$ to
be the functor on $({\rm Sch}/X)$ given by
\begin{equation}\label{fun1}
 {\rm Gr}_{\Gg, X}(S)= \biggl\{\,\text{iso-classes of pairs } ( \mathcal E, \beta) \biggm|
      \twolinestight{$\mathcal E$ a $\Gg$-torsor on $X\times S$,}
   {$\beta$ a trivialization of $\mathcal E|_{ (X\times S)\setminus\Gamma_y}$}\,\biggr\}\, .
\end{equation}
Here and everywhere else the fiber products are over $\Spec(\O)$.

Similarly to Proposition \ref{identify}, the descent lemma
\ref{descentBL} implies that for $S=\Spec(R)$ the natural map given by restriction
along $\hat\Gamma_y\to X\times S$
\begin{equation}\label{equivfun}
 {\rm Gr}_{\Gg, X}(R)\xrightarrow{ \ } \biggl\{\,\text{iso-classes of pairs } ( \mathcal E, \beta) \biggm|
      \twolinestight{$\mathcal E$ a $\Gg$-torsor on $\hat\Gamma_y$,}
   {$\beta$ a trivialization of $\mathcal E |_{ \hat\Gamma_y^\circ}$}\,\biggr\}\,
\end{equation}
is a bijection for each $\O[u]$-algebra $R$. This provides with an
alternative description of ${\rm Gr}_{\Gg, X}$. Using this
description, we can see that $\L\Gg$, $\L^+\Gg$ act on $\Gr_{\Gg,
X}$ by changing the trivialization $\beta$. In fact, we have $\Gr_{\Gg, X}\simeq \L\Gg/\L^+\Gg$
but we are not going to use this.

\subsubsection{}\label{kappafibers}  Suppose now that  $\Gg$ is as in Theorem \ref{grpschemeThm}. Let $\kappa$ be either the fraction field $F$ or the residue field
$k$ of $\O$. Let $x: \Spec(\kappa)\to  X$, where $\kappa$ is  as
above and identify the completed local ring $\widehat{\mathcal O}_x$
of $X\times \Spec (\kappa)$ with $\kappa[[t]]$, using the local
parameter $t=u-x$. Let
\begin{equation}\label{basechangeG}
\Gg_{\kappa,x}:=\calG\times_{\Spec(\O[u])}\Spec(\kappa[[t]]).
\end{equation}

(i) Suppose that $x$ factors through $0: \Spec(\O)\to X$. Recall
that by Theorem \ref{grpschemeThm} the base change $\Gg_{\kappa,
0}$, can be identified with a Bruhat-Tits group scheme $P_\kappa
:=\P_{x_{\kappa((u))}}$ over the dvr $\kappa[[t]]=\kappa[[u]]$.

Let $L^+ P_\kappa$ be the affine group scheme over $\Spec(\kappa)$
representing the functor on $\kappa$-algebras
\begin{equation*}
R\mapsto L^+ P_\kappa (R) = P_\kappa ( R[[t]])\, ,
\end{equation*}
and $L P_\kappa$  the ind-group scheme over $\Spec (\kappa)$
representing the functor
\begin{equation*}
R\mapsto L P_\kappa (R) = P_\kappa \big(R((t))\big)=G_\kappa
\big(R((t))\big)\, .
\end{equation*}
Here $G_\kappa=P_\kappa[t^{-1}]$ (which is denoted by $\und
G_{\kappa((u))}$ in Chapter \ref{groupscheme}) is the connected
reductive group over $\kappa((t))$ which is obtained by base
changing $\Gg\to \Spec(\O[u])$ along $\O[u]\to \kappa((t))$,
$u\mapsto t$. As in Proposition \ref{indproper} we see that there is
an ind-proper ind-scheme $\Gr_{P_\kappa}$ over $\kappa$ which
represents the quotient $L P_\kappa/L^+ P_\kappa$ of fpqc-sheaves on
$\kappa$-schemes. By Proposition \ref{identify}, $\Gr_{P_\kappa}$ is
the ind-scheme representing 
\[
   R\mapsto \Gr_{P_\kappa} (R) = \biggl\{\,\text{iso-classes of pairs } (\mathcal E, \beta) \biggm|
   \twolinestight{$\mathcal E$ a $P_\kappa$-torsor on $\Spec R[[t]]$,}
   {$\beta$ a trivialization of $\mathcal E|_{\Spec R((t))}$}\,\biggr\} .
\]
The base change
$\Gr_{P_\kappa}\times_{\Spec(\kappa)}\Spec(\bar\kappa)$
  is an  affine flag variety as in \cite{PappasRaTwisted}.

(ii) Suppose  $x: \Spec(\kappa)\to X$ does not factor  through $0:
\Spec(\O)\to X$. Then by Theorem \ref{grpschemeThm} (i), the base
change $\Gg_{\kappa, x}$ is a reductive group scheme which is a form
of $H$. We can see that
\begin{equation}
\Gg_{\kappa, x}\times_{\Spec(\kappa)}\Spec(\kappa')\simeq
H\times_{\Spec(\O)}\Spec( \kappa'[[t]]).
\end{equation}
for a finite  $\kappa'/\kappa$.
As above, we also have the affine Grassmannian $\Gr_{\Gg_{\kappa,
x}}$ over $\kappa$; by the above, we can see that $\Gr_{\Gg_{\kappa,
x}}\times_{\Spec(\kappa)}\Spec(\bar\kappa)$  can be identified with
the usual affine Grassmannian ${\rm Gr}_H$  over $\bar\kappa$ for
the split reductive group $H$. The following observation now follows
from Proposition \ref{identify}:

\begin{prop} \label{globalGr}
Let $x: \Spec(\kappa)\to  X$, where $\kappa$ is either the residue
field $k$ of $\mathcal O$, or the fraction field $F$ of $\mathcal
O$, and identify the completed local ring $\widehat{\mathcal O}_x$
of $X\times \Spec (\kappa)$ with $\kappa[[t]]$, using the local
parameter $t=u-x$. Then  restricting $\Gg$-bundles from
$\O[u]\otimes_\O R$ to $R[[t]]$, $u\mapsto t+x$, induces an
isomorphism over $\Spec (\kappa)$,
\begin{equation*}
i_x^\ast \colon {\rm Gr}_{\Gg, X}\times_{X, x}
\Spec(\kappa)\xrightarrow{\sim} \Gr_{\Gg_{\kappa, x}}\, .
\end{equation*}
Here $\Gr_{\Gg_{\kappa, x}}=L\Gg_{\kappa, x}/L^+\Gg_{\kappa, x}$
denotes the affine Grassmannian over $\kappa$ as above; this is
isomorphic to either $\Gr_{P_\kappa}$ if $x$ maps to $0$, or to
$\Gr_H$ over $\bar \kappa$ otherwise.
 \end{prop}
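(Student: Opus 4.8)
The plan is to deduce everything from the torsor-theoretic descriptions already in hand, so that the statement becomes a bookkeeping exercise with the Beauville--Laszlo descent. Concretely, I would combine the description (\ref{equivfun}) of ${\rm Gr}_{\Gg,X}$ in terms of $\Gg$-torsors on the formal disc $\hat\Gamma_y$ together with a trivialization on $\hat\Gamma_y^o$, with the description of ${\rm Gr}_{\Gg_{\kappa,x}}$ provided by Proposition \ref{identify} (which, as it is used in \S\ref{kappafibers} for the group scheme $P_\kappa$, applies verbatim to any smooth affine group scheme with connected fibers over the discrete valuation ring $\kappa[[T]]$).

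First I would unwind the fibre product. For a $\kappa$-algebra $R$, an $R$-point of ${\rm Gr}_{\Gg,X}\times_{X,x}\Spec(\kappa)$ is an $R$-point of ${\rm Gr}_{\Gg,X}$ whose structure morphism $y\colon\Spec(R)\to X$ is the constant composite $\Spec(R)\to\Spec(\kappa)\xrightarrow{x}X$. For such a $y$ the graph $\Gamma_y$ is cut out by $u-x$, so by the discussion in \S\ref{6b3} one has $\hat\Gamma_y=\hat\Gamma'_y=\Spec(R[[u-x]])=\Spec(R[[T]])$ and $\hat\Gamma_y^o=\Spec(R((T)))$; here it matters that one uses the affinization $\hat\Gamma'_y$ and the parameter $T=u-x$, so that the identification is uniform in $R$. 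The restriction map (\ref{equivfun}) then identifies the $R$-points in question with iso-classes of pairs $(\mathcal E,\beta)$, where $\mathcal E$ is a $\Gg$-torsor on $\Spec(R[[T]])$ and $\beta$ a trivialization of $\mathcal E|_{\Spec(R((T)))}$.

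Next I would observe that the structure morphism $\O[u]\to R[[T]]$, $u\mapsto T+x$, used above factors through $\O[u]\to\kappa[[T]]\to R[[T]]$, where $\O[u]\to\kappa[[T]]$ is precisely the map defining $\Gg_{\kappa,x}$ in (\ref{basechangeG}). Hence a $\Gg$-torsor on $\Spec(R[[T]])$ is the same datum as a $\Gg_{\kappa,x}$-torsor on $\Spec(R[[T]])$, and likewise for trivializations over $\Spec(R((T)))$; applying Proposition \ref{identify} to $\Gg_{\kappa,x}$ over $\kappa[[T]]$ then identifies the resulting set with ${\rm Gr}_{\Gg_{\kappa,x}}(R)$. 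Composing these natural bijections gives exactly the map $i_x^\ast$ induced by restricting $\Gg$-bundles along $u\mapsto T+x$, and since it is natural in $R$ it furnishes the desired isomorphism (of fpqc sheaves, hence of ind-schemes since both sides are ind-representable, cf.\ Proposition \ref{indproper} and \S\ref{kappafibers}). The final sentence of the statement I would settle by quoting Theorem \ref{grpschemeThm}: if $x$ factors through $0\colon\Spec(\O)\to X$ then $\Gg_{\kappa,0}\simeq P_\kappa$ by parts (2) and (3), while otherwise $\Gg_{\kappa,x}$ is a form of $H$ which becomes split after the unramified extension $\kappa\subset\bar\kappa$ by part (1), so that ${\rm Gr}_{\Gg_{\kappa,x}}\times_{\Spec(\kappa)}\Spec(\bar\kappa)\simeq{\rm Gr}_H$ over $\bar\kappa$.

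I do not expect a genuine obstacle here; the two points that need care are (i) the assertion that the formal completion of $X\times\Spec(R)$ along the constant section $y=x$ has coordinate ring $R[[T]]$ for every $\kappa$-algebra $R$ — this is the reason for passing to the affine scheme $\hat\Gamma'_y$ rather than the formal scheme $\hat\Gamma_y$ — and (ii) invoking Proposition \ref{identify} and the descent Lemma \ref{descentBL} in the right generality, namely for the group scheme $\Gg_{\kappa,x}$ over the dvr $\kappa[[T]]$ rather than over $\AA^1_\O$. Both were in effect already handled in \S\ref{6b4} and \S\ref{kappafibers}, so citing them suffices.
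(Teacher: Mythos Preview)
Your proposal is correct and follows essentially the same approach as the paper, which simply states that the result ``follows from Proposition \ref{identify}'' without further elaboration; you have correctly expanded the one-line proof by unwinding the formal disc description (\ref{equivfun}) and the factorization through $\kappa[[T]]$. One small remark on logical order: in the paper this proposition is stated before the global ind-representability result (Proposition \ref{indscheme}), so at this point the isomorphism is only asserted as one of fpqc sheaves---the paper explicitly notes this immediately afterward---whereas your parenthetical about ind-schemes anticipates that later result; this is harmless since an isomorphism of sheaves transports representability, but it is worth being aware that the paper is deliberately cautious here.
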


 Notice here that at this point we only consider ${\rm Gr}_{\Gg, X}$ as a fpqc sheaf over $X$.
However, using the next proposition we will soon see that these are
actually isomorphisms of
 ind-schemes.
 Remark here that the above proposition combined with Proposition \ref{indproper}
 already shows that the
 fiber of ${\rm Gr}_{\Gg, X}$ over $x: \Spec(\kappa)\to X$ is represented by an
 ind-scheme which is ind-projective over $\Spec(\kappa)$.

\begin{prop}\label{indscheme} Suppose that $\Gg$ is as in Theorem \ref{grpschemeThm}.
The  functor ${\rm Gr}_{\Gg, X}$ on $({\rm Sch}/X)$ is representable
by an  ind-projective ind-scheme over $X$.
\end{prop}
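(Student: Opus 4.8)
The plan is to deduce everything from the case $\Gg=\GL_n$ together with the local results already proved. First I would record the structure of ${\rm Gr}_{\Gg,X}$ as a quotient: since $\Gg$ is affine, $\L\Gg$ and $\L^+\Gg$ of \eqref{globloop1}, \eqref{globloop2} are an ind-scheme and a scheme over $X$, and exactly as in Proposition \ref{identify} (using smoothness of $\Gg$ to trivialize $\Gg$-torsors fppf-locally on the relative formal disks, via the descent lemma \ref{descentBL}) one gets that ${\rm Gr}_{\Gg,X}$ is the fpqc-sheaf quotient $\L\Gg/\L^+\Gg$, with $\L\Gg\to{\rm Gr}_{\Gg,X}$ an $\L^+\Gg$-torsor that is locally trivial for the étale topology on $X$. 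This lets me translate assertions about ${\rm Gr}_{\Gg,X}$ into assertions about loop groups and $\Gg$-bundles on formal disks.

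For representability as an ind-scheme I would reduce to $\GL_n$. By Proposition \ref{qaffine} choose a closed immersion $\Gg\hookrightarrow\GL_n$ over $\O[u]$ with $\GL_n/\Gg$ quasi-affine. The argument of \cite[4.5.1]{BeilinsonDrinfeld} (cf.\ \cite[Appendix]{GaitsgoryInv}, \cite{PappasRaTwisted}) then shows that ${\rm Gr}_{\Gg,X}\to{\rm Gr}_{\GL_n,X}$ is representable by a locally closed immersion, which is a closed immersion wherever $\GL_n/\Gg$ is affine (in particular over the open locus $u\neq 0$, where $\Gg=\und G$ is reductive). For $\GL_n$ itself, the descent lemma \ref{descentBL} (Beauville--Laszlo, \cite{BLdescente}) identifies ${\rm Gr}_{\GL_n,X}$ with the functor of $\mathcal O_S[[T]]$-lattices $\Lambda\subset\mathcal O_S((T))^n$ with $T=u-y$; this functor does not involve $X$, so ${\rm Gr}_{\GL_n,X}\simeq X\times_\O{\rm Gr}_{\GL_n,\O}$. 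Stratifying lattices by $T^{N}\mathcal O_S[[T]]^n\subseteq\Lambda\subseteq T^{-N}\mathcal O_S[[T]]^n$ realizes the $N$-th bounded piece as the closed subscheme of a relative Grassmannian over $X$ cut out by stability under multiplication by $T$, so ${\rm Gr}_{\GL_n,X}$ is an ind-scheme, ind-projective over $X$. Pulling back along the locally closed immersion, ${\rm Gr}_{\Gg,X}$ is a separated ind-scheme of ind-finite type, ind-quasi-projective over $X$; write it as $\varinjlim_N Z_N$ with $Z_N:={\rm Gr}_{\Gg,X}\cap{\rm Gr}^{(N)}_{\GL_n,X}$.

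It remains to show ind-properness — then, since ind-quasi-projective plus ind-proper over $X$ gives ind-projective, we are done. Let $\bar Z_N$ be the scheme-theoretic closure of $Z_N$ in ${\rm Gr}^{(N)}_{\GL_n,X}$; this is projective over $X$, and it suffices to prove $Z_N=\bar Z_N$. Over $X\setminus\{u=0\}=\Spec\O[u^{\pm1}]$ the group $\Gg=\und G$ is reductive and one may choose the embedding so that $\GL_n/\und G$ is affine (Thomason \cite{ThomasonEqRes} plus the Appendix), hence $Z_N$ restricted there is already closed in ${\rm Gr}^{(N)}_{\GL_n,X}$; thus $\bar Z_N\smallsetminus Z_N$ is supported over the divisor $\{u=0\}$. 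Over that divisor, Proposition \ref{globalGr}(i) identifies ${\rm Gr}_{\Gg,X}\times_{X,0}\Spec\O$ with the local affine flag variety ${\rm Gr}_\Gg$, which is ind-projective over $\O$ by Proposition \ref{indproper}; so $Z_N|_{u=0}$ is also closed in ${\rm Gr}^{(N)}_{\GL_n,X}|_{u=0}$.

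\textbf{The main obstacle} is the remaining possibility that $\bar Z_N$ acquires components, or thickening, supported entirely over $\{u=0\}$ — equivalently, in valuative-criterion terms, the case of a discrete valuation ring $V$ whose induced map $\Spec V\to X$, $u\mapsto a$, has $a\in\mathfrak m_V\smallsetminus\{0\}$, so that the section ``crosses'' from the locus $u\neq 0$ over the generic point into the divisor $u=0$ over the closed point. This is where the two-dimensional geometry of $X=\AA^1_\O$ genuinely enters. I would handle it with the descent lemma \ref{descentBL}: a $Z_N$-point over $\Spec V$ is a $\Gg$-bundle on $\AA^1_V$ trivialized off the graph of $a$ and bounded by $N$; extend the generic point to a $V$-point of the proper $\bar Z_N$, and then check that the limiting $\GL_n$-bundle still reduces to $\Gg$, i.e.\ that the corresponding section of the quasi-affine $\GL_n/\Gg$-bundle extends over $\Spec V$. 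This extension is forced by the fibrewise picture: by Proposition \ref{globalGr} combined with Proposition \ref{indproper}, the fibres of ${\rm Gr}_{\Gg,X}\to X$ are geometrically affine Grassmannians or flag varieties and hence ind-proper, so ${\rm Gr}_{\Gg,X}\hookrightarrow{\rm Gr}_{\GL_n,X}$ is a closed immersion on each fibre, which makes the closed point of the $V$-family land back in $Z_N$. Uniqueness of the extension is immediate from separatedness. This gives $Z_N=\bar Z_N$ for all $N$, hence ind-properness, hence ind-projectivity of ${\rm Gr}_{\Gg,X}$ over $X$.
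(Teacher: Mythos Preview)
Your reduction to $\GL_n$ and the conclusion that ${\rm Gr}_{\Gg,X}$ is a separated ind-scheme of ind-finite type, locally closed in the ind-projective ${\rm Gr}_{\GL_n,X}$, is correct and matches the paper. The gap is in the ind-properness step, specifically the ``crossing'' case. The inference ``${\rm Gr}_{\Gg,X}\hookrightarrow{\rm Gr}_{\GL_n,X}$ is a closed immersion on each fibre, which makes the closed point of the $V$-family land back in $Z_N$'' is invalid: fibrewise closedness of a locally closed immersion does not imply closedness. After you extend to a $V$-point $P$ of ${\rm Gr}^{(N)}_{\GL_n,X}$, the closed value $P(s)$ lies in $\bar Z_N$; you know $(Z_N)_{x(s)}$ is closed in the fibre, but $(\bar Z_N)_{x(s)}$ may be strictly larger, and nothing forces $P(s)$ into $(Z_N)_{x(s)}$. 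In bundle terms, you have a section of the associated $\GL_n/\Gg$-bundle over the punctured $2$-dimensional regular local scheme $\Spec V[[T]]\smallsetminus\{\frakm\}$ and need it to extend across the closed point; for merely quasi-affine targets this Hartogs-type extension can fail, and over $u=0$ that is exactly your situation. (A smaller issue: in step (a) you invoke a second embedding with affine quotient over $\O[u^{\pm1}]$; this is incompatible with the embedding already fixed by Proposition~\ref{qaffine}, which only guarantees quasi-affine.)

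The paper avoids the valuative criterion entirely. It base-changes along $\tilde X\to X$ so the group splits over $\tilde X\smallsetminus\{0\}$ and the global Grassmannian is a product with $\Gr_H$ there; over the dense open $U=(\tilde X\smallsetminus\{0\})\otimes_\O F$ it writes this as a union of proper pieces $S_{ij}$ with \emph{geometrically connected} fibres. Taking closures $Y_{ij}\subset{\rm Gr}_{\Gg,X}\times_X\tilde X$ and $Z_{ij}\subset{\rm Gr}_{\GL_n,X}\times_X\tilde X$, one has $Y_{ij}$ open in the proper $Z_{ij}$; Zariski's main theorem gives each fibre $\bar Z_{ij}$ connected, while fibrewise ind-properness of ${\rm Gr}_{\Gg,X}$ makes $\bar Y_{ij}$ closed (and open) in $\bar Z_{ij}$, forcing $\bar Y_{ij}=\bar Z_{ij}$ and hence $Y_{ij}=Z_{ij}$ proper. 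Finally, an explicit lifting argument (forward reference to the proof of Proposition~\ref{Prop8.8}) shows every point of every fibre lies in some $Y_{ij}$. The connectedness input in this Zariski-main-theorem step is precisely what substitutes for the extension you are missing.
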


\begin{proof}  We first show that ${\rm Gr}_{\Gg, X}$ is representable by an ind-scheme
 of ind-finite type and separated over $X$. This follows the corresponding argument in the proof of Proposition
\ref{indproper}. By Proposition \ref{qaffine}, there is a closed
group scheme immersion $\Gg\hookrightarrow \GL_n$ such that the
quotient $\GL_n/\Gg$ is representable by a quasi-affine scheme over
$\O[u]$. The argument in  \cite{BeilinsonDrinfeld} (or
\cite[Appendix]{GaitsgoryInv}) now shows that the natural functor $
{\rm Gr}_{\Gg, X}\to {\rm Gr}_{\GL_n, X} $ is representable and is a
locally closed immersion. In fact, if the quotient $\GL_n/\Gg$ is
affine, this functor is a closed immersion. Now note that ${\rm
Gr}_{\GL_n, X}$ is representable by an ind-scheme separated of
ind-finite type over $X$. This is well-known (see for example
\cite{BeilinsonDrinfeld}). In fact,  ${\rm Gr}_{\GL_n, X}$ is
ind-projective over $X$.

It remains to show that ${\rm Gr}_{\Gg, X}\to X$ is ind-proper. By
Propositions \ref{indproper} and \ref{globalGr} each fiber of
$\Gr_{\Gg, X}\to X$ is ind-proper. It is enough to show that the
base change by $\tilde X=\Spec(\tilde\O_0[v])\to X$ is ind-proper.
Notice that, since $\tilde X-\{0\}\to X-\{0\}$ is finite \'etale,
there is an isomorphism $\Gr_{\Gg, X}\times_X (\tilde X-\{0\})\simeq
\Gr_{H, X}\times_X (\tilde X-\{0\})=\Gr_H\times_{\O}(\tilde
X-\{0\})$ (cf. \cite[Lemma 3.3]{ZhuCoherence}, here again $H$ is the
split Chevalley form). Therefore, by Proposition \ref{indproper}
applied to $\Gr_H$, we see that
  the  restriction of $\Gr_{\Gg, X}\to X$ over $U=(\tilde X-\{0\})\otimes_\O F$ is ind-proper.
We can write this restriction as a limit $S_i$ of proper schemes
over $U$. In fact, using standard results on the structure of the
affine Grassmannians $\Gr_H$ over the field $F$
(\cite{GaitsgoryInv}, \cite{FaltingsLoops}, \cite{PappasRaTwisted})
we can assume that $S_i=\sqcup_j S_{ij}$ with $S_{ij}$ proper
schemes over $U$ with geometrically connected fibers. Denote by
$Y_{ij}$, resp. $Z_{ij}$, the Zariski closures of $S_{ij}$ in ${\rm
Gr}_{\Gg, X}\times_X \tilde X$, resp. ${\rm Gr}_{\GL_n, X}\times_X
\tilde X$. Since ${\rm Gr}_{\GL_n, X}\to X$ is ind-proper,
$Z_{ij}\to \tilde X$ is proper. Since ${\rm Gr}_{\Gg, X}\to {\rm
Gr}_{\GL_n, X}$ is a locally closed immersion, $Y_{ij}$ is open and
dense in $Z_{ij}$. Denote by bar fibers at a closed point of $\tilde
X$. It enough to show that we always have $\bar Y_{ij}=\bar Z_{ij}$.
Since all the fibers of $\Gr_{\Gg, X}\to X$ are ind-proper, $\bar
Y_{ij}$ is proper and so $\bar Y_{ij}$ is closed in $\bar Z_{ij}$.
By Zariski's main theorem applied to $Z_{ij}\to \tilde X$, we see
that $\bar Z_{ij}$ is connected and so $\bar Y_{ij}=\bar Z_{ij}$.
Hence, $Y_{ij}=Z_{ij}$ and $Y_{ij}\to \tilde X$ is proper. It
remains to see that each point of each fiber of $ {\rm Gr}_{\Gg,
X}\times_X \tilde X\to \tilde X$ belongs to some $Y_{ij}$. This
lifting property
  can be seen by the argument in the proof of Proposition \ref{Prop8.8}.
\end{proof}

\subsubsection{Specialization along $u=\varpi$.}
Now let us fix a uniformizer $\varpi$ of $\O$. We denote by $\varpi$
the section of $X=\AA^1_\O$ over $\O$ defined by $u\mapsto\varpi$.
Let $G$ be connected reductive over $F$, and split over a tamely
ramified extension $\ti F/F$ as in \ref{sss1a2}; let $\und G$ be
constructed from $G$ as in \S \ref{reductive group}. In addition, we
fix an isomorphism $\und G_F\simeq G$ from a rigidification of $G$
as explained in \ref{sss3a4}. This produces a group scheme
$\Gg:=\calG_x$ as in Corollary \ref{application}, which is
independent of the choice of the rigidification of $G$ up to
isomorphism.

Notice that there is an isomorphism
\begin{equation}\label{poweriso}
\tilde\O_0[v^{\pm 1}]\otimes_{\O[u^{\pm
1}]}F[[u-\varpi]]\xrightarrow{\sim} \tilde F[[z]]=\tilde
F[[u-\varpi]],
\end{equation}
given by $v\mapsto \ti\varpi \cdot (1+z)$  where $\ti\varpi^e=\varpi
$. Here $z$ maps to the power series
$(1+\frac{(u-\varpi)}{\varpi})^{1/e}-1$, where the $e$-th root is
expressed by using the standard binomial formula.  This isomorphism
  matches the action of $\Gamma$ on the left hand
side (coming from the cover $\O[u]\to \tilde\O_0[v]$ by base
change), with the  action on $\tilde F[[z]]$  given by the Galois
action on the coefficients $\tilde F$. Using this and the
construction of the group scheme $\und G$ in \S \ref{reductive
group} we obtain an isomorphism
\begin{equation}\label{iso6.9}
\Gg_{F, \varpi}\xrightarrow{\sim} G\times_{\Spec(F)}\Spec(
F[[u-\varpi]])
\end{equation}
well defined up to $G(F)$-conjugation.

Denote by ${\rm Gr}_{\Gg, \O}$ the fiber product
\begin{equation}
{\rm Gr}_{\Gg, \O}:={\rm Gr}_{\Gg, X}\times_{X, \varpi} \Spec(\O)\to
\Spec(\O).
\end{equation}
Using Proposition \ref{indscheme} we see that this is an
ind-projective ind-scheme
 over $\Spec(\O)$.

 Proposition \ref{globalGr} and the discussion in
the beginning of \S \ref{kappafibers} implies:

\begin{cor}\label{fibers}
1) The  generic fiber  ${\rm Gr}_{\Gg,
\O}\times_{\Spec(\O)}\Spec(F)$ is equivariantly isomorphic to the
affine Grassmannian $\Gr_{G, F}$ of $G$ over $\Spec(F)$.

2) The special fiber  ${\rm Gr}_{\Gg, \O}\times_{\Spec(\O)}\Spec(k)$
is equivariantly isomorphic to the affine Grassmannian $\Gr_{P_k}$
over $\Spec (k)$.\endproof
\end{cor}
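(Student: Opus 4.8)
The plan is to read off both isomorphisms directly from Proposition \ref{globalGr}, which computes the fibers of the global affine Grassmannian ${\rm Gr}_{\Gg, X}\to X$, together with the ind-scheme representability established in Proposition \ref{indscheme}. The only work is to identify the two base changes ${\rm Gr}_{\Gg, \O}\otimes_\O F$ and ${\rm Gr}_{\Gg, \O}\otimes_\O k$ with suitable fibers of ${\rm Gr}_{\Gg, X}$ over points of $X$, and then to unwind the identification of the relevant group scheme with $G$, resp. $P_k$.

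First I would note that, since ${\rm Gr}_{\Gg, \O}={\rm Gr}_{\Gg, X}\times_{X,\varpi}\Spec(\O)$, the generic fiber ${\rm Gr}_{\Gg, \O}\otimes_\O F$ is the fiber of ${\rm Gr}_{\Gg, X}\to X$ over the $F$-point $x\colon \Spec(F)\to\Spec(\O)\xrightarrow{\varpi}X$, i.e. over the ring map $\O[u]\to F$, $u\mapsto\varpi$. As $\varpi\neq 0$ in $F$, this point does not factor through the zero section, so Proposition \ref{globalGr} (applied with $\kappa=F$), combined with Proposition \ref{indscheme}, gives an isomorphism of ind-schemes onto $\Gr_{\Gg_{F,\varpi}}$, where $\Gg_{F,\varpi}=\Gg\otimes_{\O[u]}F[[u-\varpi]]$ and the local parameter is $T=u-\varpi$. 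Now the isomorphism \eqref{iso6.9}, $\Gg_{F,\varpi}\simeq G\otimes_F F[[u-\varpi]]$, identifies $L\Gg_{F,\varpi}$ with the loop group of $G$ and $L^+\Gg_{F,\varpi}$ with $L^+G$ for the variable $t=u-\varpi$, hence $\Gr_{\Gg_{F,\varpi}}=L\Gg_{F,\varpi}/L^+\Gg_{F,\varpi}\simeq\Gr_{G,F}$. For the equivariance I would observe that the $\L\Gg$-action on ${\rm Gr}_{\Gg, X}$ by change of trivialization restricts over $x$ to the action of $\L\Gg\times_{X,\varpi}\Spec(F)=L\Gg_{F,\varpi}$, which under \eqref{iso6.9} becomes the usual action of $LG$ on $\Gr_{G,F}$.

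For the special fiber the argument is identical, except that the relevant point of $X$ is now the $k$-point $\Spec(k)\to\Spec(\O)\xrightarrow{\varpi}X$, and because $\varpi$ is a uniformizer of $\O$ its image in $k$ vanishes; so this point is the zero section $0\colon\Spec(k)\to X$. Proposition \ref{globalGr} then yields an ind-scheme isomorphism onto $\Gr_{\Gg_{k,0}}$, and by Theorem \ref{grpschemeThm} the base change $\Gg_{k,0}=\Gg\otimes_{\O[u]}k[[u]]$ is the Bruhat--Tits parahoric group scheme $P_k=\calP_{x_{k((u))}}$ over $k[[u]]$ (now $T=u$), so $\Gr_{\Gg_{k,0}}=\Gr_{P_k}$; the equivariance for the $LP_k$- and $L^+P_k$-actions follows exactly as before.

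I do not expect a real obstacle here: the content of the corollary is contained in Propositions \ref{globalGr} and \ref{indscheme} together with \eqref{iso6.9}, and the two points needing a little care are (a) that one genuinely obtains isomorphisms of ind-schemes and not merely of fpqc sheaves over $X$ — this is precisely what Proposition \ref{indscheme} supplies — and (b) that the identifications of Proposition \ref{globalGr} intertwine the loop group actions, which is transparent on $R$-points, since after the above identifications both sides are described by the same functor $R\mapsto\Gg_{F,\varpi}(R((T)))/\Gg_{F,\varpi}(R[[T]])$, resp. $R\mapsto P_k(R((T)))/P_k(R[[T]])$, with $\L\Gg$ acting by left multiplication.
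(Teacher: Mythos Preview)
Your proposal is correct and matches the paper's approach exactly: the paper treats this corollary as immediate (it is stated with an \endproof and preceded by ``Proposition \ref{globalGr} and the discussion in the beginning of \S \ref{kappafibers} implies:''), and you have simply unpacked that reference---identifying the two fibers via Proposition \ref{globalGr}, invoking Proposition \ref{indscheme} to upgrade from fpqc sheaves to ind-schemes, and using \eqref{iso6.9} (resp.\ Theorem \ref{grpschemeThm}) to identify $\Gg_{F,\varpi}$ with $G\otimes_F F[[u-\varpi]]$ (resp.\ $\Gg_{k,0}$ with $P_k$).
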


\subsubsection{Notation}
Let $\calG$ be as in Corollary
\ref{application}. If $f: S=\Spec(R)\to X$ is a scheme morphism
given by $u\mapsto r$, we will write ${\rm Gr}_{\Gg, R, r}$ 
for the fiber product ${\rm Gr}_{\Gg, X}\times_{X, f} S$. 
If $R$ is an $\O$-algebra and
$\O[u]\to R$ is given by $u\mapsto \varpi$, we will simple write
${\rm Gr}_{\Gg, R}$ instead. This agrees with our use of the notation
${\rm Gr}_{\Gg, \O}$ above.

 \bigskip

\section{Local models}\label{ChLocal}

\setcounter{equation}{0}

Here we give our group-theoretic definition of local models. We also
explain how, in the examples of $\GL_n$, ${\rm GSp}_{2n}$ and a
minuscule coweight, it follows from \cite{GortzFlatGLn} and
\cite{GortzSymplectic} that these agree with the local models of
\cite{RapZinkBook}. This last result will be generalized in the next
chapter.

\subsection{Generalized local models}

\subsubsection{Cocharacters.}\label{coch} We continue with the above assumptions and notations.
Suppose now that $\{\mu\}$ is a geometric conjugacy class of one
parameter subgroups of $G$, defined over an algebraic closure
$\overline F$ of $F$ that contains the field $\tilde F$. Let $E$ be
the field of definition of $\{\mu\}$, a finite extension of $F$
contained in $\overline F$ (the reflex field of the pair $(G, \{
\mu\})$).

First observe that since $G$ is quasi-split over the maximal
unramified extension $\tilde F_0$ of $F$ in $\ti F$ we can find
(\cite[Lemma (1.1.3)]{KottTwisted}) a representative of $\{\mu\}$
defined over $E'=E\tilde F_0$, which factors $\mu: \Gm_{E'}\to
T_{E'}\to G_{E'}$, where $T$ is the maximal torus of $G$ given as in
\ref{sss1a3}.
 Notice that $\mu$ gives an
$E'[z,z^{-1}]$-valued point of $G_{E'}$, therefore an
$E'((z))$-valued point of $G_{E'}$, therefore an $E'$-valued point
of the loop group $LG $. By (\ref{iso6.9}) we have an isomorphism
$$
G(F((z)))\xrightarrow{\sim}
\Gg_{F,\varpi}(F((u-\varpi)))=\Gg_{F,\varpi}(F((t))).
$$
We denote by $s_\mu$ the corresponding $E'$-valued point in
$L\Gg_{F, \varpi}$.

\subsubsection{Schubert varieties in mixed characteristic.}\label{7a2}
 We would like to define
a projective scheme $M_{\Gg,  \mu }$ over $\O_E$ which we might view
as a generalized local model. Recall the definition of $s_\mu\in
L\Gg_{F, \varpi}(E')$ and consider the $L^+\Gg_{F, \varpi}$-orbit
$(L^+\Gg_{F, \varpi})_{E'}\cdot [s_\mu]$ of the corresponding point
$[s_\mu]$ in the affine Grassmannian $(L\Gg_{F, \pi}/L^+\Gg_{F,
\varpi})\times_F E'$. This orbit is contained in a projective
subvariety of $(L\Gg_{F, \varpi}/L^+\Gg_{F, \varpi})\times_F E'$ which by
Corollary \ref{fibers} (1) above can be identified with the generic
fiber of $\Gr_{\Gg, \O}\otimes_\O\O_{E'}\to \Spec(\O_{E'})$. Since
the conjugacy class of $\mu: \Gm_{E'}\to   G_{E'}$ is   defined over
$E$, the same is true for the orbit $(L^+\Gg_{F, \varpi})_{E'}\cdot
[s_\mu]$: There is an $E$-subvariety $X_\mu$ of  $(L\Gg_{F,
\varpi}/L^+\Gg_{F, \varpi})\times_F E$ such that
$X_\mu\times_EE'=(L^+\Gg_{F, \varpi})_{E'}\cdot [s_\mu]$.

\begin{Definition}
The generalized local model (or mixed characteristic Schubert
variety) $M_{\Gg, \mu}$ is the reduced scheme over $\Spec(\O_E)$
which underlies the Zariski closure of the orbit $X_\mu$ in the
ind-scheme $\Gr_{\Gg, \O_E}=\Gr_{\Gg,
\O}\times_{\Spec(\O)}\Spec(\O_E)$.
\end{Definition}

Since by Proposition \ref{indscheme}, $\Gr_{\Gg, \O_E}\to \Spec(\O_E)$ is ind-projective,
$M_{\Gg, \mu}$ is also projective over $\Spec(\O_E)$.
\smallskip

\subsection{Some examples}\label{ShimuraLocal}

\subsubsection{} \label{lattice}

{\sl The case of $\GL_{N}$.} Recall the notations of \S \ref{exGL}.
In particular, $\Gg$ is the group scheme over $\O[u]$ associated to
the lattice chain $\{W_i\}_i$.

Consider the functor ${\mathfrak L}$ on $({\rm Sch}/X)$ which to an
$X$-scheme $y: S\to X$, associates the set of isomorphism classes of
collections $(\calE_i, \psi_i,  \alpha_i)_{i\in \Z}$ where, for each
$i\in \Z$, $\E_i$  are  locally free coherent $\O_{X\times
S}$-sheaves on $X\times S$ of rank $N$, $\psi_i  : \E_{i+1}\to \E_{i
}$ are $\O_{X\times S}$-module homomorphisms, and $\alpha_i$ are
$\O_{{X\times S}-\Gamma_y}$-module isomorphisms $\alpha_i:
W_i\otimes_{\O[u]}\O_{{X\times S}-\Gamma_y}\xrightarrow{\sim}
\E_{i}\otimes_{\O_{X\times S}}\O_{{X\times S}-\Gamma_y} $
 that satisfy the
following conditions:

(a) the data are periodic of period $r$, $(\calE_i, \psi_i,
\alpha_i)=( \E_{i+r}, \psi_{i+r},  \alpha_{i+r})$, for all $i\in
\Z$,

(b) we have $\alpha_{i+1 }\cdot \psi_{i}  =\alpha_i\cdot \iota_i$,
for all $i\in \Z$,

(c) Each composition of $r$ successive $\psi_i$ is given by
multiplication by $u$, i.e
$$
 \prod_{k=0}^{r-1}\psi_{i-k}=u :\E_{i+r}=\E_{i}\to \E_{i},
$$
for all $i\in \Z$, and,

(d) for each $i\in \Z$, the cokernel $\E_{i}/\psi_{i}(\E_{i+1})$ is
a locally free $\O_S$-module of rank $r_i$.
\smallskip

We can see that ${\mathfrak L}$ is an fpqc  sheaf on $({\rm
Sch}/X)$.

When $S=\Spec(R)$ is affine, and $y: \Spec(R)\to X=\Spec(\O[u])$ is
given by $u\mapsto y$, we have $X\times
S-\Gamma_y=\Spec(R[u][(u-y)^{-1}])$. Since $u-y$ is not a zero
divisor in $R[u]$ we can use $\alpha_i$ to identify $\E_i$ with the
sheaf corresponding to a $R[u]$-locally free rank $N$ submodule
$E_i$ of $R[u][(u-y)^{-1}]^N$.
\smallskip

We can now show:

\begin{prop}\label{latGL}
There is a natural equivalence of functors ${\rm Gr}_{\Gg,
X}\xrightarrow{\sim} {\mathfrak L}$ where $\Gg$ is the group scheme
as above.
\end{prop}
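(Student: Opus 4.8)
The plan is to construct the equivalence directly, using the description of $\Gg$ from \S\ref{exGL} and the torsor-theoretic definition of ${\rm Gr}_{\Gg, X}$. Recall from \S\ref{exGL} that, since $W=\O[u]^N$, the group $\GL(W)=\GL_N$ and $\Gg=\GL(W)_Q$ is the closed subgroup scheme of $\prod_{i\in\Z/r\Z}\GL(W_i)$ cut out by commutation with the inclusions $\iota_i\colon W_{i+1}\to W_i$; in particular each projection $\Gg\to\GL(W_i)$ is a homomorphism and $\GL(W_i)$-torsors correspond to rank $N$ vector bundles. Given $S\to X$ and a pair $(\mathcal E,\beta)\in{\rm Gr}_{\Gg, X}(S)$ — a $\Gg$-torsor $\mathcal E$ on $X\times S$ and a trivialization $\beta$ of $\mathcal E$ over $(X\times S)\setminus\Gamma_y$ — I would push $\mathcal E$ forward along $\Gg\to\GL(W_i)$ to get rank $N$ bundles $\mathcal E_i=\mathcal E\times^{\Gg}W_i$, push $\beta$ forward to trivializations $\alpha_i$ of $\mathcal E_i$ away from $\Gamma_y$, and — using that $\Gg$ commutes with $\iota_i$, i.e. that $\iota_i$ is $\Gg$-equivariant — descend $\mathrm{id}\times\iota_i$ to $\O_{X\times S}$-morphisms $\psi_i\colon\mathcal E_{i+1}\to\mathcal E_i$. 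Conditions (a)--(d) are then checked étale-locally on $X\times S$, where the torsor becomes the standard chain $(W_i\otimes\O_{X\times S},\iota_i)$: (a) holds since $W_{i+r}=uW_i$ canonically, (b) since $\beta$ is $\Gg$-equivariant and hence commutes with the $\iota_i$, (c) since the corresponding composite of the $\iota_i$ is multiplication by $u$, and (d) since $W_i/\iota_i(W_{i+1})\simeq V_i$ is $\O$-free of rank $r_i$. This defines a morphism of functors ${\rm Gr}_{\Gg, X}\to{\mathfrak L}$.

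Conversely, from $(\mathcal E_i,\psi_i,\alpha_i)\in{\mathfrak L}(S)$ I form the sheaf $\mathcal E=\underline{\mathrm{Isom}}\big((W_i\otimes\O_{X\times S},\iota_i),(\mathcal E_i,\psi_i)\big)$ of isomorphisms of chains of bundles; it carries a natural right $\Gg$-action (when $(\mathcal E_i,\psi_i)$ is the standard chain this automorphism group is exactly $\Gg$), and by (b) the tuple $(\alpha_i)$ is a section of $\mathcal E$ over $(X\times S)\setminus\Gamma_y$, giving the trivialization $\beta$. The only nontrivial point is that $\mathcal E$ is a $\Gg$-torsor, which amounts to the assertion that any chain $(\mathcal E_i,\psi_i)$ satisfying (a), (c), (d) is, fppf- (in fact étale-) locally on $X\times S$, isomorphic to the standard chain. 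I expect this étale-local triviality to be the main obstacle. To prove it I would reduce to $S=\Spec R$ affine and, invoking the module version of the Beauville--Laszlo gluing behind Lemma \ref{descentBL} (cf. \cite{BLdescente}) together with the trivializations $\alpha_i$, reduce further to the restriction to $\hat\Gamma_y=\Spec R[[u-y]]$, where the data is a chain of rank $N$ locally free $R[[u-y]]$-modules $\Lambda_{r-1}\subset\cdots\subset\Lambda_0$ with $\Lambda_{i+r}=u\Lambda_i$ and each $\Lambda_i/\Lambda_{i+1}$ locally free over $R$ of rank $r_i$. This is the two-variable analogue of the statement that multichains of lattices are étale-locally standard; I would prove it by descending induction along the successive quotients $\Lambda_i/\Lambda_{i+1}$ (finitely generated projective $R$-modules of constant rank, hence free after an fppf cover), choosing compatible splittings and lifting them $(u-y)$-adically — exactly as in the Rapoport--Zink local-model argument (cf. \cite[Appendix]{RapZinkBook}).

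The remaining case, where $y$ is a unit in $R$, is trivial: then $u$ is a unit in $R[[u-y]]$, so by (c) every $\psi_i$ is an isomorphism and the chain is determined by the single locally trivial bundle $\mathcal E_0$. Granting the local triviality, $\mathcal E$ is a $\Gg$-torsor, so $(\mathcal E,\beta)\in{\rm Gr}_{\Gg, X}(S)$; the two constructions are visibly mutually inverse, since étale-locally everything becomes the standard chain on which both are the identity, and functoriality in $S$ is clear from the constructions. This yields the desired isomorphism of functors ${\rm Gr}_{\Gg, X}\xrightarrow{\sim}{\mathfrak L}$.
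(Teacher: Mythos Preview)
Your proposal is correct and follows essentially the same route as the paper. The paper also builds the forward map by pushing along $\Gg\hookrightarrow\prod_{i\in\Z/r\Z}\GL(W_i)$ and, for the inverse, reduces everything to the statement that the chain $(\calE_i,\psi_i)$ is locally isomorphic to the standard chain $(W_i\otimes R[u],\iota_i)$, deferring this to the argument of \cite[Appendix, Prop.~A.4]{RapZinkBook}. The only differences are minor: the paper works directly on $X\times S=\Spec(R[u])$ and asserts Zariski-local triviality on $\Spec(R)$, rather than first passing through Beauville--Laszlo to reduce to $\hat\Gamma_y=\Spec(R[[u-y]])$ as you do; the Rapoport--Zink argument in fact yields the stronger Zariski statement, so your \'etale/fppf formulation is weaker than necessary but of course sufficient.
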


\begin{proof}
Observe that a $\Gg$-torsor $\calT$ over $X\times S$ induces via
$\Gg\hookrightarrow \prod_{i=0}^r\GL(W_i)\to \GL(W_i)$ a
$\GL(W_i)$-torsor over $X\times S$. This amounts to giving a locally
free coherent $\O_{X\times S}$-sheaf $\E_i$ of rank $N$; since $\Gg$
respects the maps  $W_{i+1}\to W_{i}$, we obtain $\psi_i:
\E_{i+1}\to \E_{i}$. A $\Gg$-trivialization of $\calT$ over $X\times
S-\Gamma_y$ produces isomorphisms $\alpha_i$ as above. We extend
this data by periodicity; then (a), (b), (c), (d) are satisfied.
This gives the arrow ${\rm Gr}_{\Gg, X}\to {\mathfrak L}$. To show
that this is an equivalence, start with data $(\calE_i, \psi_i,
\alpha_i)_{i\in \Z}$ giving an element of ${\mathfrak L}(S)$. We
would like to show that these are produced by a $\Gg$-torsor $\calT$
with a trivialization over $X\times S-\Gamma_y$. It is enough to
assume that $S$ is affine, $S=\Spec(R)$. Since, $\Gg$ is the
subgroup of $\prod_{i\in \Z/r\Z}\GL(W_i)$ that respects $\iota_i$, we
can now see that it is enough to show the following:  Locally for
the Zariski topology on $R$, there are isomorphisms
$$
\lambda_i: W_{i}\otimes R[u]\xrightarrow {\sim} \E_i
$$
such that $\lambda_{i}\cdot \iota_{i}=\psi_i\cdot \lambda_{i+1}$.
This follows by an argument similar to the proof of \cite[Appendix to Ch. 3,
Prop. A.4]{RapZinkBook}.
\end{proof}

Now if $\mu:\Gm \to \GL_N$ is the minuscule coweight given by
$a\mapsto {\rm diag}(a^{(d)}, 1^{(N-d)})$ and $\O=W(\Ff_p)$, we can see, using
Proposition \ref{latGL}, that in this situation, the local models
$M_{\Gg, \mu}$ agree with the Rapoport-Zink local models for $\GL_N$
and $\mu$ considered in \cite{RapZinkBook}. Indeed, in this case, by
\cite{GortzFlatGLn}, the  local models of \cite{RapZinkBook} are
flat over $\O$ and so they agree with the $M_{\Gg, \mu}$ above.
 \smallskip

\subsubsection{} \label{latticeGSp}

 {\sl The case of ${\rm GSp}_{2n}$.} Recall the notations of \S \ref{exGSp}.
 In particular, $\Gg$ is the group
scheme over $\O[u]$ associated to the self-dual lattice chain
$\{W_i\}_i$.

Consider the functor ${\mathfrak {LSP}}$ on $({\rm Sch}/X)$ which to
an $X$-scheme $y: S\to X$, associates the set of isomorphism classes
of collections $(\calE_i, \psi_i,  \alpha_i, h_i)_{i\in \Z}$ where
$(\calE_i, \psi_i,  \alpha_i)_{i\in \Z}$ give an object of
$\mathfrak L$ and in addition
$$
h_i : \calE_i\times \calE_{-i-a}\to \O_{X\times S}
$$
are perfect $\O_{X\times S}$-bilinear forms that satisfy
\begin{itemize}

\item[a)] $h_{i}(\psi_i(x), y)=h_{i+1}(x, \psi_{-i-1-a}(y))$,  for $x\in \calE_{i+1}$, $y\in \calE_{-i-a}$.

\item[b)] There is $c\in \O_{{X\times S}-\Gamma_y}^*$, such that $h_i\cdot (\alpha_i, \alpha_{-i-a})=c\cdot h$
for all $i\in \Z$ (as forms $W_i\otimes\O_{{X\times
S}-\Gamma_y}\times W_{-i-a}\otimes\O_{{X\times S}-\Gamma_y}\to
\O_{{X\times S}-\Gamma_y}$).
\end{itemize}

A similar argument as above, (cf. \cite[Appendix to Ch. 3, Prop.
A. 21]{RapZinkBook}) now gives
\begin{prop}
There is a natural equivalence of functors ${\rm Gr}_{\Gg,
X}\xrightarrow{\sim} {\mathfrak {LSP}}$ where $\Gg$ is the
(symplectic) group scheme as above.\endproof
\end{prop}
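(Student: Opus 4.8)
The statement is the exact analogue, for $\mathrm{GSp}_{2n}$, of Proposition \ref{latGL} for $\GL_N$, so the proof will follow the same template, just carrying along the symplectic data. First I would produce the functor morphism ${\rm Gr}_{\Gg, X}\to {\mathfrak{LSP}}$: given a $\Gg$-torsor $\calT$ on $X\times S$ with trivialization $\beta$ over $X\times S-\Gamma_y$, push out along the projections $\Gg\hookrightarrow \Gm\times\prod_{i\in\Z/r\Z}\GL(W_i)\to \GL(W_i)$ to get locally free sheaves $\calE_i$ of rank $2n$, and along $\Gg\to \Gm$ to get a line bundle which (since $\Gg\subset\Gm\times\prod\GL(W_i)$ and the torsor is trivialized outside $\Gamma_y$) can be taken trivial, giving the similitude factor $c$; the maps $\iota_i\colon W_{i+1}\to W_i$ are $\Gg$-equivariant hence give $\psi_i\colon \calE_{i+1}\to\calE_i$; the trivialization $\beta$ gives the $\alpha_i$; and because $\Gg$ preserves the pairings $h$ on each $W_i\times W_{-i-a}$ up to the similitude character, pushing out the pairing produces perfect forms $h_i\colon\calE_i\times\calE_{-i-a}\to\O_{X\times S}$ satisfying (a) and (b). Extending by periodicity $(\calE_{i+r},\psi_{i+r},\alpha_{i+r},h_{i+r})=(\calE_i,\psi_i,\alpha_i,h_i)$ and checking conditions (c), (d) of $\mathfrak L$ is formal from the relations in $\Gg$ (composition of $r$ successive $W_{i-k}\to W_{i-r}=uW_i\hookrightarrow W_i$ is multiplication by $u$).

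For the inverse, I would reduce to $S=\Spec(R)$ affine, and show that an object $(\calE_i,\psi_i,\alpha_i,h_i)$ of ${\mathfrak{LSP}}(S)$ comes from a $\Gg$-torsor with a trivialization outside $\Gamma_y$. Since, by the explicit description in \S\ref{exGSp}, $\Gg$ is precisely the subgroup scheme of $\Gm\times\prod_{i\in\Z/r\Z}\GL(W_i)$ consisting of tuples $(c,(g_i))$ commuting with the $\iota_i$ and satisfying $h(g_i(x),g_{-i-a}(y))=c\,h(x,y)$, it suffices to produce, locally for the Zariski topology on $R$, trivializations $\lambda_i\colon W_i\otimes R[u]\xrightarrow{\sim}\calE_i$ with $\lambda_i\cdot\iota_i=\psi_i\cdot\lambda_{i+1}$ and compatible with the forms, i.e.\ $h_i(\lambda_i x,\lambda_{-i-a}y)=c\cdot h(x,y)$ for a suitable unit $c$. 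The existence of the $\lambda_i$ compatible with the $\psi_i$ is exactly the computation in the proof of Proposition \ref{latGL} (following \cite[Appendix, Prop.~A.4]{RapZinkBook}); to make them additionally respect the forms one invokes the symplectic refinement \cite[Appendix, Prop.~A.21]{RapZinkBook}, which shows that after a further Zariski localization the $\lambda_i$ can be chosen to carry $h$ to $h_i$ up to a common scalar $c\in R[u][(u-r)^{-1}]^*$ (and in fact, after adjusting $\alpha_i$, $c$ can be arranged as prescribed by condition (b)). These $(\lambda_i)$ assemble into a $\Gg$-torsor structure on the scheme of such compatible isomorphism systems, with the trivialization over $X\times S-\Gamma_y$ given by the $\alpha_i$, and this is inverse to the first arrow.

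Finally I would note the two constructions are mutually inverse on the nose — pushing out and reassembling recovers the original data since $\Gg\hookrightarrow\Gm\times\prod\GL(W_i)$ is a closed immersion cut out exactly by the $\iota_i$-commutation and form-compatibility equations — and that they are natural in $S$, hence give the asserted equivalence of fpqc sheaves on $({\rm Sch}/X)$.

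\textbf{Main obstacle.} The only non-formal point is the local triviality step in the inverse direction: showing that a local system $(\calE_i,\psi_i,h_i)$ of the prescribed lattice-chain shape is, Zariski-locally on $R[u]$, isomorphic to the standard one $(W_i,\iota_i,h)$ in a way compatible with \emph{both} the chain maps and the perfect forms. This is genuine commutative algebra over the two-dimensional base $R[u]$ rather than a dvr, so one cannot simply quote the Rapoport–Zink lattice-chain lemmas verbatim; I expect to have to redo the argument of \cite[Appendix]{RapZinkBook} in this slightly more general setting (as was done for $\GL_N$ in Proposition \ref{latGL}), using that $u-r$ is a nonzerodivisor in $R[u]$ and that perfectness of $h_i$ lets one split off hyperbolic summands step by step. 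Everything else is bookkeeping with the explicit presentation of $\Gg$ from \S\ref{exGSp}.
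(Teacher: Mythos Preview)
Your proposal is correct and follows exactly the approach the paper intends: the paper does not write out a proof but simply says ``A similar argument as above, (cf.\ \cite[Appendix, Prop.~A.21]{RapZinkBook}) now gives'' the result, where ``above'' is the $\GL_N$ case (Proposition~\ref{latGL}). You have correctly identified both the template (push out along $\Gg\hookrightarrow\Gm\times\prod\GL(W_i)$, then invert by local triviality of polarized chains) and the key external input (the symplectic refinement Prop.~A.21 of \cite{RapZinkBook} in place of Prop.~A.4).
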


Again as a result of the above, combined with the flatness result of
\cite{GortzSymplectic}, we can see that if $\mu:\Gm \to {\rm
GSp}_{2n}$ is the standard minuscule coweight given by $a\mapsto
{\rm diag}(a^{(n)}, 1^{(n)})$ and $\O=W(\Ff_p)$, then  the local models
$M_{\Gg, \mu}$ in this situation agree with the local models for
${\rm GSp}_{2n}$ considered in \cite{RapZinkBook}.
\smallskip

\subsubsection{} One can find a similar interpretation of ${\rm Gr}_{\Gg, X}$ as
moduli spaces of chains of bundles with additional structure given
by suitable forms in   more cases as in \S \ref{exClassical}, for
example when $G$ is an orthogonal group or a (ramified) unitary
group. We will leave the details to the reader. A corresponding
statement comparing the local models $M_{\Gg, \mu}$ with the local
models in the theory of PEL Shimura varieties (\cite{RapZinkBook},
\cite{GortzSymplectic}, \cite{GortzFlatGLn}, \cite{PappasRaI},
\cite{PappasRaII}, \cite{PappasRaIII}, \cite{PRS}) will be explained
in the next paragraph.

\bigskip

 \section{Shimura varieties and   local models} \label{Shimura}
\setcounter{equation}{0}

Here we discuss Shimura varieties and their integral models over
primes where the level subgroup is parahoric. We  conjecture that
there exist integral models that fit in a ``local model diagram" in
which the local model is given by our construction in the previous
chapter. We show this in most cases of Shimura varieties of PEL
type. We also explain how Theorem \ref{thmPEL}
of the introduction follows from our main structural results
on local models (which will be shown in the next section).

\subsection{The local model diagram}\label{8a}
Let $Sh_{\bold K} = Sh ({\bold G}, \{h\}, {\bold K})$ denote a
Shimura variety \cite{DeligneTravauxShimura} attached to the triple
consisting of  a {\sl connected} reductive group $\bold G$ over
$\mQ$, a family of Hodge structures $h$ and a compact open subgroup
$\bold K\subset \bold G(\mA_f)$. We fix a prime number $p$ and
assume that $\bold K$ factorizes as $\bold K = K^p\cdot K_p\subset
{\bold G}(\mA_f^p)\times {\bold G} (\mQ_p)$. We assume in addition
that $K=K_p$ is a parahoric subgroup of $\bold G (\mQ_p)$, i.e it
corresponds to the connected stabilizer of a point of the
Bruhat-Tits building of $\bold G\otimes_{\Q}\mQ_p$. We denote by
$\calP$ the corresponding Bruhat-Tits group scheme over $\Z_p$.

Let $\boE\subset\mC$ denote the reflex field of  $({\bold G},
\{h\})$, i.e. the field of definition of the geometric conjugacy
class of one-parameter subgroups $\{\mu\} = \{\mu_h\}$ attached to
$\{h\}$, cf.~\cite{DeligneTravauxShimura}. Then $\boE$ is a finite
extension of $\mQ$. Fixing an embedding
$\overline{\mQ}\to\overline{\mQ}_p$ determines a place $\wp$ of
$\boE$ above $p$. We denote by the same symbol the canonical model
of $Sh_{\bold K}$ over $\mE$ and its base change to $\mE_{\wp}$. For
simplicity, set $E=\boE_{\wp}$ and denote by $\O_E$ the ring of
integers of $E$ and by $k_E$ its residue field. It is then an
interesting problem to define a suitable model $\mathcal S_{\bold
K}$ of $Sh_{\bold K}$ over $\Spec(\O_E)$. Such a model should be
projective if $Sh_{\bold K}$ is (which is the case when $\bold
G_{\rm ad}$ is $\mQ$-anisotropic), and should always have manageable
singularities. In particular, it should be flat over $\Spec(\O_E)$,
and its local structure should only depend on the ``localized" group $G
= \bold G\otimes_{\mQ}\mQ_p$, the geometric conjugacy class
$\{\mu\}$ over $\overline{\mQ}_p$, and the parahoric subgroup $K =
K_p$ of $G (\mQ_p)$. Note that, due to the definition of a Shimura
variety, the conjugacy class $\{\mu\}$ is minuscule.

Suppose now in addition that the group $G$ splits over a tamely
ramified extension of $\Q_p$. 
We can then apply the constructions of the previous paragraphs to $G
$, $\O=\Z_p$, and a point of the building ${\mathcal B}(G ,
\mQ_p)$ that corresponds to $K \subset G(\mQ_p)$. By Theorem
\ref{grpschemeThm}, we obtain a smooth affine group scheme $\Gg \to
\Spec(\Z_p[u])$; the choice of $\{\mu \}$ allows us to give a
projective scheme $M_{\Gg  , \mu }\to \Spec(\O_E)$.  Let us set
$$
{\rm M}(G, \{\mu\})_K=M_{\Gg ,  \mu  }.
$$
  By its construction,
${\rm M}(G, \{\mu\})_K$ affords an action of the group scheme
$\Gg\otimes_{\Z_p[u],u\mapsto p}\O_E=\P\otimes_{\Z_p}\O_E$. The
conjecture  is   that there exists a model  $\mathcal S_{\bold K}$
of the Shimura variety over $\O_{E}$ whose singularities are
``described by the local model  ${\rm M}(G, \{\mu\})_K$''. More
precisely:

We conjecture that there is such a $\mathcal S_{\bold K}$ that affords
a {\it local model diagram}
\begin{equation}\label{locmoddiagram}
\xymatrix{
& {\widetilde{\mathcal S}}_{\bold K}\ar[ld]_{\pi}\ar[rd]^{\widetilde{\varphi}} & \\
{\quad\quad  \mathcal S_{\bold K} \quad\quad} & & {{\rm M}(G,
\{\mu\})_K}\, ,
 }
\end{equation}
 of $\mathcal O_{E}$-schemes, in which:
\begin{itemize}
\item $\pi$ is a torsor under the  group $\mathcal P_{\O_E}=\mathcal P\otimes_{\mZ_p}\mathcal O_{E}$,

\item $\widetilde{\varphi}$ is $\mathcal P_{\O_E}$-equivariant and smooth of relative dimension $\dim G$.
\end{itemize}

(Equivalently, using the language of algebraic stacks, there should
be a smooth morphism  of algebraic stacks
\begin{equation*}
\varphi:  {\mathcal S}_{\bold K} \to [{\rm M}(G, \{\mu\})_K/\mathcal
P_{\O_E}]\, .
\end{equation*}
of relative dimension $\dim G$ where in the brackets we have the
stack quotient. See also \cite{PRS}.) In fact, we conjecture that
there is such a diagram with $\widetilde{\varphi}$ being in addition surjective.

The existence of the local model diagram implies the following: Suppose $x$ is
a point of
 $\mathcal S_{\bold K} $ with values in the finite field $\mF_q$. By Lang's theorem
 the $\P_{\O_E}$-torsor $\pi$ splits over $x$, and so there is  $\ti x\in \widetilde{\mathcal S}_{\bold K}(\mF_q)$
 with image $y=\tilde\phi(\tilde x)\in {\rm M} (G, \{\mu\})_K
({\mF}_q)$  such that the henselizations of $ \mathcal S_{\bold K} $
at $x$ and of ${\rm M}(G,\{\mu\})_K$ at $y$ are isomorphic. The
$\mathcal P_{\O_E}$-orbit of $y$ in ${\rm M}(G,\{\mu\})_K$ is
well-defined.

\subsubsection{} Now suppose that  there is a closed group
scheme immersion $\rho: {\bold G}\hookrightarrow {\rm GSp}_{2n}$
such  that the composition of $\rho$ with $\mu$ is in the conjugacy
class of the standard minuscule cocharacter of ${\rm GSp}_{2n}$. For
typesetting simplicity, set $k=\overline{\bbF}_p=\bar k_E$. We will
also assume that there is a self-dual ``lattice" chain
$W_\bullet=\{W_i\}_{i\in \Z}$ in $\Z_p[u]^{2n}$ as in \S
\ref{latticeGSp}
 such that

\begin{itemize}
\item the homomorphism ${\bold G}\rightarrow {\rm GSp}_{2n}$
extends to a homomorphism $\Gg\rightarrow {\rm GSp}(W_\bullet)$,

\item  the homomorphism $\Gg\otimes_{\Z_p[u]}k[[u]]\rightarrow {\rm GSp}(W_\bullet)\otimes_{\Z_p[u]}k[[u]]$
is a locally closed immersion, the Zariski closure of
$\Gg\otimes_{\Z_p[u]}k((u))$ in ${\rm
GSp}(W_\bullet)\otimes_{\Z_p[u]}k[[u]]$ is a smooth group scheme
$P'_k$ over $k[[u]]$ and   $P'_k(k[[u]])$ stabilizes $x_{k((u))}$ in
the building of $\Gg(k((u)))$; then
$P_k:=\Gg\otimes_{\Z_p[u]}k[[u]]$ is the neutral component of
$P'_k$.
\end{itemize}

Under these assumptions, extending torsors via the homomorphism
$\Gg\to {\rm GSp}(W_\bullet)$   gives   ${\rm Gr}_{\Gg, \Z_p}\to
{\rm Gr}_{{\rm GSp}(W_\bullet), \Z_p}$. Restricting to ${\rm M}(G,
\{\mu\})_K\hookrightarrow {\rm Gr}_{\Gg, \Z_p}\otimes_{\O}\O_E$
gives a morphism of schemes
$$
\iota: {\rm M}(G, \{\mu\})_K\to {\rm M}({\rm
GSp}_{2n})_{W_\bullet}\otimes_{\Z_p}\O_E
$$
where ${\rm M}({\rm GSp}_{2n})_{W_\bullet}$ is the symplectic local
model as in \cite{GortzSymplectic} (cf. \ref{latticeGSp}).

\begin{prop}\label{embeddLoc}
Under the above assumptions,  $\iota: {\rm M}(G, \{\mu\})_K\to {\rm
M}({\rm GSp}_{2n})_{W_\bullet}\otimes_{\Z_p}\O_E $ is a closed
immersion.
\end{prop}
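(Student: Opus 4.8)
The plan is to show that $\iota$ is a proper monomorphism, since a proper monomorphism is a closed immersion. Both ${\rm M}(G,\{\mu\})_K$ and $Y:={\rm M}({\rm GSp}_{2n})_{W_\bullet}\otimes_{\Z_p}\O_E$ are projective over $\O_E$ — each is by construction a reduced closed subscheme of an ind-projective ind-scheme, cut out as the Zariski closure of a projective variety in the generic fibre, hence contained in some projective $\O_E$-subscheme — so $\iota$ is automatically proper. As $\O_E$ is a discrete valuation ring, every point of ${\rm M}(G,\{\mu\})_K$ lies in its generic or in its special fibre, and $\iota$ preserves these; so it suffices to prove that the two restrictions $\iota_E:=\iota\otimes_{\O_E}E$ and $\iota_{k_E}:=\iota\otimes_{\O_E}k_E$ are closed immersions. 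Granting that, $\iota$ has finite fibres and is therefore finite; then the cokernel $\mathcal C$ of $\mathcal O_Y\to\iota_\ast\mathcal O_{{\rm M}(G,\{\mu\})_K}$ is coherent, and $\mathcal C\otimes_{\O_E}k_E=0$ by affine base change along the finite morphism $\iota$ together with right exactness of $-\otimes_{\O_E}k_E$ and the vanishing of the cokernel for $\iota_{k_E}$. A coherent sheaf on a scheme proper over the DVR $\O_E$ whose special fibre vanishes has support disjoint from the special fibre, hence empty support by properness over $\Spec(\O_E)$, so $\mathcal C=0$ and $\iota$ is a closed immersion.

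For the generic fibre, $\iota_E$ is the restriction of the morphism of affine Grassmannians over $E$ induced by the closed immersion $\rho\colon{\bold G}\hookrightarrow{\rm GSp}_{2n}$ of reductive groups over the characteristic-zero field $\Q_p$. By Matsushima's criterion the quotient ${\rm GSp}_{2n}/{\bold G}$ is affine, so by the argument of \cite[Appendix]{GaitsgoryInv} (compare \cite[4.5.1]{BeilinsonDrinfeld}, \cite{PappasRaTwisted}, as used in the proof of Proposition \ref{indscheme}) this induced map of affine Grassmannians is a closed immersion. Hence $X_\mu$ maps isomorphically onto a closed subvariety, necessarily contained in the reduced closed generic fibre of $Y$, so $\iota_E$ is a closed immersion.

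The special fibre is where the real work lies. I would factor $\iota_{k_E}$ as
\[
{\rm M}(G,\{\mu\})_K\otimes k_E\hookrightarrow{\rm Gr}_{P_k}\otimes k_E\xrightarrow{\ q\ }{\rm Gr}_{P'_k}\otimes k_E\xrightarrow{\ j\ }{\rm Gr}_{{\rm GSp}(W_\bullet)_k}\otimes k_E,
\]
where $P_k=\Gg\otimes_{\Z_p[u]}k[[u]]$ and $P'_k\subset{\rm GSp}(W_\bullet)\otimes k[[u]]$ are as in the hypotheses. First I would check that $j$ is a locally closed immersion: $P'_k$ is a smooth closed subgroup scheme of ${\rm GSp}(W_\bullet)_{k[[u]]}$, which in turn sits as a closed subgroup scheme inside $\prod_i{\rm GL}(W_i)_{k[[u]]}$ (the similitude condition being closed); invoking Proposition \ref{qaffine} and the Gaitsgory-type argument over $k[[u]]$, the relevant quotients are quasi-affine, so ${\rm Gr}_{P'_k}\to{\rm Gr}_{\prod_i{\rm GL}(W_i)}$ and ${\rm Gr}_{{\rm GSp}(W_\bullet)}\to{\rm Gr}_{\prod_i{\rm GL}(W_i)}$ are (locally) closed immersions and $j$ is a locally closed immersion. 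Next, since $P_k$ is the neutral component of $P'_k$ and the two coincide over $k((u))$, the map $q$ is a finite étale torsor under $\pi_0\big(P'_k\otimes_{k[[u]]}k\big)$; its group of $\overline{\mathbb F}_p$-points injects, via the Kottwitz homomorphism, into the finite group $\Omega$ of length-zero elements of the Iwahori--Weyl group, and the deck transformations permute the Schubert cells of ${\rm Gr}_{P_k}$ by the corresponding translations. The hard part will be to show $q$ is injective on ${\rm M}(G,\{\mu\})_K\otimes k_E$: by Theorem \ref{thm01} the reduced special fibre ${\rm M}(G,\{\mu\})_K\otimes\overline{\mathbb F}_p$ is the union of the Schubert varieties indexed by the $\mu$-admissible set $\Adm(\mu)$, and all elements of $\Adm(\mu)$ lie in a single coset of the affine Weyl group $W_{\rm aff}$, so no nontrivial element of $\Omega$ can carry one admissible Schubert cell onto another; two geometric points of the special fibre of ${\rm M}(G,\{\mu\})_K$ with equal image under $q$ therefore coincide. (In the two cases actually invoked here, $\GL_N$ and ${\rm GSp}_{2n}$, one has $P'_k=P_k$ and $q$ is an isomorphism, so this point is vacuous.) Thus $q$ restricts on ${\rm M}(G,\{\mu\})_K\otimes k_E$ to a proper, unramified, radicial morphism into ${\rm Gr}_{P'_k}\otimes k_E$, i.e.\ to a proper immersion, hence a closed immersion; composing with $j$ exhibits $\iota_{k_E}$ as a locally closed immersion with proper source, hence a closed immersion.

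In summary, once one reduces to the two fibres the architecture is formal, the generic case is immediate from Matsushima plus the Gaitsgory--Beilinson--Drinfeld closed-immersion criterion for affine Grassmannians, and the main obstacle is the special fibre: verifying the quasi-affineness inputs needed to apply that criterion to $P'_k$, and — the genuinely delicate step — controlling the finite étale cover ${\rm Gr}_{P_k}\to{\rm Gr}_{P'_k}$ on the local model by means of the admissible-set description of its special fibre.
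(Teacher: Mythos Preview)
Your overall architecture matches the paper's: reduce to the two fibres, then glue via a Nakayama-type argument (your properness-of-support trick is correct and slick). The generic fibre step via Matsushima is fine. But there are two real problems in the special-fibre step.

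\textbf{The argument for $j$.} You invoke Proposition~\ref{qaffine} to conclude that the relevant quotients are quasi-affine, but that Proposition requires connected fibres, and $P'_k$ need not have connected special fibre (indeed that is exactly why $P_k\ne P'_k$). Even setting this aside, Proposition~\ref{qaffine} produces \emph{some} embedding into a $\GL_N$ with quasi-affine quotient; it does not tell you that the specific quotients $\big(\prod_i\GL(W_i)\big)/P'_k$ or ${\rm GSp}(W_\bullet)/P'_k$ are quasi-affine, so the Gaitsgory--Beilinson--Drinfeld criterion does not apply as stated. The paper handles $j$ differently: it only uses that $Q/P'_k$ is quasi-\emph{projective} over $k[[u]]$ (by \cite{Ana} and \cite[VI.2.5]{RaynaudLNM119}), and then argues via Artin local rings. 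For any Artin local $k$-algebra $A$, the ring $A((u))$ is local, so a map $\Spec(A((u)))\to Q/P'_k$ factors through an affine open; this lets one show ${\rm Gr}_{P'_k}(A)\to{\rm Gr}_Q(A)$ is injective for all such $A$, whence the map of ind-proper ind-schemes is proper, quasi-finite, hence finite, and then a closed immersion.

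\textbf{The injectivity of $q$ on the special fibre.} You invoke Theorem~\ref{thm01} to describe $\overline{M}_{\Gg,\mu}\otimes\bar k$ as the $\mu$-admissible locus and then use that $\Adm(\mu)$ lies in a single $W_{\rm aff}$-coset. But Theorem~\ref{thm01} carries the hypothesis $p\nmid|\pi_1(G_{\rm der})|$, which is \emph{not} assumed in Proposition~\ref{embeddLoc}; you are importing an extra hypothesis. The paper avoids this entirely. It shows, via the Kottwitz homomorphism and \cite{HainesRapoportAppendix}, that each connected component of ${\rm Gr}_{P_k}$ maps \emph{isomorphically} onto a connected component of ${\rm Gr}_{P'_k}$. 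Then one only needs that $\overline{M}_{\Gg,\mu}\otimes k_E$ is connected, which follows from Zariski's main theorem: the generic fibre $X_\mu$ is geometrically connected and $M_{\Gg,\mu}\to\Spec(\O_E)$ is proper. This is both simpler and free of the extra hypothesis. (Your parenthetical that ``in the two cases actually invoked here, $\GL_N$ and ${\rm GSp}_{2n}$, one has $P'_k=P_k$'' misreads the setup: the group $G$ embedded in ${\rm GSp}_{2n}$ is arbitrary, and the PEL applications include ramified unitary groups where $P'_k\ne P_k$.)
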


\begin{proof}
Recall that the generic fiber ${\rm M}(G, \{\mu\})_K\otimes_{\O_E}E$
of ${\rm M}(G, \{\mu\})_K$ is the flag variety of parabolics
 corresponding to $\{\mu\}$; the generic fiber of
${\rm M}({\rm GSp}_{2n})_{W_\bullet}$ is the Lagrangian Grassmannian
${\rm LGr}(n, 2n)$ of $n$-dimensional isotropic subspaces; our
assumption on $\{\rho\circ\mu\}$ implies that $\iota\otimes_{\O_E}E$ is
a closed immersion. We will now explain why, in this set-up, the
morphism on the special fibers $\iota\otimes_{\O_E}k_E$ is also a
closed immersion.

As above $P_k=\Gg\otimes_{\Z_p[u]}k[[u]]$ and $P'_k$ is the closure of
$P_k[u^{-1}]$ in ${\rm GSp}(W_\bullet)\otimes_{\Z_p[u]}k[[u]]$;
$P_k$ is a parahoric group scheme over $k[[u]]$. By our assumption,
$P'_k$ is a smooth affine group scheme over $k[[u]]$ and $P_k$ is the
neutral component of $P'_k$.
 Both ${\rm Gr}_{P_k}$ and $ {\rm Gr}_{P'_k}$
are ind-proper ind-schemes over $k$ and the natural morphism
$$
{\rm Gr}_{\Gg, k}={\rm Gr}_{P_k}\to {\rm Gr}_{P'_k}
$$
is  finite \'etale. In what follows, for simplicity, set $P=P_k$,
$P'=P'_k$. Consider the
 Kottwitz homomorphism $\kappa: P'(k((u)))=P(k((u)))\to \pi_1(P[u^{-1}])_I$ for the reductive group $P[u^{-1}]=P'[u^{-1}]$
over $k((u))$. By \cite{HainesRapoportAppendix}, since $P'(k[[u]])$
stabilizes $x_{k((u))}$, the intersection of the kernel ${\rm
ker}(\kappa)$ with $P'(k[[u]])$ is equal to $P(k[[u]])$.  By
\cite{PappasRaTwisted}, the homomorphism $\kappa$ induces a
bijection
$$
\pi_0(LP)\simeq \pi_0({\rm Gr}_{P})\xrightarrow{\sim}
\pi_1(P[u^{-1}])_I
$$
between the set of connected components of ${\rm Gr}_P$ and the
group  $\pi_1(P[u^{-1}])_I$. The above now imply that ${\rm
Gr}_{\Gg, k}={\rm Gr}_{P}\to {\rm Gr}_{P'}$ identifies each
connected component of ${\rm Gr}_{P}={\rm Gr}_{\Gg, k}$ with a
connected component of ${\rm Gr}_{P'}$.

Now $P'$ is a closed subgroup scheme of  $Q:={\rm
GSp}(W_\bullet)\otimes_{\O[u]}k[[u]]$. By \cite{Ana} and \cite[VI.
2.5]{RaynaudLNM119}, the quotient $Q/P'$ is quasi-projective over
$k[[u]]$.
Suppose that $A$ is an Artin local
$k$-algebra. Then $A((u))$ is a local ring and so each morphism
$\Spec(A((u)))\to Q/P'$ factors through an open affine subscheme of
$Q/P'$. Using this together with the argument of
\cite[Appendix]{GaitsgoryInv}, we can see that the fibered product
$\Spec(A)\times_{{\rm Gr}_Q}{\rm Gr}_{P'}$ is represented by the
closed subscheme of $\Spec(A)$ where the morphism $\Spec(A'((u)))\to
Q/P'$ obtained from a corresponding $\Spec(A((u)))\to Q/P'$ extends
to $\Spec(A'[[u]])\to Q/P'$. In particular, for any such $A$, ${\rm
Gr}_{P'}(A)\to {\rm Gr}_{Q}(A)$ is injective. Now let $ {\rm
Gr}_{P'}=\varinjlim_iY_i$, ${\rm Gr}_{Q}=\varinjlim_j Z_j$, with
$Y_i$, $Z_j$ proper closed subschemes and suppose $Y_i$ maps to
$Z_{j(i)}$. Applying the above, we see that $f_i: Y_i\to Z_{j(i)}$
is quasi-finite;  since $Y_i$ is proper, $f_i$ is also proper and
hence finite by Zariski's main theorem. Since $f_i(A)$ is injective
for all $A$ as above, we see that $f_i$ is a closed immersion.
 We conclude that
$$
{\rm Gr}_{P'}\to  {\rm Gr}_{Q}
$$
is a closed immersion. Now notice that Zariski's main theorem
implies that the special fiber ${\rm M}(G,
\{\mu\})_K\otimes_{\O_E}k$ of ${\rm M}(G, \{\mu\})_K$ is connected:
indeed, the generic fiber ${\rm M}(G, \{\mu\})_K\otimes_{\O_E}E $
over $E$ is geometrically connected and ${\rm M}(G, \{\mu\})_K\to
\Spec(\O_E)$ is proper by construction. Since each connected
component of ${\rm Gr}_{\Gg, k}={\rm Gr}_P$ identifies with a
connected component of ${\rm Gr}_{P'}$ and ${\rm M}(G,
\{\mu\})_K\otimes_{\O_E}k$ is connected we conclude from by above
that the morphism ${\rm M}(G, \{\mu\})_K\otimes_{\O_E}k\to  {\rm
Gr}_{Q}$ is a closed immersion. Therefore the morphism
$\iota\otimes_{\O_E}k$ is also a closed immersion.

We will now show that $\iota$ is a closed immersion. For simplicity,
set $\rm M={\rm M}(G, \{\mu\})_K$. Denote by $\iota(\rm M)$ the
closed scheme theoretic image of $\iota: {\rm M}\to {\rm M}({\rm
GSp}_{2n})_{W_\bullet}\otimes_{\O}\O_E$. (Since
$\iota\otimes_{\O_E}E$ is a closed immersion and $\rm M$ is
integral, $\iota(M)$ coincides with the Zariski closure of ${\rm
M}\otimes_{\O_E}E$ in ${\rm M}({\rm
GSp}_{2n})_{W_\bullet}\otimes_{\O}\O_E$.) We would like to show that
$\rm M=\iota(\rm M)$. Once again, by Zariski's main theorem
$\iota({\rm M})_k=\iota({\rm M})\otimes_{\O_E}k$ is connected. Using
the valuative criterion of properness and the fact that both $\rm M$
and $\iota(\rm M)$ are proper and flat over $\O_E$, we see that
 ${\rm M}_k\to \iota({\rm M})_k$ is surjective. Consider
 $\iota\otimes_{\O_E}k: {\rm M}_k\to \iota({\rm M})_k\hookrightarrow {\rm M}({\rm GSp}_{2n})_{W_\bullet}\otimes_{\O}k$;
this is a closed immersion, and therefore so is ${\rm M}_k\to
\iota({\rm M})_k$. The map ${\rm M}\to \iota({\rm M})$ is proper and
quasi-finite, hence finite. Let $A$ be the local ring of $\iota(\rm
M)$ at a closed point $\iota(x)$ of $\iota({\rm M})_k$ which is the
image of a closed point $x$ of ${\rm M}_k$. Denote by $B$ the local
ring of ${\rm M}$ at $x$, then $A\subset B$. Also since $x$ is the
unique point of $\rm M$ that maps to $\iota(x)$, $B$ is finitely
generated over $A$. Since $\iota\otimes_{\O_E}k$ is a closed
immersion,   $A/\varpi_E A$ surjects onto $B/\varpi_E B$ and hence
$\varpi_E\cdot B/A=(0)$. Applying Nakayama's lemma to the finitely
generated $A$-module $B/A$ we can conclude $A=B$. From this and the
above we deduce $\rm M=\iota({\rm M})$.
 \end{proof}
\smallskip

\subsection{The PEL case}\label{PELremark}

In this paragraph, we elaborate on the local models for Shimura
varieties of PEL type.   We will assume throughout that the prime
$p$ is odd.

We follow \cite[Chapter 6]{RapZinkBook} (see also  \cite{KottJAMS}):
Let ${\boB}$ be a finite dimensional semisimple algebra   over $\Q$
with a positive involution $*$. Then the center $\boF$ of $\boB$ is
a product of CM fields and totally real fields.
  Let $\boV$ be a finite dimensional $\Q$-vector space
of dimension $2n$ with a  perfect alternating $\Q$-bilinear form $(\
,\ ): \boV\times \boV\to \Q$. Assume that $\boV$ is equipped with a
$\boB$-module structure, such that
\begin{equation}\label{*form}
(bv, w)=(y, b^*w), \quad \forall v, w\in \boV, \quad b\in \boB.
\end{equation}
Set ${\boG}\subset {\rm Aut}_{\boB}(\boV)$ to be the closed
algebraic subgroup over $\Q$ such that
$$
{\boG}(\Q)=\{g\in {\rm Aut}_{\boB}(\boV)\ |\ (gv, gw)=c(g)(v, w),
\forall v, w\in \boV, c(g)\in \Q\}.
$$
Let $h:{\boS}:={\rm Res}_{\C/\R}\Gm_{\C}\to {\bold G}_{\R}$ be a
morphism that defines on ${\bold V}_{\R}$ a Hodge structure of type
$(1,0)$, $(0,1)$, such that $(v, h(\sqrt{-1})w)$ is a symmetric
positive bilinear form on $\boV_{\R}$. This gives a $\boB$-invariant
decomposition $\boV_\C=\boV_{0,\C}\oplus \boV_{1, \C}$ where $z\in
{\boS}$ acts on $\boV_{0,\C}$ by multiplication by $\bar z$ and on
$\boV_{1,\C}$ by multiplication by $z$. Then $({\boG}, \{h\})$
defines a Shimura variety of PEL type (cf.
\cite{DeligneTravauxShimura}, \cite{KottJAMS}).\footnote{Note   that
$\boG$ is not always connected and so the set-up  differs slightly
from the previous paragraph where it was assumed that $\boG$ is
connected.} The reflex field $\boE$ is the subfield of $\C$ that is
generated by the traces ${\rm Tr}_{\C}(b|\boV_{0,\C})$ for $b\in
\boB$. Using the isomorphism ${\bold S}_\C\simeq \C^*\times \C^*$,
$z\to (z, \bar z)$, we define $\mu: \Gm_\C\to {\boG}_\C$ as
$\mu(z)=h_\C(z, 1)$; the field $\boE$ is the field of definition of
the ${\boG}$-conjugacy class of $\mu$.  By definition, we have an
embedding $\rho: {\boG}\hookrightarrow {\rm GSp}(\boV, (\ ,\ ))={\rm
GSp}_{2n}$ and $\rho\circ \mu$ is conjugate to the standard minuscule coweight of
${\rm GSp}_{2n}$.

\subsubsection{} Now  set $G^\flat={\boG}_{\Q_p}$ and denote by
$G={\boG}_{\Q_p}^\circ$ the neutral component. Let ${\mathfrak
P}|(p)$ be a prime of $\boE$ and set  $E^\flat={\boE}_{\mathfrak
P}\subset \bar\Q_p$; we see that $\mu$ gives a unique  conjugacy class of coweights
$\Gm_{\bar\Q_p}\to G^\flat_{\bar \Q_p}$ which is defined over
$E^\flat$. Observe that each cocharacter $\mu: \Gm_{\bar\Q_p}\to
G^\flat_{\bar\Q_p}$ lands in the neutral component $G_{\bar\Q_p}$
and we can choose a representative $\mu:  \Gm_{\bar\Q_p}\to G_{\bar
\Q_p}$. Then the corresponding geometric $G_{\bar\Q_p}$-conjugacy
class is defined over $E$ which is a finite extension of $E^\flat$
(and can depend on our choice). We will also denote this conjugacy
class by $\{\mu\}$. As in \cite{RapZinkBook} we suppose that there exists an order $\O_{\bold B}$ of $B$ such
that $\O_B:=\O_{\bold B}\otimes\Z_p$ is a maximal order which is
stable under the involution $*$. Let $\{{\mathcal L}\}$ be a
self-dual multi-chain of $\O_{ B} $-lattices in $V={\bold V}_{\Q_p}$
with respect to $*$ and the alternating form $(\ ,\ )$ (in the sense
of \cite[Chapter 3]{RapZinkBook}). Consider the group scheme
$\P^\flat$ over $\Z_p$ whose $S$-valued points is the group of
$\O_B\otimes\O_S$-isomorphisms of the multi-chain
$\{\L\otimes\O_S\}$ that respect  the forms  up to (common)
similitude in $\O^*_S$. The generic fiber of $\P^\flat$ is
$G^\flat$.  It follows from \cite[Appendix to Ch. 3]{RapZinkBook}  that
$\P^\flat$ is  smooth over $\Z_p$.

  As we discussed in \ref{ss4a}, each such self-dual multichain
$\{{\mathcal L}\}$ gives a point $x(\L)$ in the Bruhat-Tits
building ${\mathcal B}(G, \breve\Q_p)$; the
group $\P^\flat(\breve\Z_p)\cap G(\breve\Q _p)$ is the
stabilizer subgroup of $x(\L)$ in $G(\breve\Q_p) $. By
\cite{BTII}, there is a unique affine smooth group scheme   $\calP'$
over $\Spec(\Z_p)$ with generic fiber $G$ such that
$\calP'(\breve\Z_p )$ is the stabilizer of $x(\L)$ in $G(\breve\Q_p)$. Then there is a group scheme embedding $\P'\hookrightarrow
\P^\flat$ which extends $G=(G^\flat)^\circ \hookrightarrow G^\flat$
and in fact, $\calP'$ is the Zariski closure of $G$ in
$\calP^\flat$. Finally,  the neutral component $\P:=(\P')^\circ$ is
the parahoric group scheme of $G$ associated to $x(\L)$.

\subsubsection{} In this paragraph we describe some constructions from \cite{RapZinkBook}.
The reader is referred to this work for more details.

We will consider $B:={\bold B}\otimes_{\Q}\Q_p$ as a central
$F:={\bold F}\otimes_{\Q}\Q_p$-algebra. For simplicity, we will
assume that the invariants of the involution $*$ on the center $F$
of $B$ are a field $F_0$. (The general case of $(B, *, V, (\ ,\ ))$
can be decomposed into a direct sum of cases with this property.)
There are two cases:

(A) The center of   $B$ is a product $Z=F\times F$, then $B\simeq
M_n(D)\times M_n(D^{\rm opp})$ and $(x, y)^*=(y, x)$. (Here $D$ is a
central division algebra over $F$. Note that $D=D^{\rm opp}$ as
sets.)

(B) The center of $B\simeq M_n(D)$ is a field $F$.
\smallskip

(Case A) We set $\O_B=M_n(\O_D)\times M_n({\O_D}^{\rm opp})$; denote
by $*$ the exchange involution on $\O_B$. Set
$U=\O_D^n\otimes_{\O_D}T$ (a left $M_n(\O_D)$-module) where $T\simeq
\O_D^m$ and $\tilde U={\rm Hom}_{\Z_p}(U,\Z_p)$ which is then also
naturally a left $M_n({\O_D}^{\rm opp})$-module. Set $W=U\oplus
\widetilde U $ which is a   left $\O_B$-module; then $W$ also
supports a unique perfect alternating $\Z_p$-bilinear form $(\ ,\ )$
for which $U$, $\widetilde U$ are isotropic and $((u, 0), (0, \tilde
u))=\tilde u(u)$ for all $u\in U$, $\tilde u\in \widetilde U$. In
this case, the lattice chain $\L_\bullet$ comes about as follows:
Choose an $\O_D$-lattice chain $\Gamma_\bullet$ in
$\O_D^m=\oplus_{i=1}^m\O_De_i$
$$
\Gamma_r=\Pi\cdot \Gamma_0\subset \Gamma_{r-1}\subset \cdots \subset
\Gamma_0=\O_D^m
$$
such that $\Gamma_{j}=\oplus_{j=1}^m \Pi^{a_j} \O_D $ for some
$1\geq a_j\geq 0$. Consider
$$
U_\bullet=\O_D^n\otimes_{\O_D}\Gamma_\bullet.
$$
Then $\L_\bullet=U_\bullet\oplus \widetilde U_\bullet$
for a  unique choice of $\Gamma_\bullet$ as above. %(as in \S \ref{lattice} (case 2)).

(Case B) This can be split into three cases (B1), (B2), (B3) which
correspond to (II), (III), (IV) of \cite{RapZinkBook}, p. 135.

(B1) $B=M_n(F)$, $F=F_0$.

(B2) $B=M_n(F)$ and $F/F_0$ is a quadratic extension.

(B3) $B=M_n(D)$ where $D$ is a quaternion algebra over $F$ and
$F=F_0$.

Recall that, as in \cite{RapZinkBook}, we always assume that we have
a maximal order $\O_D$ such that $\O_B=M_n(\O_D)$ is stable under
the involution $*$. Here, we use $D$ to denote either the quaternion
algebra $D$ or $F$ depending on the case we are considering. As in
\cite[Appendix to Ch. 3]{RapZinkBook}, we see that there is a certain perfect
form $H:\O_D^n\times\O_D^n\to \O_D$ (the possible types are:
symmetric, alternating, hermitian, anti-hermitian, quaternionic
hermitian or antihermitian for the main or the new involution on
$D$)  on the (right) $\O_D$-module $U=\O_D^n$ such that the
involution $*$ satisfies
\begin{equation}
H(Au, v)=H(u, A^*v), \quad  u, v\in U , \ A\in M_n(\O_D).
\end{equation}
Here we identify $M_n(\O_D)$ with the right-$\O_D$-module
endomorphisms ${\rm End}_{-\O_D}(U)$ of $U$. We will denote the involution
of $\O_D$ that we are using by $d\mapsto \breve d$. Then $d\mapsto
\breve{d}$ can be trivial, conjugation, the main involution or the
new involution in the case of quaternion algebras. For simplicity,
we will also refer to all the possible types of forms as
$\epsilon$-hermitian for the involution $d\mapsto \breve d$. If this
involution is trivial, ``$1$-hermitian" means symmetric and
``$(-1)$-hermitian" means alternating.

Let $\vartheta_F$ be a generator of the different of $F/\Q$ such
that $\bar\vartheta_F=-\vartheta_F$ if $F\neq F_0$. Define $h:
\O_D\times\O_D\to \Z_p$ by $h={\rm Tr}_{F/\Q_p}(\vartheta^{-1}_FH)$
if $D=F$, and by $h={\rm Tr}_{F/\Q_p}(\vartheta^{-1}_F{\rm
Tr}^0(\Pi^{-1}H))$ if $D$ is quaternion, where ${\rm Tr}^0: D\to F$
is the reduced trace.

As explained in \cite[Appendix to Ch. 3]{RapZinkBook}, we can now employ
Morita equivalence and write $V=M_n(D)\otimes_DW$ with $W\simeq D^m$
a free left $D$-module. The alternating form $(\ ,\ )=\calE(\ ,\ )$
on $V$ can be written
\begin{equation}
\calE(u_1\otimes w_1, u_2\otimes w_2)=h(u_1, u_2\Psi(w_1, w_2))
\end{equation}
where $\Psi: W\times W\to D$ is an $\epsilon$-hermitian form on $W$.
The sign $\epsilon$ of $\Psi$ is the opposite of that of $H$. We can
also write
\begin{equation}
\L_i=M_n(\O_D)\otimes_{\O_D} N_i
\end{equation}
where $N_i$ are $\O_D$-lattices in $W$. The perfect forms $\calE:
\L_{i}\times\L_j\to \Z_p$ induce $\Psi: N_i\times N_j\to \O_D$ such
that
\begin{equation}
\calE(u_1\otimes n_1, u_2\otimes n_2)=h(u_1, u_2\Psi(n_1, n_2)).
\end{equation}
Then $\{N_i\}$ give a polarized chain of $\O_D$-lattices in $W$ for
the form $\Psi$ and the point here is that there is a uniquely
determined  polarized lattice chain $N_\bullet$ that produces the
polarized chain $\L_\bullet$ as above.

\subsubsection{} To proceed we assume that the group $G$ splits over a
tamely ramified extension of $\Q_p$. In particular, the prime $p$ is
at most tamely ramified in the center $F$ of $B$. Now let us explain
how we can extend the above construction over the base $\Z_p[u]$.

(Case A) We set $\O(\calB)=M_n(\O(\calD))\times M_n(\O(\calD)^{\rm
opp})$ with $\O(\calD)$ over $W[v]$ as given in (\ref{order});
denote again by $*$ the exchange involution on $\O(\calB)$. Recall
we view $W[v]$ as a $\Z_p[u]$-algebra via $u\mapsto v^e\cdot (p\cdot
\varpi^{-e})$. Set $\und U = \O(\calD)^n\otimes_{\O(\calD)}\und T$
(a left $M_n(\O(\calD))$-module) where $\und T\simeq \O(\calD)^m$
and   $ \widetilde {\und U} ={\rm Hom}_{\Z_p[u]}(\und U,\Z_p[u])$
which is then also naturally a  left $M_n(\O(\calD)^{\rm
opp})$-module. Set $\und W=\und U\oplus  \widetilde {\und U} $ which
is a   left $\O(\calB)$-module; then $\und W$ also supports a unique
perfect alternating $\Z_p[u]$-bilinear form $(\ ,\ )$ for which
$\und U$, $\widetilde {\und U}$ are isotropic and $((u, 0), (0,
\tilde u))=\tilde u(u)$ for all $u\in \und U$, $\tilde u\in
\widetilde {\und U}$. Now choose an $\O(\calD)$-lattice chain
$\und\Gamma_\bullet$  in $\O(\calD)^m=\oplus_{i=1}^m\O(\cal D)e_i$
$$
\und\Gamma_r=X\cdot \und \Gamma_0\subset \und\Gamma_{r-1}\subset
\cdots \subset \und \Gamma_0=\O(\calD)^m
$$
such that $\und\Gamma_{j}=\oplus_{j=1}^m X^{a_j} \O(\calD) $ for
some $1\geq a_j\geq 0$ which lifts the corresponding lattice chain
$\Gamma_j$. Consider
$$
\und U_\bullet=\O(\calD)^n\otimes_{\O(\calD)}\Gamma_\bullet.
$$
Then $\und W_\bullet=\und {U}_\bullet\oplus \widetilde {\und
{U}}_\bullet$ is a self-dual (polarized) chain
 (as in \S \ref{lattice} (case 2)) that lifts $\{\L_i\}_i$. Consider
$$
\Gg(R)={\rm Aut}_{\O(\calB)\otimes_{\Z_p[u]}R}(\{ \und
W_\bullet\otimes_{\Z_p[u]}R\}, (\ ,\ ))
$$
where the automorphisms of the chain are supposed to preserve the
form $( , )$ up to common similitude in $R^*$. As in \S
\ref{sss4b3}, \S \ref{sss4b11}, we can see that $\Gg$ is one of the
group schemes constructed in Theorem \ref{grpschemeThm}. By its
construction above,  the group scheme $\Gg$ is a closed subgroup
scheme of ${\rm GSp}(\und W_\bullet)$.
\smallskip

(Case B) Recall from \ref{sss4c4} our notation of $\frakO$ which
could be $W[v]$, $\frakR$ or $\O(\calQ)$. The algebra $\frakO$
specializes to $\O_D$ above under $v\mapsto \varpi_0$ (note here
that in general the ``base field" is $F_0$). Notice that the
involution $d\mapsto \breve d$ of $\O_D$ has a canonical extension
to an involution of $\frakO$; we will denote this involution by the
same symbol.
 Now to start our construction, first extend
the form $H$ to a perfect form $\und H: \frakO^n\times\frakO^n\to
\frakO$ of the type described in \S \ref{exClassical} such that
under $W[v]\to \O_{F_0}$, $v\mapsto \varpi_0$, we obtain a form
isomorphic to $H$. For example, suppose we are in case (B1). Then
depending on the type of the involution $*$, the form $H$ is either
symmetric or alternating. In the alternating case, all perfect forms
on $\O_F^n$ are isomorphic to the standard form and we can extend
this over $W[v]$ as in \S \ref{exAlt}. In the symmetric case, the
discriminant of the perfect form is in
$W^*/(W^*)^2\simeq\O_F^*/(\O_F^*)^2$ and the Hasse invariant is
trivial; this implies that there are only two cases to consider: the
split case in which $H$ is isomorphic to the standard form and the
quasi-split unramified case; these can be lifted as in the first
cases of \S \ref{exSymm}. The other cases can be dealt with by similar
arguments using \S \ref{exhaustive} and following the pattern
explained in \cite[Appendix to Ch. 3]{RapZinkBook}. We will leave the details
to the reader. Notice that since we are assuming that the form $H$
is perfect on $\O_D^n$ there are fewer cases to consider. On the
other hand, we now also have to allow the anti-hermitian forms and
also the quaternionic $\epsilon$-hermitian for the new involution as
in \S \ref{exVariants}.

Define now a perfect form $\und h: \frakO^n\times\frakO^n\to
\Z_p[u]$ as follows: We set $\und h={\rm
Tr}_{\frakO/\Z_p[u]}(v^{1-e}\cdot \und H)$ if $\frakO=W[v]$ or
$\frakO=\frakR=W_2[v]$ (unramified case), $\und h={\rm
Tr}_{\frakO/\Z_p[u]}(v'^{1-2e}\cdot \und H)$ if
$\frakO=\frakR=W[v']$ (ramified case), and finally $\und h={\rm
Tr}_{W[v]/\Z_p[u]}(v^{1-e}\cdot {\rm Tr}^0(X^{-1}\und H))$ if
$\frakO=\O(\calQ)$.

This construction allows us to extend the involution $*$ on
$\O_B=M_n(\O_D)$ to an involution on ${\O(\calB)}=M_n(\frakO)$ which
we will also denote by $*$. Indeed, we can set
\[
A^*=\und C\cdot {}^t\breve A\cdot \und C^{-1}
\]
where $\und C$ is the matrix in ${\rm GL}_n(\frakO)$ that gives the
perfect form $\und H$. This  satisfies $C=\ep\cdot {}^t\breve C$.

Using \S \ref{exhaustive} together with \S \ref{exVariants}, we
extend $\Psi$ above to an $\epsilon$-hermitian form $\und\Psi$ on
$\calU=\frakO[v^{\pm 1}]^m$ for the involution $d\mapsto \breve d $
of the type described in \S \ref{exClassical}. As before, the form
$\und\Psi$ has the opposite parity of that of $\und H$ (i.e if one
is $\epsilon$-hermitian the other is $(-\epsilon)$-hermitian).

The self-dual $\O_D$-chain $N_\bullet$ gives a point $x$ in the
building of the corresponding group over $\Q_p$. We can  assume that
$x$
  belongs to the apartment of the standard maximal split torus; then we can give a
  self-dual $\frakO$-lattice chain $\und N_\bullet$ in $\frakO[v^{\pm 1}]^m$
that extends the self-dual $\O_D$-chain $N_\bullet$. This can be
done explicitly on a case-by-case basis by appealing to the list of
cases in \cite[Appendix to Ch. 3]{RapZinkBook}. Alternatively, we can argue
as follows: As in \S \ref{sss3a3}, \S \ref{ss3c},
the identification of apartments induced by $u\mapsto p$, gives a
corresponding point $x_{\Q_p((u))}$ in the building of $\und
G_{\Q_p((u))}$ over $\Q_p((u))$; this  corresponds to a self-dual
$\Q_p[[u]]$-lattice chain. Now we can use the construction in Remark
\ref{genericLattice}   to obtain the desired $\frakO$-lattice chain
over $\Z_p[u]$.

Now consider the tensor products
$
\und M_i=\frakO^n\otimes_{\frakO}\und N_i.
$
These are free (left) $M_n(\frakO)$-modules; they are all contained
in $\und M_i[v^{-1}]=(\frakO^n\otimes_{\frakO}\frakO^m)[v^{-1}]$.
Define a $\Z_p[u]$-valued form $\und \calE$ on
$(\frakO^n\otimes_{\frakO}\frakO^m)[v^{-1}]$ by
\begin{equation}
\und \calE(u_1\otimes n_1, u_2\otimes n_2)=\und h(u_1, u_2\und
\Psi(n_1, n_2)).
\end{equation}
We can check that the parity condition above implies that the form
is alternating, i.e $\und\calE(x, y)=- \und\E(y, x)$. Also, the form
$\und\calE$ satisfies
\begin{equation}
\und\calE(b^*m_1, m_2)=\und\calE(m_1, bm_2)
\end{equation}
for $b\in M_n(\frakO)$, $m_1$, $m_2\in \und M_i[v^{-1}]$. (Notice
that the form $\und\Psi$ is uniquely determined by $\und\calE$.) As
a result of this construction, we have given in particular, a
self-dual chain $\{\und M_i\}_i$ of $M_n(\frakO)$-lattices for the
involution $*$ and the alternating form $\und\calE$ that specializes
to the self-dual chain of $M_n(\O_D)$-lattices $\{\calL\}$ of our
initial data.

Now consider  the group scheme $\Gg'$ of automorphisms of the
self-dual $\frakO$-lattice chain $(\und N_i)_i$ such that the
corresponding automorphism of $(\und M_i)_i$ respects the form
$\und\calE$ up to a common similitude as above. By \S \ref{sss4b11}
the connected component $\Gg$ of $\Gg'$ is an example of the group
schemes of Theorem \ref{grpschemeThm}. Our construction provides
a group scheme homomorphism
\begin{equation}
\und\rho: \Gg'\xrightarrow {\ }{\rm GSp}(\und M_\bullet).
\end{equation}
As in \cite[Appendix to Ch. 3]{RapZinkBook}, using Morita equivalence we can
see that this is a closed immersion. We can also see that $\und\rho$
extends the natural symplectic representation $\rho: \boG\to {\rm
GSp}$ obtained from the PEL data. It follows that the homomorphism
$\und\rho: \Gg\to {\rm GSp}(\und M_\bullet)$ satisfies the
assumptions of the previous section. (Here, we indeed have to allow
that $\Gg$ may not be closed in ${\rm GSp}(\und M_\bullet)$. An
example is when $G$ is a ramified unitary similitude group on an
even number of variables and $x$ a vertex corresponding to a single
self-dual lattice for the corresponding hermitian form. Then the
fiber over $u=0$ of the Zariski closure  of $\und G$ in ${\rm
GSp}(\und M_\bullet)$ has two connected components, see
\cite[1.3]{PappasRaIII}.)

Using the above together with Proposition \ref{embeddLoc}, we now
see that in the case of PEL Shimura varieties of \cite{RapZinkBook}
we can identify ${\rm M}(G, \{\mu\})_K=M_{\Gg, \mu}$ with the
Zariski closure of $G/P_{\mu}$ in the symplectic local model ${\rm
M}({\rm GSp}_{2n})_{M_\bullet}\otimes_{\O}\O_E$ of
\cite{GortzSymplectic} under the standard symplectic representation
$\rho: {\bold G}\to {\mathrm {GSp}}_{2n}$.\footnote{We emphasize
here that, in general, $\boG$ is not connected and that
$G={\boG}_{\Q_p}^\circ$.}

\subsubsection{} \label{sss8c4}
We will now explain how the work of Rapoport and Zink
(\cite{RapZinkBook}) combined with the above can be used to produce
integral models of PEL Shimura varieties that afford a diagram as in
(\ref{locmoddiagram}). Then  as a result of Theorem \ref{thm01},
these models satisfy favorable properties, c.f. Theorem
\ref{thmPEL}. Our explanation becomes more complicated when the
group ${\bold G}_{\Q_p}$ is not  connected, one reason being that
our theory of local models has been set up only for connected
groups. At first glance, the reader can assume that ${\bold
G}_{\Q_p}$ is connected; then everything simplifies considerably.

We will continue to use some of the notations and constructions of
\cite{RapZinkBook}. Starting from the PEL data ${\mathfrak
D}=({\boB}, \O_{\boB}, $*$, {\boV}, (\ ,\ ), h, \{\L \}, K^p)$ with
corresponding group $\boG$ and the choice of a prime $\frakP|(p)$ of
the reflex field $\boE$, Rapoport and Zink define a moduli functor
$\calA_{K^p}$ over $\O_{E^\flat}=\O_{{\boE}_{\frakP}}$ (see
\cite[Definition 6.9]{RapZinkBook}, here $E^\flat={\boE}_{\frakP}$
is the local reflex field). Here $K^p$ is a compact open subgroup of
${\bold G}({\mathbb A}^p_f)$. When $K^p$ is small enough, this
functor is representable by a quasi-projective scheme $\calA_{K^p}$
over $\Spec(\O_{E^\flat})$.

Recall $G^\flat={\boG}_{\Q_p}$, $G={\boG}^\circ_{\Q_p}$ (a connected
reductive group); as usual, we assume these split over a tamely
ramified extension of $\Q_p$. The Shimura data give a conjugacy
class of cocharacters $\mu: \Gm_{\bar\Q_p}\to G^\flat_{\bar\Q_p}$;
then $E^\flat$ is the field of definition  of this conjugacy class.
Denote by $K^\flat_p$ the stabilizer of the lattice chain $\{\L\}$
in $G^\flat(\Q_p)={\bold G}(\Q_p)$. Set  ${\bold K}^\flat=K^p\cdot
K^\flat_p$.
 Then the generic fiber $\calA_{K^p}\otimes_{\O_{E^\flat}}E^\flat$ contains
the Shimura variety   $Sh_{\bold K^\flat}\otimes_{\boE}E^\flat$ for
${\bold G}$, as a union of some of its connected components;
$\calA_{K^p}\otimes_{\O_{E^\flat}}E^\flat$ could also contain more
Shimura varieties, which  correspond to other forms of the group
${\bold G}$ (for example, when the Hasse principle fails c.f.
\cite{KottJAMS}). Set $K'_p=K^\flat_p\cap G(\Q_p)$ and denote by
$K=K_p$ the parahoric subgroup of $G(\Q_p)$ that corresponds to $x(
\L )$. Recall we denote by
 $\calP$  the corresponding (connected) smooth group scheme over $\Z_p$ and by $\calP'$ the  smooth group
 scheme over $\Z_p$ determined by the stabilizer of $x(\L)$ so that $K'_p=\calP'(\Z_p)$;
$\calP$ is the neutral component of $\calP'$. Then $K=K_p$ is a
normal subgroup of finite index in $K'_p$. (In most cases, we have
$K'_p=K_p$, $\calP'=\calP$.)  Also recall that we denote by
$\calP^\flat$ the smooth group scheme of $\O_B$-isomorphisms of the
polarized multichain $ \{\L \}$ up to common similitude; we have
$\calP^\flat\otimes_{\Z_p}\Q_p=G^\flat$.

By \cite{RapZinkBook}, we have a smooth morphism of algebraic stacks
over $\Spec(\O_{E^\flat})$
\begin{equation}\label{phiRZ}
\varphi:  \calA_{K^p}\to [{\rm M}^{\rm naive}/\mathcal
P^\flat_{\O_{E^\flat}}]\,
\end{equation}
where ${\rm M}^{\rm naive}$ is the ``naive" local model that
corresponds to our data (see loc. cit. Def. 3.27, where this is
denoted by ${\rm M}^{\rm loc}$). By its definition, ${\rm M}^{\rm
naive}$ is a closed subscheme of the symplectic local model ${\rm
M}({\rm GSp}_{2n})_{M_\bullet}\otimes_{\Z_p}\O_{E^\flat}$ as above.

The generic fiber  of ${\rm M}^{\rm naive}$ is a projective
homogeneous space over $E^\flat$ for the group $G^\flat$ (which is
not always connected). Observe that each cocharacter $\mu:
\Gm_{\bar\Q_p}\to  G^\flat_{\bar\Q_p}$ lands in the neutral
component $G_{\bar\Q_p}$. Let $\{\mu_i\}_{i}$ be a set of
representatives (up to $G(\bar\Q_p)$-conjugation) of the
cocharacters of $G_{\bar\Q_p}$ in the $G^\flat(\bar\Q_p)$-conjugacy
class of $\mu$; we can write $\mu_i=\tau_i\mu\tau_i^{-1}$ with
$\tau_i\in G^\flat(\bar\Q_p)$. Denote by $E_i$ the   field of
definition of the $G(\bar\Q_p)$-conjugacy class of $\mu_i$. Then
$E^\flat\subset E_i$. The generic fiber ${\rm M}(G,
\{\mu_i\})_K\otimes_{\O_{E_i}}E_i=G_{E_i}/P_{\mu_i}$ is a
homogeneous space for $G_{E_i}$. By the above and Proposition
\ref{embeddLoc}, we obtain   closed immersions
$$
\iota_i: {\rm M}(G, \{\mu_i\})_K\to {\rm M}^{\rm
naive}\otimes_{\O_{E^\flat}}\O_{E_i}
$$
of schemes over $\O_{E_i}$ which are equivariant for the action of
the group scheme $\calP_{\O_{E_i}}$. In fact, since
$\calP'_{\O_{E_i}}$ is a closed subgroup scheme of ${\rm
GSp}(M_\bullet)$, the action of $\calP_{\O_{E_i}}$ on ${\rm M}(G,
\{\mu_i\})_K$ extends to an action of $\calP'_{\O_{E_i}}$ such that
$\iota_i$ remains equivariant. Now let $\ti E$ be a Galois extension
of $\Q_p$ that splits $G$ and contains all the fields $E_i$. Then
the base change ${\rm M}^{\rm naive}\otimes_{\O_{E^\flat}}\ti E$ is
a disjoint union
$$
{\rm M}^{\rm naive}\otimes_{\O_{E^\flat}}\ti E=\bigsqcup_i ({\rm
M}(G, \{\mu_i\})_{K}\otimes_{\O_{E_i}}\ti E)=\bigsqcup_i
G_{E_i}/P_{\mu_i}\otimes_{E_i}\ti E.
$$
  We obtain a morphism
 $$
 \iota: N:= \bigsqcup_i {\rm M}(G, \{\mu_i\})_K\otimes_{\O_{E_i}}\O_{\ti E}\to {\rm M}^{\rm naive}\otimes_{\O_{E^\flat}}\O_{\ti E}
 $$
 of schemes over $\O_{\ti E}$. The scheme $N$ supports an action of
 $\P'_{\O_{\ti E}}$ while its generic fiber supports a compatible action of
 $G^\flat_{\ti E}$; we can see that the action of $\P'_{\O_{\ti E}}$ on $N$ extends to an action of $\P^\flat_{\O_{\ti E}}$
 such that $\iota$ is $\P^\flat_{\O_{\ti E}}$-equivariant. Both the source and target of $\iota$ (considered as schemes over $\O_{E^\flat}$) support an action of
 the Galois group $\Gamma={\rm Gal}(\ti E/E^\flat)$ and $\iota$ is $\Gamma$-equivariant (we can check these statements by looking at the generic fibers).

  In this more general situation, we define the local model to be the quotient
 \[
 {\rm M}^{\rm loc}=N/\Gamma.
 \]
  This is a flat  $\O_{E^\flat}$-scheme. The morphism $\iota$ gives
 $$
 \xi: {\rm M}^{\rm loc}=N/\Gamma\xrightarrow{ \ } ({\rm M}^{\rm naive}\otimes_{\O_{E^\flat}}\O_{\ti E})/\Gamma=
 {\rm M}^{\rm naive}
 $$
which we can check is $\P^\flat_{\O_{E^{\flat}}}$-equivariant and
gives an isomorphism between generic fibers.

\begin{Remark}
{\rm If ${\bold G}$ is connected, then $G=G^\flat$, there is only
one $i$ and $E=E_i=E^\flat$ is the local reflex field as above. In
that case, ${\rm M}^{\rm loc}={\rm M}(G, \{\mu\})_K$.}
\end{Remark}

Pulling back the smooth $\phi$ (\ref{phiRZ}) along $\xi$ gives a
cartesian diagram
\begin{equation}\label{pullback}
 \xymatrix{
      \calA'_{K^p} \ar[r] \ar[d] & [{\rm M}^{\rm loc} /\calP^\flat_{\mathcal O_{E^\flat}}] \ar[d] \\
\calA_{K^p} \ar[r] &
          [{\rm M}^{\rm naive}  /\cal\P^\flat_{\mathcal O_{E^\flat}}]
}
\end{equation}
 with both horizontal arrows smooth.

The schemes $ \calA'_{K^p}$ and $\calA_{K^p}$ have isomorphic
generic fibers. Using the arguments in  \cite[\S 8]{KottJAMS}
applied to $\calA'_{K^p}\otimes_{\O_{E^\flat}}E^\flat=
\calA_{K^p}\otimes_{\O_{E^\flat}}E^\flat$ we can now express this
scheme as a union of Shimura varieties $Sh^{(j)}_{\bold K^\flat}$
given by various forms ${\bold G}^{(j)}$ of the group $\bold
G$.\footnote{For this, we  need to allow in our formalism Shimura varieties
for some non-connected reductive groups.} (These forms satisfy ${\bold
G}^{(j)}({\Q_v})\simeq {\bold G}({\Q_v})$, for all places $v\neq
p$). Therefore, $\calA'_{K^p}$ is a flat  model over $\O_{E^\flat}$
for such a union of Shimura varieties.

 Since we assume that $p$ is odd, we have
$p\nmid|\pi_1(G_{\on{der}})|$. Theorem \ref{CMfiber} and Proposition \ref{normalgen} below
imply that the base changes ${\rm M}(G, \{\mu_i\})_K\otimes_{\O_{E_i}}\O_{\ti E}$
are normal. Since normality is preserved by taking invariants by a finite group, ${\rm M}^{\rm loc}=N/\Gamma$
is also normal. It follows that the strict henselizations of ${\rm M}^{\rm loc}$ at all closed points
are normal and therefore also integral. We can now conclude, using the smoothness of
$\phi:  \calA'_{K^p} \rightarrow [{\rm M}^{\rm loc}
/\calP^\flat_{\mathcal O_{E^\flat}}] $,  that the same is true for the strict henselizations of
$\calA'_{K^p}$. It follows that the
Zariski
closures of $Sh^{(j)}_{\bold K^\flat}$ in
$\calA'_{K^p}$ do not intersect; denote these (reduced) Zariski closures by $\calS^{(j)}_{\bold
K^\flat}$. They are integral models
of the Shimura varieties $Sh^{(j)}_{\bold K^\flat}$. Then, for each $j$, the morphism
\begin{equation}\label{calSj}
\phi:  \calS^{(j)}_{\bold
K^\flat} \rightarrow [{\rm M}^{\rm loc} /\calP^\flat_{\mathcal
O_{E^\flat}}],
\end{equation}
obtained by restricting $\phi$, is also smooth.
Hence, ${\rm M}^{\rm loc}$ is also a ``local model" for the integral models
$\calS^{(j)}_{\bold K^\flat}$ of the Shimura varieties $Sh^{(j)}_{\bold K^\flat}$.
In most cases, the structure of ${\rm M}^{\rm loc}$ and therefore also the local structure
of $\calS^{(j)}_{\bold K^\flat}$ can be understood using Theorem
\ref{thm01}. We will explain this below.

\begin{Remark}\label{sss8d5}
{\rm We can see as in \cite{KottJAMS} that, in the situation above, all the forms ${\bold G}^{(j)}$ that appear in the list
  satisfy ${\bold G}^{(j)}(\breve\Q_p)\simeq {\bold G}({\breve\Q_p})$.

(a) Suppose now that, in addition, at least one
of the following two conditions is satisfied:
\begin{itemize}
\item[(p1)]  the special fiber of $\P^\flat$ is connected, i.e $K^\flat_p$ is parahoric,
or
\item[(p2)]  the sheaf of connected components  $\P^\flat/(\P^\flat)^\circ$
is \'etale locally constant and   its generic fiber is isomorphic
(via the natural map) to $G^\flat/(G^\flat)^\circ$.
\end{itemize}
Then, the arguments of \cite[ Lemma 7.2 and \S 8]{KottJAMS} extend
to show that all the groups ${\bold G}^{(j)}$   also satisfy ${\bold G}^{(j)}(\Q_p)\simeq {\bold G}(\Q_p)$.
(Indeed, then one can apply Lang's theorem as in the proof of Lemma
7.2 loc. cit.  since the special fiber of $(\P^\flat)^\circ$ is
connected.)

(b) Assume that, in addition to (p1) or (p2), $\bold G$ satisfies the Hasse principle, i.e
$\rH^1(\Q, {\bold G})\to \prod_v\rH^1(\Q_v, {\bold G})$ is
injective. Then the argument of \cite[\S 8]{KottJAMS} shows that
there is only one Shimura variety for the group $\bold G^{(j)}=\bold
G$ that appears in the generic fiber of $\calA_{K_p}$. In this case, we just
take $\calS_{\bold K^\flat}=\calA'_{K^p}$.
%In this case, the top smooth arrow of (\ref{pullback}), or (\ref{calSj}) above,
%immediately gives a local model diagram as in (\ref{locmoddiagram}).}
}
\end{Remark}

\subsubsection{}\label{rem8c4} Here we explain how we can  combine
the local model diagram (\ref{calSj}) with Theorem \ref{thm01} to give results
on   integral models of PEL Shimura varieties. In particular, we show Theorem
\ref{thmPEL}.

 a) The set-up simplifies drastically if $G^\flat$ is
connected, i.e if $G=G^\flat$. This is the case when ${\bold G}$
does not have any orthogonal factors. Then ${\rm M}^{\rm loc}={\rm
M}(G, \{\mu\})_K$ ($\Gamma=\{1\}$) and $\xi=\iota$ is a closed immersion. Also, in
this case the derived group $G_{\rm der}$ is simply-connected.  We
can then use Theorem \ref{thm01} and the above directly   to obtain
information on the integral models
 $\calA'_{K^p}$ and ${\mathcal S}^{(j)}_{\bold K^\flat}$. Assume in addition that $K^\flat_p$ is
parahoric, i.e that the special fiber of $\P^\flat=\P'$ is
connected. Then the existence of the local model diagram morphism $\phi$ of (\ref{calSj})
together with the results in \S \ref{ss8d}, and in particular Theorem \ref{thm01}, implies, in a standard way, that the  integral models $\calA'_{K^p}$ and ${\mathcal S}^{(j)}_{\bold K^\flat}$ satisfy the conclusions of the statement of Theorem \ref{thmPEL}.
 (Notice here that in the statement of Theorem \ref{thmPEL}
 we refer somewhat ambiguously to the ``Shimura variety $Sh_{\bold K}$
defined by the PEL data $\mathfrak D$". If the Hasse principle is not satisfied,
$Sh_{\bold K}$ is by definition the generic fiber of $\calA'_{K^p}$ which is
a union of several Shimura varieties; by the above, ${\mathcal S}^{(j)}_{\bold K^\flat}$
give integral models as in
 Theorem \ref{thmPEL} for each one of them.)

 b) Continue to assume that $G=G^\flat$. In general, if $\calP'=\calP^\flat$ is not connected,   $K'_p/K_p$ is a non-trivial abelian group.
 The above allows us to understand the
 structure of integral models of Shimura varieties with $K'_p$-level structure.
With some extra work, one should  be able to extend this and also produce
local model diagrams for Shimura
varieties with $K_p$-level structure. We will not
discuss this in this paper.  See \cite[1.3]{PappasRaIII} for an example where a case of
$K_p$-level structure for the ramified unitary group is discussed.

c)  Let us now consider the general case, i.e allow $G\neq G^\flat$.
First of all, let us remark that the scheme ${\rm M}^{\rm loc}$ defined above coincides in
various special  cases with the ``corrected" local model as
considered in \cite{PappasRaI}, \cite{PappasRaII},
\cite{PappasRaIII}, \cite{SmithOrth1}, \cite{PRS}. Indeed, first
assume that $G=G^\flat$, $E=E^\flat$ as above. Then, for both our
definition above and the definition in these references, ${\rm
M}^{\rm loc}$ is simply the Zariski closure of $G_E/P_\mu$ in ${\rm
M}^{\rm naive}$. Now let us allow $G\neq G^\flat$;  such cases have not been
studied extensively before and one needs to work a bit harder.   Some (even) orthogonal similitude group cases where
$G\neq G^\flat$ have been considered in \cite{PappasRaIII},
\cite{SmithOrth1} (see \cite{PRS}). In the split orthogonal case,
$E=E^\flat=\Q_p$, and it makes sense to define the corrected local
model as the Zariski closure of $G^\flat/P_\mu=(G/P_{\mu_1})\sqcup
(G/P_{\mu_2})$ in ${\rm M}^{\rm naive}$; it follows from \cite[\S
8.2]{PappasRaIII} that this Zariski closure is the disjoint union of
the closures of $G/P_{\mu_1}$ and $G/P_{\mu_2}$. This coincides with
the description of ${\rm M}^{\rm loc}$ above and we have ${\rm
M}^{\rm loc}={\rm M}(G, \{\mu_1\})_K\sqcup {\rm M}(G, \{\mu_2\})_K$.
  This local model
was studied in the Iwahori case by Smithling in \cite{SmithOrth1}.
In general, the suggested correction of the naive local model in this orthogonal case
\cite{PappasRaIII} involves the so-called ``spin condition" whose
main virtue is that it also attempts to  give a moduli theoretic
interpretation. (The spin condition also makes sense  in the
quasi-split even orthogonal case (\cite[\S 8.2]{PappasRaIII},
\cite[\S 2.7]{PRS}). We conjecture that the corresponding local
model always agrees with our definition above.)

As an example of a result we can obtain when $G\neq G^\flat$,
let us suppose that $G^\flat $ is a split even orthogonal similitude
group as above and that $\L$ gives a maximal self-dual lattice chain
(the Iwahori case considered in \cite{SmithOrth1}). Then the special
fiber of $\P^\flat$ is connected.
By (\ref{calSj}) and the above, the local model  ${\rm
M}^{\rm loc}={\rm M}(G, \{\mu_1\})_K\sqcup {\rm M}(G, \{\mu_2\})_K$ describes the singularities of
corresponding integral models of  ``orthogonal" PEL Shimura varieties.
Theorem \ref{thm01} then also implies that these   integral models
are normal and have reduced special
fibers with geometric components which are normal and
Cohen-Macaulay. In the general case that $G\neq G^\flat$, one would need to study the quotient $N/\Gamma={\rm M}^{\rm loc}$
but we will leave this for another occasion.

d) For general Shimura varieties of abelian type, the existence of a
suitable integral model and a local model diagram as above is the
subject of joint work in progress by the first named
author and M. Kisin  \cite{K-P}.

\medskip

\section{The special fibers of the local models}\label{ss8d}
Here we show our main results on the structure of the local models,
including Theorem \ref{thm01} of the introduction.

\subsection{Affine Schubert varieties and the $\mu$-admissible set}\label{8d1}
Let us review some aspects of the theory of Schubert varieties in the (generalized)
affine flag varieties, following
\cite{FaltingsLoops,PappasRaTwisted}.
In this section, we assume
that $F'=k((u))$, $\O'=k[[u]]$, with $k$ algebraically closed, and let $G'$ be a
connected reductive group over $F'$, split over a tamely ramified
extension $\tilde{F}'$ of $F'$.

\subsubsection{}   Let $S'$
be a maximal $F'$-split torus of $G'$ and $T'=Z_{G'}(S')$ a maximal
torus. Let $I=\Gal(\tilde{F}'/F')$. Let $x\in\calB(G',F')$ be a
point in the building, which we assume lies in the apartment
$\A(G',S', F')$ and let $\calP'_{x}$ be the corresponding parahoric
group scheme over $k[[u]]$. Let $\Gr_{\calP'_{x}}$ be the  affine
Grassmannian. Then $L^+\calP'_{x}$ acts on $\Gr_{\calP'_{x}}$ via
left multiplication. The orbits then are parameterized by
$W^{\calP'_{x}}\setminus\widetilde{W}'/W^{\calP'_{x}}$, where
$\widetilde W'$ is the Iwahori-Weyl group of $G'$ and
$W^{\calP'_{x}}$ is the Weyl group of $\calP'_{x}\otimes k$. For
$w\in W^{\calP'_{x}}\setminus\widetilde{W}'/W^{\calP'_{x}}$, let
$S^{\calP'_x}_w\subset\Gr_{\calP'_x}$ denote the corresponding
Schubert variety, i.e. the closure of the $L^+\calP'_x$-orbit
through $w$. Then according to \cite{FaltingsLoops}, \cite[Thm.
8.4]{PappasRaTwisted}, if $\on{char} k \nmid
|\pi_1(G'_{\on{der}})|$, where $G'_{\on{der}}$ is the derived group
of $G'$, then $S^{\calP'_x}_w$ is normal, has rational singularities
and is Frobenius-split if $\on{char} k>0$.

Let us also recall the structure of the Iwahori-Weyl group
$\widetilde W'$ of $G'$. It is defined via the exact sequence
\begin{equation}\label{IwahoriWeyl}
1\to T'(\O')\to N'(F')\to \widetilde W'\to 1
\end{equation}
(where $T'(\O')$ is the group of $\O'$-valued points
of the unique Iwahori group scheme for the torus $T'$) and acts on
$\A(G',S', F')$ via affine transformations. In addition, there is a
short exact sequence
\begin{equation}\label{Iwahori-Weyl}
1\to \xcoch(T')_{I}\to\widetilde{W}'\to {W'_0}\to 1,
\end{equation}
where ${W'_0}=N'(F')/T'(F')$ is the relative Weyl group of $G'$ over
$F'$. In what follows, we use $t_\la$ to denote the translation
element in $\widetilde{W}'$ given by $\la\in\xcoch(T')_I$ from the
above map \eqref{Iwahori-Weyl}\footnote{\label{fn}Note that under
the sign convention of the Kottwitz homomorphism in
\cite{KottIsocrystalsII}, $t_\la$ acts on $\A(G',S', F')$ by
$v\mapsto v-\la$.}. But occasionally, if no confusion is likely to arise,
we will also use $\la$ itself to
denote this translation element.
A choice of a special vertex $v$ of $\A(G',S',F')$ gives a splitting
of the above exact sequence and then we can write $w=t_\la w_f$ for
$\la\in \xcoch(T')_I$ and $w_f\in {W'_0}$.

Let us choose a rational Borel subgroup $B'$ of $G'$ containing $T'$. This
determines a set of positive roots $\Phi^+=\Phi(G',S')^+$ for $G'$.
There is a natural map $\xcoch(T')_I\to\xcoch(S')_\bbR$. We define
\begin{equation}\label{plus}
\xcoch(T')_I^+=\{\la\mid (\la,a)\geq 0 \mbox{ for } a\in\Phi^+\}.
\end{equation}
Observe that the chosen special vertex $v$ and the rational Borel
$B'$ determine a unique alcove $C$ in $\A(G',S',F')$. Namely, we
identify $\A(G',S',F')$ with $\xcoch(S')_\bbR$ by $v$ and then $C$
is the unique alcove whose closure contains $v$, and is contained in
the finite Weyl chamber determined by $B'$.

Let $W'_{\on{aff}}$ be the affine Weyl group of $G'$, i.e. the
Iwahori-Weyl group of $G'_{\on{sc}}$, the simply-connected cover of
$G'_{\on{der}}$. This is a Coxeter group. One has
\begin{equation}\label{affine Weyl}
1 \to \xcoch(T'_{\on{sc}})_{I}\to\ W'_{\on{aff}}\to {W'_0}\to 1,
\end{equation}
where $T'_{\on{sc}}$ is the inverse image of $T'$ in $G'_{\on{sc}}$.
One can write $\widetilde{W}'=W'_{\on{aff}}\rtimes \Omega'$, where
$\Omega'$ is the subgroup of $\widetilde{W}'$ that fixes the chosen
alcove $C$. This gives $\widetilde{W}'$ a quasi Coxeter group
structure;  it makes sense to talk about the length of an element
$w\in\widetilde{W}'$ and there is a Bruhat order on
$\widetilde{W}'$. Namely, if we write $w_1=w'_1\tau_1,
w_2=w'_2\tau_2$ with $w'_i\in W'_{\on{aff}}, \tau_i\in\Omega'$, then
$\ell(w_i)=\ell(w'_i)$ and $w_1\leq w_2$ if and only if
$\tau_1=\tau_2$ and $w'_1\leq w'_2$.

\subsubsection{}  Now let us recall the definition of the \emph{$\mu$-admissible set}
in the Iwahori-Weyl group (cf. \cite{KoRaMinuscule}, see also
\cite{PRS}). We continue with the above notations. Let $\bar{W}'$ be the absolute Weyl group of $G'$, i.e.
the Weyl group for $(G'_{\tilde{F'}},T'_{\tilde{F'}})$. Let
$\mu:(\bbG_m)_{\tilde{F'}}\to G'\otimes_{F'}\tilde{F'}$ be a
geometrical conjugacy class of 1-parameter subgroups. It determines
a $\bar{W}'$-orbit in $\xcoch(T')$. One can associate to $\mu$ a
${W'_0}$-orbit $\Lambda$ in $\xcoch(T')_{I}$ as follows. Choose a
Borel subgroup of $G'$ containing $T'$, and defined over $F'$. This
gives a unique element in this $\bar{W}'$-orbit, still denoted by
$\mu$, which is dominant with respect to this Borel subgroup. Let
$\bar{\mu}$ be its image in $\xcoch(T')_I$, and let
$\Lambda={W'_0}\bar{\mu}$. It turns out that $\Lambda$ does not
depend on the choice of the rational Borel subgroup of $G'$, since
any two such $F'$-rational Borel subgroups that contain $T'$ are
conjugate to each other by an element in ${W'_0}$. For
$\mu\in\xcoch(T')$, define the admissible set
\begin{equation}\label{Adm}
\Adm(\mu)=\{w\in\widetilde{W}'\mid w\leq t_\la, \mbox{ for some }
\la\in\Lambda\},
\end{equation}
and more generally,
\begin{equation}\label{Adm-parahoric}
\Adm^{\calP'_x}(\mu)=W^{\calP'_x}\Adm(\mu)W^{\calP'_x}.
\end{equation}
If $\on{char} k\nmid |\pi_1(G'_{\on{der}})|$, let us define a
reduced closed subvariety of $\Gr_{\calP'_x}$ whose underlying set
is given by
\[\calA^{\calP'_x}(\mu)=\bigcup_{w\in \Adm(\mu)} L^+\calP'_x w L^+\calP'_x/L^+\calP'_x=\bigcup_{w\in W^{\calP'_x}\setminus \Adm^{\calP'_x}(\mu)/W^{\calP'_x}}S^{\calP'_x}_w.\]
In general, we can define a slightly different variety
$\calA^{\calP'_x}(\mu)^{\circ}$ as in \cite[Sect.
10]{PappasRaTwisted} (see also \cite[Sect. 2.2]{ZhuCoherence}),
which is isomorphic to $\calA^{\calP'_x}(\mu)$ when $\on{char}
k\nmid |\pi_1(G'_{\on{der}})|$. (Here, we write
$|\pi_1(G'_{\on{der}})|$ for the order of the algebraic fundamental
group of $G'_{\on{der}}(\overline {k((u))}^s)$). In fact, when
$\on{char} k\nmid |\pi_1(G'_{\on{der}})|$,
$\calA^{\calP'_x}(\mu)^{\circ}$ is the translation of
$\calA^{\calP'_x}(\mu)$ back to the ``neutral connected component"
of $\Gr_{\calP_x'}$ by a certain element of $G'(k((u)))$. The
variety $\calA^{\calP'_x}(\mu)^\circ$ depends only on $G'_{\on{ad}}$
and the image of $\mu$ under $G'\to G'_{\on{ad}}$. From its
definition (\emph{loc. cit.}), if we have the decomposition
$G'=G'_1\times G'_2$, $\mu=\mu_1+\mu_2$ and
$\calP'_x=(\calP'_x)_1\times(\calP'_x)_2$, then
\[\calA^{\calP'_x}(\mu)^\circ\simeq \calA^{(\calP'_x)_1}(\mu_1)^\circ\times \calA^{(\calP'_x)_2}(\mu_2)^\circ.\]

\subsection{Special fibers}\label{8d2}

Let us return to the local models $M_{\Gg, \mu}$ and show Theorem
\ref{thm01} of the introduction. We will assume throughout that
$p\nmid|\pi_1(G_{\on{der}})|$.
%We will assume in
%this paragraph that $G_{\on{der}}$ is simply-connected. Since the
%local models should be invariant under central isogeny (as explained
%in \cite{PRS}), this is not a serious restriction.

\subsubsection{} We denote by $\overline M_{\Gg, \mu}=M_{\Gg, \mu}\otimes_{\O_E}k_E$
the special fiber of $M_{\Gg,\mu}\to \Spec(\O_E)$ over the residue
field $k_E$ of $\O_E$.

\begin{thm}\label{CMfiber}
%Suppose that $G_{\rm der}$ is simply connected.
Suppose that $p\nmid|\pi_1(G_{\on{der}})|$.   Then the scheme $M_{\Gg,\mu}$ is normal.
In addition, the special fiber
$\overline M_{\Gg, \mu}$ is reduced, and each geometric irreducible
component of $\overline M_{\Gg, \mu}$   is normal, Cohen-Macaulay
and Frobenius split.
\end{thm}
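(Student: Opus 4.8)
The plan is to pin down the geometric special fibre $\overline M_{\Gg,\mu}\otimes_{k_E}\bar{\mathbb F}_p$ as the reduced admissible union $\calA^{\calP'_x}(\mu)^\circ$ of affine Schubert varieties inside the affine flag variety $\Gr_{P_k}$ (using Corollary \ref{fibers}(2) to place $\overline M_{\Gg,\mu}$ there), and then to read off every assertion from the known structure theory of affine Schubert varieties together with Serre's criterion. First I would note that $M_{\Gg,\mu}\to\Spec(\O_E)$ is flat: by definition it is the reduced Zariski closure of the integral scheme $X_\mu$ sitting in the generic fibre, so each of its irreducible components dominates $\Spec(\O_E)$ and $\varpi_E$ is a nonzerodivisor. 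Consequently $\overline M_{\Gg,\mu}$ is a closed, $L^+\calP'_x$-stable subscheme of $\Gr_{P_k}$, equidimensional of dimension $\dim X_\mu$, hence set-theoretically a union of Schubert varieties $S^{\calP'_x}_w$; the dimension bound $\ell(w)\le\dim X_\mu$ coming from flatness, combined with the Kottwitz--Rapoport combinatorics of the $\mu$-admissible set (\cite{KoRaMinuscule}), forces the occurring $w$ to lie in $\Adm^{\calP'_x}(\mu)$. This realizes $\overline M_{\Gg,\mu}$ as a closed subscheme of $\calA^{\calP'_x}(\mu)^\circ$.

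The heart of the argument is to promote this inclusion to an equality of schemes, from which reducedness is immediate. I would fix an $L^+\calP'_x$-equivariant ample line bundle $\mathcal L$ on $\Gr_{P_k}$ --- obtained from a closed embedding $\Gg\hookrightarrow\GL_n$ as in Proposition \ref{qaffine} and the determinant bundle on $\Gr_{\GL_n}$ --- and extend it to an ample line bundle on $\Gr_{\Gg,\O_E}$. Flatness of $M_{\Gg,\mu}$ and the vanishing of higher cohomology for large tensor powers give $\dim H^0(\overline M_{\Gg,\mu},\mathcal L^{\otimes N})=\dim H^0(X_\mu,\mathcal L^{\otimes N})$ for $N\gg 0$, and the right-hand side is computed on the homogeneous space $X_\mu$ by a Borel--Weil type formula. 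On the other side, the coherence conjecture of Pappas--Rapoport, proved by the second author in \cite{ZhuCoherence}, asserts precisely that $\dim H^0(\calA^{\calP'_x}(\mu)^\circ,\mathcal L^{\otimes N})$ is given by the same polynomial in $N$. Since the surjection $\mathcal O_{\calA^{\calP'_x}(\mu)^\circ}\twoheadrightarrow\mathcal O_{\overline M_{\Gg,\mu}}$ induces a surjection on $H^0(-,\mathcal L^{\otimes N})$ for $N\gg 0$, and the two spaces of sections have equal dimension, that surjection is an isomorphism for all large $N$; hence $\overline M_{\Gg,\mu}=\calA^{\calP'_x}(\mu)^\circ$ scheme-theoretically, and in particular $\overline M_{\Gg,\mu}$ is reduced.

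Granting this identification, the rest is formal. The geometric irreducible components of $\overline M_{\Gg,\mu}=\calA^{\calP'_x}(\mu)^\circ$ are the maximal Schubert varieties $S^{\calP'_x}_w$ with $w$ maximal in $\Adm^{\calP'_x}(\mu)$ (the images of the translations $t_\lambda$, $\lambda\in\Lambda$); since $p\nmid|\pi_1(G_{\on{der}})|$ --- equivalently $p\nmid|\pi_1(G'_{\on{der}})|$ for the corresponding group $G'$ over $\mathbb F_p((u))$ --- each such $S^{\calP'_x}_w$ is normal, Cohen--Macaulay and Frobenius split by \cite{FaltingsLoops} and \cite[Thm.~8.4]{PappasRaTwisted}. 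For the normality of $M_{\Gg,\mu}$ itself I would apply Serre's criterion $R_1+S_2$: at a codimension-one point of $M_{\Gg,\mu}$ in the generic fibre, regularity follows from smoothness of $X_\mu$; at the generic point of a component of the special fibre the local ring is a discrete valuation ring because $\overline M_{\Gg,\mu}$ is reduced, giving $R_1$; and reducedness of $\overline M_{\Gg,\mu}$ (hence $S_1$) propagates along the flat morphism over the discrete valuation ring $\O_E$ to $S_2$ for $M_{\Gg,\mu}$. Thus $M_{\Gg,\mu}$ is normal.

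I expect the main obstacle to be exactly the equality $\overline M_{\Gg,\mu}=\calA^{\calP'_x}(\mu)^\circ$: the set-theoretic containment rests on a careful analysis of which Schubert cells can occur as specializations of $X_\mu$ (the admissibility bound), while the decisive numerical input is the coherence conjecture of \cite{ZhuCoherence} --- the deepest external ingredient --- and making this work requires the somewhat delicate bookkeeping of comparing the chosen ample line bundle on the homogeneous space $X_\mu$, on the total ind-scheme $\Gr_{\Gg,\O_E}$, and on the affine flag variety $\Gr_{P_k}$ within one compatible flat family, so that the Euler characteristic/Hilbert polynomial comparison is legitimate.
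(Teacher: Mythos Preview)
Your overall strategy---identify the geometric special fibre with the admissible union and then invoke \cite{FaltingsLoops}, \cite{PappasRaTwisted} and Serre's criterion---matches the paper's, but you have the key containment backwards, and your justification for it does not hold. You claim that the Schubert cells $S^{\calP'_x}_w$ occurring in $\overline M_{\Gg,\mu}$ must have $w\in\Adm^{\calP'_x}(\mu)$ because of the dimension bound $\ell(w)\le\dim X_\mu$ together with ``Kottwitz--Rapoport combinatorics''. But a length bound, even combined with the constraint that all cells lie in a single connected component of $\Gr_{P_{\bar k}}$, does not force admissibility: there are in general many $w$ in the relevant $W'_{\rm aff}$-coset of $\widetilde W'$ with $\ell(w)\le\langle 2\rho,\mu\rangle$ that are not $\le t_\lambda$ for any $\lambda\in\Lambda$. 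The reference \cite{KoRaMinuscule} analyses the admissible set itself; it does not supply a result bounding the specializations of $X_\mu$ a priori.

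The paper instead proves the \emph{opposite} containment $\calA^{P_{\bar k}}(\mu)\subseteq\overline M_{\Gg,\mu}\otimes_{k_E}\bar k$ directly (Proposition \ref{Prop8.8}): for each extremal translation $t_\lambda$, $\lambda\in\Lambda$, one exhibits an $\O_E$-section $[s_{\tilde\lambda}]$ of $\Gr_{\Gg,\O}$ whose generic point lies in $X_\mu$ and whose closed point hits $t_\lambda$ (Lemma \ref{sla}, via an explicit construction in the global loop group of the torus $\calT^0\subset\Gg$). Together with the $L^+P_k$-invariance of $\overline M_{\Gg,\mu}$ that you noted, this gives $S^{P_{\bar k}}_{t_\lambda}\subset\overline M_{\Gg,\mu}\otimes\bar k$ for every $\lambda$, hence the containment. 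With this direction in hand your coherence argument runs as written (the inequality of section dimensions now points the right way), and the remainder of your proof is correct. One minor slip: in your $R_1$ check you appeal to smoothness of $X_\mu$, but $X_\mu$ is an affine Schubert variety and is smooth only when $\mu$ is minuscule; what you actually need, and have under the hypothesis $p\nmid|\pi_1(G_{\rm der})|$, is that $X_\mu$ is normal.
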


  Notice that the set-up now is more general than in Theorem \ref{thm01},
since here $\mu$ is not necessarily minuscule.
Recall that by its construction, $\overline M_{\Gg, \mu}$ is a
closed subscheme of $\Gr_{P_{k_E}}$. Clearly, it is enough to prove
the theorem after base changing to $\breve{\O}_E$ with residue field
$\bar k$.   Then the second part of Theorem \ref{CMfiber} that refers to the special fiber
$\overline M_{\Gg, \mu}$  is a corollary of the
aforementioned results of \cite{FaltingsLoops, PappasRaTwisted}
combined  with  Theorem \ref{special fiber} below  which
gives a precise description of the geometric special fiber
$\overline M_{\Gg, \mu}\otimes_{k_E}\bar k$
 as a union of affine Schubert varieties.   We will first explain
 how this second part also implies the first part, i.e the normality
 of $M_{\Gg,\mu}$. This follows from Proposition \ref{normalgen} below
 together with the fact that the generic fiber of $M_{\Gg,\mu}$ is normal
 (since it is given by the single affine Schubert variety $X_\mu$ associated to $\mu$).

 \begin{prop}\label{normalgen}
 Suppose that $Y\to \Spec(\O)$ is a flat scheme of finite type
 with normal generic fiber and reduced special fiber. Then
 $Y$ is normal.
 \end{prop}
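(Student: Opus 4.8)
The statement is a standard application of Serre's criterion for normality, so the plan is to verify that $Y$ satisfies $(R_1)$ and $(S_2)$. First I would reduce to the local situation: it suffices to show that $\mathcal{O}_{Y,y}$ is normal for every point $y\in Y$. Points $y$ lying in the generic fiber are harmless, since the generic fiber $Y_F = Y\otimes_{\O}F$ is normal by hypothesis and $\mathcal{O}_{Y,y} = \mathcal{O}_{Y_F,y}$ for such $y$ (localization at a point where the uniformizer $\varpi$ is invertible). So the real content is at points $y$ of the special fiber $Y_k = Y\otimes_{\O}k$.

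For the $(S_2)$ condition: since $Y\to\Spec(\O)$ is flat and of finite type and $\O$ is a discrete valuation ring (hence regular of dimension $1$, in particular Cohen-Macaulay... wait, I should not assume $Y$ is Cohen-Macaulay, only that I want $(S_2)$). The cleaner route is: $\varpi$ is a nonzerodivisor on $\mathcal{O}_{Y,y}$ by flatness over $\O$, and the quotient $\mathcal{O}_{Y,y}/\varpi\mathcal{O}_{Y,y}$ is the local ring of the special fiber $Y_k$ at $y$. The special fiber is reduced by hypothesis; a reduced Noetherian ring satisfies $(R_0)$ and $(S_1)$. I would then invoke the fact that if $A$ is Noetherian, $t\in A$ a nonzerodivisor, and $A/tA$ satisfies $(S_{n-1})$, then $A$ satisfies $(S_n)$ at primes containing $t$ (this is a standard depth computation: $\operatorname{depth}A_{\mathfrak{p}} = \operatorname{depth}(A/tA)_{\mathfrak{p}} + 1$ for $t\in\mathfrak p$, together with $\dim A_{\mathfrak p} = \dim(A/tA)_{\mathfrak p}+1$). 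Applying this with $n=2$: $Y_k$ satisfies $(S_1)$, hence $Y$ satisfies $(S_2)$ at all points of the special fiber; at points of the generic fiber, $(S_2)$ follows from normality of $Y_F$. Hence $Y$ satisfies $(S_2)$ everywhere.

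For the $(R_1)$ condition: I must show $\mathcal{O}_{Y,y}$ is regular whenever $\dim\mathcal{O}_{Y,y}\le 1$. If such a $y$ lies in the generic fiber, regularity follows from normality of $Y_F$ (a normal local ring of dimension $\le 1$ is regular — it is a field or a DVR). If $y$ lies in the special fiber, then $\mathcal{O}_{Y_k,y} = \mathcal{O}_{Y,y}/\varpi\mathcal{O}_{Y,y}$ has dimension $\dim\mathcal{O}_{Y,y}-1 \le 0$, so it is an Artinian local ring; being reduced (special fiber is reduced) it is a field. Then $\mathcal{O}_{Y,y}$ is a Noetherian local ring whose maximal ideal is generated by $\varpi$ (since the maximal ideal becomes zero modulo $\varpi$), so it is regular of dimension $1$ (or $0$, which does not occur here since $\varpi\ne 0$). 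This disposes of $(R_1)$.

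Combining $(R_1)$ and $(S_2)$, Serre's criterion gives that $Y$ is normal. The main (mild) obstacle is simply being careful with the interplay between the two strata — generic fiber versus special fiber — and correctly citing the depth/flatness lemma relating $\operatorname{depth}$ at a prime of $Y$ to $\operatorname{depth}$ at the corresponding prime of $Y_k$; everything else is bookkeeping. I expect this to be essentially the proof the authors give, perhaps phrased via \cite{SGA3} or \cite{ConradNotes} or a standard commutative algebra reference for the depth lemma and Serre's criterion.
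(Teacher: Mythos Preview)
Your proof is correct and follows the same approach as the paper: verify Serre's criterion by handling $(R_1)$ and $(S_2)$ separately, using flatness to make $\varpi$ a nonzerodivisor and reducedness of the special fiber to supply $(S_1)$ there. The only cosmetic difference is in the $(R_1)$ step: the paper invokes generic smoothness of the reduced special fiber over the perfect residue field, whereas you argue directly that a codimension-$1$ point on the special fiber has $\mathcal{O}_{Y_k,y}$ a field and hence $\mathfrak{m}_y=(\varpi)$; your version is in fact slightly cleaner since it does not appeal to perfectness of $k$.
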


 \begin{proof} By Serre's criterion, it is enough to check
 that $Y$ satisfies properties (R1) and (S2). By assumption, these properties are satisfied
 by the generic fiber $Y\otimes_\O F$. Since the special fiber $\overline Y=Y\otimes_\O k$
 is a reduced scheme of finite type and $k$ is perfect, $\overline Y$  is generically smooth over $k$.
 It follows that $Y$ is regular in codimension $1$, i.e it satisfies (R1). It remains to show that
 $Y$ has depth $\geq 2$ at points of codimension $\geq 2$ which are supported on the special fiber. Since $Y$ is flat over $\Spec(\O)$,
 the uniformizer $\varpi$ provides the start of a regular sequence;   we can always obtain one additional element in this regular sequence
 since $\overline Y$ is reduced and hence it satisfies (S1).
 \end{proof}

 \subsubsection{} Here we show the second part of Theorem \ref{CMfiber}.
  In what follows, the notations are as in \S
\ref{kappafibers}. In particular, we have $P_{\bar k}=\calP_{x_{\bar
k((u))}}$; this is a parahoric group scheme  which is obtained by
base-changing $\Gg$ to $\bar k[[u]]$. (The corresponding reductive
group is $G'=G_{\bar k}=\Gg\times_X \Spec(\bar k((u)))$.)  Recall
that our construction of $\underline G$ over $\breve {\O}[u,
u^{-1}]$ in \S \ref{reductive group} produces an isomorphism between
the Iwahori-Weyl groups $\widetilde W$ of $G\otimes_F\breve{F}$ and
$\widetilde W'$ of $G'$. The constructions in \S \ref{8d1} above can
also be applied to $G\otimes_F\breve F$ and $\{\mu\}$ to produce a
$W_0$-orbit $\Lambda$ in $\xcoch(T)_I\subset \widetilde W$ and a
subset ${\rm Adm}(\mu)\subset \widetilde W$. Using this
identification, we will view $\Lambda$ and ${\rm Adm}(\mu)$ also as,
respectively, a   $W'_0$-orbit in $\xcoch(T')_I\subset \widetilde W'$, and a
subset ${\rm Adm}(\mu)\subset \widetilde W'$.

\begin{thm}\label{special fiber}
%Suppose that $G_{\rm der}$ is simply connected.
Suppose that $p\nmid|\pi_1(G_{\on{der}})|$. Then we have
\begin{equation*}
 \calA^{P_{\bar k}}(\mu)=\overline M_{\Gg, \mu}\otimes_{k_E}\bar k
 \end{equation*}
as closed subschemes of $\Gr_{P_{\bar k}}$.
\end{thm}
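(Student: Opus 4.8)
The plan is to identify the two closed subschemes of $\Gr_{P_{\bar k}}$ in two stages: first match their underlying sets, then use the coherence theorem of \cite{ZhuCoherence} to rule out nilpotents. Everything may be checked after base change to $\breve\O_E$, so I assume the residue field is $\bar k$. By its definition $M_{\Gg,\mu}$ is the reduced Zariski closure of the irreducible orbit $X_\mu$ inside $\Gr_{\Gg,\O_E}$; hence $M_{\Gg,\mu}$ is integral, projective, and dominates $\Spec\O_E$, so it is flat over $\O_E$, and the special fibre $\overline M_{\Gg,\mu}$ is a projective $\bar k$-scheme of pure dimension $d:=\dim X_\mu$ whose Hilbert polynomial with respect to any line bundle on $\Gr_{\Gg,\O_E}$ equals that of the generic fibre. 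I would work with the ample line bundle $\mathcal L$ obtained by restricting a ``central'' line bundle on the global affine Grassmannian $\Gr_{\Gg,X}$, so that its restrictions to the generic fibre $\Gr_{G,E}$ and to the special fibre $\Gr_{P_{\bar k}}$ are the line bundles occurring in the statement of the coherence conjecture (cf. \cite{PappasRaTwisted}, \cite{ZhuCoherence}); using that away from $u=0$ the ind-scheme $\Gr_{\Gg,X}$ becomes (after a tame cover) the split $\Gr_H$ (Lemma 3.3 of \cite{ZhuCoherence}), the Hilbert polynomial of $\overline M_{\Gg,\mu}$ also equals that of the affine Schubert variety attached to $\{\mu\}$. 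Finally, recall that $\calA^{P_{\bar k}}(\mu)=\bigcup_{\lambda\in\Lambda}S^{P_{\bar k}}_{t_\lambda}$ is reduced and equidimensional of dimension $d$, its irreducible components being the distinct $S^{P_{\bar k}}_{t_\lambda}$.

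The first step is the inclusion $\overline M_{\Gg,\mu}\subseteq\calA^{P_{\bar k}}(\mu)$ as a set. For this I would spread $X_\mu$ out over $X=\AA^1_\O$ inside $\Gr_{\Gg,X}$ and compare fibres via Proposition \ref{globalGr} and Corollary \ref{fibers}: the fibres over $u\ne 0$ are ordinary affine Schubert varieties in $\Gr_G$, while the fibre over $u=0$ is a finite union of $L^+P_{\bar k}$-orbit closures. The combinatorial point --- that only the orbits attached to $w\in\Adm(\mu)$ appear --- is the mixed-characteristic analogue of the function field statement (\cite{GaitsgoryInv}, \cite{PappasRaTwisted}, \cite{ZhuCoherence}): one writes a point of the special fibre as a limit of points of $X_\mu$ and reads off its position via the Cartan and Iwahori--Bruhat decompositions together with the Kottwitz homomorphism, obtaining an element of $\widetilde W'$ that is $\le t_\lambda$ for some $\lambda\in\Lambda$. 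Since $M_{\Gg,\mu}$ is stable under $L^+\Gg$, the reduced subscheme $(\overline M_{\Gg,\mu})_{\mathrm{red}}$ is an $L^+P_{\bar k}$-stable closed subset of $\calA^{P_{\bar k}}(\mu)$, hence a union of some of the Schubert varieties $S^{P_{\bar k}}_{t_\lambda}$.

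The second step is to show that every $S^{P_{\bar k}}_{t_\lambda}$ actually occurs, i.e. $[t_\lambda]\in\overline M_{\Gg,\mu}$ for all $\lambda\in\Lambda$. Here one specializes the torus-fixed points of $X_\mu$: for $w\in W_0$ the point $[s_{w\mu}]$ of the generic fibre extends, by ind-properness of $\Gr_{\Gg,\O_E}$, to a section of $\Gr_{\Gg,\O_E}\to\Spec\O_E$, and a direct computation with the construction of $\und G$ in \S\ref{reductive group} and the identifications (\ref{poweriso})--(\ref{iso6.9}) shows that this section meets the special fibre in the point of $\Gr_{P_{\bar k}}$ given by the translation element $t_\lambda$, where $\lambda$ is the image of $w\mu$ in $\xcoch(T)_I$; as $w$ runs over $W_0$ the $\lambda$ run over all of $\Lambda$. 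Thus $[t_\lambda]\in\overline M_{\Gg,\mu}$, and by $L^+P_{\bar k}$-invariance and closedness $S^{P_{\bar k}}_{t_\lambda}\subseteq(\overline M_{\Gg,\mu})_{\mathrm{red}}$. Combined with the first step and the equidimensionality of $\calA^{P_{\bar k}}(\mu)$, this yields $(\overline M_{\Gg,\mu})_{\mathrm{red}}=\calA^{P_{\bar k}}(\mu)$.

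It remains to promote this to an equality of closed subschemes, i.e. to see that $\overline M_{\Gg,\mu}$ is reduced, and this is where the coherence theorem is essential. By \cite{ZhuCoherence} (in the generality of tamely ramified groups, and using that under the hypothesis $p\nmid|\pi_1(G_{\mathrm{der}})|$ the variety $\calA^{P_{\bar k}}(\mu)^{\circ}$ of \cite{PappasRaTwisted} is isomorphic to $\calA^{P_{\bar k}}(\mu)$) one has $\dim H^0(\calA^{P_{\bar k}}(\mu),\mathcal L^{\otimes n})=\dim H^0(\overline M_{\Gg,\mu},\mathcal L^{\otimes n})$ for all $n\gg 0$, since both equal $\dim H^0$ of the ambient affine Schubert variety (by the flatness remarks above). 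Since $\calA^{P_{\bar k}}(\mu)=(\overline M_{\Gg,\mu})_{\mathrm{red}}$, the surjection $\mathcal O_{\overline M_{\Gg,\mu}}\twoheadrightarrow\mathcal O_{\calA^{P_{\bar k}}(\mu)}$ has nilpotent kernel $\mathcal N$ with $\chi(\mathcal N\otimes\mathcal L^{\otimes n})=0$ for all $n\gg 0$ (using Serre vanishing), which forces $\mathcal N=0$; hence $\overline M_{\Gg,\mu}=\calA^{P_{\bar k}}(\mu)$. I expect the main obstacle to lie not in this last, formal step but in the first two: the essential hard input (the coherence theorem) is imported, and the real work here is to set up the mixed-characteristic degeneration inside $\Gr_{\Gg,X}$ and to carry out the two specialization arguments, both of which rely on the representability and ind-properness of $\Gr_{\Gg,X}$ established in \S\ref{chapterLoop}.
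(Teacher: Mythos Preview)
Your second step (specializing the torus-fixed points $[s_{w\mu}]$ and using $L^+P_{\bar k}$-invariance to obtain the closed subscheme inclusion $\calA^{P_{\bar k}}(\mu)\subseteq\overline M_{\Gg,\mu}$) is exactly the paper's Proposition~\ref{Prop8.8} (Lemmas~\ref{groupaction} and~\ref{sla}), and your final use of the coherence theorem together with an ample line bundle on $\Gr_{\Gg,X}$ is also what the paper does.

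The problem is your first step. You claim to prove the set-theoretic inclusion $\overline M_{\Gg,\mu}\subseteq\calA^{P_{\bar k}}(\mu)$ by ``reading off the position via the Cartan and Iwahori--Bruhat decompositions'' of a specialization. This is not a proof: there is no direct semicontinuity statement that lets you compare the Bruhat position of a point in the generic fibre (a Schubert cell in $\Gr_G$ over $F$) with the Bruhat position of its specialization in $\Gr_{P_{\bar k}}$ across a mixed-characteristic family where the group is genuinely changing. Indeed, this inclusion was one of the main open questions (the Kottwitz--Rapoport conjecture on admissibility) and is a \emph{consequence} of the theorem, not an input to it; before \cite{ZhuCoherence} it was known only in scattered PEL cases via explicit lattice computations.

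More importantly, your first step is unnecessary. Once you have the closed subscheme inclusion $\calA^{P_{\bar k}}(\mu)\hookrightarrow\overline M_{\Gg,\mu}$ from your second step, the coherence theorem gives $\dim H^0(\calA^{P_{\bar k}}(\mu),\calL^{\otimes n})=\dim H^0(M_{\Gg,\mu,\bar F},\calL^{\otimes n})=\dim H^0(\overline M_{\Gg,\mu},\calL^{\otimes n})$ for $n\gg 0$, and an inclusion of projective schemes with equal Hilbert polynomials for an ample $\calL$ is an equality. This yields $\calA^{P_{\bar k}}(\mu)=\overline M_{\Gg,\mu}$ in one stroke --- both the reverse set-theoretic inclusion and the reducedness. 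Delete your first step entirely and what remains is the paper's proof.
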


\begin{cor}\label{specialCM}
Suppose that $p\nmid|\pi_1(G_{\on{der}})|$ and that
 $x$ is special in $\calB(G_{\Fsm}, \Fsm)$. Then the
  scheme $M_{\Gg, \mu}$ is  Cohen-Macaulay and normal
  and the special fiber $\overline M_{\Gg, \mu}$ is geometrically
 irreducible and normal.
\end{cor}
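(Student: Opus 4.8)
The plan is to combine Theorem \ref{special fiber}, which identifies the geometric special fibre $\overline M_{\Gg,\mu}\otimes_{k_E}\bar k$ with the admissible locus $\calA^{P_{\bar k}}(\mu)\subset\Gr_{P_{\bar k}}$, with the observation that for a \emph{special} vertex this locus collapses to a single affine Schubert variety, and then to apply the normality/Cohen--Macaulay results of \cite{FaltingsLoops,PappasRaTwisted} and descend along a DVR. Since normality, Cohen--Macaulayness and geometric irreducibility can all be checked after the faithfully flat base change $\O_E\to\breve\O_E$ (equivalently, after passing to $\bar k$ on special fibres), it suffices to analyze $\overline M_{\Gg,\mu}\otimes_{k_E}\bar k=\calA^{P_{\bar k}}(\mu)$.

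First I would record the combinatorial reduction. Because $x$ is special in $\calB(G_{\Fsm},\Fsm)$, the parahoric $P_{\bar k}=\calP_{x_{\bar k((u))}}$ is a special maximal parahoric of $G'=G_{\bar k}$; as recalled in \S\ref{8d1}, the choice of the special vertex splits \eqref{Iwahori-Weyl}, giving $\widetilde W'=\xcoch(T')_I\rtimes W'_0$ with $W^{P_{\bar k}}$ mapping isomorphically onto $W'_0$. Hence the double cosets $W^{P_{\bar k}}\backslash\widetilde W'/W^{P_{\bar k}}$ are in bijection with the $W'_0$-orbits in $\xcoch(T')_I$, and the orbit $\Lambda=W'_0\bar\mu$ attached to $\mu$ is a single such orbit. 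Every $w\in\Adm(\mu)$ satisfies $w\le t_\la$ for some $\la\in\Lambda$; since $t_\la$ and $t_{\bar\mu}$ lie in the same $W^{P_{\bar k}}$-double coset (conjugate by a lift of an element of $W'_0\subset W^{P_{\bar k}}$), we get $S^{P_{\bar k}}_w\subseteq S^{P_{\bar k}}_{t_\la}=S^{P_{\bar k}}_{t_{\bar\mu}}$. Therefore
\[
\calA^{P_{\bar k}}(\mu)=S^{P_{\bar k}}_{t_{\bar\mu}},
\]
a single affine Schubert variety (and the same holds for $\calA^{P_{\bar k}}(\mu)^\circ$ after translation, using $p\nmid|\pi_1(G'_{\on{der}})|$, cf.\ \cite{ZhuCoherence}). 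Since $G$ and $G'$ have the same absolute root datum, $\pi_1(G_{\on{der}})$ and $\pi_1(G'_{\on{der}})$ have the same order, so the hypothesis $p\nmid|\pi_1(G_{\on{der}})|$ gives $p\nmid|\pi_1(G'_{\on{der}})|$, and by \cite{FaltingsLoops,PappasRaTwisted} the variety $S^{P_{\bar k}}_{t_{\bar\mu}}$ is normal, Cohen--Macaulay and Frobenius split. In particular $\overline M_{\Gg,\mu}\otimes_{k_E}\bar k$ is irreducible, normal and Cohen--Macaulay; descending along $k_E\to\bar k$ (irreducibility of the geometric fibre gives geometric irreducibility, geometric normality gives normality, and Cohen--Macaulayness descends), we conclude that $\overline M_{\Gg,\mu}$ is geometrically irreducible, geometrically normal, and Cohen--Macaulay over $k_E$.

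Finally I would transfer these properties to $M_{\Gg,\mu}$. By construction $M_{\Gg,\mu}$ is a reduced closed subscheme of $\Gr_{\Gg,\O_E}$ equal to the closure of its generic fibre $X_\mu$, hence has no $\varpi_E$-torsion and is $\O_E$-flat; its generic fibre $X_\mu$ is a single affine Schubert variety, which is normal and Cohen--Macaulay by \cite{FaltingsLoops,PappasRaTwisted} (as already used in the proof of Theorem \ref{CMfiber}), and we have just shown its special fibre is Cohen--Macaulay. A Noetherian scheme flat over the discrete valuation ring $\O_E$ with Cohen--Macaulay special fibre is Cohen--Macaulay, so $M_{\Gg,\mu}$ is Cohen--Macaulay; normality then follows either from Serre's criterion ($S_2$ from Cohen--Macaulayness; $R_1$ since the generic fibre is normal and the reduced special fibre is generically smooth over the perfect field $k_E$), or directly from Theorem \ref{CMfiber}. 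The main obstacle is the combinatorial step showing $\calA^{P_{\bar k}}(\mu)$ reduces to a single Schubert variety at a special vertex --- identifying precisely the contributing double coset and verifying that every admissible element is Bruhat-dominated by $t_{\bar\mu}$ modulo $W^{P_{\bar k}}$, together with the bookkeeping between $\calA^{P_{\bar k}}(\mu)$ and $\calA^{P_{\bar k}}(\mu)^\circ$; the remaining arguments are formal descent and flatness over a DVR.
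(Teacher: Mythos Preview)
Your proof is correct and follows essentially the same approach as the paper: reduce to showing that the geometric special fiber is a single affine Schubert variety (hence irreducible, normal, and Cohen--Macaulay by \cite{FaltingsLoops,PappasRaTwisted}), then lift Cohen--Macaulayness to $M_{\Gg,\mu}$ via flatness over the DVR $\O_E$ and invoke Theorem \ref{CMfiber} for normality. The only presentational difference is that the paper cites \cite[Prop.~6.15]{ZhuCoherence} for the combinatorial collapse of $\calA^{P_{\bar k}}(\mu)$ to a single Schubert variety at a special vertex, whereas you supply the direct argument via $W^{P_{\bar k}}\cong W'_0$ and the identification of double cosets with $W'_0$-orbits in $\xcoch(T')_I$; this is exactly the content behind that citation.
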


\begin{proof}
It is enough to show that the geometric special fiber $\overline
M_{\Gg, \mu}\otimes_{k_E}\bar k$ is irreducible. Indeed, then by
Theorem \ref{CMfiber},  the special fiber $\overline M_{\Gg, \mu}$
is Cohen-Macaulay and we can conclude that
$M_{\Gg, \mu}$ is also Cohen-Macaulay. Normality follows as above.
The irreducibility   of $\overline M_{\Gg,
\mu}\otimes_{k_E}\bar k$ follows from Theorem \ref{special fiber}:
Indeed,
 when $x$ is special, $W^{\calP'_x}=W_0'$ and
$ W^{\calP'_x}\setminus \Adm^{\calP'_x}(\mu)/W^{\calP'_x}$ has only
one extreme element in the Bruhat order, namely the image of $t_\mu$
(cf. \cite[Prop. 6.15]{ZhuCoherence}).
\end{proof}

\begin{Remark}\label{normalCM}
{\rm a) Some special cases of Theorems \ref{CMfiber} and
\ref{special fiber} were known before (e.g   \cite{GortzFlatGLn},
\cite{GortzSymplectic} for $\GL_n$ and ${\rm GSp}_{2n}$).
 See \cite{PRS} for a survey of these previous results.
A description of the components of the special fiber of local models
in terms of the $\mu$-admissible set as in Theorem \ref{special
fiber}  was first suggested by Kottwitz and Rapoport
 in \cite{KoRaMinuscule}.

 b) We conjecture that when $p\nmid|\pi_1(G_{\on{der}})|$, $M_{\Gg, \mu}$
 is always  Cohen-Macaulay, even if $x$ is not special. This conjecture
has   recently been proven in the split case by X. He \cite{HeCM}.}
 \end{Remark}

\smallskip

Now let us discuss the proof  of Theorem \ref{special fiber}.

\begin{proof}
We can assume that $F=\breve{F}$ and  the residue field $k=\bar{k}$
is algebraically closed; then we are trying to show  $\calA^{P_{
k}}(\mu)=\overline M_{\Gg, \mu}$. The proof is divided into two
parts. The first part is to exhibit $\calA^{P_k}(\mu)$ as a closed
subscheme of $\overline M_{\Gg, \mu}$. Then we apply the coherence
conjecture of Rapoport and the first author (\cite{PappasRaTwisted})
shown in \cite{ZhuCoherence} to deduce the theorem.

\begin{prop}\label{Prop8.8}
 The scheme $\calA^{P_k}(\mu)$ is naturally a closed subscheme of $\overline M_{\Gg, \mu}$.
\end{prop}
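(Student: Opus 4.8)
The plan is to reduce the assertion to a set-theoretic inclusion and then to produce the missing points by a degeneration argument inside the family $\Gr_{\Gg,\O}\to\Spec(\O)$. Recall that $\calA^{P_k}(\mu)$ is, by its very definition, the reduced closed subscheme of $\Gr_{P_k}$ supported on $\bigcup_w L^+P_k\,w\,L^+P_k/L^+P_k$; consequently, as soon as one knows $|\calA^{P_k}(\mu)|\subseteq|\overline M_{\Gg,\mu}|$ inside $\Gr_{P_k}$, the scheme $\calA^{P_k}(\mu)$ is the reduced induced structure on that closed subset of $\overline M_{\Gg,\mu}$, hence a closed subscheme of it. So the whole content is this set-theoretic inclusion (we are already assuming $k=\bar k$). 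First I would record two structural facts. One: $M_{\Gg,\mu}$ is proper and flat over $\O_E$, being the reduced Zariski closure of $X_\mu$ in an ind-scheme that is ind-proper over $\O_E$ by Proposition~\ref{indscheme}. Two: $M_{\Gg,\mu}$ is $L^+\Gg_{\O_E}$-stable --- indeed $X_\mu$ is a single $L^+\Gg_{F,\varpi}$-orbit, the action morphism $L^+\Gg_{\O_E}\times M_{\Gg,\mu}\to\Gr_{\Gg,\O_E}$ carries the generic point of the source into $X_\mu$, and $L^+\Gg$ has connected fibres, so the scheme-theoretic image is irreducible, dense in, and contained in $M_{\Gg,\mu}$. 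Hence $\overline M_{\Gg,\mu}$ is a closed, $L^+P_k$-stable subscheme of $\Gr_{P_k}$.

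Next I would cut the problem down to one family of points. Since $\Adm^{P_k}(\mu)=W^{P_k}\Adm(\mu)W^{P_k}$, since $\Adm(\mu)$ is the lower set, for the Bruhat order on $\widetilde W$, generated by the translations $\{t_\lambda:\lambda\in\Lambda\}$ with $\Lambda=W_0\bar\mu\subseteq\xcoch(T)_I$, and since the Bruhat order passes to $W^{P_k}$-double cosets with $S^{P_k}_w\subseteq S^{P_k}_{t_\lambda}$ whenever $w\leq t_\lambda$, one has $|\calA^{P_k}(\mu)|=\bigcup_{\lambda\in\Lambda}|S^{P_k}_{t_\lambda}|$. As $\overline M_{\Gg,\mu}$ is closed and $L^+P_k$-stable, $[t_\lambda]\in\overline M_{\Gg,\mu}$ already forces $S^{P_k}_{t_\lambda}=\overline{L^+P_k\cdot[t_\lambda]}\subseteq\overline M_{\Gg,\mu}$. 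Thus it suffices to prove that $[t_\lambda]$ lies in $\overline M_{\Gg,\mu}$ for every $\lambda\in\Lambda$. To construct these points, write $\bar\mu\in\xcoch(T)_I$ for the image of $\mu$, fix $w\in W_0$, choose a lift $n_w$ of $w$ in $N_{\underline G}(\underline T)$ (over a finite unramified extension of $\breve{\O}[u^{\pm1}]$ if necessary), and set $\tilde\lambda=\on{Int}(n_w)\circ\mu$, a cocharacter whose image in $\xcoch(T)_I$ is $\lambda:=w\bar\mu$. Since $n_w$ is a constant section of $L^+\Gg_{F,\varpi}$, the point $[s_{\tilde\lambda}]=n_w\cdot[s_\mu]$ lies in $X_\mu$, where $s_{\tilde\lambda}=\tilde\lambda(u-\varpi)$. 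Using the description of the global affine Grassmannian via $\Gg$-bundles on $\AA^1$ trivialised off the graph (Proposition~\ref{globalGr}), the point $[s_{\tilde\lambda}]$ is represented by the $\Gg$-bundle on $\AA^1_{\breve{\O}_{E'}}$ glued along $\{u=\varpi\}$ by $\tilde\lambda$ evaluated at the local parameter $u-\varpi$; this bundle is already defined over $\breve{\O}_{E'}$ (the \'etale fundamental group of $\Gm$ over $\breve{\O}_{E'}$ is the tame Galois group of the fraction field, so $\mu$, being defined over $E'$, extends to a cocharacter of $\underline T$ over $\breve{\O}_{E'}[u^{\pm1}]$; alternatively one may simply invoke the valuative criterion of properness for $M_{\Gg,\mu}$). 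Its generic fibre lies in $X_\mu$, so the resulting $\Spec(\breve{\O}_{E'})$-point factors through the closed subscheme $M_{\Gg,\mu}\otimes_{\O_E}\breve{\O}_{E'}$; reducing modulo the uniformiser of $\breve{\O}_{E'}$ sends $\varpi\mapsto 0$, so the gluing datum degenerates to $\tilde\lambda$ evaluated at $u$, whose class in $\Gr_{P_k}$ is, under the identification (compatible with \eqref{IwahoriWeyl} and \eqref{Iwahori-Weyl}) of the Iwahori--Weyl group of $G_{\breve F}$ with that of $P_k[u^{-1}]$, the translation element $t_\lambda$. Hence $[t_\lambda]\in\overline M_{\Gg,\mu}$, and letting $w$ range over $W_0$ sweeps out all of $\Lambda$, completing the proof.

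The step I expect to be the main obstacle is the last identification: tracking the ``limit, as the base point of $X=\AA^1_\O$ degenerates from $u=\varpi$ to $u=0$'', of the cocharacter-defined point $[s_{\tilde\lambda}]$ through the explicit isomorphisms \eqref{poweriso} and \eqref{iso6.9}, and matching it correctly with the translation element $t_\lambda$ of the affine flag variety $\Gr_{P_k}$ --- this requires care with the sign convention for the Kottwitz homomorphism (cf.~\cite{KottIsocrystalsII}) and with the compatibility of the two identifications of Iwahori--Weyl groups, and one must also ensure that enough cocharacter lifts $\tilde\lambda$ are available to realise the whole orbit $\Lambda$, which is where quasi-splitness of $G$ over $\breve F$ is used. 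The same degeneration computation, applied to an arbitrary Schubert variety in $\Gr_H$ in place of $X_\mu$, supplies the ``lifting property'' invoked in the proof of Proposition~\ref{indscheme}.
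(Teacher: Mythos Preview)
Your proposal is correct and follows essentially the same strategy as the paper's own proof: reduce to $L^+P_k$-invariance of $\overline M_{\Gg,\mu}$ (the paper's Lemma~\ref{groupaction}) plus the existence, for each $\lambda\in\Lambda$, of an $\O_{E'}$-section of $\Gr_{\Gg,\O}$ whose generic point is $[s_{\tilde\lambda}]\in X_\mu$ and whose closed point is $t_\lambda$ (the paper's Lemma~\ref{sla}). Your identification of the main obstacle is also accurate. The paper resolves it not by a direct bundle-degeneration argument but by working inside the global loop group of the torus: one forms $\calT^0=(\Res_{\O[v]/\O[u]}(T_H\otimes\O[v]))^{\gamma,\circ}\subset\Gg$ and, following \cite[Prop.~3.5]{ZhuCoherence}, constructs an explicit map $s_{\nu,\O_E}:\Spec(\O_E)\to\L\calT^0$ whose restriction to $E$ is $s_\nu$ and whose value at the closed point is $N(\nu(v))\in T(k((u)))$, which hits $t_\nu$ under the Kottwitz homomorphism. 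This bypasses the need to track the isomorphisms \eqref{poweriso}--\eqref{iso6.9} directly and handles the sign/convention issues you flag.
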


\begin{proof}
Let $\la\in\Lambda$, where $\Lambda\subset\xcoch(T)_I$ is the
$W_0$-orbit in $\xcoch(T)_I$ associated to $\mu$ as before. Let
$t_\la\in\widetilde{W}$ be the corresponding element in the
Iwahori-Weyl group. These elements then are extreme elements in
$\Adm(\mu)$ under the Bruhat order of $\widetilde{W}$. To prove the
lemma, then it is enough to show that $S^{\calP_k}_{t_\la}\subset
\overline M_{\Gg, \mu}$ for any $t_\la\in\Lambda$. This turns out to
be a direct consequence of the following two lemmas.

\begin{lemma}\label{groupaction} The scheme $\overline M_{\Gg, \mu}$ is
invariant under the action of $L^+P_k$ on $\Gr_{P_k}$.
\end{lemma}

\begin{proof}  Recall from \S \ref{6b4}
that $\L^+\Gg$ acts  naturally on $\Gr_{\Gg,X}$.  Now let $x:\Spec
(\kappa)\to X$ be a point where $\kappa$ is either the residue field
of $\O$ or the fractional field of $\O$. Then by definition
$\L^+\Gg\times_X\Spec(\kappa)\simeq L^+\Gg_{\kappa,x}$, and the
induced action of $\L^+\Gg\times_X\Spec(\kappa)$ on
$\Gr_{\Gg,X}\times_X\Spec( \kappa)$ is just the local action of
$L^+\Gg_{\kappa,x}$ on $\Gr_{\Gg_{\kappa,x}}$. Let
$(\L^+\Gg)_\O:=\L^+\Gg\times_X\Spec( \O)$ where the section $\Spec
(\O)\to X$ is given by $u\mapsto \varpi$ as before. Then
$(\L^+\Gg)_\O$ acts on $\Gr_{\Gg,\O}$. Over $E$, this is just the
action of $(L^+\Gg_{F,\varpi})_E$ on
$(\Gr_{\Gg_{F,\varpi}})_E=\Gr_{\Gg_{F,\varpi}}\times_{\Spec(F)}\Spec(E)$.
Since $\overline M_{\Gg, \mu}$ is defined to be the Zariski closure
of $(L^+\Gg_{F,\varpi})_E\cdot [s_\mu]$, the special fiber carries
the natural action of $L^+P_k=L^+\Gg_{k,0}$. \end{proof}
\smallskip

To state the second Lemma, recall that $\Gr_{\Gg,\O}$ is ind-proper
(Proposition \ref{indproper}). For $\la\in\Lambda$, let
$\tilde{\la}\in\bar{W}\mu$ be a lift of $\la$. The $E$-point
$[s_{\tilde{\la}}]$ of $\Gr_{\Gg,\O}$ (cf. \S \ref{coch}) gives rise to
a unique $\O_E$-section of $\Gr_{\Gg,\O}$, still denoted by
$[s_{\tilde{\la}}]$. Let $0$ be the closed point of $\Spec(\O_E)$.

\begin{lemma}\label{sla}
We have $[s_{\tilde{\la}}](0)=t_\la$ in ${\rm Gr}_{P_k}$.
\end{lemma}

\begin{proof}
Let $\O[u]\to\O[v]$ be given by
$u\mapsto v^e$. Set
\[\calT=\Res_{\O[v]/\O[u]}(T_H\otimes\O[v])^\gamma,\]
and let $\calT^0$ be the neutral connected component of $\calT$. Then by
construction of the group scheme $\Gg$ in Theorem
\ref{grpschemeThm}, $\calT^0$ is a subgroup of $\Gg$. Let $\L\calT$,
$\L\calT^0$ be the global loop groups over $X=\AA^1_\O$, whose
definitions are similar to \eqref{globloop1}. Exactly as in the proof of \cite[Prop. 3.5]{ZhuCoherence}
we see that
 a cocharacter $\nu$ of $G$ defined over $E$ gives rise
to a map
\[s_{\nu,\O_E}:\Spec(\O_E)\to \L\calT^0\to \L\Gg,\]
such that: (i) the restriction of this map to
$\Spec(E)\to\L\calT^0\to\L\Gg$ is the point $s_\nu$ as in \S \ref{coch};
and (ii) $s_{\nu,\O_E}(0)\in \L\calT^0(k)=T(k((u)))$  maps to
$t_\nu$ under the Kottwitz homomorphism $T(k((u)))\to \xcoch(T)_I$.
Clearly, these two statements together imply the lemma.
\end{proof}

This now concludes the proof of Proposition \ref{Prop8.8}.
\end{proof}
\smallskip

Now we proceed with the second part of the proof of the theorem. To
apply the coherence conjecture, we need to construct a natural line
bundle on $\Gr_{\Gg,X}$. The construction is parallel to \cite[Sect.
4]{ZhuCoherence} where we also refer the reader for more
information. Let $\calV_0=\on{Lie}\Gg$ be the Lie algebra of $\Gg$.
By \cite{SeshadriPNAS}, this  is a  free $\O[u]$-module of rank
$\dim_F G$. Then the adjoint representation
$\on{Ad}:\Gg\to\on{GL}(\calV_0)$ gives rise to a morphism
\[\on{ad}: \Gr_{\Gg,X}\to\Gr_{\on{GL}(\calV_0),X}.\]
Over $\Gr_{\on{GL}(\calV_0),X}$, we have the determinant line bundle
$\calL_{\det}$, defined as usual (for example, see \cite[Sect.
4]{ZhuCoherence}). Let $\calL=\calL_{2c}=\on{ad}^*(\calL_{\det})$ be
the corresponding line bundle on $\Gr_{\Gg,X}$, which is ample. Let
us denote by $\calL_{ k}$  the restriction of $\calL $ to the
special fiber $\Gr_{\Gg,\O}\otimes_\O k\simeq \Gr_{P_k}$, and let
$\calL_{ \bar{F}}$ be the restriction of $\calL $ to the geometric
generic fiber
$\Gr_{\Gg,\O}\otimes_\O\bar{F}\simeq\Gr_{\Gg_{F,\pi}}\otimes_F\bar{F}\simeq\Gr_H\otimes_F\bar{F}$.

Let $M_{\Gg, \mu, \bar F}$ be the geometric  generic fiber of
$M_{\Gg, \mu}$. Then for $n\gg 0$,
\[\dim_{\bar{F}}\Ga(M_{\Gg, \mu, \bar F},  \calL_{ \bar{F}}^{\otimes n})=\dim_{k}\Ga(\overline M_{\Gg, \mu }, \calL_{ k}^{\otimes n})\geq \dim_k\Ga(\calA^{P_k}(\mu), \calL_{ k}^{\otimes n}),
\]
and the equality holds for $n\gg 0$ if and only if
$\calA^{P_k}(\mu)=\overline M_{\Gg, \mu}$. Now the coherence
conjecture of \cite{PappasRaTwisted} as proved in
\cite{ZhuCoherence} implies that
\begin{equation}\label{cohconj}
\dim_{\bar{F}}\Ga(M_{\Gg,  \mu ,\bar{F}}, \calL_{ \bar{F}}^{\otimes
n})= \dim_k\Ga(\calA^{P_k}(\mu), \calL_{ k}^{\otimes n})
\end{equation}
for any $n$ and this is enough to conclude the proof. For clarity,
we explain here how to deduce the equality (\ref{cohconj}) from the
results of \cite{ZhuCoherence}. First  we assume that $G_{\on{der}}$
is simply-connected. Then we can write
$G_{\on{der}}=\on{Res}_{F_1/F}G_1\times\cdots \times
\on{Res}_{F_m/F}G_m$ as the decomposition into simple factors, where
$G_i$ are almost simple, absolutely simple and simply-connected
groups defined over $F_i$. Let $\{\mu_i\}$ be the corresponding
conjugacy classes of cocharacters for $\Res_{F_i/F}(G_i)_{\on{ad}}$,
where $(G_i)_{\on{ad}}$ is the adjoint group of $G_i$, and let
$(P_k)_i$ be the corresponding parahoric group scheme over $F_i$.
Then
\[\calA^{P_k}(\mu)\simeq \calA^{P_k}(\mu)^{\circ} \simeq \prod_i \calA^{(P_k)_i}(\mu_i)^{\circ}\]
and under this isomorphism, $\calL_{ k}\simeq \calL_1\boxtimes
\cdots \boxtimes \calL_m$, where $\calL_i$ is ample on
$\calA^{(P_k)_i}(\mu_i)^{\circ}$ with central charge $2h^\vee_i$.
Here $h^\vee_i$ is the dual Coxeter number for
$G_i\otimes_{F_i}\bar{F}$, and the central charge is defined in
\cite[Sect. 10]{PappasRaTwisted} (also see \cite[Sect.
2.2]{ZhuCoherence}). On the other hand, $H_{\on{der}}=\prod
H_i^{m_i}$, where $H_i$ is the split form for $G_i$ and
$m_i=[F_i:F]$. Then $\mu_i=\mu_{i,1}+\cdots+\mu_{i,m_i}$ for
$\mu_{i,j}$ dominant coweights of $(H_i)_{\on{ad}}$. As usual, if
$\mu$ is a coweight of a split group $M$, we denote
$\overline{\Gr}_{M,\mu}$ the closure of the corresponding  $L^+M$
orbit in $\Gr_{M}$, which is indeed the same as $\calA^{M\otimes
k[[u]]}(\mu)$. We similarly denote $\calA^{M\otimes
k[[u]]}(\mu)^\circ$ by $\overline{\Gr}_{M,\mu}^\circ$. We have
\[
M_{\Gg,  \mu ,\bar{F}}\simeq \overline{\Gr}_{H,\mu}\simeq
\overline{\Gr}^\circ_{H_{\on{ad}},\mu}\simeq \prod
\overline{\Gr}^\circ_{(H_i)_{\on{ad}},\mu_{i,j}},
\]
and under this isomorphism, $\calL_{\bar{F}}\simeq
\boxtimes_i(\calL_{b,i}^{\otimes 2h^\vee_i})^{\boxtimes m_i}$, where
$\calL_{b,i}$ is the ample generator of the Picard group of each
connected component of $\Gr_{(H_i)_{\on{ad}}}$ (which is isomorphic
to $\Gr_{H_i}$). Now by \cite[Theorem 2, Proposition
6.2]{ZhuCoherence}, we have
\[
\dim_k\Ga(\calA^{(P_k)_i}(\mu_i)^{\circ}, \calL_i^{\otimes
n})=\prod_{j=1}^{m_i}\dim_{\bar{F}}\Ga(\overline{\Gr}_{(H_i)_{\on{ad}},\mu_{i,j}}^\circ,
\calL_{b,i}^{\otimes 2nh_i^\vee}).
\]
Combining this equality with the above gives (\ref{cohconj}) in this
case.

Next, we consider the general case when ${\rm
char}(k)\nmid|\pi_1(G_{\on{der}})|$. Let
\[1\to S\to \tilde{G}\to G\to 1\]
be a $z$-extension of $G$, i.e. $S$ is an induced central torus, and
$\tilde{G}_{\on{der}}$ is simply-connected. We can further assume
that $\tilde{G}$ is also tamely ramified. Then $\{\mu\}$ can be lifted
to a geometric conjugacy class $\{\tilde{\mu}\}$ of $\tilde{G}$. Let
us apply our construction to $\tilde{G}$ and the corresponding parahoric group
scheme to obtain the global
affine Grassmannian $\Gr_{\tilde{\Gg},\O}$ over $\O$. We can also
define $M_{\tilde{\Gg},\tilde{\mu}}$. Observe that over $\bar{F}$,
we have the natural map
$\Gr_{\tilde{\Gg},\bar{F}}\to\Gr_{\Gg,\bar{F}}$. Under this map, the
line bundle $\calL$ on $\Gr_{\Gg,\bar{F}}$ pulls back to the
corresponding line bundle on $\Gr_{\tilde{\Gg},\bar{F}}$ since the
adjoint representation of $\tilde{G}$ factors through $G$. In
addition, $M_{\tilde{\Gg},\tilde{\mu},\bar{F}}$ maps isomorphically
to $M_{\Gg,\mu,\bar{F}}$. Therefore,
\[\dim_{\bar{F}}\Ga(M_{\Gg, \mu, \bar F},  \calL_{ \bar{F}}^{\otimes n})=\dim_{\bar{F}}\Ga(M_{\tilde{\Gg}, \tilde{\mu}, \bar F},  \calL_{ \bar{F}}^{\otimes n}).\]
Likewise, we have $\Gr_{\tilde{P}_k}\to\Gr_{P_k}$ which maps
$\calA^{\tilde{P}_k}(\tilde{\mu})$ isomorphically to $\calA^{P_k}(\mu)$
(cf. \cite[Sect. 6]{PappasRaTwisted}); the corresponding line
bundles $\calL_{ k}$ respect the pullback under this map. Then
\[\dim_k\Ga(\calA^{P_k}(\mu),
\calL_{ k}^{\otimes n})=\dim_k\Ga(\calA^{\tilde{P}_k}(\tilde{\mu}),
\calL_{ k}^{\otimes n}).\] This allows us to reduce to the previous
case.
\end{proof}

\bigskip

\section{Nearby cycles and the conjecture of Kottwitz}

\setcounter{equation}{0}

In this chapter, we study the sheaves of nearby cycles of the local
models. We extend work of Gaitsgory and Haines-Ng\^o (see Theorem
\ref{comm  constraints}) and among other results, we show Theorems
\ref{thm02} (Kottwitz's conjecture) and \ref{thm03} of the
introduction.

\subsection{The nearby cycles}

\subsubsection{}\label{review of nearby cycles}
We begin by briefly recalling some general facts.  (For more
details, see for example \cite{IllusieMonodromy},
\cite{GortzHainesCrelle}.) Let $(S,s,\eta)$ be a Henselian trait,
i.e. $S$ is the spectrum of a Henselian discrete valuation ring, $s$
is the closed point of $S$ with residue field $k(s)$, and $\eta$ is
the generic point of $S$. For our purposes, we will always assume
that $k(s)$ is either finite or algebraically closed. Let
$\bar{\eta}$ be a geometric point over $\eta$ and $\bar{S}$ be the
normalization of $S$ in $\bar{\eta}$. Let $\bar{s}$ be the closed
point of $\bar{S}$. Let $I={\rm
ker}(\Gal(k(\bar{\eta})/k(\eta))\to\Gal(k(\bar{s})/k(s)))$ denote
the inertia group.

Let $\ell$ be a prime invertible in $\O_S$. For a scheme $X$,
separated and of finite type over $s$, there is the natural
category $\on{Sh}_c(X\times_s\eta,\overline{\bbQ}_\ell)$ whose
objects are constructible $\overline{\bbQ}_\ell$-sheaves on
$X_{\bar{s}}$, together with continuous actions of
$\Gal(k(\bar{\eta})/k(\eta))$, compatible with the action of
$\Gal(k(\bar{\eta})/k(\eta))$ on $X_{\bar{s}}$ via
$\Gal(k(\bar{\eta})/k(\eta))\to\Gal(\bar{s}/s)$ (see \cite[XIII,
1.2.4]{SGA7I-II}). The natural functor
$\on{Sh}_c(X)\to\on{Sh}_c(X\times_s\eta)$ is a full embedding with
essential image consisting of objects on which the inertia $I$ acts
trivially (SGA 7, XIII, 1.1.3). The ``bounded derived" category of
$\on{Sh}_c(X\times_s\eta,\overline{\bbQ}_\ell)$ is denoted by
$\on{D}_c^b(X\times_s\eta,\overline{\bbQ}_\ell)$\footnote{As usual,
this category is not the ``real" derived category of
$\on{Sh}_c(X\times_s\eta,\overline{\bbQ}_\ell)$, but is defined via
a limit process. See \cite[Footnote 2]{HainesNgoNearby}.}. The usual
perverse $t$-structure on
$\on{D}_c^b(X_{\bar{s}},\overline{\bbQ}_\ell)$ is naturally lifted
to $\on{D}_c^b(X\times_s\eta,\overline{\bbQ}_\ell)$, and we have the
corresponding category of perverse sheaves
$\on{Perv}(X\times_s\eta,\overline{\bbQ}_\ell)$. The natural functor
$\on{D}_c^b(X,\overline{\bbQ}_\ell)\to
\on{D}_c^b(X\times_s\eta,\overline{\bbQ}_\ell)$ is a full embedding,
and its essential image consists of objects in
$\on{D}_c^b(X\times_s\eta,\overline{\bbQ}_\ell)$ on which $I$ acts
trivially.

Recall that if $p:\frakX\to S$ is a  morphism of schemes which is separated and of finite type, then there is the
so-called nearby cycle functor
\[{\rm R}\Psi^\frakX: \on{D}_c^b(\frakX_\eta,\overline{\bbQ}_\ell)\to \on{D}_c^b(\frakX_s\times_s\eta,\overline{\bbQ}_\ell),\]
which restricts to an exact functor (\cite[Sect.
4]{IllusieMonodromy})
\[{\rm R}\Psi^\frakX: \on{Perv}(\frakX_\eta,\overline{\bbQ}_\ell)\to \on{Perv}(\frakX_s\times_s\eta,\overline{\bbQ}_\ell),\]
Let $f:\frakX\to\frakY$ be a morphism over $S$. There is   a
canonical natural transform $f_!R\Psi^{\frakX}\to R\Psi^{\frakY}
f_!$ which is an isomorphism if $f$ is proper. In addition, there is
  a canonical natural transform $f^*R\Psi^{\frakY}\to
R\Psi^{\frakX} f^*$, which is an isomorphism if $f$ is smooth.

We will also occasionally use the vanishing cycle functor
(\cite[XII]{SGA7I-II})
\[{\rm R}\Phi^\frakX: \on{D}_c^b(\frakX,\overline{\bbQ}_\ell)\to  \on{D}_c^b(\frakX_s\times_s\eta,\overline{\bbQ}_\ell),\]
which, roughly speaking, is defined via the distinguished triangle
\[\calF_{\bar{s}}\to {\rm R}\Psi^{\frakX}(\calF_\eta)\to {\rm R}\Phi^{\frakX}(\calF)\to. \]
A theorem of Gabber (cf. \cite[Sect. 4]{IllusieMonodromy}) says
that ${\rm R}\Phi^{\frakX}[-1]$ is also perverse exact with the
$t$-structure on $\on{D}_c^b(\frakX,\overline{\bbQ}_\ell)$ defined
as in loc. cit.

\begin{Remark}\label{indsheaf}
{\rm As explained in \cite[A. 2]{GaitsgoryInv}, if $X$ is an
ind-scheme, of ind-finite type over a field, then
$\on{D}_c^b(X,\overline{\bbQ}_\ell)$ is defined as the direct limit
of the corresponding category on finite dimensional closed
subschemes. As the push-forward along closed immersions is perverse
exact, this allows to also define a corresponding category
$\on{Perv}(X,\overline{\bbQ}_{\ell})$. Similarly, we then have
$\on{D}_c^b(X\times_s\eta,\overline{\bbQ}_\ell)$ and
$\on{Perv}(X\times_s\eta,\overline{\bbQ}_\ell)$. If $\frakX$ is
 of ind-finite type over $\O$, we can also define the nearby cycles
${\rm R}\Psi^\frakX$ and the results in the above discussion
appropriately extend to this case.

In what follows, without mentioning it explicitly, we will
understand that any category of sheaves on an ind-scheme is defined
as a direct limit as above. }
\end{Remark}

\subsubsection{} Let us now return to our set up, so
$S=\Spec(\O)$, where $\O$ is the ring of integers of a $p$-adic
field $F$ with residue field $k$. Let us fix a prime $\ell$, which
is invertible in $\O$.

Let $X_0=\Spec (\O)\to X=\AA^1_\O$ be the morphism given by
$u\mapsto 0$ and let $P=P_\O$ be the group scheme over $\O[[t]]$
given by
\[P =\Gg\times_X\Spec (\O[[t]]), \quad u\mapsto t.\]
Then for $\kappa$ either the fraction field $F$ of $\O$, or the
residue field $k$ of $\O$, $P \times_{\O[[t]]}\kappa[[t]]=P_\kappa$ is the
parahoric group scheme over $\kappa[[t]]$ associated to the point
$x_{\kappa((u))}$ in the building of $\calB(G_\kappa,\kappa((u)))$
as in \S \ref{3a}, see also \S \ref{kappafibers}. Let us also
consider
\[\Gr_{P}:=\Gr_{\Gg, \O, 0}=\Gr_{\Gg,X}\times_X X_0\]
given by this specialization along $u=0$. This is identified with
the local affine Grassmannian ${\rm Gr}_{\Gg}$ over $\Spec(\O)$
considered in \S \ref{LocalaffGrass}. The jet group $L^+P $ over
$\O$ is defined as follows: for every $\O$-algebra $R$,
\[L^+P (R)=P (R[[t]]).\]
Then $L^+P $ acts on $\Gr_{P }$.

Let $\Spec(\kappa)\to\Spec(\O)$ be a perfect field-valued point.
Then $\Gr_{P
}\times_{\Spec(\O)}\Spec(\kappa)=\Gr_{P_\kappa}=\Gr_{P,\kappa}$ is
the
 affine Grassmannian associated to $P_\kappa$, and when we
base change  the action of $L^+P $ on $\Gr_{P }$ under $\O\to
\kappa$, we obtain  the usual action of $L^+P_\kappa$ on
$\Gr_{P_\kappa}$.

For simplicity, we set $\breve{P}=P\otimes_\O\breve{\O}$, similarly
for the other (ind)-schemes. The $L^+\breve{P} $-orbits of
$\Gr_{\breve{P}}$ are parametrized by certain double cosets   in the
extended Weyl group $\widetilde{W}$. For $w\in\widetilde{W}$, let
$\mathring{S}_w$ be the corresponding orbit which is smooth over
$\breve{\O}$, and let $S_w$ be the corresponding Schubert scheme
over $\breve\O$. Recall that the group splits after an extension of
$\breve{\O}$ of degree prime to $p$. We can see that if
$\breve{\O}\to \kappa$ is as above, then there is a nilpotent
immersion $S_{w,\kappa}\to S_w\otimes_{\breve{\O}} \kappa$ where
$S_{w,\kappa}$ is the Schubert variety $S_{w,\kappa}$ in
$\Gr_{P_\kappa}$ corresponding to $w$. (This immersion is an
isomorphism if $p\nmid |\pi_1(G_{\on{der}})|$. Indeed, then the
Schubert  varieties $S_{w,\kappa}$ are   normal and the result
follows using \cite[Prop. 9.11]{PappasRaTwisted}.) We will often
identify $\overline{\bbQ}_\ell$-sheaves on $S_w\otimes_{\breve{\O}}
\kappa$ with the corresponding $\overline{\bbQ}_\ell$-sheaves on
$S_{w,\kappa}$.

We denote $\on{IC}_w$ to be the intersection cohomology sheaf on
$S_w$, i.e., the intermediate extension of
$\overline{\bbQ}_\ell[\dim S_{w}{+1}](\dim S_w/2)$ on
$\mathring{S}_w$. (Here $\dim S_w$ is the relative dimension over
$\breve\O$. For the definition of perverse sheaves on schemes over
$\O$, we refer to \cite[Sect. 4]{IllusieMonodromy}.) If $S_w$ is
defined over the discrete valuation ring $\O'$ with $\O\subset
\O'\subset\breve\O$, we will keep track  of the action of
$\Gal(\O'/\O)$ on $\on{IC}_w$, or equivalently, regard $\on{IC}_w$
also defined over $\O'$. For a $\kappa$-valued point of $\Spec(
\breve\O)$, the intersection cohomology sheaf on $S_{w,\kappa}$ is
denoted by $\on{IC}_{w,\kappa}$.

When $G=H\otimes_\O F$ with $H$ a split Chevalley group over $\O$
and $\calG=H\times_{\Spec(\O)} X$, then $\Gr_P$ is the affine
Grassmannian
  $\Gr_H$ over $\O$  and   $L^+P$ is
$L^+H$. The $L^+H$-orbits of $\Gr_H$ are parameterized by conjugacy
classes of one-parameter subgroups of $H$ and for
$\mu\in\xcoch(T_H)\subset\widetilde{W}$, we denote $S_\mu$ by
$\overline{\Gr}_\mu$. Similarly, we denote by $\on{IC}_\mu$   the
intersection cohomology sheaf on $\overline{\Gr}_\mu$.

\subsubsection{}

Let $M_{\Gg,\mu, E}$ denote the generic fiber of $M_{\Gg, \mu}$. If
$H$ is the split form of $G$, then $ M_{\Gg,\mu, \tilde F}=M_{\Gg,
\mu, E}\otimes_E\tilde{F}$ is a projective subvariety of
$\Gr_{\Gg,\O}\otimes_{\O} \tilde{F}\simeq\Gr_H\otimes_{\O}
\tilde{F}$ by Corollary \ref{fibers}. In general $M_{\Gg, \mu, E}$
is not smooth unless $\mu$ is minuscule. We denote $\F_\mu$ to be
the intersection cohomology sheaf on the generic fiber $M_{\Gg,\mu,
E}$. Then the pull back of $\F_\mu$ to $M_{\Gg,\mu, \tilde F}$ is
isomorphic to $\on{IC}_{\mu,\tilde{F}}$. Our goal  
is to establish a commutativity constraint for the nearby cycle
\[{\rm R}\Psi_\mu:={\rm R}\Psi^{ M_{\Gg, \mu}}(\F_\mu).\]

Recall that we denote by   $\ke$    the residue field of $\O_E$. We
first need

\begin{lemma}\label{equiv str}
The perverse sheaf\, ${\rm R}\Psi_\mu$ on the special fiber $
\overline M_{\Gg, \mu}=M_{\Gg, \mu}\otimes_{\O_E}\ke\subset\Gr_{P,
\ke }$ admits a natural $L^+P_{\ke}$-equivariant structure, i.e.,
${\rm R}\Psi_\mu$ admits a $L^+P_{\ke}\otimes_{\ke}
\bar{k}$-equivariant structure as perverse sheaves on $\Gr_{P,
\ke}\otimes_{\ke}\bar{k}$, which is compatible with the action of
$\Gal(\bar{F}/E)$ in an obvious sense (which will be clear from the
proof).
\end{lemma}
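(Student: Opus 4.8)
The strategy is to produce the equivariant structure on $\mathrm{R}\Psi_\mu$ by realizing the nearby cycles as the special fiber of a nearby cycles construction over the \emph{two-dimensional} base, using the global affine Grassmannian $\mathrm{Gr}_{\Gg,X}$ and the global jet group $\L^+\Gg$ in place of their specializations. The key point is that over $X=\AA^1_\O$ the orbit $X_\mu$ deforms in a family, and the group $\L^+\Gg$ acts on this family compatibly with the projection to $X$; the nearby cycles taken along the direction $u\mapsto\varpi$ then inherit the action of the group scheme $\L^+\Gg$ restricted along $u=0$, which is exactly $L^+P$. Concretely: first, I would recall that over $\AA^1_{\O_E}$ (via the section $u\mapsto\varpi$ deformed, or rather via the full two-parameter family $\mathrm{Gr}_{\Gg,\O_E}$) the Zariski closure of $X_\mu$ gives a scheme $M_{\Gg,\mu}$ flat over $\Spec(\O_E)$, and its formation is equivariant for the action of $(\L^+\Gg)_{\O_E}$ described in \S\ref{6b4}, since $X_\mu$ is a single $(L^+\Gg_{F,\varpi})_{E'}$-orbit closure and the closure operation is equivariant.

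\textbf{Key steps.} (1) Observe that $(\L^+\Gg)_{\O_E}$ is a pro-smooth (in particular pro-\emph{flat}) affine group scheme over $\Spec(\O_E)$ acting on $M_{\Gg,\mu}\to\Spec(\O_E)$, with generic fiber $(L^+\Gg_{F,\varpi})_E$ acting on $M_{\Gg,\mu,E}$ and special fiber $L^+P_{\ke}$ acting on $\overline M_{\Gg,\mu}$; this is immediate from Proposition \ref{globalGr} and the discussion in \S\ref{kappafibers}, \S\ref{LocalaffGrass}. (2) The sheaf $\F_\mu$ on the generic fiber is $(L^+\Gg_{F,\varpi})_E$-equivariant: indeed $\F_\mu$ is an intersection cohomology sheaf on a single orbit closure, and the intermediate extension of the (equivariant) constant sheaf on the smooth open orbit is canonically equivariant, by uniqueness of the IC extension. (3) Now apply the nearby cycles functor. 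The point is a standard compatibility of $\mathrm{R}\Psi$ with a \emph{smooth} group action over the trait: working level by level on the pro-system defining $\L^+\Gg$, each truncation $L^{(m)}\Gg$ is a smooth finite-type group scheme over $\O_E$, and since $\mathrm{R}\Psi$ commutes with smooth (in particular with the action and projection maps of such a group scheme) pullback by the natural base-change isomorphism $a^\ast\mathrm{R}\Psi\xrightarrow{\sim}\mathrm{R}\Psi\, a^\ast$ for $a$ the action morphism and $\mathrm{pr}$ the projection, one transports the equivariance datum of $\F_\mu$ on the generic fiber to an equivariance datum of $\mathrm{R}\Psi_\mu$ on $\overline M_{\Gg,\mu}$. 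The cocycle condition for the equivariant structure follows from functoriality of these base-change isomorphisms. (4) The compatibility with $\Gal(\bar F/E)$ is automatic: $\mathrm{R}\Psi_\mu$ lives in $\mathrm{D}^b_c(\overline M_{\Gg,\mu}\times_{\ke}\eta,\overline\Q_\ell)$ by construction, i.e.\ it already carries the inertia/Galois action, and all the morphisms $a,\mathrm{pr}$ are defined over $\O_E$, hence the transported equivariance datum is $\Gal(\bar F/E)$-equivariant.

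\textbf{Main obstacle.} The technical heart is Step (3): making precise the commutation of $\mathrm{R}\Psi$ with the action of the \emph{pro}-algebraic group $\L^+\Gg$ over the two-dimensional base, and checking that the base-change maps assemble into a genuine equivariant structure (i.e.\ verifying the cocycle/associativity constraint on the $L^{(m)}\Gg$-level and then passing to the limit). Here one must be careful that $\mathrm{R}\Psi$ in the ind-scheme, pro-group setting behaves as expected (cf.\ Remark \ref{indsheaf} and the conventions of \cite[A.2]{GaitsgoryInv}); the sheaf $\mathrm{R}\Psi_\mu$ is supported on the finite-type closed subscheme $\overline M_{\Gg,\mu}$, so one may restrict attention to a finite-dimensional orbit picture where only finitely many Weyl-group elements $w\in\Adm(\mu)$ occur, and then the relevant group truncation is finite type. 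The mild subtlety that $S_{w,\ke}\to S_w\otimes_{\breve\O}\ke$ may only be a nilpotent immersion (when $p\mid|\pi_1(G_{\on{der}})|$) is harmless for sheaf-theoretic purposes since nilpotent immersions induce equivalences on étale sheaves; under the running hypothesis $p\nmid|\pi_1(G_{\on{der}})|$ of the surrounding sections it is even an isomorphism.
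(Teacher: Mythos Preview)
Your proposal is correct and follows essentially the same approach as the paper: both reduce to a finite-level truncation of the jet group (the paper introduces the smooth finite-type group scheme $\L^+_n\Gg$ over $X$ through which the action on $M_{\Gg,\mu}$ factors), then use that the action and projection maps are smooth so that $\mathrm{R}\Psi$ commutes with their pullbacks, transporting the canonical equivariance $m^*\F_\mu\simeq p^*\F_\mu$ on the generic fiber to one on the special fiber, with the cocycle condition and Galois compatibility inherited by functoriality. One small remark: your opening ``two-dimensional base'' framing is a slight red herring --- as your Steps (1)--(4) and the paper both show, the argument lives entirely over the one-dimensional trait $\Spec(\O_E)$ once $(\L^+\Gg)_{\O_E}$ has been formed; and your final paragraph's digression on the nilpotent immersion $S_{w,\ke}\to S_w\otimes\ke$ is not needed for this lemma.
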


\begin{proof}
Let $\L^+_n\Gg$ be the $n$-th jet group of $\Gg$, i.e. the group
scheme over $X=\bbA^1_\O$, whose $R$-points classify pairs
$(y,\beta)$ with $y:\Spec(R)\to X$ and $\beta\in\Gg(\Ga_{y,n})$,
where $\Ga_{y,n}$ is the $n-$th nilpotent thickening of $\Ga_y$. In
other words,
\begin{equation}\label{nthjet}\L^+_n\Gg(R)=\Gg(R[u-y]/(u-y)^{n+1})\end{equation}
(cf. \eqref{globloop1}). It is clear that $\L^+_n\Gg$ is smooth over
$X$ and that the action of
$(\L^+\Gg)_{\O_E}:=\L^+\Gg\times_X\Spec(\O_E)$ on $M_{\Gg, \mu}$
factors through the action of
$(\L^+_n\Gg)_{\O_E}:=\L^+_n\Gg\times_{X}\Spec(\O_E)$ for some
sufficiently large $n$.

Let $m:(\L^+_n\Gg)_{\O_E}\times_{\O_E} M_{\Gg, \mu}\to M_{\Gg, \mu}$
be the above action.  Let $p:(\L^+_n\Gg)_{\O_E}\times_{\O_E} M_{\Gg,
\mu}\to M_{\Gg, \mu}$ be the natural projection. Then there is a
canonical isomorphism $m^*\F_\mu\xrightarrow{\sim} p^*\F_\mu$ as
sheaves on $(\L^+_n\Gg)_{E}\times_{E} M_{\Gg, \mu}$ since the
intersection cohomology sheaf $\F_\mu$ is naturally
$(\L^+\Gg)_{E}$-equivariant. By taking  nearby cycles, we have a
canonical isomorphism
\[
{\rm R}\Psi^{(\L^+_n\Gg)_{\O_E}\times_{\O_E} M_{\Gg,
\mu}}(m^*\F_\mu)\xrightarrow{\sim} {\rm
R}\Psi^{(\L^+_n\Gg)_{\O_E}\times_{\O_E} M_{\Gg, \mu}}(p^*\F_\mu),
\]
which is equivariant with respect to the action of
$\Gal(k(\bar{\eta})/k(\eta))$. Since both $m$ and $p$ are smooth
morphisms and taking nearby cycles commutes with smooth base change,
we have
\[
m^*{\rm R}\Psi^{ M_{\Gg, \mu}}(\F_\mu)\xrightarrow{\sim} p^*{\rm
R}\Psi^{ M_{\Gg, \mu}}(\F_\mu),
\]
compatible with the action of $\Gal(k(\bar{\eta})/k(\eta))$. The
cocycle condition of this isomorphism follows from the corresponding
cocycle condition for $m^*\F_\mu\xrightarrow{\sim} p^*\F_\mu$. This
then establishes the desired action and the lemma follows.
\end{proof}

\begin{Definition}We let
$\on{Perv}_{L^+P_k}(\Gr_{P_k}\times_kF,\overline{\bbQ}_\ell)$   be
the category whose objects are $(\calF,\theta)$, where $\calF\in
\on{Perv}(\Gr_{P_k}\times_kF,\overline{\bbQ}_\ell)$, and
$\theta:m^*\calF\xrightarrow{\sim} p^*\calF$ is a
$L^+P_{\bar{k}}$-equivariant structure on $\calF$, which is
compatible with the action of $\Gal(\bar{F}/F)$.
\end{Definition}

By the above Lemma, ${\rm R}\Psi_\mu={\rm R}\Psi^{ M_{\Gg,
\mu}}(\F_\mu)$ is an object of
$\on{Perv}_{L^+P_{\varkappa}}(\Gr_{P_{\varkappa}}\times_{\varkappa}E,\overline{\bbQ}_\ell)$
for $\varkappa=k_E$.

\subsubsection{}
Let $w\in\widetilde{W}$. Recall that for a chosen reduced expression
$\tilde{w}$ of $w$, there is the Demazure resolution
$D_{\tilde{w}}\to S_w$, where $D_{\tilde{w}}$ is smooth proper over
$\breve\O$, containing $\mathring{S}_w$ as a Zariski open subset,
and $D_{\tilde{w}}\setminus\mathring{S}_w$ is a divisor with normal
crossings relative to $\breve\O$.

\begin{lemma}\label{no monodromy}
Let $F\subset F'\subset \breve{F}$, $\O'$ be the normalization of
$\O$ in $F'$, and $k'$ be the residue field of $\O'$. Write for
simplicity  $P'=P\otimes_{\O}\O'$. Assume that $S_w$ is defined over
$\O'$. Let $\on{IC}_{w,F'}$ be the intersection cohomology sheaf on
$S_{w,F'}$. Then ${\rm R}\Psi^{\Gr_{P'}}(\on{IC}_{w,F'})$ is
isomorphic as an object of
$\on{Perv}(S_{w,k'}\times_{k'}F',\overline{\bbQ}_\ell)$ to the
intersection cohomology sheaf $\on{IC}_{w,k'}$ of $S_{w,k'}$ (recall
that we regard $\on{Perv}(S_{w,k'},\overline{\bbQ}_\ell)$ as a full
subcategory of
$\on{Perv}(S_{w,k'}\times_{k'}F',\overline{\bbQ}_\ell)$).
\end{lemma}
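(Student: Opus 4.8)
The plan is to prove that the nearby cycles of the intersection cohomology sheaf on a Schubert variety $S_w$ over $\O'$ is again an intersection cohomology sheaf (with trivial inertia action). First I would reduce to the case where $\O'$ is strictly Henselian, i.e. base change to $\breve\O$ so that the residue field is algebraically closed; since $S_w$ splits after a further extension of degree prime to $p$, and nearby cycles commute with this kind of base change on the trait, it suffices to work geometrically. Then the key geometric input is the Demazure resolution $\pi: D_{\tilde w}\to S_w$ associated to a reduced word $\tilde w$: the scheme $D_{\tilde w}$ is smooth and proper over $\O'$, contains $\mathring S_w$ as a dense open, and its complement is a relative normal crossings divisor. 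Because $\pi$ is proper, $\pi_*$ commutes with nearby cycles: ${\rm R}\Psi^{S_w}(\pi_* \overline{\bbQ}_\ell[\dim+1]) \simeq \pi_* {\rm R}\Psi^{D_{\tilde w}}(\overline{\bbQ}_\ell[\dim+1])$.

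Next I would compute ${\rm R}\Psi^{D_{\tilde w}}(\overline{\bbQ}_\ell)$. Since $D_{\tilde w}\to\Spec(\O')$ is smooth, the nearby cycles of the constant sheaf are the constant sheaf (no vanishing cycles, trivial inertia action) on the special fiber $D_{\tilde w,k'}$ — this is the classical fact that nearby cycles commute with smooth morphisms and ${\rm R}\Psi$ of the constant sheaf on a smooth $S$-scheme with smooth special fiber is constant. Hence $\pi_* {\rm R}\Psi^{D_{\tilde w}}(\overline{\bbQ}_\ell[\dim+1])$ is just $(\pi_k)_* \overline{\bbQ}_\ell[\dim+1]$ on the special fiber $S_{w,k'}$, which is the pushforward of the constant sheaf along the special fiber Demazure resolution $\pi_k: D_{\tilde w,k'}\to S_{w,k'}$. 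On the other hand, $\on{IC}_{w,F'}$ is a direct summand of $\pi_{F',*}\overline{\bbQ}_\ell[\dim+1]$ by the decomposition theorem applied to the proper map $\pi_{F'}$ (indeed $D_{\tilde w,F'}$ is smooth proper and $\pi_{F'}$ is birational, so the IC sheaf of $S_{w,F'}$ appears with multiplicity one as the ``top'' summand). Then the plan is to apply ${\rm R}\Psi$ to this splitting: since ${\rm R}\Psi$ is a functor it carries the idempotent decomposition over to the special fiber, exhibiting ${\rm R}\Psi^{S_{w}}(\on{IC}_{w,F'})$ as a direct summand of $(\pi_k)_*\overline{\bbQ}_\ell[\dim+1]$.

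To conclude I would argue that this particular summand is $\on{IC}_{w,k'}$. The cleanest route is to use that ${\rm R}\Psi[-1]$ (or rather ${\rm R}\Psi$ up to the perverse-normalization shift used in the paper) is perverse $t$-exact, so ${\rm R}\Psi^{S_w}(\on{IC}_{w,F'})$ is a perverse sheaf on $S_{w,k'}$; moreover it has no quotient or sub supported on $\partial S_w = S_{w,k'}\setminus\mathring S_{w,k'}$. For this last point I would use: over the open orbit $\mathring S_{w,F'}$ the IC sheaf restricts to $\overline{\bbQ}_\ell[\dim+1]$, and since $\mathring S_w$ is smooth over $\O'$, ${\rm R}\Psi$ restricted there is again the constant sheaf $\overline{\bbQ}_\ell[\dim+1]$ on $\mathring S_{w,k'}$ with trivial inertia action; so ${\rm R}\Psi^{S_w}(\on{IC}_{w,F'})$ restricts to the IC-sheaf's own restriction on the open orbit. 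A perverse sheaf on $S_{w,k'}$, equal to $\overline{\bbQ}_\ell[\dim+1]$ on the dense smooth open orbit and which is a direct summand of the pushforward $(\pi_k)_*\overline{\bbQ}_\ell[\dim+1]$ from a resolution, must be $\on{IC}_{w,k'}$ provided it has no summand supported on the boundary. The absence of a boundary summand is where I expect the main work: one way is to observe that the set of Frobenius/monodromy weights forces it (the IC summand is the one of ``maximal'' weight), but the more robust argument uses that $\on{IC}_{w,F'}$ is the intermediate extension $j_{!*}$ of its restriction to $\mathring S_{w,F'}$, and that ${\rm R}\Psi$ commutes with $j_{!*}$ because $j$ is the inclusion of a smooth open over $\O'$: since ${\rm R}\Psi j^* = j^* {\rm R}\Psi$ (smooth base change) and ${\rm R}\Psi$ is perverse exact, applying ${\rm R}\Psi$ to the defining exact sequences of $j_{!*}$ and using exactness gives that ${\rm R}\Psi(j_{!*}\mathcal F) = j_{!*}({\rm R}\Psi(j^*\mathcal F))$. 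Taking $\mathcal F$ the shifted constant sheaf on $\mathring S_{w,F'}$, whose ${\rm R}\Psi$ is the shifted constant sheaf on $\mathring S_{w,k'}$ with trivial inertia action, we get exactly $\on{IC}_{w,k'}$, with trivial inertia, proving the lemma over $\breve F$; the Galois descent down to $F'$ then follows by keeping track of the natural $\Gal(\breve F/F')$-equivariant structures throughout, which are preserved by all the functorial operations used. The main obstacle, as indicated, is making the ``${\rm R}\Psi$ commutes with intermediate extension along a smooth open immersion'' step precise — this needs the relative perverse $t$-structure over a trait from \cite{IllusieMonodromy} and Gabber's perverse-exactness theorem, both of which are available in the excerpt's setup.
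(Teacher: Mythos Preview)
Your geometric argument over $\breve\O$ via the Demazure resolution is essentially the paper's own argument (which the paper packages as a citation to \cite[\S 5.2, \S 6.3]{HainesNgoNearby}). The gap is in the descent to $F'$. You write that ``Galois descent down to $F'$ then follows by keeping track of the natural $\Gal(\breve F/F')$-equivariant structures throughout, which are preserved by all the functorial operations used.'' But the Demazure resolution $D_{\tilde w}$ depends on a choice of reduced expression $\tilde w$, and this choice is in general \emph{not} invariant under $\Gal(\breve F/F')$; the paper explicitly notes that $D_{\tilde w}$ need not be defined over $\O'$. So the pushforward $\pi_*$, the decomposition-theorem splitting, and the identification of the summand all take place over $\breve\O$ only, and there is no $\Gal(\breve F/F')$-structure to carry along.

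Your alternative route, ``${\rm R}\Psi$ commutes with $j_{!*}$ along a smooth open immersion,'' would indeed bypass the resolution and work directly over $\O'$, but the justification you give does not hold: perverse exactness of ${\rm R}\Psi$ together with $j^*{\rm R}\Psi \simeq {\rm R}\Psi j^*$ is not enough, because nearby cycles do not commute with $j_!$ or $j_*$ for an open immersion in general, and $j_{!*}$ is defined as the image of ${}^p j_! \to {}^p j_*$. There is no general statement of this form.

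The paper's actual fix is short and uses an ingredient you already have. From the $\breve F$ case one knows the underlying sheaf on $S_{w,\bar k}$ with its inertia action is $\on{IC}_{w,\bar k}$ (trivial inertia). Hence over $k'$ the nearby cycles must be of the form $\on{IC}_{w,k'}\otimes\calL$ for some rank one local system $\calL$ pulled back from $\Spec(k')$. Now restrict to the open orbit $\mathring S_w$, which \emph{is} defined and smooth over $\O'$; there ${\rm R}\Psi$ of the shifted constant sheaf is the shifted constant sheaf with trivial Galois action, forcing $\calL$ to be trivial. You already observed this open-orbit computation for a different purpose; it is exactly what closes the gap.
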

\begin{proof}The existence of $D_{\tilde{w}}$, together with the argument as in \cite[\S
5.2, \S 6.3]{HainesNgoNearby}, implies that the lemma holds for
$F'=\breve{F}$. Observe that we cannot apply the same argument
directly to the case $F'\subsetneqq \breve F$ because $D_{\tilde{w}}$ is
not necessarily defined over $\O'$. Instead, we argue as follows.
From the case $F'=\breve{F}$ above, we deduce that ${\rm
R}\Psi^{\Gr_{P'}}(\on{IC}_{w,F'})=\on{IC}_{w,k'}\otimes\calL$ for
some rank one local system on $S_{w,k'}$ coming from $\Spec(k')$. On
the other hand, $\mathring{S}_{w}$ is defined and is smooth over
$\O'$. Therefore, ${\rm
R}\Psi^{\Gr_{P'}}(\on{IC}_{w,F'})|_{\mathring{S}_{w,k'}}\simeq
\on{IC}|_{\mathring{S}_{w,k'}}=\overline{\bbQ}_\ell[\dim
S_{w,k'}]( (\dim S_{w,k'})/2)$. Hence, $\calL$ is trivial.
\end{proof}

\subsection{A commutativity constraint}\label{sectionConstraint}

\subsubsection{}
Let $\on{D}_{L^+P}(\Gr_{P },\overline{\bbQ}_\ell)$ be the bounded
$L^+P $-equivariant derived category of (constructible)
$\overline{\bbQ}_\ell$-sheaves on $\Gr_{P }$ in the sense of
Bernstein-Lunts \cite{BernsteinLunts}. Let us recall that
$\on{D}_{L^+P }(\Gr_{P },\overline{\bbQ}_\ell)$ is a monoidal
category with structure given by the ``convolution product" defined
by Lusztig (see \cite{GaitsgoryInv}, \cite{LusztigAst}). Namely, we
have the convolution diagram
\begin{equation}\label{Conv-diag}
\Gr_{P }\times\Gr_{P }\xleftarrow{\ q\ }LP \times\Gr_{P
}\xrightarrow{\ p\ }LP \times^{L^+P }\Gr_{P }=:\Gr_{P
}\tilde{\times}\Gr_{P }\xrightarrow{\ m\ } \Gr_{P },
\end{equation}
where $p,q$ are natural projections and $m$ is given by the left
multiplication of $LP $ on $\Gr_{P }$. Let $\calF_i\in \on{D}_{L^+P
}(\Gr_{P }, \overline{\Q}_l), i=1,2$, and let
$\calF_1\tilde{\times}\calF_2$ be the unique sheaf (up to a
canonical isomorphism) on $LP\times^{L^+P }\Gr_{P }$ such that
\begin{equation}\label{twp1}
p^*(\calF_1\tilde{\times}\calF_2)\simeq
q^*(\calF_1\boxtimes\calF_2).
\end{equation}
Then, by definition
\begin{equation}\label{conv prod}
\calF_1\star\calF_2=m_!(\calF_1\tilde{\times}\calF_2).
\end{equation}

If $\Spec(\kappa)\to\Spec(\O)$ is a field valued point, we have the
corresponding monoidal category
$\on{D}_{L^+P_\kappa}(\Gr_{P_\kappa},\overline{\bbQ}_\ell)$ defined
in the same manner. Let
$\on{Perv}_{L^+P_\kappa}(\Gr_{P_\kappa},\overline{\bbQ}_\ell)$ be
the core of the perverse $t$-structure.

Observe that if $\calF_1$,
$\calF_2\in\on{Perv}_{L^+P_k}(\Gr_{P_k}\times_kF,\overline{\bbQ}_\ell)$,
then there is a natural action of $\Gal(\bar{F}/F)$ on
$\calF_1\star\calF_2$.  (In fact, one can define the ``derived
category"
$\on{D}_{L^+P_k}(\Gr_{P_k}\times_k\eta,\overline{\bbQ}_\ell)$ so
that $\calF_1\star\calF_2$ will be an object in
$D_{L^+P_k}(\Gr_{P_k}\times_k\eta,\overline{\bbQ}_\ell)$. We will
not use this concept in the paper.)

\subsubsection{}

In general, if $\calF_1,\calF_2$ are perverse sheaves, it is not
always the case that $\calF_1\star\calF_2$ is perverse. However, the
main result of this subsection is

\begin{thm}\label{comm constraints}
There is a canonical isomorphism
\[
c_{\calF}:\on{IC}_{w,\bar{k}}\, \star\ {\rm
R}\Psi_\mu\xrightarrow{\sim} {\rm R}\Psi_\mu\star\on{IC}_{w,\bar{k}}
\]
of perverse sheaves on $\Gr_{P_{\bar{k}}}$. In addition, if
$S_{w,\bar{k}}$ is defined over $k'\supset k_E$, this isomorphism
respects the action of $\Gal(\bar F/E')$ on both sides, where
$E'=EF'\subset \bar{F}$, and $F'$ is the unique subfield in
$\breve{F}$ with residue field $k'$.
\end{thm}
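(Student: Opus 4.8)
The strategy is to imitate Gaitsgory's construction of the commutativity constraint (\cite{GaitsgoryInv}) and its adaptation by Haines--Ng\^o (\cite{HainesNgoNearby}), but carried out over the two-dimensional base $X=\AA^1_\O$ rather than over a curve over a field. First I would set up the \emph{global} (Beilinson--Drinfeld style) deformation. The key geometric object is the convolution/fusion Grassmannian over $X\times_\O S$ for a suitable test base $S$: recall that $\Gr_{\Gg,X}\to X$ already interpolates between the affine Grassmannian $\Gr_H$ over the generic point and the affine flag variety $\Gr_{P_k}$ over $u=0$. Over $X\setminus\{0\}$, after the finite \'etale base change $\tilde X\setminus\{0\}\to X\setminus\{0\}$, one has $\Gr_{\Gg,X}\times_X(\tilde X\setminus\{0\})\simeq \Gr_H\times_\O(\tilde X\setminus\{0\})$ (used already in the proof of Proposition \ref{indscheme}, cf. \cite[Lemma 3.3]{ZhuCoherence}). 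I would then build, over the square $X\times_X X$ (or rather over a two-disc configuration space near the point $(0,0)$), the two-leg convolution space whose fiber away from the diagonal is $\Gr_H\times \Gr_{P_k}$ (with the two factors supported at the two distinct points), and whose fiber on the diagonal, including over $u=0$, is the twisted product $\Gr_{P_k}\tilde\times\Gr_{P_k}$. The sheaf to propagate is $\on{IC}_{\mu}\boxtimes\F_{w}$-type data (more precisely the external product of the constant-group intersection sheaf $\on{IC}_w$ on the $H$-leg with $\F_\mu$ on the $\Gg$-leg, or vice versa); over the diagonal this degenerates, by the same smooth-base-change/nearby-cycles argument as in Lemma \ref{equiv str}, to $\on{IC}_{w,\bar k}\tilde\times {\rm R}\Psi_\mu$ (resp.\ ${\rm R}\Psi_\mu\tilde\times\on{IC}_{w,\bar k}$).

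The heart of the argument is then the following: the two-leg family over the configuration space is acted on by the permutation of the two points (the $\bbZ/2$ swapping the legs), and this gives, after pushing forward by the convolution map $m$ (which is ind-proper, so commutes with nearby cycles by the proper base change property recalled in \S\ref{review of nearby cycles}), a canonical isomorphism between $\on{IC}_{w,\bar k}\star {\rm R}\Psi_\mu$ and ${\rm R}\Psi_\mu\star\on{IC}_{w,\bar k}$. I would organize this in three steps. \emph{Step 1:} Prove that $\on{IC}_{w,\bar k}\star{\rm R}\Psi_\mu$ is perverse. This is where the input from \cite{FaltingsLoops,PappasRaTwisted} on normality/Frobenius-splitness/rational singularities of Schubert varieties $S_w$ in the twisted affine flag variety enters, together with the fact that ${\rm R}\Psi$ is perverse-exact (Gabber, \cite[\S4]{IllusieMonodromy}); alternatively, as in \cite{HainesNgoNearby}, one shows $\on{IC}_w\star(-)$ preserves perversity on nearby cycles using the Demazure resolution $D_{\tilde w}$ (available over $\breve\O$) and the semismallness of the convolution map restricted to the relevant strata. \emph{Step 2:} Construct the global family and identify its fibers: away from the diagonal it is the external product, so the nearby cycle along the diagonal direction of $\on{IC}_w\boxtimes\F_\mu$ computes, leg by leg, $({\rm R}\Psi^{\Gr}\on{IC}_w)\tilde\times({\rm R}\Psi_\mu)$, and by Lemma \ref{no monodromy}, ${\rm R}\Psi^{\Gr_{P'}}(\on{IC}_{w,F'})\simeq\on{IC}_{w,k'}$ — so the first leg does \emph{not} acquire extra monodromy and we really get $\on{IC}_{w,\bar k}\star{\rm R}\Psi_\mu$ on one side and ${\rm R}\Psi_\mu\star\on{IC}_{w,\bar k}$ on the other. \emph{Step 3:} Use the swap symmetry of the configuration space, acting on the family compatibly with $m$, to produce the isomorphism $c_\calF$; check it is canonical (independent of the Demazure resolution, of the reduced word, etc.) by restricting to the open dense locus where everything is a plain external product.

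For the Galois-equivariance assertion in the final sentence, the point is that all the constructions above can be performed over the correct ring of integers rather than over $\breve\O$. Concretely: $\on{IC}_w$, $S_w$, and the Demazure resolution data needed in Step 1 are defined over $\O'$ (with residue field $k'$, fraction field $F'$) once $S_{w,\bar k}$ descends to $k'$; the sheaf $\F_\mu$ and hence ${\rm R}\Psi_\mu$ carry a $\Gal(\bar F/E)$-action (Lemma \ref{equiv str}); and the global two-leg family, together with its swap symmetry, is defined over $\O_{E'}$ with $E'=EF'$. Since nearby cycles, the convolution map $m$, and $m_!$ are all Galois-equivariant functors in the evident sense, the isomorphism $c_\calF$ obtained by transporting along the swap is automatically $\Gal(\bar F/E')$-equivariant, the field $E'$ being exactly the compositum over which both legs are individually defined. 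I would verify the equivariance by the same density reduction as for canonicity: on the open locus the isomorphism is literally the flip of an external tensor product, which is manifestly equivariant, and $\on{IC}$-extension (middle perversity) is canonical hence equivariant.

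\textbf{Main obstacle.} I expect the genuinely hard part to be Step 1 combined with the mixed-characteristic version of the ``factorization in families" (Step 2): over a curve over a field, Gaitsgory's global Grassmannian is well understood, but here the base $X$ is regular of Krull dimension two and the group $\Gg$ degenerates (is only reductive away from $u=0$), so one must carefully construct the two-leg convolution space near $(0,0)\in X\times X$, control its ind-scheme structure (using the quasi-affine-quotient embedding $\Gg\hookrightarrow\GL_n$ of Proposition \ref{qaffine} and the ind-properness statements of \S\ref{6b3}), and check that the naive nearby-cycles computation of its diagonal fiber really yields the twisted product with \emph{no spurious monodromy on the $\Gg$-leg} — which is precisely what forces the appeal to Lemma \ref{no monodromy} and to the normality results for $S_w$ (valid because $p\nmid|\pi_1(G_{\rm der})|$). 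Controlling perversity of the convolution at the degenerate point, where the relevant stratification is the $\mu$-admissible stratification of $\overline M_{\Gg,\mu}$ identified in Theorem \ref{special fiber}, is the technical crux.
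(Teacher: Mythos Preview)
Your proposal conflates two distinct constructions: the \emph{swap symmetry} of geometric Satake (two moving points exchanged by a $\bbZ/2$-action, used to commute $\on{IC}_\mu\star\on{IC}_\lambda$) and Gaitsgory's \emph{centrality} argument (one moving point colliding with a fixed one). The theorem here is a centrality statement, and the situation is inherently asymmetric: $\on{IC}_w$ lives on the parahoric flag variety $\Gr_{P_k}$ (the fiber of $\Gr_{\Gg,X}$ at $u=0$), while $\F_\mu$ lives on the generic fiber $\Gr_{\Gg,\O}\otimes E$. Your own description ``fiber away from the diagonal is $\Gr_H\times\Gr_{P_k}$'' already exhibits this; a swap would have to carry a sheaf on $\Gr_{P_k}$ to one on $\Gr_H$, which makes no sense. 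So Step~3 as written has no content.

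The paper's proof instead builds a Beilinson--Drinfeld Grassmannian $\Gr^{\on{BD}}_{\Gg,X}$ over $X$ (\emph{not} $X\times_\O X$): for $y:S\to X$ it parametrizes $\Gg$-torsors on $X\times S$ trivialized off $\Gamma_y\cup(\{0\}\times S)$, so one leg is the moving section $y$ and the other is the \emph{fixed} zero section. After base change along $u=\varpi$, the generic fiber is $\Gr_{P_{E'}}\times\Gr_{\Gg,E'}$ and the special fiber is $\Gr_{P_{k'}}$. The isomorphism then comes not from a swap but from \emph{two different} convolution resolutions $m:\Gr^{\rm Conv}_{\Gg,X}\to\Gr^{\on{BD}}_{\Gg,X}$ and $m':\Gr^{\rm Conv'}_{\Gg,X}\to\Gr^{\on{BD}}_{\Gg,X}$, differing in which leg is modified first; each is an isomorphism generically and the local convolution map $\Gr_{P_k}\tilde\times\Gr_{P_k}\to\Gr_{P_k}$ over $u=0$. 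Nearby cycles of $\on{IC}_{w,E'}\boxtimes\F_\mu$ pushed along $m$ (resp.\ $m'$) yield ${\rm R}\Psi_\mu\star\on{IC}_{w,k'}$ (resp.\ $\on{IC}_{w,k'}\star{\rm R}\Psi_\mu$), both canonically isomorphic to ${\rm R}\Psi^{\Gr^{\on{BD}}_{\Gg,\O_{E'}}}(\on{IC}_{w,E'}\boxtimes\F_\mu)$ --- this is Proposition~\ref{aux}. Two further points: perversity is then automatic (nearby cycles of a perverse sheaf), so your Step~1 evaporates; and the hypothesis $p\nmid|\pi_1(G_{\on{der}})|$ is nowhere used --- Lemma~\ref{no monodromy} needs only the Demazure resolution, not normality of $S_w$. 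The paper in fact remarks that its symmetric treatment of the two resolutions, made possible by the group scheme $\Gg$ over $\O[u]$, sidesteps a genuine difficulty in Gaitsgory's original argument (Borel reduction of torsors on $\AA^1$), which would be no more available in a two-moving-points setup. Your Galois-equivariance paragraph is fine in spirit and carries over once the construction is corrected: everything is done over $\O_{E'}$.
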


\begin{Remark} {\rm In the case $G=\GL_n$ or $\on{GSp}_{2n}$,
and $x$ is in an alcove, i.e the parahoric  group is an Iwahori,
this is one of the main results of \cite{HainesNgoNearby} (loc. cit.
Proposition 22). }
\end{Remark}

The proof is given by a mixed characteristic version of the arguments in
\cite{GaitsgoryInv,ZhuCoherence}. We need a version of the
Beilinson-Drinfeld Grassmannian defined over $X=\AA^1_\O$. For a
scheme $y:S\to X$ we set,
\begin{equation*}\label{BD Grass}
\Gr^{\on{BD}}_{\Gg,X}(S)=\biggl\{\,\text{iso-classes of pairs } (
\mathcal E, \beta) \biggm|
      \twolinestight{${\mathcal E}$ a $\Gg$-torsor on $X\times S$,}
   {$\beta$ a trivialization of $\mathcal E |_{ (X\times S)\setminus\Gamma_y\cup (0\times S) }$}\,\biggr\}\, .
\end{equation*}
Here $X\times  S=X\times_{\Spec(\O)}S$.
To prove that $\Gr^{\on{BD}}_{\Gg,X}$  is indeed represented by an
ind-scheme, one proceeds as in the proof of Proposition
\ref{indscheme}: Namely, it is standard (\cite{BeilinsonDrinfeld}) that
$\Gr^{\on{BD}}_{\GL_n,X}$ is represented by an ind-scheme. The
general case follows from the fact that if $\Gg\to\GL_n$ is a closed
embedding such that $\GL_n/\Gg$ is quasi-affine, then
$\Gr^{\on{BD}}_{\Gg,X}\to\Gr^{\on{BD}}_{\GL_n,X}$ is a locally
closed embedding.

Let us describe $\Gr^{\on{BD}}_{\Gg,X}$ more explicitly. For this
purpose, set $\mathring{X}=\Spec(\O[u,u^{-1}])\hookrightarrow X$ and
let $X_0=\Spec(\O)\hookrightarrow X$ be given by $u\mapsto 0$ as
before.

The following isomorphisms are clear
\[\Gr^{\on{BD}}_{\Gg,X}|_{\mathring{X}}\simeq \Gr_{P }\times_{\Spec(\O)}\Gr_{\Gg,X}|_{\mathring{X}}\ ,\quad\quad \Gr^{\on{BD}}_{\Gg,X}|_{X_0}\simeq \Gr_{P }.\]

Let us set
$\Gr^{\on{BD}}_{\Gg,\O}:=\Gr^{\on{BD}}_{\Gg,X}\times_X\Spec(\O)$,
where $\varpi:\Spec(\O)\to X$ is given by $u\mapsto\varpi$. Observe
that on
$$
\Gr^{\on{BD}}_{\Gg,E'}
=\Gr^{\on{BD}}_{\Gg,\O}\times_{\Spec(\O)}{\Spec(E')}\simeq
\Gr_{P_{E'}}\times_{\Spec(E')}\Gr_{\calG, E'},
$$
 we can form
$\on{IC}_{w,E'}\boxtimes\, \F_\mu$ over
$S_{w,E'}\times_{\Spec(E')}M_{\Gg, \mu, E'}$. (Here and in what
follows, for simplicity, we write again $\F_\mu$ for the pull-back
of $\F_\mu$ to the base change over $E'$.)

Clearly, Theorem \ref{comm constraints} is a consequence of the
following.

\begin{prop}\label{aux}
We have canonical isomorphisms in
$\on{Perv}_{L^+P_{k'}}(\Gr_{P_{k'}}\times_{k'}E',\overline{\bbQ}_\ell)$:

(a)
  \ ${\rm R}\Psi^{\Gr_{\Gg,\O_{E'}}^{\on{BD}}} (\on{IC}_{w,E'}\boxtimes\,\F_\mu)\xrightarrow{\sim} {\rm R}\Psi_\mu\star\on{IC}_{w,k'}.$

(b)  \ ${\rm R}\Psi^{\Gr_{\Gg,\O_{E'}}^{\on{BD}}
}(\on{IC}_{w,E'}\boxtimes\,\F_\mu)\xrightarrow{\sim}\on{IC}_{w,k'}\star\,
{\rm R}\Psi_\mu$.
\end{prop}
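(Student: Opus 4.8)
\textbf{Plan of proof for Proposition \ref{aux}.}

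The strategy is to analyze the Beilinson--Drinfeld Grassmannian $\Gr^{\on{BD}}_{\Gg,X}$ by restricting to the two natural strata of $X$ near $u=0$ along the section $\varpi$, exactly in the spirit of \cite{GaitsgoryInv} and \cite{ZhuCoherence}, and then to use the compatibility of nearby cycles with the factorization structure. I would prove part (a) first; part (b) is entirely analogous with the roles of the two points interchanged, so I will only indicate the modification at the end. The key geometric input is that over $X_0=\Spec(\O)$ (the locus $u=0$) we have $\Gr^{\on{BD}}_{\Gg,X}|_{X_0}\simeq \Gr_{P}$ with the sheaf $\on{IC}_{w,E'}\boxtimes\F_\mu$ degenerating to the convolution $\on{IC}_{w}\,\tilde\times\,{\rm R}\Psi_\mu$ on $\Gr_P\,\tilde\times\,\Gr_P$ pushed forward via the convolution morphism $m$, while over $\mathring X=\Spec(\O[u,u^{-1}])$ it is the external product $\Gr_{P}\times_{\Spec(\O)}\Gr_{\Gg,X}|_{\mathring X}$.

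\emph{Step 1: Reduce to a nearby-cycles computation on $\Gr^{\on{BD}}_{\Gg,\O_{E'}}$.} Base change $\Gr^{\on{BD}}_{\Gg,X}$ along $\varpi\colon \Spec(\O_{E'})\to X$, $u\mapsto\varpi$, to get a scheme $\Gr^{\on{BD}}_{\Gg,\O_{E'}}\to\Spec(\O_{E'})$ whose generic fiber is $\Gr_{P_{E'}}\times_{\Spec(E')}\Gr_{\Gg,E'}$ (and inside it the closed subvariety $S_{w,E'}\times M_{\Gg,\mu,E'}$ carrying $\on{IC}_{w,E'}\boxtimes\F_\mu$), and whose special fiber is $\Gr_{P_{k'}}$. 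I would apply the nearby-cycles functor ${\rm R}\Psi^{\Gr^{\on{BD}}_{\Gg,\O_{E'}}}$ to $\on{IC}_{w,E'}\boxtimes\F_\mu$; the $L^+P_{k'}$-equivariance of the result, and its $\Gal(\tilde F/E')$-equivariance, follow by the same smooth-base-change argument as in Lemma \ref{equiv str} applied to the jet-group action on $\Gr^{\on{BD}}_{\Gg,\O_{E'}}$.

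\emph{Step 2: Identify ${\rm R}\Psi^{\Gr^{\on{BD}}}(\on{IC}_{w,E'}\boxtimes\F_\mu)$ with $\on{IC}_{w,k'}\,\tilde\times\,{\rm R}\Psi_\mu$ along the convolution morphism, then push forward.} Here one uses the local structure of $\Gr^{\on{BD}}$ over $X$ near $u=0$: étale-locally on the relevant finite-dimensional Schubert strata there is an isomorphism (over a neighborhood of $0\in X$) between $\Gr^{\on{BD}}_{\Gg,X}$ and the convolution space $LP\times^{L^+P}\Gr_{\Gg,X}$, under which $\on{IC}_{w}\boxtimes\F_\mu$ corresponds to $\on{IC}_w\,\tilde\times\,\F_\mu$ in the sense of \eqref{twp1}. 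Since nearby cycles commute with the smooth projection $p$ and with the proper pushforward $m_!$, and since by Lemma \ref{no monodromy} we have ${\rm R}\Psi^{\Gr_{P'}}(\on{IC}_{w,E'})\simeq\on{IC}_{w,k'}$ with trivial monodromy, applying ${\rm R}\Psi$ to $\on{IC}_{w,E'}\boxtimes\F_\mu$ and using the K\"unneth-type compatibility of ${\rm R}\Psi$ with external products on the two factors gives
$$
{\rm R}\Psi^{\Gr^{\on{BD}}_{\Gg,\O_{E'}}}(\on{IC}_{w,E'}\boxtimes\F_\mu)\;\simeq\; m_!\bigl(\on{IC}_{w,k'}\,\tilde\times\,{\rm R}\Psi_\mu\bigr)\;=\;{\rm R}\Psi_\mu\star\on{IC}_{w,k'},
$$
which is part (a). (The order of the factors in the convolution is dictated by which point of $X$ carries which sheaf; in part (b) one arranges the BD deformation so that $\F_\mu$ sits over the moving point $\varpi$ and $\on{IC}_w$ over $u=0$, yielding $\on{IC}_{w,k'}\star{\rm R}\Psi_\mu$.) Finally, perversity of the output: since ${\rm R}\Psi$ is perverse exact and $m$ is proper and semismall on each Schubert stratum (this is where one invokes the structural results of \cite{FaltingsLoops,PappasRaTwisted} on affine Schubert varieties, valid because $p\nmid|\pi_1(G_{\on{der}})|$), the convolution $\on{IC}_{w,k'}\star{\rm R}\Psi_\mu$ is perverse, so all the objects in sight lie in $\on{Perv}_{L^+P_{k'}}(\Gr_{P_{k'}}\times_{k'}E',\overline{\bbQ}_\ell)$.

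\emph{Expected main obstacle.} The delicate point is Step 2: making precise the local isomorphism between the Beilinson--Drinfeld deformation and the convolution space over the two-dimensional base $X=\AA^1_\O$, and checking that under it the intersection-cohomology sheaf on the BD family really specializes to $\on{IC}_w\,\tilde\times\,\F_\mu$ in a way compatible with the Galois action of $\Gal(\tilde F/E')$. In the function-field setting of \cite{GaitsgoryInv,ZhuCoherence} this rests on the factorization property of $\Gr^{\on{BD}}$; over a mixed-characteristic two-dimensional base one must check that the same factorization and the compatibility of ${\rm R}\Psi$ with it (in particular that no extra monodromy is introduced along $u=0$, which is where Lemma \ref{no monodromy} and the tameness hypothesis are essential) go through. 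I would handle this by working on finite-dimensional closed subschemes of the ind-schemes (as in Remark \ref{indsheaf}) and reducing, via the quasi-affine embedding $\Gg\hookrightarrow\GL_n$, to the case of $\GL_n$, where the BD Grassmannian and its factorization are classical; the passage back to $\Gg$ uses that $\Gr^{\on{BD}}_{\Gg,X}\to\Gr^{\on{BD}}_{\GL_n,X}$ is a locally closed immersion, exactly as in the proof of Proposition \ref{indscheme}.
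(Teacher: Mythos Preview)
Your overall strategy—resolve the Beilinson--Drinfeld Grassmannian by a convolution space, use that nearby cycles commute with proper push-forward and with smooth pullback, and invoke Lemma \ref{no monodromy}—matches the paper. But there is a real gap in your treatment of parts (a) and (b).

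Both (a) and (b) have the \emph{same} left-hand side: nearby cycles of the \emph{same} sheaf $\on{IC}_{w,E'}\boxtimes\F_\mu$ on the \emph{same} scheme $\Gr^{\on{BD}}_{\Gg,\O_{E'}}$. The whole content of the proposition is that this one object admits two different descriptions, as convolutions in opposite orders. Your sentence ``in part (b) one arranges the BD deformation so that $\F_\mu$ sits over the moving point $\varpi$ and $\on{IC}_w$ over $u=0$'' does not distinguish (b) from (a): that is already the arrangement in the BD Grassmannian, and it cannot be ``swapped'' since the two points play asymmetric roles (one is fixed at $u=0$, the other moves). What is actually needed is two \emph{different} convolution resolutions $\Gr^{\rm Conv}_{\Gg,X}$ and $\Gr^{\rm Conv'}_{\Gg,X}$ of the same $\Gr^{\on{BD}}_{\Gg,X}$, each proper over it and an isomorphism over $\mathring X$, but with special fibers realizing the convolution diagram in the two opposite orders. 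The paper constructs these explicitly as moduli problems; for $\Gr^{\rm Conv'}$ the twisted-product structure is exhibited via an $\L^+_n\Gg$-torsor $\calQ_n$ over $\Gr_P\times X$ (not over $\Gr_{\Gg,X}$), which is the step where the two-dimensional group scheme $\Gg$ over $\O[u]$ is essential. The paper explicitly remarks that Gaitsgory's original proof of the analogue of (b) relied on Borel reductions of $H$-torsors on $\bbA^1$, which is not available for parahoric $\Gg$ over $\bbA^1_\O$; your proposed reduction to $\GL_n$ via the locally closed immersion does not substitute for this, since nearby cycles do not interact well with locally closed immersions.

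Two smaller points: in Step~2 you write $m_!(\on{IC}_{w,k'}\,\tilde\times\,{\rm R}\Psi_\mu)={\rm R}\Psi_\mu\star\on{IC}_{w,k'}$, which has the factors reversed. And your appeal to semismallness for perversity is unnecessary (and not quite correct in the parahoric setting): perversity of the right-hand sides follows immediately because the left-hand side is ${\rm R}\Psi$ applied to a perverse sheaf.
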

\begin{proof} In order to simplify the notation, in this proof we will set
$\O'=\O_{E'}$.

We first prove (a). We define an ind-scheme over $X$ by attaching to
every morphism $y:S\to X$,
\begin{equation*}\label{Conv Grass}
\Gr^{\rm Conv}_{\Gg,X}(S)=\Biggl\{\,\text{iso-classes of  } (
\mathcal E, \mathcal E',\beta,\beta') \Biggm|
      \threelinestight{$\mathcal E$, $\mathcal E'$ are two $\Gg$-torsors on $X\times S$,}
   {$\beta$ a trivialization of $\mathcal E |_{ (X\times S)\setminus\Gamma_y}$,} {$\beta'$ an isomorphism $\mathcal E'|_{\mathring{X}\times S}\simeq \mathcal E|_{\mathring{X}\times S}$ }\Biggr\}\, .
\end{equation*}
Observe that there is an natural projection $p:\Gr^{\rm
Conv}_{\Gg,X}\to\Gr_{\Gg,X}$ by forgetting $(\calE',\beta')$, and a
natural map $m:\Gr^{\rm Conv}_{\Gg,X}\to\Gr^{\on{BD}}_{\Gg,X}$
sending $(\calE,\calE',\beta,\beta')$ to $(\calE',\beta\beta')$.

The map $p$ makes $\Gr^{\rm Conv}_{\Gg,X}$  a fibration over
$\Gr_{\calG,X}$ with fibers isomorphic to $\Gr_{P}$. To see this, we
define a fpqc sheaf $\widetilde{\Gr}_{\Gg,X}$ over $X$ by setting,
for $S$ affine,
\[
\widetilde{\Gr}_{\Gg,X}(S)=\biggl\{\,\text{iso-classes of  } (
\mathcal E,\beta,\beta') \biggm|
      \twolinestight{$\mathcal E$ a $\Gg$-torsor on $X\times S$,
   $\beta$ a trivialization of} {$\mathcal E |_{ (X\times S)\setminus\Gamma_y}$, $\beta'$ a trivialization of
   $\mathcal E|_{D_S}$ }\,\biggr\}\, .
\]
Then $L^+P $ acts on $\widetilde{\Gr}_{\Gg,X} $ by changing the
trivialization $\beta'$ and this makes $\widetilde{\Gr}_{\Gg,X}$ a
$L^+P $-torsor over $\Gr_{\Gg,X}$. In addition,
\[
\Gr^{\rm Conv}_{\Gg,X}=\widetilde{\Gr}_{\Gg,X}\times^{L^+P}\Gr_{P}.
\]
The map $m$ can be described as follows. We have
\[
m|_{\mathring{X}}:\Gr^{\rm
Conv}_{\Gg,X}|_{\mathring{X}}\simeq\Gr^{\on{BD}}_{\Gg,X}|_{\mathring{X}},\quad\quad
m|_{X_0}:\Gr_{P }\,\widetilde{\times}\, \Gr_{P }\to\Gr_{P },
\]
where $\Gr_{P }\, \widetilde{\times}\, \Gr_{P }\to\Gr_{P }$ is
defined in \eqref{twp1}.

By specialization along $u\mapsto \varpi$, we obtain  corresponding
ind-schemes over $\Spec(\O)$. Let us  further base change all the
ind-schemes along $\O\to\O'=\O_{E'}$. In particular, we denote
$\Gr^{\rm Conv}_{\Gg,X}\times_X\Spec(\O')$ by $\Gr^{\rm
Conv}_{\Gg,\O'}$. Similarly, we write $\Gr^{\rm Conv}_{\Gg, E'}$,
$\widetilde{\Gr}_{\Gg, E'}$, etc. for the base change to $\Spec(E')$
also given by $u\mapsto \varpi$.

Regard $\on{IC}_{w,E'}\boxtimes\,\F_\mu$ as a sheaf on
$\Gr_{\Gg,E'}^{\rm Conv} \simeq \Gr_{P_{E'}}\times_{\Spec(E')}
\Gr_{\Gg, E'}$. Since taking nearby cycles commutes with proper
push-forward, to show (a) it will be enough to show that there is a
canonical isomorphism
\begin{equation}\label{ser3}
{\rm R}\Psi^{\Gr^{\rm Conv}_{\Gg,\O'}}(\on{IC}_{w,E'}\boxtimes\,
\F_\mu)\xrightarrow{\sim}{\rm
R}\Psi_\mu\,\widetilde{\times}\,\on{IC}_{w,k'}
\end{equation}
of sheaves on $\Gr_{P_{k'}}\,\widetilde\,{\times}\Gr_{P_{k'}}$;
 here $R\Psi_\mu\,\widetilde{\times}\,\on{IC}_{w,k'}$ is the twisted
product defined as in \eqref{twp1}.

Let $L^+_nP $ be the $n$-th jet group of $P $ whose definition is
similar to $\L^+_n\Gg$ in \eqref{nthjet}. (In fact, $L^+_nP
=\L^+_n\Gg\times_XX_0$.) Choose $n$ sufficiently large so that the
action of $L^+P $ on $S_w$ factors through $L^+_nP $. Let $\Gr_{\Gg,
n, X}$ be the $L^+_nP $-torsor over $\Gr_{\Gg,X}$ that classifies
$(\calE, \beta, \beta')$ where $(\calE, \beta)$ are as in the
definition of $\Gr_{\Gg,X}$ and $\beta'$ is a trivialization of the
restriction of $\calE$ over the $n$-th infinitesimal neighborhood
$X_n$ of $X_0\subset X$. Set
$$
\Gr_{\Gg, n, \O_{E'}}=\Gr_{\calG, n, X}\times_X\Spec(\O'),\quad
 \Gr_{\Gg, n,  E'}=\Gr_{\calG, n, X}\times_X\Spec( E' ).
 $$
Then $\F_\mu\,\widetilde{\times}\,\on{IC}_{w,E'}$ is supported on
\[
 \widetilde{\Gr}_{\Gg,E'} \times^{L^+P }S_w\simeq \Gr_{\Gg,n, E'} \times^{L^+_nP }S_w\subset\Gr^{\rm Conv}_{\Gg,E'} .
\]
Observe that over $E'$, it makes sense to talk about
$\F_\mu\twprod\on{IC}_{w,E'}$ (as defined via \eqref{twp1}), which
is canonically isomorphic to $\on{IC}_{w,E'}\boxtimes\, \F_\mu$.
Therefore, \eqref{ser3} is equivalent to
\begin{equation}\label{ser4}
{\rm R}\Psi^{\Gr^{\rm
Conv}_{\Gg,\O'}}(\F_\mu\twprod\on{IC}_{w,E'})\simeq {\rm
R}\Psi_\mu\twprod \on{IC}_{w,k'}.
\end{equation}
Let us denote the pullback of $\F_\mu$ to $\Gr_{\Gg, n, E'} $ by
$\widetilde{\F}_\mu$. Since $\Gr_{\Gg,n, X }\to \Gr_{\Gg,X}$ is
smooth, ${\rm R}\Psi^{\Gr_{\calG, n,\O'}} (\widetilde{\F}_\mu)$ is
canonically isomorphic to the pullback of ${\rm R}\Psi_\mu$, and by
Lemma \ref{no monodromy},
\[
{\rm R}\Psi^{\Gr_{\calG, n, \O'} \times_{ \O' }  S_{w,
\O'}}(\widetilde{\F}_\mu\boxtimes\on{IC}_{w,E'})\simeq {\rm
R}\Psi^{\Gr_{\calG, n,
\O'}}(\widetilde{\F}_\mu)\boxtimes\on{IC}_{w,k'}.
\]
Observe that both sides are in fact $L^+_nP_{k'}$-equivariant
perverse sheaves, and the isomorphism respects to the equivariant
structure (by the similar argument as in the proof of Lemma
\ref{equiv str}). We thus have \eqref{ser4} and therefore have
finished the proof of (a).

Next we prove (b), which is similar. There is another convolution
affine Grassmannian $\Gr_{\Gg,X}^{\rm Conv'}$, which represents the
functor that associates every $X$-scheme $y:S\to X$,
\begin{equation*}
\Gr^{\rm Conv'}_{\Gg,X}(S)=\Biggl\{\,\text{iso-classes of  } (
\mathcal E, \mathcal E',\beta,\beta') \Biggm|
      \threelinestight{$\mathcal E, \mathcal E'$ are two $\Gg$-torsors on $X\times S$,}
  { $\beta$ a trivialization of $\mathcal E |_{  \mathring{X}\times S }$,} {$\beta'$ an isomorphism $\mathcal E'|_{X\times S\setminus\Ga_y}\simeq \mathcal E|_{X\times S\setminus\Ga_y}$}\,\Biggr\}\, .
\end{equation*}

Clearly, we have $m':\Gr^{\rm Conv'}_{\Gg,
X}\to\Gr_{\Gg,X}^{\on{BD}}$ by sending
$(y,\calE,\calE',\beta,\beta')$ to $(y,\calE',\beta\beta')$. This is
an isomorphism over $\mathring{X}$, and $m'|_{X_0}$ is  again
 the local convolution diagram
\[
m:\Gr_{P}\twprod \Gr_{P}\to\Gr_{P}.
\]
Again, regard $\on{IC}_{w,E'}\boxtimes\,\F_\mu$ as a sheaf on
$\Gr^{\rm Conv'}_{\calG,E'} \simeq \Gr_{P_{E'}}\times_{E'}
\Gr_{\Gg,E'} $.  Again, as nearby cycles commute with proper
push-forward, it is enough to prove that as sheaves on
$\Gr_{P_{k'}}\twprod\Gr_{P_{k'}}$,
\begin{equation}\label{ser1}
{\rm R}\Psi^{\Gr_{\Gg,\O'}^{\rm Conv'} }(\on{IC}_{w,E'}\boxtimes\,
\F_\mu)\xrightarrow{\sim} \on{IC}_{w,k'}\twprod R\Psi_\mu.
\end{equation}
Recall $\calL^+_n\calG$ is the $n$-th jet group of $\calG$. This is
smooth over $X$ and the action of
$(\L^+\Gg)_{\O_E}=\L^+\Gg\times_X\Spec(\O_E)$ on $M_{\Gg, \mu}$
factors through   $(\calL^+_n\calG)_{\O_E}$ for some sufficiently
large $n$.

Let us define the $\calL^+_n\calG$-torsor $\calQ_n$ over $\Gr_{P
}\times_{\Spec(\O)} X$ as follows. Its $S$-points are quadruples
$(y,\calE,\beta,\beta')$, where $y:S\to X$, $(\calE,\beta)$ are as
in the definition of $\Gr_{P}$ (and therefore $\beta$ is a
trivialization of $\calE$ on $\mathring{X}\times S$), and $\beta'$
is a trivialization of $\calE$ over $\Ga_{y,n}$, the $n$-th
nilpotent thickening of the graph $\Ga_y$ of $y$. Then we have the
twisted product
 \[
 \calT_n:=(\calQ_n\times_X\Spec(\O'))\times^{(\calL^+_n\calG)_{\O'} }M_{\Gg, \mu, \O'}\subset\Gr_{\calG,\O'}^{\rm Conv'} .\]
Over $E'$, we can form the twisted product
$\on{IC}_{w,E'}\twprod\F_\mu$ on $\calT_n$ as in \eqref{twp1}, which
is canonically isomorphic to $\on{IC}_{w,E'}\boxtimes\,\F_\mu$. By
the same argument as in the proof of (a) (i.e. by pulling back
everything to $(\calQ_n\times_X\Spec(\O'))\times M_{\Gg, \mu, \O'}$)
 we have
\[{\rm R}\Psi^{ \calT_n}
 (\on{IC}_{w,E'}\twprod\F_\mu)\xrightarrow{\sim}
\on{IC}_{w,k'}\twprod{\rm R}\Psi_\mu.\] Therefore, \eqref{ser1}
holds and this completes the proof of the proposition.
 \end{proof}
\begin{Remark}
{\rm Observe that in \cite{GaitsgoryInv}, the proof of the second
statement of the proposition is considerably more difficult than the
proof of the first one. Indeed, to prove the second statement,
Gaitsgory used the fact that every $H$-torsor over
$\bbA^1_{\bar{k}}$ admits a reduction to a Borel subgroup, whose
counterpart for the general parahoric group schemes $\Gg$ over
$\bbA^1_\O$ has not been documented. Instead, our approach exploits
the extra flexibility provided by the use of the group schemes
$\Gg$ and treats both cases in a parallel way.}
\end{Remark}

\subsection{The monodromy of the nearby cycles}
\subsubsection{}
Let $M_{\Gg, \mu}$ be the generalized local model defined  as
before. This is a flat projective $\O_E$-scheme. To study the
  action of the inertia group $ I_E={\rm ker}({\rm Gal}(\bar F/E)\to {\rm Gal}(\bar k/k_E))$
  (``monodromy") on the nearby cycles, it is convenient to first consider the restriction of this action to
  the inertia $I_{\ti F}$, where, as we recall (see \S \ref{sss1a2}), $\tilde F$ splits the group $G$.

\begin{thm}\label{monodromy}
The action of the inertia $I_{\ti F}$ on the nearby cycles ${{\rm
R}\Psi}_\mu$ is unipotent.
\end{thm}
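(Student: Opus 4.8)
The plan is to exploit the same fundamental technique that is used throughout the paper: comparing nearby cycles taken along two different directions over the two-dimensional base $X = \AA^1_\O = \Spec(\O[u])$. The point is that the local model $M_{\Gg,\mu}$, which sits inside $\Gr_{\Gg,\O}$ via specialization along $u = \varpi$, has a companion family over the $u$-line obtained by specialization along $u = t$ for varying $t$. Concretely, I would introduce the Beilinson--Drinfeld-type deformation $\Gr_{\Gg, X}^{\on{BD}}$ (or rather its $\mu$-version, the Zariski closure of the orbit $X_\mu$ inside $\Gr^{\on{BD}}_{\Gg, X}$ appropriately pulled back) which over the open locus $u \ne 0$ looks like a product of a constant affine Grassmannian for the split form $H$ with the local model, and whose special behavior is concentrated along $u = 0$ and along the graph $\Gamma_\varpi$. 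The key structural input is Corollary \ref{fibers} together with the fact (used in the proof of Theorem \ref{special fiber}, via \cite[Lemma 3.3]{ZhuCoherence}) that over the complement of $u = 0$, after the unramified base change $\tilde X \to X$, the twisted affine Grassmannian becomes the constant $\Gr_H$. Hence the whole ramification of $\Gg$ is supported at $u = 0$.

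The first step is therefore to set up the two families carefully: base-change everything by $\tilde\O_0[v] \to \O[u]$, $u \mapsto v^e$, so that the cover $\tilde X \to X$ is totally ramified precisely over $u = 0$, and over $v \ne 0$ the group $\Gg$ pulls back to the constant split group $H \otimes \O_0[v, v^{-1}]$. Then the nearby cycles $\mathrm R\Psi_\mu$ along $u = \varpi$ can be compared with the nearby cycles of the analogous family taken in the $\tilde F$-direction; since over $\tilde F$ the geometry is that of a constant $\Gr_H$ (an affine Schubert variety $\overline{\Gr}_{H,\mu}$ with its $\mathrm{IC}$ sheaf), Lemma \ref{no monodromy} tells us the nearby cycles along that direction carry \emph{no} monodromy at all --- they are literally an intersection cohomology sheaf over the residue field. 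The second step is to bootstrap this: the inertia $I_{\tilde F}$ is exactly the inertia of $\Spec(\tilde F) \to \Spec(F)$, and under the identification of $\pi_1(\Gm_\O, \Spec \bar F) \xrightarrow{\sim} \Gamma^t$ (as in \S\ref{sss2a1}), $I_{\tilde F}$ corresponds to the inertia of the cover $\Spec(\tilde F) \to \Spec(F)$ obtained by specializing $\tilde X \to X$ along $u = \varpi$. Because $\tilde X \to X$ is \emph{unramified away from $u = 0$}, pulling back the family $M_{\Gg,\mu} \to \Spec(\O_E)$ to $\O_{E'}$ (where $E' = E\tilde F$) is an \emph{unramified} base change of Henselian traits, and nearby cycles commute with such base change while the Galois action simply becomes the restricted action. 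So after this unramified base change the family extends to a family over the larger ring and the $I_{\tilde F}$-part of the monodromy becomes trivial by the argument of Lemma \ref{no monodromy} applied fiberwise to the (constant, unramified) geometry; equivalently, one runs the Demazure-resolution / normal-crossings argument of \cite[\S 5.2, \S 6.3]{HainesNgoNearby} on the $\tilde F$-side where it legitimately applies since $\tilde X$-base-changed everything is a constant affine Schubert variety.

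Making this precise, the heart of the argument is: (i) identify the sheaf $\mathrm R\Psi_\mu$, after restricting the monodromy to $I_{\tilde F}$, with the nearby cycles of the family over $\tilde O_0[v]$ localized at $v = \tilde\varpi$ (using the isomorphism \eqref{poweriso} $\tilde\O_0[v^{\pm 1}] \otimes_{\O[u^{\pm 1}]} F[[u - \varpi]] \xrightarrow{\sim} \tilde F[[z]]$); (ii) observe that this latter family is, generically, the constant family $\overline{\Gr}_{H,\mu} \times \Spec(\tilde F[[z]])$ with $\mathrm{IC}$ coefficients, so Lemma \ref{no monodromy} (or its split-group version combined with the Demazure resolution, which \emph{is} defined over the relevant base here) gives $\mathrm R\Psi \cong \mathrm{IC}$ with trivial inertia action; (iii) transport this back. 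The conclusion that $I_{\tilde F}$ acts unipotently --- in fact trivially, once one is careful about which field one works over --- then follows. Actually, strictly speaking the statement only claims unipotence; the cleaner route to unipotence is to note that $I_{\tilde F}$ has a maximal pro-$\ell$ quotient through which it must act (the prime-to-$\ell$ part acts through a finite quotient that is killed by passing to the perfect residue field, and the wild part acts trivially by Gabber's theorem on tame nearby cycles since $\tilde F/F$ kills tameness), and the pro-$\ell$-monodromy on nearby cycles of a semistable-type degeneration is unipotent; but the comparison above in fact yields triviality of the $I_{\tilde F}$-action, which a fortiori gives unipotence.

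The main obstacle I anticipate is step (i): one must genuinely verify that the formation of $M_{\Gg,\mu}$ --- a Zariski closure inside an ind-scheme --- is compatible with the $v$-direction base change, i.e. that the Zariski closure of $X_\mu$ inside $\Gr_{\Gg, \O_E}$ pulls back (under the flat, unramified-away-from-$0$ morphism $\Spec(\O_{E'}) \to \Spec(\O_E)$, dressed up via \eqref{poweriso}) to the corresponding closure for the split group, so that the two nearby-cycle sheaves really are identified and not merely related by a spectral sequence. This requires knowing that $\Gr_{\Gg, X} \times_X (\tilde X \setminus \{0\}) \simeq \Gr_{H} \times_\O (\tilde X \setminus \{0\})$ as ind-\emph{schemes} (Proposition \ref{indscheme} and the discussion in its proof give this) and then that taking closures commutes with the flat base change $\O_E \to \O_{E'}$ --- which it does because $\O_E \to \O_{E'}$ is flat and $M_{\Gg,\mu}$ is by definition the reduced closure, using that the generic fiber is geometrically reduced (indeed smooth, $\mu$ being minuscule in the application, though here $\mu$ need not be minuscule so one invokes Theorem \ref{CMfiber} for normality hence reducedness of the total space). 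Once that compatibility is nailed down, the rest is a formal application of the cited nearby-cycle formalism and Lemma \ref{no monodromy}.
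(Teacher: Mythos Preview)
Your argument has a genuine gap, and in fact it would prove too much: as written, your steps (i)--(iii) would yield \emph{triviality} of the $I_{\tilde F}$-action, but this is false in general --- it holds only when $x$ is a special vertex (Proposition~\ref{mono triv}), and for a general parahoric (e.g.\ Iwahori) the action is genuinely nontrivial though unipotent. The error is in step (ii): the family $\widetilde M_{\Gg,\mu} = M_{\Gg,\mu}\otimes_{\O_E}\O_{\tilde F} \to \Spec(\O_{\tilde F})$ is \emph{not} the constant family $\overline{\Gr}_{H,\mu}\times\Spec(\tilde F[[z]])$. Its generic fiber is indeed $\overline{\Gr}_{H,\mu,\tilde F}$, but its special fiber sits inside the \emph{twisted} flag variety $\Gr_{P_{\bar k}}$, not inside $\Gr_{H,\bar k}$. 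You appear to conflate the formal loop variable $z=u-\varpi$ (which indexes the loop group, not a geometric direction) with the uniformizer of the trait over which nearby cycles are taken. The isomorphism~\eqref{poweriso} only tells you that the \emph{generic} fiber of $\Gr_{\Gg,\O}$ becomes $\Gr_H$ over $\tilde F$; it says nothing about the special fiber. Equivalently, in your two-dimensional picture, the section $v=\tilde\varpi$ of $\tilde X=\Spec(\tilde\O_0[v])$ has its closed point at $v=0$, which is exactly where the twisting lives --- you cannot avoid it. Likewise the base change $\O_E\to\O_{\tilde F}$ is in general ramified, so your ``unramified base change'' assertion fails. The ``cleaner route'' via semistable degenerations is also unjustified: there is no a priori semistable model of $M_{\Gg,\mu}$.

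The paper's proof is quite different. It first reduces to the Iwahori case using that the unipotent/non-unipotent decomposition of ${\rm R}\Psi$ is respected by proper pushforward. In the Iwahori case, one splits $\widetilde{{\rm R}\Psi}_\mu = (\widetilde{{\rm R}\Psi}_\mu)^{\rm un}\oplus(\widetilde{{\rm R}\Psi}_\mu)^{\rm non\text{-}un}$ and shows the second summand vanishes. The only place a ``constant split family'' comparison enters is Lemma~\ref{triv}, which shows that the \emph{cohomology} $\rH^*(\widetilde{{\rm R}\Psi}_\mu)\simeq\rH^*(\on{IC}_{\mu,\tilde F})$ carries trivial $I_{\tilde F}$-action (this much of your idea is correct, since cohomology is computed on the generic fiber and there one really does have the split Schubert variety over a good trait). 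Hence $\rH^*((\widetilde{{\rm R}\Psi}_\mu)^{\rm non\text{-}un})=0$. The crucial extra input, which your proposal lacks entirely, is the commutativity constraint of Theorem~\ref{comm constraints}: it shows that $(\widetilde{{\rm R}\Psi}_\mu)^{\rm non\text{-}un}$ is a \emph{central} sheaf on the Iwahori flag variety, and for central sheaves the hypercohomology functor is faithful (via the Wakimoto filtration, \cite[Cor.~7.9]{ZhuCoherence}), so vanishing cohomology forces the sheaf itself to vanish. This faithfulness step is what bridges the gap between ``trivial on $\rH^*$'' and ``unipotent on the sheaf'' --- without it, one only gets information about the cohomology, which is exactly the piece of your argument that survives.
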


\begin{proof}  Consider
the base change
\[
\widetilde M_{\Gg, \mu}:= M_{\Gg, \mu}\otimes_{\O_E}\O_{\tilde{F}}.
\]
Let us set $\widetilde{{\rm R}\Psi}_\mu:={\rm R}\Psi^{\widetilde
M_{\Gg,\mu}}(\F_\mu)$. Recall that $\widetilde{{\rm R}\Psi}_\mu$ is
canonically isomorphic to  the sheaf ${\rm R}{\Psi}_\mu$ on
$\overline M_{\Gg, \mu}\otimes_{k_E}\bar k$ but with its ${\rm
Gal}(\bar F/F)$-action restricted to ${\rm Gal}(\bar F/\ti F)$.

To study the inertia action we can base change to $\breve\O$. Let
$(S,s,\eta)$ be a strictly Henselian trait, i.e. $k(s)$ is separably
closed. Suppose that $\frakX\to S$ is a separated scheme of finite
type. Let us recall that the nearby cycle ${\rm R}\Psi^{\frakX}$ can
be canonically decomposed as
\[
{\rm R}\Psi^{\frakX}=({\rm R}\Psi^{\frakX})^{\rm un}\oplus ({\rm
R}\Psi^{\frakX})^{\rm non-un},
\]
where $({\rm R}\Psi^{\frakX})^{\rm un}$ is the unipotent part
  and $({\rm R}\Psi^{\frakX})^{\rm non-un}$ is the non-unipotent part
  (see \cite[\S 5]{GortzHainesCrelle}).
If $f:\frakX\to\frakY$ is a proper morphism over $S$, then the
isomorphism $f_*{\rm R}\Psi^{\frakX}\simeq {\rm R}\Psi^{\frakY}f_*$
respects this decomposition. We refer to \cite{GortzHainesCrelle}
for the details. From this discussion, by using the standard
technique of using an Iwahori subgroup contained in our given
parahoric, we see that Theorem \ref{monodromy} follows from

\begin{prop}\label{Iwahori case}
Theorem \ref{monodromy} holds in the case $x\in A(G,A,F)$ lies in
the alcove, i.e. when $P_k$ is an Iwahori group scheme.
\end{prop}

 Proposition \ref{Iwahori case} will be deduced as a consequence of the general theory of
central sheaves on  affine flag varieties together with the
following lemma.

\begin{lemma}\label{triv}
Under the above assumptions and notations, the  action of
$\Gal(\bar{F}/\tilde{F}\breve{F})$ on the cohomology groups $\rH^*(
M_{\Gg, \mu,\tilde{F}}, (\F_\mu)_{\tilde{F}})$ is trivial.
\end{lemma}
\begin{proof}
Recall that if $H$ is the split form of $G$, then $M_{\Gg, \mu,
\tilde F}\simeq \overline{\Gr}_{H, \mu,\tilde{F}}$ and
$(\F_\mu)_{\tilde{F}}\simeq\on{IC}_{\mu,\tilde{F}}$. As in Lemma
\ref{no monodromy}, the nearby cycle ${\rm
R}\Psi^{\overline{\Gr}_{H,
\mu}\otimes_\O\O_{\tilde{F}}}(\on{IC}_{\mu,\tilde{F}})$
 has trivial inertia action. This and proper base change implies the lemma.
\end{proof}
\smallskip

Now, we prove Proposition \ref{Iwahori case}. It is enough to show
the statement for $ \widetilde{{\rm R}\Psi}_\mu$. Decompose
\[
\widetilde{{\rm R}\Psi}_\mu=(\widetilde{{\rm R}\Psi}_\mu)^{\rm
un}\oplus (\widetilde{{\rm R}\Psi}_\mu)^{\rm non-un}.
\]
By taking   cohomology  and using the above lemma, we obtain
$\rH^*(\Gr_{P_k}, (\widetilde{{\rm R}\Psi}_\mu)^{\rm non-un})=(0). $
We claim that this already implies that $(\widetilde{{\rm
R}\Psi}_\mu)^{\rm non-un}=(0)$. Indeed, recall that by Theorem
\ref{comm constraints}, we have isomorphisms
\[
\widetilde{{\rm R}\Psi}_\mu\star\on{IC}_{w,\bar{k}} \simeq {\rm
R}\Psi^{\Gr^{\on{BD}}_{\Gg,\O_{\tilde F}}
}(\on{IC}_{w,E'}\boxtimes\,\calF_\mu)\simeq \on{IC}_{w,E'}\star\,
\widetilde{{\rm R}\Psi}_\mu
\]
compatible with the action of the inertia group
$\Gal(\bar{F}/\tilde{F}\breve{F})$. Since the inertia action on
$\on{IC}_{w,\bar{k}}={\rm
R}\Psi^{\Gr_{\breve{P}}}(\on{IC}_{w,\breve{F}})$ is trivial, we
obtain  isomorphisms of perverse sheaves
\begin{eqnarray*}
(\widetilde{{\rm R}\Psi}_\mu)^{\rm non-un}\star\on{IC}_{w,\bar{k}}\simeq   (\widetilde{{\rm R}\Psi}_\mu\star\on{IC}_{w,\bar{k}})^{\rm non-un}  \phantom{\ \ \ \ \ } \\
\phantom{\ \ \ \ } \simeq    (\on{IC}_{w,\bar{k}}\star\,
\widetilde{{\rm R}\Psi}_\mu)^{\rm non-un}
\simeq\on{IC}_{w,\bar{k}}\star\, (\widetilde{{\rm R}\Psi}_\mu)^{\rm
non-un}.
\end{eqnarray*}
(\cite[\S 5.3]{GortzHainesCrelle}). In other words,
$(\widetilde{{\rm R}\Psi}_\mu)^{\rm non-un}$ is a \emph{central
sheaf} in $ \on{Perv}_{L^+P_{\bar{k}}}(\Gr_{P_{\bar{k}}}, \overline
\Q_l)$ (cf. \cite{ArBezru, ZhuCoherence}). In loc. cit., it is
proven that central sheaves admit filtrations by the so-called
Wakimoto sheaves on $\Gr_{P_{\bar{k}}}$. Hence, by \cite[Corollary
7.9]{ZhuCoherence}, $\rH^*(\Gr_{P_{\bar{k}}}, (\widetilde{{\rm
R}\Psi}_\mu)^{\rm non-un})=(0)$ implies that $(\widetilde{{\rm
R}\Psi}_\mu)^{\rm non-un}=(0)$. This concludes the proof of the
Proposition and hence also of Theorem \ref{monodromy}.
\end{proof}

\subsubsection{}
Recall that we say that $x\in \calB(G, F)$ is a very special vertex, if
it is special in the sense of Bruhat-Tits in both $\B(G, F)$ and
$\B(G_{\breve{F}},\breve{F})$. 
If there exists such a vertex then the group $G$ is quasi-split over $F$
 (see \cite[\S 6]{ZhuSatake}).

\begin{prop}\label{mono triv}
Assume that $x$ is a very special vertex. Then the action of the inertia
$I_{\ti F}$    on $ {{\rm R}\Psi_\mu}$ is trivial. \end{prop}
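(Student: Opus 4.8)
The plan is to reduce Proposition \ref{mono triv} to the unipotent case already established in Theorem \ref{monodromy} by showing that, when $x$ is special, the nearby cycles sheaf ${\rm R}\Psi_\mu$ has \emph{no} nontrivial unipotent Jordan blocks, i.e. the monodromy operator is not merely unipotent but actually the identity. First I would recall from Theorem \ref{monodromy} that $I_{\ti F}$ acts unipotently on ${\rm R}\Psi_\mu$; since $I_{\ti F}$ acts through its tame quotient (pro-$\ell$ part being trivial on $\overline{\bbQ}_\ell$-sheaves in the relevant sense) via a single topological generator, triviality of the action is equivalent to the vanishing of the nilpotent logarithm $N$ of the monodromy. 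So the task becomes: show $N = 0$ on ${\rm R}\Psi_\mu$.

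The key input should be that when $x$ is a special vertex, by Corollary \ref{specialCM} the special fiber $\overline M_{\Gg,\mu}$ is irreducible and normal, and by Theorem \ref{special fiber} it is the single affine Schubert variety $S^{P_{\bar k}}_{t_\mu}$ (the admissible set having a unique maximal element $t_\mu$, as used in the proof of Corollary \ref{specialCM}). The plan is then to compare ${\rm R}\Psi_\mu$ with the intersection cohomology sheaf of this Schubert variety. I would argue, paralleling Lemma \ref{no monodromy} and Lemma \ref{triv}: the nearby cycles of the intersection cohomology sheaf $\F_\mu$ on $M_{\Gg,\mu,E}$ — which over $\ti F$ becomes $\on{IC}_{\mu,\ti F}$ on $\overline{\Gr}_{H,\mu,\ti F}$ — restricted to the open smooth locus $\mathring S_{t_\mu}$ is just the constant sheaf (shifted and twisted), hence has trivial $N$ there. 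Then, using that ${\rm R}\Psi_\mu$ is a perverse sheaf on the \emph{irreducible} variety $\overline M_{\Gg,\mu} = S^{P_{\bar k}}_{t_\mu}$ whose generic-rank piece is this constant sheaf, and that by Lemma \ref{equiv str} it is $L^+P_{k_E}$-equivariant, I would deduce that the semisimplification of ${\rm R}\Psi_\mu$ is $\on{IC}_{t_\mu,\bar k}$ itself (up to the $\Gal$-action being trivial on coefficients), forcing the monodromy filtration to be concentrated in one step, i.e. $N=0$. Here the normality/irreducibility of $\overline M_{\Gg,\mu}$ in the special case is precisely what makes ${\rm R}\Psi_\mu$ \emph{equal} to $\on{IC}_{t_\mu,\bar k}$ rather than a larger perverse sheaf — this is the crucial point distinguishing the special case from the general one.

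An alternative, cleaner route I would pursue in parallel uses the commutativity constraint: by Theorem \ref{comm constraints} (and the proof of Proposition \ref{Iwahori case}), $(\widetilde{{\rm R}\Psi}_\mu)$ is, in the appropriate sense, a central sheaf, and central sheaves admit Wakimoto filtrations. In the \emph{special} vertex case the admissible set is ``saturated'' — it has a single maximal element and the Schubert variety $S^{P_{\bar k}}_{t_\mu}$ is normal — so one expects the Wakimoto filtration of ${\rm R}\Psi_\mu$ to degenerate, with only the central term $J_{t_\mu} = \on{IC}_{t_\mu,\bar k}$ surviving; combined with the fact (Lemma \ref{no monodromy}) that $\on{IC}_{t_\mu,\bar k} = {\rm R}\Psi^{\Gr_{\breve P}}(\on{IC}_{t_\mu,\breve F})$ carries trivial inertia action, this gives the claim. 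I would compare the Euler characteristics / cohomology via Lemma \ref{triv}: $\rH^*(\overline M_{\Gg,\mu}, {\rm R}\Psi_\mu) \cong \rH^*(M_{\Gg,\mu,\ti F}, \F_\mu)$ has trivial $I_{\ti F}$-action, and since the ``non-identity'' part of a unipotent action on a perverse sheaf that is pulled apart by a nontrivial $N$ would contribute to cohomology (arguing as in the proof of Proposition \ref{Iwahori case} via Wakimoto sheaves and \cite[Corollary 7.9]{ZhuCoherence}), vanishing of that contribution forces $N=0$.

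The main obstacle will be making the ``degeneration of the Wakimoto/monodromy filtration'' argument precise: one needs to know that in the special case ${\rm R}\Psi_\mu$ is \emph{irreducible} as a perverse sheaf (equal to $\on{IC}_{t_\mu,\bar k}$), not just that its support is irreducible. I would establish this by combining Theorem \ref{special fiber} (support is exactly $S^{P_{\bar k}}_{t_\mu}$), the normality from Corollary \ref{specialCM}, the computation of ${\rm R}\Psi_\mu$ on the open orbit $\mathring S_{t_\mu}$ (constant sheaf, so rank one generically), and a weight/purity argument: ${\rm R}\Psi_\mu$ of a pure sheaf on the generic fiber is known to be pure (by the purity of nearby cycles of the IC sheaf of $\overline{\Gr}_{H,\mu}$, which is what $\F_\mu$ becomes after base change to $\ti F$, together with the fact that $\overline M_{\Gg,\mu}\otimes\ti F \cong \overline{\Gr}_{H,\mu,\ti F}$), and a pure perverse sheaf with irreducible support and generic rank one must be $\on{IC}$ of that support — in particular has trivial $N$. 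Once this identification ${\rm R}\Psi_\mu \cong \on{IC}_{t_\mu,\bar k}$ is in hand, triviality of $I_{\ti F}$ follows immediately from Lemma \ref{no monodromy} applied with $F' = \ti F$.
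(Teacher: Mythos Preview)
Your central claim --- that in the special vertex case ${\rm R}\Psi_\mu$ is irreducible and equal to $\on{IC}_{t_\mu,\bar k}$ --- is false, and this is the main gap. A pure perverse sheaf with irreducible support and generic rank one need \emph{not} be the IC sheaf of its support: by the decomposition theorem it is a direct sum of IC sheaves, but these can be supported on proper closed subvarieties. Concretely, for $G=\Res_{K/F}\GL_n$ with $K/F$ totally tamely ramified of degree $d$ and $x$ (hyper)special, the paper later computes (Example \ref{exPRJAG}, Theorem \ref{thm9.19})
\[
\widetilde{{\rm R}\Psi}_\mu\ \simeq\ \on{IC}_{\mu_1}\star\cdots\star\on{IC}_{\mu_d}\ \simeq\ \bigoplus_{\lambda\leq\mu_1+\cdots+\mu_d} M_\lambda\otimes\on{IC}_\lambda,
\]
which is typically reducible, with summands supported on strictly smaller Schubert varieties inside $\overline M_{\Gg,\mu}$. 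Your purity/rank-one argument only pins down the top summand. Likewise, the ``Wakimoto filtration degenerates to a single term'' expectation is not correct for the same reason; the irreducibility of $\overline M_{\Gg,\mu}$ controls the support, not the constituents of the sheaf.

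The paper's actual argument is much shorter and uses a different input that you did not identify. When $x$ is special, the (ramified) geometric Satake correspondence (\cite{MirkVilonen,ZhuSatake}) shows that the category $\on{Perv}_{L^+P_{\bar k}}(\Gr_{P_{\bar k}},\overline{\bbQ}_\ell)$ is \emph{semisimple}, and hence the hypercohomology functor
\[
\rH^*:\on{Perv}_{L^+P_{\bar k}}(\Gr_{P_{\bar k}},\overline{\bbQ}_\ell)\longrightarrow\on{Vect}_{\overline{\bbQ}_\ell}
\]
is faithful. By Lemma \ref{triv} the inertia $I_{\tilde F}$ acts trivially on $\rH^*({\rm R}\Psi_\mu)$; since the monodromy operator $N$ is an endomorphism of ${\rm R}\Psi_\mu$ in this category and $\rH^*(N)=0$, faithfulness gives $N=0$. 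So the key feature of the special vertex case is not the geometry of $\overline M_{\Gg,\mu}$ but the semisimplicity of the equivariant perverse category there; that is what fails for a general parahoric and is why Theorem \ref{monodromy} only gives unipotence.
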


\begin{proof}
Observe that when $x$ is a very special vertex, the hypercohomology
functor
\[
\rH^*:\on{Perv}_{L^+P_{\bar{k}}}(\Gr_{P_{\bar{k}}},\overline{\bbQ}_\ell)\to\on{Vect}_{\overline{\bbQ}_\ell}
\]
is a faithful functor. This follows from the semisimplicity of
$\on{Perv}_{L^+P_{\bar{k}}}(\Gr_{P_{\bar{k}}},\overline{\bbQ}_\ell)$,
as is shown in \cite{MirkVilonen, ZhuSatake}. Then the proposition
is a direct corollary of Lemma \ref{triv}.
\end{proof}

\begin{Remark}{\rm
 When $G=\Res_{K/F}\GL_n$, where $K$ is a finite extension of $F$
with Galois hull $\tilde{F}$ in $\bar{F}$, and $\mu$ is a Shimura
(minuscule) cocharacter,  Proposition \ref{mono triv} is shown in \cite{PappasRaI}. When $G$ is a ramified unitary similitude group and $\mu$ is a
Shimura cocharacter, it is shown in \cite{ZhuSatake}.
%Of course,
%when $G$ is unramified and $\mu$ is a Shimura cocharacter, the
%statement is obvious since $M_{\Gg, \mu}$ is smooth over $\O_E$.
In these two cases, one can also obtain an explicit description of the monodromy
action of $ I_E$ on ${\rm R}\Psi_\mu$.  See \cite[Sect. 7]{PappasRaI} (also   Example \ref{exPRJAG} below)
and \cite[Theorem 6.2]{ZhuSatake} respectively.
We will return to
the subject of the  monodromy action on ${\rm R}\Psi_\mu$
in \S \ref{quasisplit}. There we will give a uniform but less
explicit description, in all cases that $G$ is quasi-split and $x$ is very special.}
\end{Remark}

\subsection{The semi-simple trace}\label{sstraceSect}
\subsubsection{} As is explained in \cite[\S 3.1]{HainesNgoNearby}, for any $X$ over $s$,
and $\bbF_q\supset k(s)$, there is a map
\[\tau^{\on{ss}}: \on{D}_c^b(X\times_s\eta,\overline{\bbQ}_\ell)\to \on{Func}(X(\bbF_{q}),\overline{\bbQ}_\ell),\]
called the semi-simple trace. This notion is due to Rapoport. Let us
briefly recall its definition and refer to \cite{HainesNgoNearby}
for details.

Let $V$ be an $\ell$-adic representation of the Galois group ${\rm Gal}(k(\bar\eta)/k(\eta))$. An admissible
filtration of $V$ is an increasing filtration $F_\bullet V$, stable
under the action of ${\rm Gal}(k(\bar\eta)/k(\eta))$ and such that, for all $i$, the action of the inertia
$I$ on $V_i/V_{i-1}$ factors through a finite quotient. For an
admissible filtration of $V$, one defines
\[\Tr^{\on{ss}}(\sigma, V)=\sum_i\Tr(\sigma, (\on{gr}^F_iV)^I),\]
where $\sigma$ is the geometric Frobenius element. It is well-known
that  admissible filtrations always exist and that the semi-simple
trace $\Tr^{\on{ss}}(\sigma, V)$ does not depend on the choice of
admissible filtration.

Next, if $\calF\in\on{Sh}_c(X\times_s\eta,\overline{\bbQ}_\ell)$,
then for every $x\in X(\bbF_q)$, we define
\[\tau_\calF^{\on{ss}}(x)=\Tr^{\on{ss}}(\sigma_x,\calF_{\bar{x}}),\]
where $\bar{x}$ is a geometric point over $x$ and $\sigma_x$ is the
geometric Frobenius of $\Gal(k(\bar{x})/k(x))$. In general, if $\calF\in
\on{D}_c^b(X\times_s\eta,\overline{\bbQ}_\ell)$, then we set
\[
\tau_\calF^{\on{ss}}(x)=\sum_i(-1)^i\tau_{\calH^i\calF}^{\on{ss}}(x).
\]

It is known that taking  the semi-simple trace gives an analogue of
 Grothendieck's usual sheaf-function dictionary. Namely, we can see that
$\tau^{\on{ss}}$ factors through the Grothendieck group of
$\on{D}_c^b(X\times_s\eta,\overline{\bbQ}_\ell)$. Also, if $f:X\to
Y$ is a morphism over $s$, then
\begin{equation}\label{sstrF}
f^*\tau^{\on{ss}}_\calF=\tau^{\on{ss}}_{f^*\calF},\quad
f_!\tau^{\on{ss}}_\calF=\tau^{\on{ss}}_{f_!\calF}.
\end{equation}

\subsubsection{}

Now, consider $x\in {\mathcal B}(G, F)$ and let $\calP_x$ be the
parahoric group scheme as before. Let $G'=\calG\times_X
\Spec(k((u)))$ be the corresponding reductive group over $k((u))$
and suppose $\calP_{x_{k((u))}}=\calG\times_X\Spec(k[[u]])$ is a
corresponding parahoric group scheme over $k[[u]]$. Let
$\bbF_q\supset k$ and set $P'_q:=\calP_{x_{k((u))}}(\bbF_q[[u]])$.
Let $\calH_q(G',P'):=\calH(G'(\bbF_q((u))),P'_q)$ be the Hecke
algebra of bi-$P'_q$-invariant, compactly supported locally constant
$\overline{\Q}_l$-valued functions on $G'(\bbF_q((u)))$ under
convolution. This is an associative algebra (usually
non-commutative). Let $\calZ(\calH_q(G',P'))$ denote its center.

 Since ${\rm R}\Psi_\mu$
is an object in
$\on{Perv}_{L^+P_{k_E}}(\Gr_{P_k}\times_{k_E}E,\overline{\bbQ}_\ell)$,
for every $\bbF_q\supset k_E$, we can consider $\tau_{{\rm
R}\Psi_\mu}^{\on{ss}}\in \calH_q(G',P')$.  As in
\cite{HainesNgoNearby}, we can see that Theorem \ref{comm
constraints} implies

\begin{thm} \label{thm9.13} With the above assumptions and notations,
 $\tau^{\on{ss}}_{{\rm
R}\Psi_\mu}\in\calZ(\calH_q(G',P'))$.\ \endproof
\end{thm}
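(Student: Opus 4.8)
\textbf{Proof plan for Theorem \ref{thm9.13}.} The statement is that $\tau^{\on{ss}}_{{\rm R}\Psi_\mu}$ lies in the center of the parahoric Hecke algebra $\calH_q(G',P')$. The plan is to deduce this formally from the commutativity constraint of Theorem \ref{comm constraints} together with the sheaf-to-function dictionary for the semi-simple trace, following the strategy of Haines--Ng\^o \cite{HainesNgoNearby}. First I would recall that, by the geometric interpretation of the convolution product on the affine flag variety $\Gr_{P_k}$, the function attached (via $\tau^{\on{ss}}$) to a convolution $\calF_1\star\calF_2$ of two $L^+P$-equivariant objects is the convolution product $\tau^{\on{ss}}_{\calF_1}\ast\tau^{\on{ss}}_{\calF_2}$ in $\calH_q(G',P')$; this uses the compatibility \eqref{sstrF} of $\tau^{\on{ss}}$ with proper pushforward (the map $m$ in the convolution diagram \eqref{Conv-diag} is ind-proper on the relevant Schubert pieces) and with the twisted external product, exactly as in \cite[\S 3]{HainesNgoNearby}. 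In particular $\tau^{\on{ss}}_{\on{IC}_{w,\bar k}}$, as $w$ ranges over $\widetilde W'$ modulo $W^{P'}$ on both sides, gives a spanning set (in fact a basis up to unipotent filtration issues, which do not affect the span) of $\calH_q(G',P')$ as a $\overline{\Q}_\ell$-vector space, since the $\on{IC}$-sheaves of Schubert varieties are, modulo lower terms, the characteristic functions of the double cosets $P'_q w P'_q$.

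Next I would invoke Theorem \ref{comm constraints}: there is a canonical isomorphism $\on{IC}_{w,\bar k}\star {\rm R}\Psi_\mu \xrightarrow{\sim} {\rm R}\Psi_\mu\star \on{IC}_{w,\bar k}$ in $\on{Perv}_{L^+P_{\bar k}}(\Gr_{P_{\bar k}})$, compatible with the Galois action. Applying $\tau^{\on{ss}}$ to both sides and using the previous paragraph yields the identity
\[
\tau^{\on{ss}}_{\on{IC}_{w,\bar k}}\ast \tau^{\on{ss}}_{{\rm R}\Psi_\mu} = \tau^{\on{ss}}_{{\rm R}\Psi_\mu}\ast \tau^{\on{ss}}_{\on{IC}_{w,\bar k}}
\]
in $\calH_q(G',P')$ for every $w$. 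Since the $\tau^{\on{ss}}_{\on{IC}_{w,\bar k}}$ span $\calH_q(G',P')$, it follows that $\tau^{\on{ss}}_{{\rm R}\Psi_\mu}$ commutes with every element of $\calH_q(G',P')$, i.e. $\tau^{\on{ss}}_{{\rm R}\Psi_\mu}\in\calZ(\calH_q(G',P'))$. One small point to check carefully is that $\tau^{\on{ss}}_{{\rm R}\Psi_\mu}$ is indeed supported on finitely many double cosets and is bi-$P'_q$-invariant, hence genuinely an element of $\calH_q(G',P')$: the former holds because $\overline M_{\Gg,\mu}$ is a finite union of Schubert varieties (Theorem \ref{special fiber}), and the latter because ${\rm R}\Psi_\mu$ is $L^+P_{k_E}$-equivariant by Lemma \ref{equiv str}, so that $\tau^{\on{ss}}_{{\rm R}\Psi_\mu}$ is constant on $P'_q$-double cosets by the equivariance-implies-invariance principle of \cite[\S 3]{HainesNgoNearby}.

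The main obstacle, already resolved upstream in the paper, is the commutativity constraint itself (Theorem \ref{comm constraints}), whose proof rests on the mixed-characteristic Beilinson--Drinfeld Grassmannian $\Gr^{\on{BD}}_{\Gg,X}$ and the two nearby-cycle computations of Proposition \ref{aux}; granting that, the deduction of centrality is essentially formal. A secondary technical issue is to make sure that passing from the isomorphism of perverse sheaves to the equality of functions via $\tau^{\on{ss}}$ is legitimate even though $\calF_1\star\calF_2$ need not be perverse for general $\calF_i$: this is handled by noting that $\tau^{\on{ss}}$ factors through the Grothendieck group of $\on{D}^b_c(\Gr_{P_k}\times_{k}\eta,\overline{\Q}_\ell)$ and is additive in distinguished triangles, so only the class of the convolution in the Grothendieck group matters, and \eqref{sstrF} applies directly. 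Thus the proof amounts to: (i) set up the function-sheaf dictionary for convolution on $\Gr_{P_k}$; (ii) verify $\tau^{\on{ss}}_{{\rm R}\Psi_\mu}\in\calH_q(G',P')$; (iii) spread the $\on{IC}$-functions into a spanning set; (iv) apply $\tau^{\on{ss}}$ to Theorem \ref{comm constraints} and conclude.
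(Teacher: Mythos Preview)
Your proposal is correct and follows exactly the paper's approach: the paper simply states that Theorem \ref{comm constraints} implies Theorem \ref{thm9.13} ``as in \cite{HainesNgoNearby}'', and you have spelled out precisely that deduction via the sheaf--function dictionary. One minor remark: you need not invoke Theorem \ref{special fiber} (which has the hypothesis $p\nmid|\pi_1(G_{\on{der}})|$) to get finite support, since $\overline M_{\Gg,\mu}$ is already projective and hence meets only finitely many Schubert cells.
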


In what follows, we explain how, in some cases, we can determine (or
characterize)   the central function $\tau^{\rm ss}_{{\rm
R}\Psi_\mu}$.

\subsubsection{Unramified groups}
Here, we assume that $G$ is unramified. This means that the group
$G$ is quasi-split and it splits over $\breve{F}$.
\smallskip

a) First assume that $x$ is hyperspecial. Then $\Gr_{P_{\bar{k}}}$
is isomorphic to the usual affine Grassmannian $\Gr_H\otimes
\bar{k}$ of $H$. In this case, the geometric fiber
$\overline{M}_{\Gg, \mu }\otimes_{k_E}\bar{k}$ is isomorphic (up to
nilpotents) to the Schubert variety
$\overline{\on{Gr}}_{\mu,\bar{k}}\subset \Gr_H\otimes\bar k$
corresponding to $\mu$. (If $p\nmid|\pi_1(G_{\on{der}})|$, these are
isomorphic by Theorem \ref{special fiber}.) Let us denote the
intersection cohomology sheaf on $\overline{M}_{\Gg, \mu }$ by
$\bar{\F}_\mu$ so that $(\bar{\F}_\mu
)_{\bar{k}}\simeq\on{IC}_{\mu,\bar{k}}$. By Proposition \ref{mono
triv}, we know that ${\rm R}\Psi_\mu$ is a
Weil sheaf on $\overline{\Gr}_{\mu,\bar{k}}$. %(i.e a sheaf with compatible ${\rm Gal}(\bar k/k_E)$-action).
By Lemma \ref{groupaction} and Lemma \ref{sla},
$(\bar{\F}_\mu)_{\bar{k}}$ is a subquotient of $({\rm
R}\Psi_\mu)_{\bar{k}}$ as Weil sheaves and when forgetting the Weil
structure over $k_E$, it is a direct summand since
$\on{Perv}_{L^+P_{\bar{k}}}(\Gr_{P_{\bar{k}}},
\overline{\Q}_{\ell})$ is a semi-simple category. Then it follows
from
\[\dim_{\overline{\Q}_{\ell}} \rH^*(\on{IC}_{\mu})=\dim_{\overline{\Q}_{\ell}} \rH^*(\F_\mu)=\dim_{\overline{\Q}_{\ell}} \rH^*({\rm R}\Psi_\mu),\]
that we have

\begin{prop}\label{propWeil}
${\rm R}\Psi_\mu\simeq \bar{\F}_{\mu}$ as Weil sheaves.\endproof
\end{prop}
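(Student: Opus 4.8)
The plan is to combine three facts already in hand: the Galois action on $\mathrm{R}\Psi_\mu$ factors through a finite quotient on the hyperspecial vertex (Proposition \ref{mono triv}, since $x$ hyperspecial implies $x$ special), so $\mathrm{R}\Psi_\mu$ is genuinely a Weil sheaf on $\overline{\Gr}_{\mu,\bar k}$; the category $\on{Perv}_{L^+P_{\bar k}}(\Gr_{P_{\bar k}},\overline{\bbQ}_\ell)$ is semisimple (Mirkovi\'c--Vilonen, Zhu), so after forgetting the Weil structure $\mathrm{R}\Psi_\mu$ decomposes as a direct sum of $L^+P_{\bar k}$-equivariant intersection cohomology sheaves $\on{IC}_{\nu,\bar k}$ with $\nu$ ranging over a finite set with multiplicities; and the total cohomology is preserved under nearby cycles, $\rH^*(\overline{\Gr}_{\mu},\on{IC}_\mu)=\rH^*(M_{\Gg,\mu,E},\F_\mu)=\rH^*(\Gr_{P_{\bar k}},\mathrm{R}\Psi_\mu)$, the first equality by proper base change to $\tilde F$ (where $M_{\Gg,\mu,\tilde F}\simeq\overline{\Gr}_{H,\mu,\tilde F}$ and $\F_\mu$ pulls back to $\on{IC}_{\mu,\tilde F}$), the second because $\mathrm{R}\Psi$ commutes with the proper pushforward to a point.

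First I would record that $\on{IC}_{\mu,\bar k}$ occurs as a summand of $(\mathrm{R}\Psi_\mu)_{\bar k}$. This is exactly the content already asserted before Proposition \ref{propWeil}: by Lemma \ref{groupaction}, $\overline M_{\Gg,\mu}$ carries a genuine $L^+P_{\bar k}$-action, and by Lemma \ref{sla} the section $[s_{\tilde\mu}]$ specializes to the point $t_\mu$ lying in the open orbit $\mathring{S}_\mu=\mathring{\on{Gr}}_\mu$; hence on that open orbit $\mathrm{R}\Psi_\mu$ restricts (by smooth base change of nearby cycles along the smooth locus, as in Lemma \ref{no monodromy}) to the constant sheaf $\overline{\bbQ}_\ell[\dim\Gr_\mu](\tfrac{\dim\Gr_\mu}{2})$, which forces $\on{IC}_{\mu,\bar k}$ to be the unique summand of $(\mathrm{R}\Psi_\mu)_{\bar k}$ supported with full support on $\overline{\Gr}_{\mu,\bar k}$, and to appear with multiplicity exactly one. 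Then the dimension count $\dim\rH^*(\on{IC}_\mu)=\dim\rH^*(\mathrm{R}\Psi_\mu)$ and additivity of $\dim\rH^*$ over the semisimple decomposition leaves no room for any other summand: every summand contributes nonnegatively to $\dim\rH^*$ (by purity and the decomposition theorem each $\on{IC}_{\nu}$ has nonzero cohomology), so all multiplicities other than that of $\on{IC}_\mu$ must vanish. Therefore $(\mathrm{R}\Psi_\mu)_{\bar k}\simeq\on{IC}_{\mu,\bar k}$ as plain perverse sheaves.

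It remains to upgrade this geometric isomorphism to an isomorphism of Weil sheaves over $k_E$, i.e.\ to match the Frobenius structures. Here I would use that $\on{IC}_\mu$ (equivalently $\bar\F_\mu$) is already defined over $k_E$ as a subquotient, hence (by semisimplicity over $k_E$ of the relevant category, or simply because the open-orbit restriction identifies it canonically) a direct summand, of $\mathrm{R}\Psi_\mu$ as Weil sheaves: the inclusion $\bar\F_\mu\hookrightarrow\mathrm{R}\Psi_\mu$ constructed via Lemmas \ref{groupaction} and \ref{sla} is Galois-equivariant by construction, and its cokernel is a Weil perverse sheaf whose underlying geometric object vanishes by the previous paragraph. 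A perverse sheaf on a scheme of finite type over $\bar k$ equipped with a Weil structure is zero as soon as it is zero after forgetting the Weil structure; so the cokernel is zero and the inclusion is an isomorphism of Weil sheaves. The main obstacle is the bookkeeping in this last step — ensuring that the summand $\bar\F_\mu$ one extracts really is defined over $k_E$ with its natural Frobenius (not merely over some finite extension), which is why one wants the canonical identification on the open orbit rather than an abstract splitting; given Proposition \ref{mono triv} and the smooth base change of nearby cycles along $\mathring{\on{Gr}}_\mu$, this canonicity is automatic. This completes the proof of Proposition \ref{propWeil}.
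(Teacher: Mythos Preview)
Your proposal is correct and follows essentially the same route as the paper: triviality of inertia via Proposition~\ref{mono triv}, the appearance of $\bar\F_\mu$ inside $\mathrm{R}\Psi_\mu$ from Lemmas~\ref{groupaction} and~\ref{sla} combined with semisimplicity of $\on{Perv}_{L^+P_{\bar k}}(\Gr_{P_{\bar k}},\overline{\bbQ}_\ell)$, and then the dimension count $\dim\rH^*(\on{IC}_\mu)=\dim\rH^*(\F_\mu)=\dim\rH^*(\mathrm{R}\Psi_\mu)$ to rule out extra summands. You have simply made explicit what the paper compresses into two sentences before the proposition, in particular the open-orbit argument for why $\on{IC}_\mu$ appears with multiplicity one and the passage from a geometric isomorphism to one of Weil sheaves; the paper phrases the latter as ``subquotient as Weil sheaves'' rather than constructing a canonical inclusion, but the conclusion is the same once the underlying geometric sheaf is shown to vanish.
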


 If $\mu$ is a minuscule coweight and  $x$ is 
hyperspecial, then $\overline{ M }_{\Gg, \mu }$ is smooth. If 
we set
$d=\dim \overline{ M }_{\Gg, \mu }=2(\rho, \mu)$ then, in particular,
we have
\begin{equation}
{\rm R}\Psi_\mu\simeq \overline{\Q}_{\ell}[d](d/2)
\end{equation}
 (the constant sheaf on
$\overline{ M }_{\Gg, \mu }$ up to cohomological shift and Tate
twist).

For simplicity, write $K=P'_q$ and denote $\calH_q(G',P')$ by
$\on{Sph}_q$ (the notation stands for the spherical Hecke algebra)
which is then already a commutative algebra. It is well-known that
the function
\[
A_\mu:=\tau^{\on{ss}}_{\bar{\F}_{\mu}}\in\on{Sph}_q
\]
is given by the Lusztig-Kato polynomial (at least  when $G$ is
split). Of course, in the   case that $\mu$ is minuscule,
$A_\mu=(-1)^{2(\rho,\mu)}q^{(\rho,\mu)}1_{Ks_\mu K}$, where
$1_{Ks_\mu K}$ is the characteristic function of the double coset
$Ks_\mu K$.  
\smallskip

b) Next we let $x$ be a general point in ${\mathcal B}(G, F)$. We
can then obtain the following statement which was conjectured by
Kottwitz and previously proven in the cases $G= \GL_n$ and
$\on{GSp}_{2n}$ by Haines and Ng\^o (\cite{HainesNgoNearby}, see
also \cite{RostamiThesis}).

\begin{thm} \label{ThmKott}(Kottwitz's conjecture)
Assume that $G$ is unramified. Then with the notations above,
$\tau^{\on{ss}}_{{\rm R}\Psi_\mu}$ is the unique element in the
center $\calZ(\calH_q(G',P'))$, whose image under the Bernstein
isomorphism $\calZ(\calH_q(G',P'))\xrightarrow{\sim} \on{Sph}_q$ is
$A_\mu$.
\end{thm}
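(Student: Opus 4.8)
The plan is to upgrade the centrality statement of Theorem \ref{thm9.13} to an identification of $\tau^{\on{ss}}_{{\rm R}\Psi_\mu}$ inside $\calZ(\calH_q(G',P'))$, by realizing ${\rm R}\Psi_\mu$ as the central sheaf on the affine flag variety attached to $\on{IC}_\mu^{\on{sph}}$ via geometric Satake, and then passing to functions. Uniqueness of the element with prescribed Bernstein image is automatic, so the whole point is the identification of the image.

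First I would reduce to the case that $P'$ is an Iwahori group scheme. Choose an alcove $C$ of $\calA(G,A,F)$ with $x\in\bar C$, and let $\calG_C$, $\calG_x$ be the group schemes of Theorem \ref{grpschemeThm} for the barycenter of $C$ and for $x$. There is a proper surjection $\Gr_{\calG_C,\O}\to\Gr_{\calG_x,\O}$ which, by Corollary \ref{fibers} and the fact that $\calG_C[u^{-1}]=\calG_x[u^{-1}]=\und G$, is the identity of $\Gr_{G,F}$ on generic fibers; hence it carries $M_{\calG_C,\mu}$ onto $M_{\calG_x,\mu}$, and proper base change for nearby cycles gives that the pushforward of ${\rm R}\Psi_\mu^{\calG_C}$ is ${\rm R}\Psi_\mu^{\calG_x}$ (this is the "Iwahori subgroup contained in the parahoric" device already used in the proof of Theorem \ref{monodromy}). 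Function-theoretically this is the pushforward $\calZ(\calH_q(G',\mathrm{Iw}'))\to\calZ(\calH_q(G',P'))$, which intertwines the two Bernstein isomorphisms with $\on{Sph}_q$ (cf. \cite{HainesNgoNearby} and Haines's work on parahoric Hecke algebras), so it suffices to treat $P'=\mathrm{Iw}'$.

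Next, by Theorem \ref{comm constraints} the perverse sheaf ${\rm R}\Psi_\mu$ on $\Gr_{\mathrm{Iw}',\bar k}$ is \emph{central}: it commutes, compatibly with Frobenius, with convolution by all $\on{IC}_{w,\bar k}$. I would then invoke the structure theory of central sheaves on affine flag varieties (Gaitsgory, Arkhipov--Bezrukavnikov, and in the present ramified generality \cite{ZhuSatake}, \cite{ZhuCoherence}): the category of central sheaves is semisimple and commutative monoidal, and the nearby-cycle functor $Z$ attached to the Beilinson--Drinfeld family $\Gr^{\on{BD}}_{\Gg,X}\to X$ of the proof of Theorem \ref{comm constraints} (whose generic fiber involves $\Gr_{G,F}$ and whose special fiber is $\Gr_{\mathrm{Iw}'}$) is a monoidal functor from the Satake category on the generic fiber, matching geometric Satake, onto the central sheaves. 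By construction ${\rm R}\Psi_\mu=Z(\F_\mu)$, and on the generic fiber $\F_\mu$ pulls back to $\on{IC}_{\mu,\tilde F}$ on $\overline{\Gr}_{H,\mu}$, while proper base change together with Corollary \ref{fibers} gives $\rH^*(\Gr_{\mathrm{Iw}'},{\rm R}\Psi_\mu)=\rH^*(M_{\Gg,\mu,\tilde F},\on{IC}_{\mu,\tilde F})=V_\mu$; since hypercohomology is a faithful (fiber) functor on this semisimple category, ${\rm R}\Psi_\mu$ is forced to be the central sheaf attached to $\on{IC}_\mu^{\on{sph}}$ with its correct Weil structure (the hyperspecial computation of Proposition \ref{propWeil}, i.e. $\tau^{\on{ss}}_{\bar{\F}_\mu}=A_\mu$, serves as the consistency check on the generic-fiber normalization). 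Applying $\tau^{\on{ss}}$, which is additive and compatible with convolution by (\ref{sstrF}), and using that $Z$ corresponds under $\tau^{\on{ss}}$ to the Bernstein isomorphism $\on{Sph}_q\xrightarrow{\sim}\calZ(\calH_q(G',\mathrm{Iw}'))$, yields that $\tau^{\on{ss}}_{{\rm R}\Psi_\mu}$ is the Bernstein image of $A_\mu$.

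The main obstacle is precisely the middle step: establishing, in mixed characteristic, the monoidal compatibility of the nearby-cycle functor $Z$ coming from $\Gr^{\on{BD}}_{\Gg,X}$ over the two-dimensional base $X=\AA^1_\O$ with geometric Satake on $\Gr_{G,F}$, and its function-theoretic shadow being the Bernstein isomorphism. Over a function field this is Gaitsgory's theorem; here one must rerun the argument using the global Grassmannians of \S\ref{chapterLoop} and the ramified geometric Satake of \cite{ZhuSatake}, and for unramified non-split $G$ also keep track of the inertia action and match $A_\mu$ with the twisted Lusztig--Kato function. Once $Z$ is available with these properties, the identification of $\tau^{\on{ss}}_{{\rm R}\Psi_\mu}$, and hence Kottwitz's conjecture, is formal.
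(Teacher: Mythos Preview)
Your reduction to the Iwahori case is correct and matches the paper, as does the invocation of centrality via Theorem~\ref{thm9.13}. But your treatment of the Iwahori case takes an unnecessarily heavy route and, as you yourself flag, leaves a genuine gap: you need that the nearby-cycle functor $Z$ realizes the Bernstein isomorphism at the function level, and you propose to extract this from a monoidal compatibility of $Z$ with geometric Satake in mixed characteristic that the paper does not establish. (Incidentally, the category of central sheaves on the Iwahori affine flag variety is \emph{not} semisimple---the Wakimoto filtration is typically non-split---so the faithfulness-of-hypercohomology step does not work as stated; but that step is in any case tautological, since ${\rm R}\Psi_\mu=Z(\F_\mu)$ by construction.)

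The paper's argument bypasses your obstacle completely. Since $G$ is unramified, the closure of the alcove contains a \emph{hyperspecial} vertex $x_0$, and one has the proper map $\pi:\Gr_{\Gg_C,\O_E}\to\Gr_{\Gg_{x_0},\O_E}$, an isomorphism on generic fibers. Because nearby cycles commute with proper push-forward, $\pi_*({\rm R}\Psi_\mu^{\mathrm{Iw}})={\rm R}\Psi_\mu^{\mathrm{hypersp}}$, and the latter is $\bar\F_\mu$ by Proposition~\ref{propWeil}, with $\tau^{\on{ss}}_{\bar\F_\mu}=A_\mu$. On the function side, by (\ref{sstrF}) the map $\pi_*$ is integration over $K/\mathrm{Iw}$-cosets, and its restriction to $\calZ(\calH_q(G',\mathrm{Iw}'))$ \emph{is} the Bernstein isomorphism to $\on{Sph}_q$ (this is the characterization used in \cite{HainesNgoNearby}; see \cite[3.1.1]{HainesBC}). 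That finishes the Iwahori case in one line. The general parahoric case then follows from the compatibility of the Bernstein isomorphisms under change of parahoric \cite[3.3.1]{HainesBC}. You treated Proposition~\ref{propWeil} as a mere normalization check; in fact it is the entire computational input, and no monoidal structure on $Z$, no ramified Satake, and no structure theory of central sheaves is needed.
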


\begin{proof} See \cite[Theorem 3.1.1]{HainesBC} for the
Bernstein isomorphism in this case. We first show the result when
$x$ is in an alcove, i.e the parahoric subgroup is an Iwahori. Then
the proof  follows, exactly as is explained in
\cite{HainesNgoNearby},
 from Theorem \ref{thm9.13} and Proposition \ref{propWeil},
by using (\ref{sstrF}) and the fact that taking nearby cycles
commutes with proper push-forward.  We refer the reader to loc. cit.
for more details.
 The general parahoric case now follows similarly by first finding an Iwahori subgroup
contained in the parahoric and then using the compatibility of the
Bernstein isomorphism with change in parahoric subgroup
(\cite[3.3.1]{HainesBC}).
\end{proof}

\begin{Remark}
{\rm Assume that $x$ lies in an alcove, i.e the parahoric subgroup
is an Iwahori, and that $\mu$ is minuscule. Using  Bernstein's
presentation of the Iwahori-Hecke algebras (again at least for $G$
split), it is possible to give
 explicit formulas for the functions $\tau^{\on{ss}}_{R\Psi_\mu}$. See
  \cite{HainesNgoNearby}  and \cite{HainesTest} for such formulas and various other
  related results.}
\end{Remark}

\subsubsection{Quasi-split groups; the special vertex case}\label{quasisplit}

Now, we assume that $G$ is quasi-split (but can be split only after
a ramified extension). We will restrict here to the case that $x$ is
a very special vertex in $\calB(G, F)$, i.e it is special and remains
special in $\calB(G_{\breve F}, \breve F)$ in the sense of
Bruhat-Tits. As in this case the special fiber
$\overline{M}_{\Gg,\mu}$ is not necessarily smooth, the nearby cycle
${\rm R}\Psi_\mu$ can be complicated. Our goal is to determine the
semi-simple  trace $\tau^{\on{ss}}_{{\rm R}\Psi_\mu}$ but before
that we need to understand the monodromy action on ${\rm
R}\Psi_\mu$.

We will start by describing $\widetilde{{\rm R}\Psi}_\mu$
explicitly.
 By Proposition \ref{mono triv}, $\widetilde{{\rm R}\Psi}_\mu$ is a Weil sheaf
on $\Gr_{P_{\bar{k}}}$.

Let us briefly recall the (ramified) geometric Satake correspondence
as established in \cite{MirkVilonen, ZhuSatake}. Let $H^\vee$ be the
dual group of the split form $H$ of $G$ over $\overline{\bbQ}_\ell$
in the sense of Langlands (i.e. the root datum of $H^\vee$ is dual
to the root datum of $H$). Then the Galois group $\Ga=\Gal(\tilde
F/F)$ (and therefore $I=I_F$) acts on $H^\vee$ via pinned
automorphisms (after choosing some pinning). Recall the notation
$G'=\Gg\times_X\Spec(k((u)))$ and $P_k=\Gg\times_X\Spec(k[[u]])$;
our constructions allow us to identify the actions of $I_F$ and
$I_{k((u))}$ on $H^\vee$ obtained from $G$ and $G'$ respectively:
Indeed, the action of $I_F$ (resp. $I_{k((u))}$) on $H^\vee$ 
is determined by $I_F\to {\rm Out}(G)$ (resp. $I_{k((u))}\to {\rm Out}(G')$).
These homomorphisms factor through the corresponding tame inertia quotients
which as in \S \ref{ssCovers} are identified with the fundamental group of $\breve\O[u^{\pm 1}]$. However, by our construction, ${\rm Out}(G)={\rm Out}(\und G)={\rm Out}(G')$,
and both $I_F\to {\rm Out}(G)$ and $I_{k((u))}\to {\rm Out}(G')$ are obtained as 
specializations of $\pi_1(\Spec(\breve\O[u^{\pm 1}]), \Spec(\bar F))\to {\rm Out}(\und G)$.

The geometric Satake correspondence of \cite{MirkVilonen, ZhuSatake}
is an  equivalence of tensor categories
\[\calS_{\bar{k}}:\on{Rep}((H^\vee)^I)\xrightarrow{  \sim  }\on{Perv}_{L^+P_{\bar{k}}}(\Gr_{P_{\bar{k}}},\overline{\bbQ}_\ell),\]
such that the composition $\rH^*\circ\calS_{\bar{k}}$ is isomorphic
to the natural forgetful fiber functor $\on{Rep}((H^\vee)^I)\to {\rm
Vect}( \overline\Q_l)$. On the other hand,
$\Gr_{\Gg,\O}\otimes_{\O}\tilde{F}$ is just $\Gr_{H, \tilde{F}}$ for
the split group $H$ and by \cite{MirkVilonen}  there is a full
embedding
\[
\calS_{\tilde{F}}:\on{Rep}(H^\vee)\to\on{Perv}_{(\calL^+\calG)_{\tilde{F}}}(\Gr_{\Gg,\O}\otimes_{\O}\tilde{F},\overline{\bbQ}_\ell).
\]
This maps $V_\mu$, the highest weight representation of $H^\vee$
corresponding to $\mu\in\xcoch(T)$, to $\on{IC}_{\mu,\tilde{F}}$, so
that $\calS_{\tilde F}(V_\mu)=\on{IC}_{\mu,\tilde F}$. Again, the
composition   $\rH^*\circ \calS_{\tilde F}$  is isomorphic to the
forgetful functor.

\begin{thm}\label{thm9.19}
The composition
\begin{multline*} (\calS_{\bar{k}})^{-1}\circ{\rm R}\Psi^{\Gr_{\Gg,\O}\otimes\O_{\tilde{F}}}\circ\calS_{\tilde{F}}:\\
\on{Rep}(H^\vee)\to\on{Perv}_{(\calL^+\calG)_{\tilde{F}}}(\Gr_{\Gg,\O}\otimes_\O\tilde{F},\overline{\bbQ}_\ell)
\to
\on{Perv}_{L^+P_{\bar{k}}}(\Gr_{P_{\bar{k}}},\overline{\bbQ}_\ell)\to\on{Rep}((H^\vee)^I)
\end{multline*}
is isomorphic to the natural restriction functor
\[\on{Res}:\on{Rep}(H^\vee)\to\on{Rep}((H^\vee)^I).\]
In particular, $\widetilde{{\rm R}\Psi}_\mu\simeq
\calS_{\bar{k}}(\on{Res}(V_\mu))$.
\end{thm}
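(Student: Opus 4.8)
The plan is to prove Theorem~\ref{thm9.19} by realizing the nearby cycles functor as a tensor functor on the Satake category and identifying it through its effect on fiber functors, rather than computing stalks directly. First I would recall the structure of both sides: by the ramified geometric Satake equivalence $\calS_{\bar k}$, the target category $\on{Perv}_{L^+P_{\bar k}}(\Gr_{P_{\bar k}},\overline\bbQ_\ell)$ is Tannakian with Tannaka group $(H^\vee)^I$, and its fiber functor is $\rH^*$; similarly the source, via $\calS_{\tilde F}$, sits inside the Satake category for the split group $H$ over $\tilde F$, with Tannaka group $H^\vee$ and fiber functor $\rH^*$. The composite $F_{\Psi}:=(\calS_{\bar k})^{-1}\circ{\rm R}\Psi\circ\calS_{\tilde F}$ takes values in $\on{Rep}((H^\vee)^I)$. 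The key point is that ${\rm R}\Psi$ for the Beilinson--Drinfeld degeneration over $\AA^1_{\O}$ is a \emph{tensor} functor: this is exactly what Theorem~\ref{comm constraints} and Proposition~\ref{aux} give us, upgraded via the convolution/fusion product on $\Gr^{\on{BD}}_{\Gg,X}$, so that ${\rm R}\Psi(\calF_1\star\calF_2)\simeq {\rm R}\Psi(\calF_1)\star{\rm R}\Psi(\calF_2)$ compatibly with associativity and commutativity constraints. Hence $F_\Psi$ is a tensor functor $\on{Rep}(H^\vee)\to\on{Rep}((H^\vee)^I)$.

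Next I would use compatibility with fiber functors. By proper base change for nearby cycles (the fibers of $\Gr^{\on{BD}}_{\Gg,\O_{\tilde F}}$ over the two points of $\Spec\O_{\tilde F}$ have the same cohomology of $\calS_{\tilde F}(V)$ and $\calS_{\bar k}(F_\Psi(V))$ respectively), we get a canonical isomorphism $\rH^*\circ F_\Psi \simeq \rH^*$ of fiber functors on $\on{Rep}(H^\vee)$ — this uses Lemma~\ref{triv} / Lemma~\ref{no monodromy} to know that the relevant nearby cycles compute ordinary cohomology with no extra twist, and in the special-vertex case $\widetilde{{\rm R}\Psi}_\mu$ is an honest Weil sheaf by Proposition~\ref{mono triv}. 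A tensor functor between neutral Tannakian categories which respects fiber functors corresponds to a homomorphism of affine group schemes $\iota:(H^\vee)^I\to H^\vee$; I would check this $\iota$ is a closed immersion (faithfulness of $F_\Psi$, which follows because ${\rm R}\Psi$ sends $\on{IC}_\mu$ to something containing $\on{IC}_{\bar\mu}$ as a constituent, by Lemma~\ref{groupaction} and Lemma~\ref{sla}) and that it is the canonical inclusion. For the latter: $\iota$ is characterized by the induced map on weight lattices/characters, and one computes this via the behaviour on $T^\vee$-weights, i.e. via the maps $\xcoch(T)\to\xcoch(T)_I$; concretely Lemma~\ref{sla} tells us that the support of ${\rm R}\Psi_\mu$ realizes $t_\lambda$ for $\lambda\in\Lambda$, so the Satake parameters match the restriction of $\xcoch(T)$-weights to $\xcoch(T)_I$-weights, which is exactly what the restriction functor $\on{Res}:\on{Rep}(H^\vee)\to\on{Rep}((H^\vee)^I)$ does (the inclusion $(H^\vee)^I\hookrightarrow H^\vee$ being the one fixed by the pinning).

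Having pinned down $\iota$ as the standard inclusion, functoriality gives $F_\Psi\simeq \on{Res}$ as tensor functors, and then the ``in particular'' statement is immediate: $\widetilde{{\rm R}\Psi}_\mu = {\rm R}\Psi(\calS_{\tilde F}(V_\mu)) = \calS_{\bar k}(F_\Psi(V_\mu)) \simeq \calS_{\bar k}(\on{Res}(V_\mu))$. The main obstacle I anticipate is the first half: promoting the commutativity constraint of Theorem~\ref{comm constraints} to a genuine tensor (monoidal, with unit, associativity and commutativity) structure on the nearby cycles functor in this mixed-characteristic two-dimensional setting, and verifying that the isomorphism of fiber functors $\rH^*\circ F_\Psi\simeq\rH^*$ is compatible with the tensor structures. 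This is the analogue of Gaitsgory's central-functor argument, and the technical care needed is in checking the cocycle/hexagon conditions for the fusion product on $\Gr^{\on{BD}}_{\Gg,X}$ — but all the geometric inputs (representability and ind-properness of $\Gr^{\on{BD}}_{\Gg,X}$, the two convolution Grassmannians $\Gr^{\on{Conv}}$ and $\Gr^{\on{Conv}'}$, and the compatibility of ${\rm R}\Psi$ with proper pushforward and smooth pullback) are already established in \S\ref{sectionConstraint}, so this should go through as in \cite{GaitsgoryInv, ZhuSatake}. A secondary subtlety is keeping track of the $\Gal(\tilde F/F)$-equivariance so that the resulting restriction is to $(H^\vee)^I$ with the \emph{correct} $I$-action, which is handled by the identification of the $I_F$- and $I_{k((u))}$-actions on $H^\vee$ noted just before the theorem statement.
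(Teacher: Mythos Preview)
Your approach is genuinely different from the paper's and, as sketched, has a real gap.

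\textbf{What the paper does.} The paper does not argue Tannakianly at all. It exploits that in \cite{ZhuSatake} the ramified Satake equivalence $\calS_{\bar k}$ is \emph{constructed} via nearby cycles along the equal-characteristic degeneration $\widetilde{\Gr}_{\Gg,\bbA^1_{\bar k}}\to\bbA^1_{\bar k}$, so that $\calS_{\bar k}(\on{Res}V_\mu)\simeq {\rm R}\Psi^{\widetilde{\Gr}_{\Gg,\bbA^1_{\bar k}}}(\on{IC}_{\mu,\bar k}\boxtimes\overline\bbQ_\ell[1])$ by definition. The theorem then reduces to comparing nearby cycles along two transverse directions (the $\tilde\varpi$-direction and the $v$-direction) inside the two-parameter family $\widetilde{\Gr}_{\Gg,X}=\Gr_{\Gg,X}\times_{\bbA^1_\O}\bbA^1_{\O_{\tilde F}}$. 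Using that the inertia acts trivially (Proposition~\ref{mono triv}), both nearby cycles are identified with ${}^p\rH^0 i^* j_*$ of the appropriate open inclusions (Lemma~9.20 in the paper), and a vanishing-cycle computation (Lemma~9.22) shows these agree. Faithfulness of $\rH^*$ finishes the argument. No tensor-functor or group-homomorphism identification is needed.

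\textbf{The gap in your argument.} Two issues. First, Theorem~\ref{comm constraints} and Proposition~\ref{aux} establish \emph{centrality} ($\on{IC}_{w,\bar k}\star{\rm R}\Psi_\mu\simeq{\rm R}\Psi_\mu\star\on{IC}_{w,\bar k}$), not monoidality of ${\rm R}\Psi$ itself (${\rm R}\Psi(\F_\mu\star\F_\nu)\simeq{\rm R}\Psi_\mu\star{\rm R}\Psi_\nu$). The latter requires a convolution Grassmannian with \emph{two} moving points colliding at $u=\varpi$, not one moving and one fixed at $u=0$; this is provable by analogous methods but is not what you cited. Second, and more seriously, your identification of the homomorphism $\iota:(H^\vee)^I\to H^\vee$ is inadequate. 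Lemma~\ref{sla} only tells you where the section $[s_{\tilde\lambda}]$ specializes as a \emph{point} of $\Gr_{P_k}$; it says nothing about how the weight functors (hyperbolic localization along semi-infinite orbits, which is what encodes the $T^\vee$- and $(T^\vee)^I$-gradings under geometric Satake) behave under ${\rm R}\Psi$. Pinning down $\iota$ requires showing that nearby cycles intertwine the constant-term functors for $H$ and for $G'$, i.e.\ that the Mirkovi\'c--Vilonen weight decomposition degenerates to the ramified one; this is a substantial additional argument (carried out in \cite{ZhuSatake} in equal characteristic) and is not a consequence of support information alone. Without it, you cannot distinguish the standard inclusion from another embedding $(H^\vee)^I\hookrightarrow H^\vee$ compatible with the fiber functor.
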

\begin{proof}Since this is a statement about geometric sheaves, we
can assume that the residue field of $\tilde{F}$ is algebraically
closed (i.e. we replace $\tilde{F}$ by $\tilde{F}\breve{F}$ in
$\bar{F}$). In addition, we can ignore the Tate twist in what
follows.

Consider the ramified cover $\bbA^1_{\O_{\tilde{F}}}\to\bbA^1_\O$
given by $\O[u]\to\O_{\tilde{F}}[v], u\mapsto v^e$. Let us denote
the base change
\[\widetilde{\Gr}_{\Gg,X}:=\Gr_{\Gg,X}\times_{\bbA^1_\O}\bbA^1_{\O_{\tilde{F}}}.\]
We will consider the sub ind-schemes of $\widetilde{\Gr}_{\Gg,X}$
given by specializing $\O_{\tilde{F}}[v]$ along different
directions.
\begin{enumerate}
\item The map $\O_{\tilde{F}}[v]\to\tilde{F}$, $v\mapsto \tilde{\varpi}$
gives rise to a closed embedding
$i_{v=\tilde{\varpi}}:\Gr_{\Gg,\O}\otimes\O_{\tilde{F}}\to\widetilde{\Gr}_{\Gg,X}$.
We denote the open embedding
$\Gr_{\Gg,\O}\otimes\tilde{F}\to\Gr_{\Gg,\O}\otimes\O_{\tilde{F}}$
by $j_{v=\tilde{\varpi}}$.

\item The map $\O_{\tilde{F}}[v]\to\tilde{F}[v]$ gives rise to
$j_{\tilde{\varpi}\neq
0}:\widetilde{\Gr}_{\Gg,\bbA^1_{\tilde{F}}}\to
\widetilde{\Gr}_{\Gg,X}$. We denote
$\widetilde{\Gr}_{\Gg,\bbG_{m\tilde{F}}}\to
\widetilde{\Gr}_{\Gg,\bbA^1_{\tilde{F}}}$ by $j_{\tilde{F}}$.

\item The map $\O_{\tilde{F}}[v]\to\bar{k}[v]$,
$\tilde{\varpi}\mapsto 0$ gives rise to
$i_{\tilde{\varpi}=0}:\widetilde{\Gr}_{\Gg,\bbA^1_{\bar{k}}}\to
\widetilde{\Gr}_{\Gg,X}$. By its very definition, this is the
ind-scheme over $\bbA^1_{\bar{k}}$ considered in \cite{ZhuSatake}
(in \emph{loc. cit.}, it is denoted by $\widetilde{\Gr}_\Gg$). We
denote $\widetilde{\Gr}_{\Gg,\bbG_{m\bar{k}}}\to
\widetilde{\Gr}_{\Gg,\bbA^1_{\bar{k}}}$ by $j_{\bar{k}}$.

\item The map $\O_{\tilde{F}}[v]\to\O_{\tilde{F}}$, $v\mapsto 0$, gives rise
to $i_{v=0}:\Gr_{P_{\O_{\tilde{F}}}}\to\widetilde{\Gr}_{\Gg,X}$, and
its open complement is denoted by $j_{v\neq 0}:
\widetilde{\Gr}_{\Gg,\bbG_m}\to \widetilde{\Gr}_{\Gg,X}$.

\item The map $\O_{\tilde{F}}[v]\to\bar{k}$, $v\mapsto 0$, $\varpi\mapsto 0$ gives rise
to $i:\Gr_{P_{\bar{k}}}\to \widetilde{\Gr}_{\Gg,X}$.
\end{enumerate}

Since the reductive group scheme $\underline{G}$ constructed in
Sect. \ref{reductive group}  splits after the base change $\O[u^{\pm
1}]\to \O_{\tilde F}[v^{\pm 1}]$, we have
\[
\widetilde{\Gr}_{\Gg,\bbG_m}\simeq (\Gr_{H}\times
\bbG_{m})\otimes_\O\O_{\tilde{F}}.
\]
Then for $\mu\in\xcoch(T)$, we can regard
$\on{IC}_{\mu}\boxtimes\,\overline{\bbQ}_\ell[1]$, which is a
perverse sheaf on $\Gr_{H}\times \bbG_{m}$, as a perverse sheaf on
$\widetilde{\Gr}_{\Gg,\bbG_{m}}$.
% We let $\widetilde{\on{IC}}_{\mu}$
%be its intermediate extension to the whole
%$\widetilde{\Gr}_{\Gg,\bbA^1}$.
Similarly, we have
$\on{IC}_{\mu,\kappa}\boxtimes\,\overline{\bbQ}_\ell[1]$ over
$\widetilde{\Gr}_{\Gg,\bbG_{m\kappa}}$ %and $\widetilde{\on{IC}}_{\mu,\kappa}$ over $\widetilde{\Gr}_{\Gg,\bbA^1_{\kappa}}$
for $\kappa=\bar{k}$ or $\tilde{F}$.

By the construction of $\calS_{\bar k}$ in \cite{ZhuSatake}, there
is a canonical isomorphism
\[
{\rm
R}\Psi^{\widetilde{\Gr}_{\Gg,\bbA^1_{\bar{k}}}}(\on{IC}_{\mu,\bar{k}}\boxtimes\,\overline{\bbQ}_\ell[1])\simeq
\calS_{\bar{k}}(\on{Res}V_\mu).
\]
Therefore, the theorem will follow if we can construct a canonical
isomorphism
\begin{equation}\label{step1}
{\rm
R}\Psi^{\Gr_{\Gg,\O}\otimes\O_{\tilde{F}}}(\calS_{\tilde{F}}(V_\mu))\simeq
{\rm
R}\Psi^{\widetilde{\Gr}_{\Gg,\bbA^1_{\bar{k}}}}(\on{IC}_{\mu,\bar{k}}\boxtimes\,\overline{\bbQ}_\ell[1]).
\end{equation}

In what follows, we will make use of the following standard lemma.
(See also \cite[Corollary 2.3]{ZhuSatake}).
\begin{lemma}\label{another}
Let $p: \frakX\to S$ be as in \S \ref{review of nearby cycles} with $S$ a strictly henselian trait.
Let $\calF$ be a perverse sheaf on $\frakX_\eta$. If the inertia
action on ${\rm R}\Psi^\frakX(\calF)$ is trivial, then
\[
{\rm R}\Psi^\frakX(\calF)\simeq {^p}\rH^{0}i^*j_{*}\calF\simeq
{^p}\rH^{1}i^*j_*\calF\simeq i^*j_{!*}\calF,
\]
where ${^p}\rH^*$ stands for  perverse cohomology.
\end{lemma}
\begin{proof} Here, as usual, we denote by $i$ (resp. $j$) the natural closed (resp. open) immersion of the 
closed (resp. generic) fiber of $p: \frak X\to S$ into $\frakX$. Since the inertia action is trivial, from the distinguished triangle
\begin{equation}\label{distinguish}
i^*j_*\calF\to {\rm R}\Psi^\frakX(\calF)\stackrel{0}{\to} {\rm
R}\Psi^\frakX(\calF)\to \ ,
\end{equation}
(see for example \cite[(3.6.2)]{IllusieMonodromy}) we obtain that
$i^*j_*\calF$ is supported in perverse cohomological degree $0$ and
$1$, and both cohomology sheaves are isomorphic to ${\rm
R}\Psi^\frakX(\calF)$. But $i^*j_{!*}\calF={^p\rH}^{1}i^*j_*\calF$.
The lemma follows.
\end{proof}

Using this we see that we can reduce \eqref{step1} to proving an
isomorphism
\begin{equation}\label{step2}
{^p}\rH^{0}i^*j_{\bar{k}*}(\on{IC}_{\mu,\bar{k}}\boxtimes\,\overline{\bbQ}_\ell)\xrightarrow{\sim}{^p}\rH^{0}i^*j_{v=\tilde{\varpi}*}\calS_{\tilde{F}}(V_\mu).
\end{equation}
Observe that there is a natural map $i^*_{v=\tilde{\varpi}}j_{v\neq
0*}(\on{IC}_{\mu}\boxtimes\,\overline{\bbQ}_\ell)[-1]\to
j_{v=\tilde{\varpi}*}\calS_{\tilde{F}}(V_\mu)$, which is the
adjunction of $j_{v=\tilde{\varpi}}^*i^*_{v=\tilde{\varpi}}j_{v\neq
0*}(\on{IC}_{\mu}\boxtimes\,\overline{\bbQ}_\ell)[-1]\xrightarrow{\sim
} \calS_{\tilde{F}}(V_\mu)$. Similarly, we have a map
\begin{equation}
\label{stronger} i^*_{\tilde{\varpi}=0}j_{v\neq
0*}(\on{IC}_{\mu}\boxtimes\,\overline{\bbQ}_\ell)[-1]\to
j_{\bar{k}*}(\on{IC}_{\mu,\bar{k}}\boxtimes\,\overline{\bbQ}_\ell).
\end{equation}
The key observation, which we will prove later, is that
\begin{lemma}\label{stronger1}
The map \eqref{stronger} is an isomorphism.
\end{lemma}

From this, we obtain the following correspondence
\begin{equation}\label{adjunction}
{^p}\rH^0i^*j_{\bar{k}*}(\on{IC}_{\mu,\bar{k}}\boxtimes\,\bbQ_\ell)\leftarrow
{^p}\rH^0i^*j_{v\neq
0*}(\on{IC}_{\mu}\boxtimes\,\overline{\bbQ}_\ell)[-1]\to
{^p}\rH^0i^*j_{v=\tilde{\varpi}*}\calS_{\tilde{F}}(V_\mu),
\end{equation}
with the first arrow an isomorphism. Therefore, we can invert the
first arrow of \eqref{adjunction} and obtain an arrow as in
\eqref{step2}. To show that this gives an isomorphism, it is enough
to show that it induces an isomorphism on cohomology since as we
mentioned before,
$\rH^*:\on{Perv}_{L^+P_{\bar{k}}}(\Gr_{P_{\bar{k}}},\overline{\bbQ}_\ell)\to
{\rm Vect}(\overline \Q_\ell)$ is faithful.

Let $f:\widetilde{\Gr}_{\Gg,X}\to\bbA^1_{\O_{\tilde{F}}}$ be the
structure map, which is ind-proper.  Therefore the (derived)
push-forward $f_*$ commutes with any pullback and pushforward. Since
over $(\bbG_m)_{\O_{\tilde F}}\subset \bbA^1_{\O_{\tilde{F}}}$,
$f_*(\on{IC}_{\mu}\boxtimes\, \overline{\bbQ}_\ell)$ is just
constant, with stalks isomorphic to $V_\mu[1]$, it is immediately
seen that
\[
f_*i^*j_{\bar{k}*}(\on{IC}_{\mu,\bar{k}}\boxtimes\,\bbQ_\ell)\simeq
f_*i^*j_{v\neq
0*}(\on{IC}_{\mu}\boxtimes\,\overline{\bbQ}_\ell)[-1]\simeq f_*
i^*j_{v=\tilde{\varpi}*}\calS_{\tilde{F}}(V_\mu)\simeq V_\mu.
\]
Observe that the triangle \eqref{distinguish} implies that
$i^*j_{\bar{k}*}(\on{IC}_{\mu,\bar{k}}\boxtimes\,\bbQ_\ell)={^p}\rH^0(i^*j_{\bar{k}*}(\on{IC}_{\mu,\bar{k}}\boxtimes\,\bbQ_\ell))+{^p}\rH^1(i^*j_{\bar{k}*}(\on{IC}_{\mu,\bar{k}}\boxtimes\,\bbQ_\ell))[-1]$
as objects in
$\on{D}_{L^+P_{\bar{k}}}(\Gr_{P_{\bar{k}}},\overline{\bbQ}_\ell)$
(and similarly for
$i^*j_{v=\tilde{\varpi}*}\calS_{\tilde{F}}(V_\mu))$. Therefore the
isomorphism
$\rH^*(i^*j_{\bar{k}*}(\on{IC}_{\mu,\bar{k}}\boxtimes\bbQ_\ell))\simeq
\rH^*(i^*j_{v=\tilde{\varpi}*}\calS_{\tilde{F}}(V_\mu))$ implies
that the natural map \eqref{adjunction} induces the isomorphism
\[\rH^*({^p}\rH^0i^*j_{\bar{k}*}(\on{IC}_{\mu,\bar{k}}\boxtimes\bbQ_\ell))\simeq
\rH^*({^p}\rH^0i^*j_{v=\tilde{\varpi}*}\calS_{\tilde{F}}(V_\mu)),\]
and the theorem follows.

It remains to prove Lemma \ref{stronger1}. This will follow if we
can show:
\begin{lemma}\label{step3}
Consider the natural structure map
$\widetilde{\Gr}_{\Gg,X}\to\O_{\tilde{F}}$. Then:

(i) The vanishing cycle ${\rm
R}\Phi^{\widetilde{\Gr}_{\Gg,X}}(j_{v\neq
0*}(\on{IC}_\mu\boxtimes\overline{\bbQ}_\ell))$ is trivial.

(ii) The natural map
\[{\rm R}\Psi^{\widetilde{\Gr}_{\Gg,X}}(j_{\tilde{F}*}(\on{IC}_{\mu,\tilde{F}}\boxtimes\,\overline{\bbQ}_\ell))\to j_{\bar{k}*}{\rm R}\Psi^{\widetilde{\Gr}_{\Gg,\bbG_{m}}}(\on{IC}_{\mu,\tilde{F}}\boxtimes\,\overline{\bbQ}_\ell)\simeq j_{\bar{k}*}(\on{IC}_{\mu,\bar{k}}\boxtimes\,\overline{\bbQ}_\ell)\]
is an isomorphism.
\end{lemma}

Indeed, the natural map \eqref{stronger} factors as
\[i_{\tilde{\varpi}=0}^*j_{v\neq 0*}(\on{IC}_\mu\boxtimes\,\overline{\bbQ}_\ell)[-1]\to{\rm R}\Psi^{\widetilde{\Gr}_{\Gg,X}}(j_{\tilde{F}*}(\on{IC}_{\mu,\tilde{F}}\boxtimes\,\overline{\bbQ}_\ell))\to j_{\bar{k}*}(\on{IC}_{\mu,\bar{k}}\boxtimes\,\overline{\bbQ}_\ell).\]
(To see these two maps coincide, apply the adjunction between
$i_{\tilde{\varpi}=0}^*$ and $i_{\tilde{\varpi}=0*}$.) Now by Part
(i) of the lemma, the first arrow is an isomorphism, and by Part
(ii) of the lemma, the second arrow is an isomorphism. This shows
that Lemma \ref{step3} implies Lemma \ref{stronger1}.

Finally, we prove Lemma \ref{step3}. This statement can be regarded
as a global analogue of Lemma \ref{no monodromy}. Since there is no
resolution of $\widetilde{\Gr}_{\Gg,X}$ satisfying the conditions in
\cite[\S 5.2]{HainesNgoNearby}, we need a different argument. First,
we prove (i). According to Gabber's theorem, since $j_{v\neq
0*}(\on{IC}_\mu\boxtimes\,\overline{\bbQ}_\ell)[1]$ is perverse,
${\rm R}\Phi^{\widetilde{\Gr}_{\Gg,X}}(j_{v\neq
0*}(\on{IC}_\mu\boxtimes\,\overline{\bbQ}_\ell))$ is a perverse
sheaf. On the other hand, by \cite[\S 5.2]{HainesNgoNearby},
\[
j_{\bar{k}}^*{\rm R}\Phi^{\widetilde{\Gr}_{\Gg,X}}(j_{v\neq
0*}(\on{IC}_\mu\boxtimes\,\overline{\bbQ}_\ell))={\rm
R}\Phi^{\widetilde{\Gr}_{\Gg,\bbG_m}}(j_{v\neq 0}^*j_{v\neq
0*}(\on{IC}_\mu\boxtimes\,\overline{\bbQ}_\ell))=(0).
\]
Therefore, ${\rm R}\Phi^{\widetilde{\Gr}_{\Gg,X}}(j_{v\neq
0*}(\on{IC}_\mu\boxtimes\overline{\bbQ}_\ell))$ is a perverse sheaf
on $\Gr_{P_{\bar{k}}}$, which is $L^+P_{\bar{k}}$-equivariant by the
same argument as in Lemma \ref{equiv str}. To show it is trivial, it
is enough to prove that its cohomology vanishes. To do this we can
apply $f_*$ with
$f:\widetilde{\Gr}_{\Gg,X}\to\bbA^1_{\O_{\tilde{F}}}$ as above, and
we are done.

To prove (ii), as the nearby cycle functor commutes with Verdier
duality (cf. \cite[Theorem 4.2]{IllusieMonodromy}), it is enough to
prove the dual statement that the natural map
\[
j_{\bar{k}!}(\on{IC}_{\mu,\bar{k}}\boxtimes\overline{\bbQ}_\ell)\to
{\rm
R}\Psi^{\widetilde{\Gr}_{\Gg,X}}j_{\tilde{F}!}(\on{IC}_{\mu,\tilde{F}}\boxtimes\,\overline{\bbQ}_\ell)
\]
is an isomorphism. But the vanishing cycle ${\rm
R}\Phi^{\widetilde{\Gr}_{\Gg,X}}(j_{v\neq
0!}(\on{IC}_\mu\boxtimes\,\overline{\bbQ}_\ell))$ is trivial   by a similar
argument
 as above, and therefore
\begin{multline*}
\ \ \ \ \ \ \ \ \ \ {\rm
R}\Psi^{\widetilde{\Gr}_{\Gg,X}}j_{\tilde{F}!}(\on{IC}_{\mu,\tilde{F}}\boxtimes\,\overline{\bbQ}_\ell)\simeq
i_{\tilde{\varpi}=0}^*j_{v\neq
0!}(\on{IC}_\mu\boxtimes\overline{\bbQ}_\ell)[-1]\simeq \\
\simeq
j_{\bar{k}!}i_{\tilde{\varpi}=0}^*(\on{IC}_\mu\boxtimes\overline{\bbQ}_\ell)[-1]\simeq
j_{\bar{k}!}(\on{IC}_{\mu,\bar{k}}\boxtimes\overline{\bbQ}_\ell).\ \
\ \ \ \ \ \ \
\end{multline*}
This completes the proof of Lemma \ref{step3} and the theorem.
\end{proof}

\begin{Remark}\label{OrzoRemark}
{\rm It is possible that the theory of nearby cycles over a higher dimensional base, 
see for example \cite{Lau}, could provide an different approach 
to the above arguments.}
\end{Remark}

Let us use Theorem \ref{thm9.19} to  determine the inertia action on
${\rm R}\Psi_\mu$. Let $H^\vee\rtimes\Gal(\bar F/E)$ be the
Langlands dual group of $G_E$. Then according to \cite{RiZh},
$\rH^*(\F_\mu)$ carries a canonical action of $H^\vee\rtimes
\Gal(\bar F/E)$, that factors through $H^\vee\rtimes \Gal(\tilde
F/E)$, whose underlying representation of $H^\vee$ is just $V_\mu$
and whose underlying representation of $\Gal(\tilde{F}/E)$ is (a
certain twist of) the natural action of $\Gal(\tilde{F}/E)$ on
$\rH^*(\F_\mu)$. In addition, when we restrict this action to $I_E$,
the twist disappears. Therefore, $V_\mu$ is   naturally a
$(H^\vee)\rtimes I_E$-module. Since nearby cycles commute with
proper push-forward, we have $\rH^*({\rm R}\Psi_\mu)\simeq
\on{Res}(V_\mu)$ as a representation of $(H^\vee)^{I_F}\times I_E$.
Therefore, we obtain that
\begin{thm}\label{thm9.23}
Under the isomorphism
$\calS_{\bar{k}}(\on{Res}(V_\mu))\xrightarrow{\sim} {\rm R}\Psi_\mu$
obtained from Theorem \ref{thm9.19}, the action of the inertia $I_E$
on ${\rm R}\Psi_\mu$ corresponds to the action of $I_E\subset
(H^\vee)^{I_F}\times I_E$ on $\on{Res}(V_\mu)$.
\end{thm}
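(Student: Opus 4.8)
\textbf{Proof plan for Theorem \ref{thm9.23}.} The plan is to combine Theorem \ref{thm9.19} with the results of \cite{RiZh} on the Galois action on the cohomology of the local model, following the strategy already sketched in the paragraph preceding the statement. First I would recall precisely the output of \cite{RiZh}: the cohomology $\rH^*(\F_\mu)$ carries a natural action of $H^\vee\rtimes\Gal(\bar F/E)$, factoring through $H^\vee\rtimes\Gal(\tilde F/E)$, with underlying $H^\vee$-representation $V_\mu$ and with the $\Gal(\tilde F/E)$-action being (a twist of) the natural Galois action on $\rH^*(\F_\mu)$; crucially, when restricted to the inertia subgroup $I_E$ the twist disappears, so $V_\mu$ becomes an honest $(H^\vee)\rtimes I_E$-module in a way compatible with the geometric arithmetic action. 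Then I would note that, since nearby cycles commute with proper pushforward along the structure map $M_{\Gg,\mu}\to\Spec(\O_E)$, there is a canonical isomorphism $\rH^*({\rm R}\Psi_\mu)\simeq \rH^*(\F_\mu)$ compatible with the $\Gal(\bar F/E)$-action on the right and the residual Galois (in particular $I_E$) action on ${\rm R}\Psi_\mu$ on the left; on the $H^\vee$-module side the geometric Satake fiber functor identifies $\rH^*({\rm R}\Psi_\mu)$ with $\on{Res}(V_\mu)$ as a $(H^\vee)^{I_F}$-module by Theorem \ref{thm9.19}.

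The second step is to upgrade this from an identification of cohomology groups to an identification of the inertia action \emph{on the perverse sheaf} ${\rm R}\Psi_\mu$ itself. Here I would invoke Proposition \ref{mono triv}, which gives that $I_{\tilde F}$ acts trivially on ${\rm R}\Psi_\mu$, so the $I_E$-action factors through the finite quotient $\Gal(\tilde F/\tilde F\cap\breve F E)$, i.e. through a finite cyclic group of tame inertia; and I would use the fact (established in the proof of Theorem \ref{thm9.19}, via \cite{MirkVilonen}, \cite{ZhuSatake}) that in the special-vertex case the hypercohomology functor $\rH^*\colon \on{Perv}_{L^+P_{\bar k}}(\Gr_{P_{\bar k}},\overline{\bbQ}_\ell)\to {\rm Vect}(\overline\Q_\ell)$ is faithful and in fact is the fiber functor of the Tannakian category $\on{Rep}((H^\vee)^{I})$. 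Since $\widetilde{{\rm R}\Psi}_\mu\simeq\calS_{\bar k}(\on{Res}(V_\mu))$ is the image of a representation under the Satake equivalence, an automorphism of this perverse sheaf as an object of $\on{Perv}_{L^+P_{\bar k}}(\Gr_{P_{\bar k}},\overline{\bbQ}_\ell)$ is determined by its effect on $\rH^*$; hence the $I_E$-action on ${\rm R}\Psi_\mu$ is the \emph{unique} one inducing the given $I_E$-action on $\rH^*({\rm R}\Psi_\mu)\simeq\on{Res}(V_\mu)$. Comparing with the previous step, this $I_E$-action on $\on{Res}(V_\mu)$ is precisely the restriction to $I_E\subset (H^\vee)^{I_F}\times I_E$ of the $(H^\vee)\rtimes I_E$-action from \cite{RiZh}, which is what the theorem asserts.

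I would be careful about two bookkeeping points. First, one must check that the isomorphism $\rH^*({\rm R}\Psi_\mu)\simeq\rH^*(\F_\mu)$ coming from ``nearby cycles commute with proper pushforward'' is genuinely $\Gal(\bar F/E)$-equivariant and not merely $\Gal(\bar F/E)$-equivariant up to the twist appearing in \cite{RiZh}; this is exactly why one restricts to $I_E$, where by loc.\ cit.\ the twist is trivial, so the comparison is on the nose. Second, one must match the two appearances of the group $(H^\vee)^{I}$: the one from the ramified geometric Satake equivalence $\calS_{\bar k}\colon\on{Rep}((H^\vee)^I)\xrightarrow{\sim}\on{Perv}_{L^+P_{\bar k}}(\Gr_{P_{\bar k}},\overline{\bbQ}_\ell)$ and the one implicit in ``$\on{Res}(V_\mu)$ as a $(H^\vee)^{I_F}\times I_E$-module'' — these agree because our construction of $\und G$ over $\O[u^{\pm1}]$ identifies the inertia actions of $I_F$ and $I_{k((u))}$ on $H^\vee$ (as recorded in the setup of \S\ref{quasisplit}), and the $I_F$-invariants are taken with respect to the same pinned action in both places.

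\textbf{Main obstacle.} The genuinely delicate step is the compatibility in the second paragraph: passing from the isomorphism of cohomology groups $\rH^*({\rm R}\Psi_\mu)\simeq\on{Res}(V_\mu)$, together with the $I_E$-action on the left-hand side, to the claim that the \emph{sheaf-level} $I_E$-equivariant structure on ${\rm R}\Psi_\mu$ is the one transported from $\on{Res}(V_\mu)$ via $\calS_{\bar k}^{-1}$. This requires knowing that the $I_E$-action on ${\rm R}\Psi_\mu$ is by tensor-automorphisms (so that it corresponds, under Tannakian reconstruction, to an actual homomorphism $I_E\to(H^\vee)^I$ acting on the underlying vector space), and that Theorem \ref{thm9.19}'s isomorphism intertwines the Weil/Galois structures and not just the bare perverse sheaves. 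I expect this to go through by the same argument as in \cite[Theorem 6.2]{ZhuSatake} for the ramified unitary case: one checks compatibility at the level of the commutativity/fusion structure on the Beilinson–Drinfeld Grassmannian $\widetilde{\Gr}_{\Gg,X}$ used in the proof of Theorem \ref{thm9.19}, where the various specializations $i_{v=\tilde\varpi}$, $i_{\tilde\varpi=0}$, $i_{v=0}$ carry compatible Galois actions, so that the chain of isomorphisms \eqref{adjunction} is automatically $I_E$-equivariant; the faithfulness of $\rH^*$ then promotes this from a statement about cohomology to a statement about the equivariant structure on the sheaf. The remaining verifications (tameness of the $I_E$-action, finiteness of the relevant filtration, matching of twists) are routine given Proposition \ref{mono triv} and the results of \cite{RiZh} quoted above.
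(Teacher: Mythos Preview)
Your proposal is correct and follows essentially the same approach as the paper: invoke \cite{RiZh} to identify $\rH^*(\F_\mu)\simeq V_\mu$ as an $H^\vee\rtimes I_E$-module (the twist vanishing on inertia), use that nearby cycles commute with proper pushforward to get $\rH^*({\rm R}\Psi_\mu)\simeq\on{Res}(V_\mu)$ as a $(H^\vee)^{I_F}\times I_E$-module, and then conclude at the sheaf level. The paper is terser than you are---it simply states the theorem as a direct consequence of the cohomology-level identification---while you spell out explicitly the faithfulness of $\rH^*$ (already used in Proposition~\ref{mono triv} and Theorem~\ref{thm9.19}) needed to promote an identification of $I_E$-actions on cohomology to one on the perverse sheaf; your ``main obstacle'' is thus a point the paper leaves implicit rather than a genuine gap.
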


\begin{example}\label{exPRJAG}
{\rm Let us consider the following example,  which was discussed in
\cite{PappasRaI} (note that here we use different notation). Let
$G=\on{Res}_{K/F}\GL_n$ and $\calP_x=\on{Res}_{\O_{K}/\O_F}\GL_n$,
where $K/F$ is a totally {\sl tamely} ramified extension of degree
$d$.
%In this case, our previous discussions (and the
%result in \cite{ZhuSatake}) carry through without the tameness assumption
%of $\tilde{F}/F$.
In this case, we have $\Gr_{P_k}\simeq\Gr_{\GL_n}$, and
$\Gr_{\Gg,\O}\otimes_{\O}\tilde{F}\simeq(\Gr_{\GL_n})^d$. In
addition, $H^\vee=\GL_n^d$, and $(H^\vee)^I=\GL_n$, embedded in
$H^\vee$ via the diagonal embedding.

Let $\tilde{F}$ be the Galois closure of $K/F$.  Choose an ordering
of the set of embeddings $\phi: \tilde F\to \bar F$ which allows us
to identify ${\rm Gal}(\tilde F/F)$ with a subgroup of the symmetric
group $S_d$. Let $\mu$ be a minuscule coweight of $G$. We can write
$\mu=(\mu_1,\mu_2,\ldots,\mu_d)$ with $\mu_i :\Gm_{\tilde F}\to
(\GL_n)_{\tilde F}$; if $E$ is the reflex field of $\mu$, the Galois
group ${\rm Gal}(\tilde F/E)$ is the subgroup of those $\sigma\in
S_d$ with $\mu_{\sigma(i)}=\mu_i$. In this case,
\[\on{Res}(V_\mu)=V_{\mu_1}\otimes\cdots\otimes V_{\mu_d}.\]
as a representation of $\GL_n=(H^\vee)^I$. Therefore, Theorem
\ref{thm9.19} gives
\[\widetilde{{\rm R}\Psi}_\mu\simeq \on{IC}_{\mu_1}\star\on{IC}_{\mu_2}\star\cdots\star\on{IC}_{\mu_d}.\]
Indeed, this has been proved in \cite[Theorem 7.1]{PappasRaI}.
 As the action of
$\Gal(\tilde{F}/E)$ on $V_{\mu_1}\otimes\cdots \otimes V_{\mu_d}$
commutes with the action of $(H^\vee)^{I_F}$, we can write
\[V_{\mu_1}\otimes\cdots \otimes V_{\mu_d}=\bigoplus_{\lambda\leq\mu_1+\cdots+\mu_d} M_\lambda\otimes V_\lambda,\]
where each $M_\la$ is a representation of $\Gal(\tilde{F}/E)$.
Therefore, Theorem \ref{thm9.23} now implies that
\[{\rm R}\Psi_\mu\simeq  \bigoplus_{\lambda\leq\mu_1+\cdots+\mu_d} M_\lambda\otimes\on{IC}_\lambda,\]
and the action of $\Gal(\tilde{F}/E)$ on ${\rm R}\Psi_\mu$ is
through the action on each $M_\lambda$. This decomposition was
conjectured in \cite[Remark 7.4]{PappasRaI}. Actually, it seems that
our techniques can be extended to obtain this even when $K/F$ is
wildly ramified
 but we prefer to leave this for another time.}
\end{example}

\subsubsection{}
Finally, let us determine the (semi-simple) trace
$\tau^{\on{ss}}_{{\rm R}\Psi_\mu}$ in the case that $G$ is
quasi-split (but not unramified) and $x$ is very special.

 We first recall  the following extension of the
geometric Satake Langlands isomorphism (cf. \cite[Theorem
0.2]{ZhuSatake}). Recall $P_k=\calP_{x_{k((u))}}$, a parahoric group
scheme over $k[[u]]$. The affine flag variety $\Gr_{P_k}$ is defined
over $k$. Let $\frakP_x^0$ be the category of $L^+P_k$-equivariant,
semi-simple perverse sheaves on $\Gr_{P_k}$, pure of weight zero.
Then there is an equivalence
\[
\calS_k:
\on{Rep}((H^\vee)^I\rtimes\Gal(\bar{k}/k))\xrightarrow{\sim}
\frakP_x^0,
\]
where we consider $(H^\vee)^I\rtimes\Gal(\bar{k}/k)$ as a
pro-algebraic group over $\overline{\bbQ}_\ell$ and
$\on{Rep}((H^\vee)^I\rtimes\Gal(\bar{k}/k))$ is its category of
algebraic representations. Let $\bar{\mu}\in(\xcoch(T)_I)^\sigma$ so
that the corresponding Schubert variety in $\Gr_{P_k}$ is defined
over $k$. Let $\on{IC}_{\bar{\mu}}$ be its intersection cohomology
sheaf and $A_{\bar{\mu}}$ be the function obtained by taking the
Frobenius trace on the stalks of $\on{IC}_{\bar{\mu}}$. Let
$W_{\bar{\mu}}=\rH^*(\on{IC}_{\bar\mu})$. Via the above geometric
Satake isomorphism, $W_{\bar{\mu}}$ is  naturally a representation
of $(H^\vee)^I\rtimes\Gal(\bar{k}/k)$.

Recall that in \cite{ZhuSatake}, we call a smooth irreducible
representation of  $G'=\Gg\times_X\Spec(k((u)))$ over $F'=k((u))$ to
be ``unramified" if it has a vector fixed by
$P'_k=\calP_{x_{k((u))}}(k[[u]])$.   To each such  representation,
we attach a Langlands parameter
\[\on{Sat}(\pi): W_{F'}\to {^L}G= H^\vee\rtimes \Gal(\bar{F}'^s/F'),\]
where $W_{F'}$ is the Weil group of $F'$, such that
$\on{Sat}(\pi)(\gamma)= (1,\gamma)$ for $\gamma\in I_{F'}$. Let
$\Phi$ be a lift of the Frobenius to $W_{F'}$. Then as is shown in
\emph{loc. cit.}, up to conjugation, we can assume that
$\on{Sat}(\pi)(\Phi)\in (H^\vee)^{I_{F'}}\rtimes
\Gal(\bar{F}'^s/F')$ and is uniquely determined by this image up to
$(H^{\vee})^{I_{F'}}$-conjugacy. Now the Langlands parameter
$\on{Sat}(\pi)$ is characterized by the following identity.
\[
\on{tr}(\pi(A_{\bar{\mu}}))=\on{tr}((\on{Sat}(\pi)(\Phi),
W_{\bar{\mu}}).
\]

Now, since by Proposition \ref{mono triv} the inertia action on
${\rm R}\Psi_\mu$ factors through a finite quotient, we have
\[
\tau^{\on{ss}}_{{\rm R}\Psi_\mu}(x)=\on{tr}^{\on{ss}}(\sigma_x,
({\rm R}\Psi_\mu)_{\bar x})=\on{tr}(\sigma_x, ({\rm
R}\Psi_\mu)_{\bar x}^I) .
\]
We determine ${\rm R}\Psi_\mu^I$ as a Weil sheaf on
$\Gr_{P_{\bar{k}}}$. As nearby cycles commute with proper
pushforward, the cohomology $\rH^*({\rm
R}\Psi_\mu^I)=\rH^*(\calF_\mu)^I$ is pure of  weight zero.
Therefore, ${\rm R}\Psi_\mu^I$ is just the direct sum of
intersection cohomology sheaves on $\Gr_{P_{\bar{k}}}$, equipped
with the natural Weil structure. By the above discussion and the
results of the previous section, we obtain that
$\tau^{\on{ss}}_{R\Psi_\mu}\in \calH_q(G',K')$ is the unique
function such that
\[
\on{tr}(\tau^{\on{ss}}_{R\Psi_\mu}(\pi))=\on{tr}(\on{Sat}(\pi)(\sigma),V_\mu^I).
\]
This agrees with the prediction of Haines and Kottwitz.

\bigskip
\bigskip

\section{Appendix: Homogeneous spaces}

\setcounter{equation}{0}

In this section, we study the representability of certain quotients
of group schemes over a two-dimensional base. The results are used
in  Chapter \ref{chapterLoop} to show the ind-representability of the global
affine Grassmannian ${\rm Gr}_{\Gg, X}$.

\subsection{}We assume that $A$ is an excellent Noetherian regular ring
of Krull dimension $2$. If $G$, $H$ are smooth group schemes over
$A$ and $H\hookrightarrow G$ is a closed group scheme immersion then
by Artin's theorem (\cite{ArtinVersal}), the fppf quotient $G/H$ is
represented by an algebraic space which is separated of finite
presentation and in fact smooth over $A$.

\begin{lemma}\label{triple}
Suppose that $G_1\hookrightarrow G_2\hookrightarrow G_3$ are closed
group scheme immersions and $G_1$, $G_2$, $G_3$ are smooth   over
$A$. The natural morphism
$$
G_3/G_1\to G_3/G_2
$$
is an fppf fibration with fibers isomorphic to $G_2/G_1$. Suppose
that $G_2/G_1$ is quasi-affine (affine). If  $G_3/G_2$ is a scheme,
then so is $G_3/G_1$. If in addition $G_3/G_2$ is quasi-affine
(resp. affine), then so is $G_3/G_1$.
\end{lemma}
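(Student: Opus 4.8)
The plan is to prove the statement in stages, first establishing the fibration structure and then using it to descend the scheme (and quasi-affine, resp. affine) property. First I would check that $G_3/G_1 \to G_3/G_2$ is an fppf fibration with fiber $G_2/G_1$: since $G_2/G_1$ is representable by an algebraic space (smooth, separated, finitely presented over $A$ by Artin's theorem, as recalled at the start of the appendix), and $G_3 \to G_3/G_2$ is an fppf $G_2$-torsor, one forms the associated fiber space $G_3 \times^{G_2} (G_2/G_1)$ and observes it is canonically isomorphic to $G_3/G_1$ as fppf sheaves. This is a purely formal manipulation with torsors; the fiber over any point of $G_3/G_2$ is a twisted form of $G_2/G_1$, hence $G_3/G_1 \to G_3/G_2$ is fppf-locally on the target isomorphic to the projection $(G_2/G_1) \times (\text{base}) \to (\text{base})$.

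Next, assuming $G_3/G_2$ is a scheme, I would show $G_3/G_1$ is a scheme when $G_2/G_1$ is quasi-affine. The point is that $G_3/G_1 \to G_3/G_2$ is an fppf-locally trivial fibration with quasi-affine fibers, and being a scheme is fppf-local on the base (more precisely: if $Y \to S$ is a map of algebraic spaces, $S$ a scheme, and there is an fppf cover $S' \to S$ with $Y \times_S S'$ a scheme, and the map $Y \to S$ is, say, separated — which holds here since $G_2/G_1 \hookrightarrow G_3/G_1$-fibers are separated — then $Y$ is a scheme). Concretely, one uses that a quasi-affine morphism of algebraic spaces is representable by schemes: the fibration $G_3/G_1 \to G_3/G_2$ becomes quasi-affine after the fppf base change $G_3 \to G_3/G_2$ (it pulls back to $G_3 \times^{G_2}(G_2/G_1) \to G_3$, which is quasi-affine since $G_2/G_1$ is), and quasi-affineness descends along fppf covers; hence $G_3/G_1 \to G_3/G_2$ is itself quasi-affine, in particular representable, so $G_3/G_1$ is a scheme. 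The ``quasi-affine (resp. affine)'' refinement then follows from the same descent: if $G_3/G_2$ is quasi-affine (resp. affine) over $A$ and $G_3/G_1 \to G_3/G_2$ is quasi-affine (resp. affine), compose — quasi-affine morphisms are stable under composition, and likewise affine morphisms, so $G_3/G_1$ is quasi-affine (resp. affine) over $A$.

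The main obstacle is the descent of the quasi-affine (and affine) property along the fppf cover $G_3 \to G_3/G_2$ — i.e. verifying that ``$f$ is quasi-affine'' and ``$f$ is affine'' are fppf-local on the base for morphisms of algebraic spaces. For affineness this is standard fppf descent of quasi-coherent algebras (the relative $\mathrm{Spec}$ descends). For quasi-affineness one needs the characterization of quasi-affine morphisms via the counit $f^*f_*\mathcal{O} \to \mathcal{O}$ being an open immersion onto its image / the relative $\mathrm{Spec}$ of $f_*\mathcal{O}$ receiving an open immersion; this characterization is compatible with flat base change, which gives the descent. A secondary technical point is confirming that $G_3/G_1$, a priori only an algebraic space, really is a scheme once the structure map to the scheme $G_3/G_2$ is shown representable — but this is immediate since representable morphisms to schemes have schemes as source. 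I would cite the relevant stacks-project or SGA statements on fppf descent of morphism properties rather than reproving them.
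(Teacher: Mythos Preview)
Your proposal is correct and follows essentially the same approach as the paper. The paper's proof is extremely terse: it simply asserts that $G_3/G_1\to G_3/G_2$ is a quasi-affine morphism (by effectivity of fppf descent for quasi-affine schemes), hence representable, and then composes quasi-affine (resp.\ affine) morphisms --- exactly your argument, with your ``main obstacle'' (fppf descent of quasi-affineness) being precisely what the paper encapsulates in the single phrase ``fppf descent is effective for quasi-affine schemes.''
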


\begin{proof}
The  first statement follows from the fact that fppf descent is
effective for quasi-affine schemes. In fact, by our assumption, $
G_3/G_1\to G_3/G_2 $ is a quasi-affine morphism. If $G_3/G_2$ is
quasi-affine, then $G_3/G_1$ is also quasi-affine (affine) since its
structure morphism to $A$ is a composition of quasi-affine (resp.
affine) morphisms and as such is also quasi-affine (resp. affine).
\end{proof}

\begin{prop}\label{linearrep}
Let $\Gg, \Hh \to S=\Spec(A)$ be two smooth affine group schemes
with connected fibers. Assume that $\Hh$ is a closed subgroup scheme
of $\Gg$.

Set $\Gg=\Spec(B)$, so that $B$ is an $A$-Hopf algebra.
Then there is a free finitely generated $A$-module $M=A^n$ with
$\Gg$-action  (i.e a $B$-comodule)  and a projective $A$-submodule
$W\subset M$  which is a locally a direct summand,
 such that:

i) There is a  $\Gg$-equivariant surjection ${\rm Sym}^\bullet_A(M)
\to B $ and the $\Gg$-action on $M$ gives a group scheme
homomorphism $\rho: \Gg\hookrightarrow  {\rm GL}(M)$ which is a
closed immersion.

ii) The representation $\rho$ identifies $\Hh$ with the subgroup
scheme of $\Gg$ that stabilizes $W$.

 \end{prop}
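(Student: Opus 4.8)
The plan is to adapt the classical construction of a faithful linear representation of an affine group scheme, due to Grothendieck and Serre in the field case (see \cite{SGA3}, \cite{ConradNotes}) and extended by Thomason \cite{ThomasonEqRes} to a regular two-dimensional base, to the relative situation $\Hh \subset \Gg$. First I would recall that since $\Gg = \Spec(B)$ is affine over $A = \Spec(S)$, the Hopf algebra $B$ is the union of its finitely generated $A$-subcomodules (this is the standard fact that a comodule algebra is a filtered union of finite subcomodules; over a general base one needs $B$ to be $A$-flat, which holds since $\Gg$ is smooth, and uses that $B$ is a direct limit of finite projective submodules). Choosing such a finite subcomodule $V_0 \subset B$ that contains a set of $A$-algebra generators of $B$, one obtains a $\Gg$-equivariant surjection $\mathrm{Sym}^\bullet_A(V_0) \twoheadrightarrow B$; moreover the comodule map $\rho_0 : \Gg \to \mathrm{GL}(V_0)$ is then a closed immersion, because a morphism of affine schemes is a closed immersion iff the map on coordinate rings is surjective, and here $\mathrm{Sym}^\bullet_A(V_0) \to B$ being surjective forces $\calO(\mathrm{GL}(V_0)) \to B$ to be surjective. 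The subtlety over a two-dimensional regular base (as opposed to a field or a Dedekind base) is that $V_0$ need only be $A$-projective, not free; this is exactly the point at which one invokes the result of \cite{ThomasonEqRes} (and its amplification recorded here as Proposition \ref{qaffine} / the Appendix) — but in fact for the statement as phrased we may simply allow $M$ to be free by adding a trivial complementary summand, since $V_0$ is finite projective of constant rank and $V_0 \oplus A^m$ is free for suitable $m$; the $\Gg$-action extends trivially on the added factor, and all of (i) survives.

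Next, for part (ii), I would use the standard trick (Chevalley's theorem on subgroups as stabilizers of lines/subspaces, in its relative form) to realize $\Hh$ inside $\Gg$. Let $I \subset B$ be the Hopf ideal defining the closed subgroup scheme $\Hh$. Since $\Gg$ is Noetherian affine, $I$ is finitely generated, and since $B$ is a filtered union of finite $\Gg$-subcomodules we may, after enlarging $V_0$, assume $V_0 \cap I$ generates $I$ as an ideal and that $V_0$ is $\Gg$-stable. Set $W_0 = V_0 \cap I$. The key claim is that $W_0$ is a locally-direct-summand $A$-submodule of $V_0$ (this requires a short flatness argument: $V_0/W_0$ embeds $\Gg$-equivariantly into $B/I = \calO(\Hh)$, which is $A$-flat since $\Hh$ is smooth, and one checks $V_0/W_0$ is itself $A$-flat and finitely presented, hence finite projective — this is where excellence and regularity of $A$ are comfortably more than enough). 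Then the construction of the representation is arranged so that $\Hh$ is precisely the scheme-theoretic stabilizer of $W_0$ in $\mathrm{GL}(V_0)$, intersected with $\Gg$: one direction is immediate because $\Hh$ acts trivially on $B/I$ hence preserves $I$ hence $W_0$; the other direction — that any $A'$-point of $\Gg$ stabilizing $W_0$ lies in $\Hh(A')$ — follows because stabilizing $W_0$ forces the comodule map to preserve the ideal generated by $W_0$, which is $I$, hence to factor through $B/I$. Finally I would pass from $V_0$ to $M = V_0 \oplus A^m$ (free) with $W = W_0 \subset V_0 \subset M$, note that $W$ is still a locally-direct summand of $M$, and observe the stabilizer of $W$ in $\mathrm{GL}(M)$ meets $\Gg$ (acting via $\rho = \rho_0 \oplus \mathrm{triv}$) exactly in $\Hh$, since the trivial factor contributes nothing.

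The main obstacle I anticipate is the local-freeness statements over the two-dimensional base: showing that the chosen finite subcomodules $V_0$ and the submodule $W_0 = V_0 \cap I$ are $A$-projective (locally direct summands) rather than merely torsion-free or finitely generated. Over a field this is automatic; over a Dedekind ring one uses that finitely generated torsion-free is projective; over a general regular Noetherian ring of dimension $2$ one must instead argue via flatness — specifically, $V_0/W_0 \hookrightarrow \calO(\Hh)$ combined with the fact that a finitely presented $A$-module that is flat is projective, and that $\Hh$ smooth gives $\calO(\Hh)$ flat — together with care that the filtered-union description of $B$ by \emph{finite projective} subcomodules genuinely works (this is where the input from \cite{ThomasonEqRes}, as used in the Appendix, is essential; alternatively one invokes that $A$ being regular of dimension $2$ means reflexive modules behave well, cf. the extension arguments in Remark \ref{genericLattice}). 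Everything else — surjectivity of the symmetric-algebra map, closedness of $\rho$, and the stabilizer identification — is the standard relative Chevalley argument and should go through with only routine bookkeeping.
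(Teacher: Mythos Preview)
Your overall strategy is the same as the paper's, and your identification of the main obstacle is exactly right: the difficulty is showing that $W_0 = V_0 \cap I$ is locally a direct summand of $V_0$, i.e.\ that $V_0/W_0$ is $A$-projective. But your proposed resolution of this obstacle does not work. You write that $V_0/W_0 \hookrightarrow B/I = \calO(\Hh)$ and that $\calO(\Hh)$ is $A$-flat, and conclude that ``one checks $V_0/W_0$ is itself $A$-flat.'' This inference is false over a two-dimensional base: a finitely generated submodule of a flat (even free) $A$-module need not be flat. Over a Dedekind ring torsion-free implies flat, but in dimension two a height-two ideal is a torsion-free submodule of $A$ that is not projective. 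So from $V_0/W_0 \subset B'$ you get only that $V_0/W_0$ is torsion-free, which is not enough.

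What the paper actually does is precisely to repair this. It proves two lemmas: (proj1) the torsion in $P/N$ is finitely generated when $P$ is projective and $N$ finite; and (proj2) if $N \subset P$ with $P$ projective and $P/N$ torsion-free, then $N$ is projective (via $N = N^{\vee\vee}$, using regularity in dimension two). The construction then proceeds by first saturating the image of $V$ inside $B'$ to a $\Gg$-stable $\tilde M$ with $B'/\tilde M$ torsion-free (so $\tilde M$ is projective), and then --- this is the nontrivial step you are missing --- building a $\Gg$-stable $\tilde V \subset B$ with $p(\tilde V) = \tilde M$ and $B/\tilde V$ torsion-free, by passing through $B' \otimes_A B$ and a fiber-product diagram. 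Only after this saturation does one get a short exact sequence $0 \to W \to \tilde V \to \tilde M \to 0$ of finite projective $A$-modules, making $W$ locally a direct summand. Your argument skips this saturation and so does not establish projectivity of the quotient.
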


\begin{proof} Write $\Gg=\Spec(B)$, $\Hh=\Spec(B')$ and
let $p: B \to B'$ be the ring homomorphism that corresponds to
$\Hh\subset \Gg$. Observe that $B$, $B'$ are Hopf algebras over $A$.
We will often refer to the $B$-comodules for the Hopf algebra $B$ as
``modules with $\Gg$-action". Since $\Gg$, $\Hh$ are smooth with
connected geometrical fibers  both $B$ and $B'$ are projective
$A$-modules by Raynaud-Gruson \cite[Proposition
3.3.1]{RaynaudGruson}. We start with two lemmas.

\begin{lemma}\label{proj1} Let $P$ be a projective $A$-module and $N\subset P$ be
a finitely generated $A$-submodule. Then the $A$-torsion submodule
of $P/N$ is finitely generated.\endproof
\end{lemma}

\begin{proof}As $P$ is a direct summand of a free $A$-module, we can
assume that $P=A^I$ itself is free, with a basis $\{e_{i}; i\in
I\}$. Let $n_1,\ldots,n_t$ be a set of generators of $N$, and write
$n_i=\sum a_{ij}e_j$. Then $J=\{j\in I\mid \exists\ i \mbox{ such
that } a_{ij}\neq 0\}$ is a finite set and $A^I/N=A^J/N\oplus
A^{I-J}$. The conclusion follows.
\end{proof}

\begin{lemma}\label{proj2}Let $P$ be a projective $A$-module and suppose that $N\subset P$ is a
finitely generated $A$-submodule. If $P/N$ is torsion free, then $N$
is a projective $A$-module.
\end{lemma}

\begin{proof}
Observe that $N$ is $A$-torsion free. Consider the double dual
$N^{\vee\vee}$. We have $N\subset N^{\vee\vee}\subset P$, and
$N^{\vee\vee}/N\subset P/N$ is torsion. Therefore, $N=N^{\vee\vee}$,
which is projective by our assumption that $A$ is regular and
two-dimensional.
\end{proof}
\smallskip

Now let us prove the proposition. Observe first \cite[Cor.
3.2]{ThomasonEqRes} and its proof imply (i) of the proposition (In
other words, \cite{ThomasonEqRes} implies that such a $\Gg$ is
linear, i.e a closed subgroup scheme of ${\rm GL}_n$.) To obtain the
proof of the whole proposition we have to refine this construction
from \cite{ThomasonEqRes} to account for the subgroup scheme $\Hh$.

Let $V\subset B$ be a finitely generated $A$-submodule with $\Gg$
action that contains both a set of generators of $I=\ker (B\to B')$
and a set of generators of $B$ as an $A$-algebra. Let $p(V)$ be the
image of $V$ under $p:B\to B'$. The following diagram is a
commutative diagram of  modules with $\Gg$-action
\[\begin{CD}
V@>{\rm coact}>> V\otimes_A B@>p\otimes 1>> p(V)\otimes_A B\\
@VVV@VVV@VVV\\
B@>{\rm comult}>>B\otimes_A B@>p\otimes 1>>B'\otimes_A B
\end{CD}\]
where $\Gg$ acts on $V\otimes_A B$, $p(V)\otimes_A B$, $B\otimes_A
B$, $B'\otimes_A B$ via the actions on the second factors.

The image of $N:=(p\otimes 1)\cdot {\rm coact}(V)$ in $p(V)\otimes_A
B$ is a finite $A$-submodule with $\Gg$-action. Let $\epsilon:B\to
A$ be the unit map which splits the natural $A\subset B$. Let $M$ be
the image of $N$ under $B'\otimes B\stackrel{1\otimes\epsilon}{\to}
B'$. Observe that $M$ is a finite $A$-module, but is not necessarily
$\Gg$-stable. By \cite[Prop. 2]{SerreGroIHES}, we can choose a
finite $\Gg$-stable $A$-module $\tilde{M}$ in $B'$ containing $M$.
By Lemma \ref{proj1}, we can enlarge $\tilde{M}$ if necessary to
assume that $B'/\tilde{M}$ is torsion free (so $\tilde{M}$ is
projective over $A$
 by the Lemma \ref{proj2}). We regard $\tilde{M}$ as a $\Gg$-stable submodule of
$B'\otimes_AB$ via $\tilde{M}\subset B'=B'\otimes_AA\subset
B'\otimes_AB$ (this is indeed a $\Gg$-stable submodule since the
inclusion $A\subset B$ is $\Gg$-equivariant). Let
$\tilde{N}=\tilde{M}+N$. Then $\tilde{N}$ is a finite $\Gg$-stable
submodule of $B'\otimes_AB$, and under the map $1\otimes\epsilon:
B'\otimes_A B\to B'$, $(1\otimes \epsilon)(\tilde{N})=\tilde{M}$.
Observe that the torsion submodule
 $t((B'\otimes_AB)/\tilde{N})\subset (B'\otimes_AB)/\tilde{N}$ is a module with $\Gg$-action
 and maps to zero under $(B'\otimes_AB)/\tilde{N}\stackrel{1\otimes\epsilon}{\to} B'/\tilde{M}$. Let
$\tilde{N}'$ be the preimage of $t((B'\otimes_AB)/\tilde{N})$ under
$B'\otimes_AB\to (B'\otimes_AB)/\tilde{N}$. From Lemma \ref{proj1},
$\tilde{N}'$ is finite $\Gg$-stable $A$-module, and $(1\otimes
\epsilon)(\tilde{N}')=\tilde{M}$. In addition, $\tilde{N}'$ is
locally free since $(B'\otimes_AB)/\tilde{N}'$ is torsion free.

Let $\tilde{V}$ be the $\Gg$-stable $A$-submodule of $B$ given by
the fiber product
\[\begin{CD}\tilde{V}@>>> \tilde{N}'\\
@VVV@VVV\\
B@>(p\otimes 1)\cdot{\rm comult}>> B'\otimes_AB.
\end{CD}\]
Observe that $(p\otimes 1)\cdot {\rm comult}: B' \to B'\otimes_AB$
is injective. Therefore, $\tilde{V}$ is an $A$-submodule of
$\tilde{N}'$ and therefore it is finitely generated over $A$. In
addition,  $\tilde{N}'\supset N$, $V\subset \tilde{V}$. Since
$B/\tilde{V}\hookrightarrow (B'\otimes_AB)/\tilde{N}'$ is torsion
free, $\tilde{V}$ is projective. Observe that
$$
B \xrightarrow {(p\otimes 1)\cdot {\rm comult} }
B'\otimes_AB\stackrel{1\otimes\epsilon}{\to} B'
$$ is just the projection $p$.
Therefore, $p(\tilde{V})=\tilde{M}$.

Therefore, we obtain the following commutative diagram
\[\begin{CD}
0@>>>W@>>>\tilde{V}@>>>\tilde{M}@>>>0\\
@.@VVV@VVV@VVV@.\\
0@>>> I@>>>B@>>>B'@>>>0
\end{CD}\]
with the first row finitely generated projective $A$-modules. Notice
that  $\tilde{V}\supset V$ contains a set of generators of the
$B$-ideal $I$ and a set of $A$-algebra generators of $B$. Hence, we
obtain a closed immersion $\Gg\xrightarrow {}{\rm GL}(\tilde V)$ of
group schemes and  we can see that $\Hh$ can be identified with the
closed subgroup scheme of $\Gg$ that preserves the direct summand
$W\subset M:=\tilde V$. By replacing  $M$ by $M\oplus M'$ and $W$ by
$W\oplus M'$ where $M'$ is a finitely generated projective
$A$-module with trivial $\Gg$-action, we can assume that $M$ is
$A$-free as desired.
\end{proof}

\begin{cor} \label{qproj} Suppose that  $\Hh\subset \Gg$ are as in Proposition \ref{linearrep}.
Then the fppf quotient $\Gg/\Hh$ is representable by a
quasi-projective scheme over $A$.
\end{cor}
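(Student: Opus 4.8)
\textbf{Proof plan for Corollary \ref{qproj}.} The plan is to combine Proposition \ref{linearrep} with the construction of the quotient as a locally closed subscheme of a Grassmannian bundle, exactly in the style of Chevalley's classical construction of $G/H$ as an orbit in a projective space. First I would invoke Proposition \ref{linearrep} to fix a free finitely generated $A$-module $M=A^n$ with a $\Gg$-action (equivalently a $B$-comodule structure) and a projective $A$-submodule $W\subset M$, locally a direct summand, such that $\rho\colon\Gg\hookrightarrow\GL(M)$ is a closed immersion and $\Hh$ is precisely the stabilizer of $W$ inside $\Gg$. Let $d=\mathrm{rk}(W)$; since $W$ is locally a direct summand of $M$ it has constant rank on each connected component of $\Spec(A)$ (if necessary I would argue component by component, so assume $d$ constant). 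The point $[W]$ then defines an $A$-valued point of the Grassmannian $\mathrm{Gr}(d,M)=\mathrm{Gr}(d,n)\times_{\Spec(\Z)}\Spec(A)$, which is a projective $A$-scheme.

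Next I would form the orbit map $\Gg\to\mathrm{Gr}(d,M)$, $g\mapsto g\cdot[W]$, and observe that its scheme-theoretic stabilizer is exactly $\Hh$ by the second clause of Proposition \ref{linearrep}. By Artin's theorem (cited at the start of the Appendix) the fppf quotient $\Gg/\Hh$ is an algebraic space, separated and smooth of finite presentation over $A$, and the orbit map factors through a monomorphism $\iota\colon\Gg/\Hh\hookrightarrow\mathrm{Gr}(d,M)$. This $\iota$ is of finite presentation (both sides are of finite presentation over $A$) and is a monomorphism, hence by \cite{EGA} (or the standard fact that a monomorphism locally of finite type is separated and that an fppf-locally-immersive monomorphism is an immersion) it remains to check that $\iota$ is an immersion; once this is done, $\Gg/\Hh$ is a locally closed subscheme of the projective $A$-scheme $\mathrm{Gr}(d,M)$, hence quasi-projective over $A$, which in particular proves it is a scheme.

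To see that $\iota$ is an immersion, I would argue fppf-locally on the source: the quotient map $\pi\colon\Gg\to\Gg/\Hh$ is an fppf $\Hh$-torsor, and $\Gg$ is affine and faithfully flat over $\Gg/\Hh$; the composite $\Gg\to\mathrm{Gr}(d,M)$ is the orbit map, whose fibers are exactly the $\Hh$-cosets. Concretely, one can use that $\Gg/\Hh$ is covered by the (fppf, hence by smoothness even \'etale) images of sections, and over such a chart the orbit map is identified with the immersion cut out by the incidence/stabilizer conditions defining the $\Gg$-orbit of $[W]$ inside $\mathrm{Gr}(d,M)$; the locally closed orbit of a point under a smooth group scheme acting on a finite-type $A$-scheme is a locally closed subscheme (this is where smoothness of $\Gg$ and the closedness of $\rho$ enter), so $\iota$ is a locally closed immersion. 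Alternatively, and perhaps more cleanly, I would deduce immersiveness from Lemma \ref{triple}: take $G_1=\Hh\subset G_2=\mathrm{Stab}_{\GL(M)}(W)\subset G_3=\GL(M)$, note $G_3/G_2$ is the Grassmannian $\mathrm{Gr}(d,M)$ which is projective (in particular quasi-affine over itself is not the point—rather one applies the fibration statement directly), so $\Gg/\Hh\to\Gg/(\Gg\cap G_2)=\Gg/\Hh$ sits inside $\GL(M)/G_2=\mathrm{Gr}(d,M)$ as the $\Gg$-orbit, and the fibration language of Lemma \ref{triple} shows the relevant quotients are schemes; the quasi-projectivity then follows since a locally closed subscheme of a projective $A$-scheme is quasi-projective.

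\textbf{Main obstacle.} The only genuinely delicate point is verifying that the orbit map realizes $\Gg/\Hh$ as a \emph{locally closed} subscheme of $\mathrm{Gr}(d,M)$ — i.e. that the monomorphism $\iota$ is an immersion rather than merely a monomorphism of algebraic spaces. Over a field this is classical (orbits of algebraic group actions are locally closed), but over the two-dimensional base $A$ one must be a little careful: I would handle it by fppf descent, using that $\pi\colon\Gg\to\Gg/\Hh$ admits sections fppf-locally, that $\Gg$ is affine, and that the orbit map $\Gg\to\mathrm{Gr}(d,M)$ is $\Hh$-invariant with the expected fibers, so that fppf-locally on the target $\iota$ is pulled back from the immersion defining the $\Gg$-orbit; since ``being an immersion'' is fppf-local on the target, $\iota$ is an immersion. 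Everything else is bookkeeping with Proposition \ref{linearrep} and the standard properties of Grassmannians over a Noetherian base.
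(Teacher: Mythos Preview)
Your overall strategy---construct the orbit map from $\Gg$ to a Grassmannian (equivalently, via Pl\"ucker, to a projective space) and realize $\Gg/\Hh$ inside it---is exactly the paper's approach. The genuine gap is the step you yourself flag as the ``main obstacle'': you do not actually establish that the monomorphism $\iota\colon\Gg/\Hh\to\mathrm{Gr}(d,M)$ is an immersion. Your fppf argument is misdirected: ``being an immersion'' is fppf-local on the \emph{target}, so you would need an fppf cover of $\mathrm{Gr}(d,M)$ over which $\iota$ becomes an immersion, whereas sections of $\pi\colon\Gg\to\Gg/\Hh$ give you local data on the \emph{source}. The alternative via Lemma~\ref{triple} does not apply either, since that lemma concerns a chain $G_1\subset G_2\subset G_3$ and the map $G_3/G_1\to G_3/G_2$; your setup has $\Gg$ and $\GL(M)$ playing incompatible roles (you would need $\Hh\subset\Gg\subset\GL(M)$ with $\GL(M)/\Gg$ known to be a scheme, which is not yet available). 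Finally, the slogan ``orbits of smooth group schemes are locally closed'' is a theorem over a field, not over a two-dimensional base; invoking it here is exactly what needs proof.

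The paper sidesteps all of this with a cleaner argument you are missing: any monomorphism of finite type is automatically separated and quasi-finite (monomorphisms are unramified, hence locally quasi-finite). So $\bar f\colon\Gg/\Hh\to\mathbb{P}^{r-1}_A$ is a separated quasi-finite morphism of algebraic spaces. By \cite[6.15]{Knutson}, an algebraic space admitting a separated quasi-finite morphism to a scheme is itself a scheme. Now Zariski's main theorem for schemes says $\bar f$ factors as an open immersion followed by a finite morphism; composing with the target's projectivity gives quasi-projectivity. This two-line argument replaces your attempted direct verification of immersiveness and is the key idea you should supply.
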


\begin{proof}
By Artin's theorem the fppf quotient $\Gg/\Hh$ is represented by an
algebraic space over $A$. The algebraic space  $\Gg/\Hh$ is
separated of finite type and even smooth over $A$, the quotient
$\Gg\to \Gg/\Hh$ is also smooth.  Take $M$ and $W$ as in Proposition
\ref{linearrep} and set $P:=\wedge^{\on{rk}W}M$  and
$L=\wedge^{\on{rk}W}W\subset \wedge^{\on{rk}W}M=A^r$, where $r={{\rm
rank}(M)\choose {\rm rank}(W)}$. Then, $\Hh$ is the stabilizer of
$[L]$ in ${\rm Proj}( \wedge^{\on{rk}W}M)={\mathbb P}^{r-1}_A$. We
obtain a morphism $f: \Gg\to  {\mathbb P}^{r-1}_A$. This  gives a
monomorphism $\bar f: \Gg/\Hh\to  {\mathbb P}^{r-1}_A$ which is a
separated quasi-finite morphism of algebraic spaces. By
\cite[6.15]{Knutson}, $\Gg/\Hh$ is a scheme and we can now apply
Zariski's main theorem to
 $\bar f$. We obtain that $\bar f$ is a composition of an open immersion
with a finite morphism and we can conclude that $\Gg/\Hh$ is
quasi-projective. (See \cite[proof of Thm. 2.3.1]{ConradNotes} for a
similar argument).
 \end{proof}
\smallskip

\begin{Remark}
{\rm General homogeneous spaces over Dedekind rings are schemes
\cite{Ana}, but this is not always the case when the base is a
Noetherian regular ring of dimension $2$; see
\cite[X]{RaynaudLNM119}. In loc. cit. Raynaud asks if $\Gg/\Hh$ is a
scheme when both $\Gg$ and $\Hh$ are smooth and affine over a normal
base and $\Hh$ has connected fibers. The above Corollary gives a partial
answer to this question.}
\end{Remark}

\begin{cor}\label{qaffine}
Suppose  that $ \Gg$ is a smooth affine group scheme with connected
fibers over $A$. Then there $n\geq 1$ and a closed subgroup scheme embedding
$\Gg\hookrightarrow {\rm GL}_n$,   such that the
fppf quotient ${\rm GL}_n/\Gg$ is represented by a smooth
quasi-affine scheme over $A$.
\end{cor}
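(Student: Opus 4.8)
The plan is to realize $\Gg$ as the scheme-theoretic stabilizer of a \emph{vector} in a finite free linear representation of the ambient general linear group; the orbit morphism then exhibits the quotient as a locally closed — hence quasi-affine — subscheme of an affine space, and smoothness over $A$ is automatic from Artin's theorem, as recalled in the paragraph preceding \ref{triple}. The extra twist compared with \ref{qproj} (which only yields quasi-\emph{projective} quotients) is a character-cancellation device.

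First I would invoke \ref{linearrep}\,(i) (Thomason's theorem) to fix a closed subgroup scheme embedding $\iota\colon\Gg\hookrightarrow\GL(M_0)=\GL_{n_0}$ with $M_0=A^{n_0}$. Then I would apply \ref{linearrep} to the pair $\Hh=\Gg\subseteq\Gg'=\GL_{n_0}$ (both smooth affine with connected fibers): this produces a free $A$-module $M=A^N$ carrying a $\GL_{n_0}$-action, a direct summand $W\subseteq M$ of rank $w$, and an identification $\Gg=\mathrm{Stab}_{\GL_{n_0}}(W)$. In particular $\Gg$ acts on $W$, hence on the line $\det W=\wedge^w W\subseteq\wedge^w M$ through a character $\chi\colon\Gg\to\Gm$. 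Now set $G'=\GL_{n_0}\times\Gm$ (again smooth affine with connected fibers, and embeddable block-diagonally into $\GL_{n_0+1}$), and embed $\Gg$ into $G'$ via $g\mapsto(\iota(g),\chi(g))$; this is a closed immersion because its composite with the first projection is $\iota$. Letting $A'$ be the standard representation of the $\Gm$-factor, consider the $G'$-representation $V=\wedge^w M\otimes_A (A')^\vee$ on a finite free $A$-module, with its $G'$-stable line $L_0=\det W\otimes_A (A')^\vee$: the character by which the embedded copy of $\Gg$ acts on $L_0$ is $\chi\cdot\chi^{-1}=1$. Choosing a generator $v$ of $L_0$, one checks — using that $W$ is a direct summand (so the stabilizer of the Plücker line $\wedge^w W$ inside $\wedge^w M$ is exactly $\mathrm{Stab}_{\GL_{n_0}}(W)=\Gg$) together with the shearing by $\chi$ — that the scheme-theoretic stabilizer $\mathrm{Stab}_{G'}(v)$ is precisely the image of $\Gg$ in $G'$.

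The orbit morphism $G'\to V$, $g'\mapsto g'\cdot v$, then factors through a monomorphism $G'/\Gg\hookrightarrow V$. By fppf descent $G'/\Gg$ is a smooth algebraic space over $A$, and — exactly as in the proof of \ref{qproj}, now with the affine space $V$ in place of projective space — one deduces from \cite[6.15]{Knutson} and Zariski's main theorem, applied to this separated quasi-finite monomorphism, that $G'/\Gg$ is a scheme which is quasi-affine (open in a scheme finite, hence affine, over $V$) and smooth over $A$. Finally I would set $n=n_0+1$ and use the chain $\Gg\subseteq G'=\GL_{n_0}\times\Gm\subseteq\GL_n$. One checks that $\GL_n/G'$ is affine over $A$: it is the complement of the relatively ample incidence divisor of type $(1,1)$ in $\mathbb P^{n_0}_A\times_A(\mathbb P^{n_0}_A)^\vee$, i.e.\ the $A$-scheme of pairs (line, complementary hyperplane). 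Applying \ref{triple} to $\Gg\subseteq G'\subseteq\GL_n$, with $G'/\Gg$ quasi-affine and $\GL_n/G'$ affine, yields that $\GL_n/\Gg$ is a quasi-affine scheme, smooth over $A$, which is the assertion.

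The main obstacle is the middle step: over the two-dimensional base $A$ one must ensure that the scheme-theoretic stabilizer of $v$ is exactly $\Gg$ (and not a strictly larger subgroup scheme) and that the orbit map is a monomorphism with quasi-affine image. This calls for the same module-theoretic care that went into \ref{linearrep} (reflexive hulls, torsion submodules of quotients of projective modules, and the local-freeness results from \cite{RaynaudGruson}), together with the passage from algebraic spaces to schemes via \cite[6.15]{Knutson} and Zariski's main theorem. The character bookkeeping — adjoining a $\Gm$-factor precisely so as to cancel $\chi$ — is exactly what upgrades quasi-projectivity (as in \ref{qproj}) to quasi-affineness.
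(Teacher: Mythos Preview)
Your approach is essentially the same as the paper's: embed $\Gg\hookrightarrow\GL_{n_0}$, realize $\Gg$ as the stabilizer of a line in a $\GL_{n_0}$-representation, adjoin a $\Gm$-factor to cancel the character so that $\Gg$ becomes the stabilizer of a \emph{vector}, deduce that $(\GL_{n_0}\times\Gm)/\Gg$ is quasi-affine via the orbit map into an affine space, and finish with Lemma~\ref{triple} using that $\GL_{n_0+1}/(\GL_{n_0}\times\Gm)$ is affine. Your explicit identification of this last quotient with the complement of the incidence divisor is a nice addition that the paper leaves implicit.

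There is one small oversight. You write ``choosing a generator $v$ of $L_0$'', but $L_0=\det W\otimes(A')^\vee\cong\det W$ is only a rank-one \emph{projective} $A$-module and need not be free when $\operatorname{Pic}(A)\neq 0$; Proposition~\ref{linearrep} only guarantees that $W$ is a local direct summand, not that $\det W$ is trivial. The paper handles exactly this point by noting that quasi-affineness of the quotient can be checked Zariski-locally on $\Spec A$, and then passing to the local case where $L$ is free. Your ``main obstacle'' paragraph worries about the stabilizer computation and module-theoretic subtleties from \ref{linearrep}, but those are in fact fine (the Pl\"ucker argument gives the stabilizer on the nose); the genuine gap is this freeness issue, and the fix is the localization step.
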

\begin{proof}
By Proposition \ref{linearrep} applied to $\Gg$ and $\Hh=\{e\}$, we
see that there is a closed subgroup scheme embedding $\rho:
\Gg\hookrightarrow {\rm GL}_m$ (this follows also directly from
\cite{ThomasonEqRes}). Now apply Corollary \ref{qproj} and its proof
to the pair of the group ${\rm GL}_m$ with its closed subgroup
scheme $\Gg$. We obtain a ${\rm GL}_m$-representation $\rho':
\GL_m\to \GL_r=\GL(M)$ that induces a locally closed embedding
$\GL_m/\Gg\hookrightarrow {\mathbb P}^{r-1}$. Denote by $\chi:
\Gg\to \Gm={\rm Aut}_A(L)$   the character giving the action of
$\Gg$ on the $A$-line $L$ (as in the proof of Corollary \ref{qproj})
and consider $\Gg\to \GL_m\times \Gm$ given by $g\mapsto (\rho'(g),
\chi^{-1}(g))$. Consider the quotient $(\GL_m\times \Gm)/\Gg$; to
prove it is quasi-affine it is enough to reduce to the case that $A$
is local. Then $L$ is free, $L=A\cdot v$, and $\Gg$ is the subgroup
scheme of $\GL_m\times \Gm$ (acting by $(g, a)\cdot m=a\rho'(g)(m)$)
that fixes $v$. This gives a quasi-finite separated monomorphism
$(\GL_m\times \Gm)/\Gg\rightarrow {\mathbb A}^r$ and so by arguing
as in  the proof of Corollary \ref{qproj} we see that $(\GL_m\times
\Gm)/\Gg$ is quasi-affine. Consider now the standard diagonal block
embedding $\GL_m\times \Gm\hookrightarrow \GL_{m+1}$. The quotient
$\GL_{m+1}/(\GL_m\times \Gm)$ is affine and we can conclude using
Lemma \ref{triple}.
\end{proof}

 \bibliographystyle{hamsplain}

\bibliography{ShimuraBiblioPZ}

\end{document}